\newcommand\res{\mathop{\hbox{\vrule height 7pt width .5pt depth 0pt
			\vrule height .5pt width 6pt depth 0pt}}\nolimits}
\newcommand\LL{\res}
\newcommand\weakto{\rightharpoonup}
\newcommand\eps{\varepsilon}
\newcommand\Id{\mathrm{Id}}
\newcommand\loc{\mathrm{loc}}
\newcommand\Lip{\mathrm{Lip\,}}
\newcommand\supp{\mathrm{supp\,}}
\newcommand\Tr{\mathrm{Tr\,}}
\newcommand\dist{\mathrm{dist}}
\newcommand\qc{\mathrm{qc}}
\newcommand\qcinfty{{\mathrm{qc},\infty}}
\newcommand\scal{\mathrm{scal}}
\newcommand\SO{\mathrm{SO}}
\newcommand\R{\mathbb{R}}
\newcommand\N{\mathbb{N}}
\newcommand\Z{\mathbb{Z}}
\newcommand\calU{\mathcal{U}}
\newcommand\calV{\mathcal{V}}
\newcommand\calA{\mathcal{A}}
\newcommand\calB{\mathcal{B}}
\newcommand\calL{\mathcal{L}}
\newcommand\calH{\mathcal{H}}
\newcommand\calF{\mathcal{F}}
\newcommand\calM{\mathcal{M}}
\newtheorem{theorem}{Theorem}[section]
\newtheorem{proposition}[theorem]{Proposition}
\newtheorem{lemma}[theorem]{Lemma}
\newtheorem{corollary}[theorem]{Corollary}
\numberwithin{equation}{section}
\newcommand\cutoffconst{{M}}
\newcommand\Functeps{\mathcal F_\eps}
\newcommand\Functepspq{\mathcal F_{\eps,p,q}}
\newcommand\Functlim{\mathcal F_{p,q}}
\newcommand\dx{{\mathrm d}x}
\newcommand\dy{{\mathrm d}y}
\newcommand\ds{{\mathrm d}s}
\newcommand\dt{{\mathrm d}t}
\newcommand\dH{{\mathrm d}{\mathcal H}}
\newcommand\dd{{\mathrm d}}
\newcommand\Psiinfty{{\Psi_\infty}}
\newcommand\gpsup{g_{\sup}}
\newcommand\gpinf{g_{\inf}}
\newcommand\pcube{{y}}
\begin{document}
\begin{center}
  {\Large
{Superlinear {free-discontinuity} models:\\[1.5mm]
relaxation and phase-field approximation}}\\[5mm]
{\today}\\[5mm]
Sergio Conti$^{1}$, Matteo Focardi$^{2}$ and Flaviana Iurlano$^{3}$\\[2mm]
{\em $^{1}$
 Institut f\"ur Angewandte Mathematik,
Universit\"at Bonn,\\ 53115 Bonn, Germany}\\[1mm]

{\em $^{2}$ DiMaI, Universit\`a di Firenze\\ 50134 Firenze, Italy}\\[1mm]
{\em $^{3}$ DIMA, Universit\`a di Genova\\ 16146 Genova, Italy}
\\[3mm]
    \begin{minipage}[c]{0.8\textwidth}
{In this paper we develop the Direct Method {in} the Calculus of Variations for}
free-discontinuity energies {{whose} bulk and surface densities exhibit superlinear growth for large gradients and small jump amplitudes, respectively.}
{A distinctive}
{feature of this kind of models} is that the functionals
{are defined on}
$SBV$ {functions whose} jump sets {may have} infinite measure. {Establishing general lower semicontinuity and relaxation results
{in this setting}
requires new analytical techniques. {In addition, we propose} a variational approximation of {certain superlinear} energies
via phase-field models.}
\end{minipage}
\end{center}

\tableofcontents

\section{Introduction}
Variational models involving free-discontinuity problems play a fundamental role in fields such as fracture
mechanics, image processing, and materials science. These models describe systems {in which} the energy consists of
competing bulk and surface terms.
A prototypical example takes the form
\begin{equation}\label{eq0}\int_{\Omega\setminus K} \Psi(\nabla u) \dx+\int_{K}g([u],\nu)\dH^{n-1}.
\end{equation}
In the context of solid mechanics, the domain $\Omega\subset\R^n$ represents the reference configuration of a hyperelastic
body, while $u:\Omega\to\R^m$ denotes its deformation. 
The set $K\subset\Omega$ is
the unknown (sufficiently regular) set of discontinuity points of $u$; finally $\nu$ and $[u]$ denote, respectively,
the normal vector to $K$ and the difference of the traces of $u$ across $K$. The first term in the energy \eqref{eq0}
represents the stored elastic energy, while the second term 
describes the 
contribution concentrated on the fractured surface $K$, which may {account for both} energy and
dissipation. External loads and
Dirichlet boundary conditions may {also be incorporated into} the model. For a {comprehensive} overview of the subject, see
\cite{AmbrosioBraides95,Braides,FrancfortMarigo,AFP,BFM} {and references therein. While {most of} the literature {focuses on}
the case where the discontinuity set $K$ has finite $\calH^{n-1}$
measure, we {aim to extend} the framework to {settings in which} $\calH^{n-1}(K)$ may be infinite.
This generalization requires new analytical techniques to establish lower semicontinuity and relaxation
{results} under weaker regularity assumptions.}

It is nowadays clear that the correct space to relax and study problems of type \eqref{eq0} is that of functions of
bounded variation $BV(\Omega;\R^m)$ (or related spaces $SBV$, $GBV$, $GSBV$,  see \cite{AFP}). More precisely,
the weak formulation of problem \eqref{eq0} {is}
\begin{equation}\label{eq1}\int_{\Omega} \Psi(\nabla u) \dx+\int_{J_u}g([u],\nu_u)\dH^{n-1},
\end{equation}
having replaced $K$ by the jump set $J_u$ of $u$ and having labeled the corresponding normal vector $\nu_u$.

{In several models} the energy density $\Psi:\R^{m{\times} n}\to [0,\infty)$ is assumed to have $q$-growth at infinity,
$q>1$,  while the surface energy density $g:\R^m{\times} S^{n-1}\to[0,\infty)$ is commonly chosen either such that
\[g(z,\nu)\geq c>0,\]
for some $c>0$ and for all $z\in\R^m$ and $\nu\in S^{n-1}$, or such that {$g(0,\nu)=0$ and}
\begin{equation}\label{eq2a}
\lim_{|z|\to0}\frac{g(z,\nu)}{|z|}\in(0,\infty],
\end{equation}
for all $\nu\in S^{n-1}$. 
While many classical models assume surface densities that remain bounded {away from zero} for small jumps, we consider
\emph{superlinear growth}, where the surface energy density {converges to zero} as the jump amplitude vanishes. This assumption
reflects physical situations where fracture toughness increases continuously at small scales and leads to new
mathematical challenges. Specifically, {we will assume} for all $\nu\in S^{n-1}$
\begin{equation}\label{eq2}
\lim_{|z|\to0}\frac{g(z,\nu)}{|z|}=\infty.
\end{equation}
{Under the latter assumptions the functional setting of the problem is naturally given by the space of Special Functions
with Bounded Variation {$SBV$} introduced by De Giorgi and Ambrosio \cite{DeGiorgiAmbrosio} (see also
\cite[Chapters~4-8]{AFP}).}
Models of this type appear for instance in image processing
\cite{BouchitteDubsSeppecher,AlicandroBraidesGelli,AlicandroGelli,Morini}, in fracture mechanics
\cite{FokouaContiOrtiz2014,ArizaContiOrtiz2024}, in screw dislocations in single crystal plasticity
\cite{DeLucaScalaVanGoethem},
{in the theory of smectic thin films \cite{BallCanevariStroffolini}, and in the variational derivation of the Read and
Shockley formula for the energy of small angle grain boundaries in polycrystals
\cite{LauteriLuckhaus,FortunaGarroniSpadaro}.
{These models include the case of jump sets with infinite length, but the analysis of the corresponding function
space is still missing.}

In view of the closure and compactness properties of the underlying space $(G)SBV$ \cite[Theorems~4.7, 4.8 and 4.36]{AFP}, the existence of minimizers for a functional of the form \eqref{eq1}
is known using the Direct Method of the Calculus of Variations in the isotropic case. {This refers to the setting where}
the bulk energy density $\Psi(\xi)$
{depends only on the modulus of the gradient, i.e.,} $\Psi(\xi)=\hat\Psi(|\xi|)$,
and
the surface energy density $g(z,\nu)$
{depends only on the jump magnitude, i.e.,}
$g(z,\nu)=\hat g(|z|)$. {Under the assumption that}
both $\Psi$ and $g$ satisfy the {aforementioned} growth conditions, $\Psi$ is {convex},
and $g$ is concave, {existence can be established.} {See \cite{BouchitteButtazzo1990,BouchitteButtazzo1993,AFP} and} below for a more {detailed discussion of the relevant} literature.

In this paper we {develop the theory of the Direct Method for this class of problems, focusing in particular on the properties of the appropriate function spaces, as well as on the} lower semicontinuity and relaxation of energies of the form \eqref{eq1}, with more general
bulk and surface densities. 
{We also {construct} a
variational approximation via phase-field models, which is {particularly relevant for} numerical approximation.

The lower semicontinuity property for anisotropic surface energies was first studied in \cite{AmbrosioBraides1990}
in the framework of minimal partitions, i.e. $SBV$ functions 
{which take finitely many values}, obtaining as a necessary condition the $BV$-ellipticity of $g$ (see \eqref{e:BV ellipticity} for
the precise definition), see \cite[Theorem~5.14]{AFP}.

More generally, the lower semicontinuity property for anisotropic surface integrals with $BV$-elliptic, bounded and
continuous densities $g$ was first established in \cite[Theorem~3.3]{Ambrosio1990} along sequences $(u_j)_j
\subset SBV\cap L^\infty(\Omega;\R^m)$ such that
\begin{itemize}
\item[($g$-a)] $u_j\to u$ in measure on $\Omega$, $u\in SBV\cap L^\infty(\Omega;\R^m)$, 
$\sup_j\|u_j\|_{L^\infty(\Omega)}<\infty$, and $(\nabla u_j)_j$ is equiintegrable 
in $L^1(A;\R^{m\times n})$ for every {$A\subset\subset\Omega$};
\item[($g$-b)] $\sup_j\calH^{n-1}(J_{u_j})<\infty$.
\end{itemize}
We note that the statement of  \cite[Theorem~3.3]{Ambrosio1990} is formulated by requiring $g$ to be bounded from below
by a strictly positive constant, nevertheless the conclusion holds in the more general framework
as stated above with the same proof.

In what follows we extend \cite[Theorem~3.3]{Ambrosio1990} to a large class of energies compatible with Ambrosio's $SBV$ compactness theorem.
More precisely, we consider surface integrands $g:\R^m\times S^{n-1}\to[0,\infty)$ such that
\begin{equation}\label{gBVell}
{g\text{ is $BV$-elliptic,}}
\end{equation}
\begin{equation}\label{e:g consistent}
g(z,\nu)=g(-z,-\nu),
\end{equation}
\begin{equation}\label{e:g grwoth g0}
 \frac1c g_0(|z|)\le g(z,\nu) \le c g_0(|z|),
\end{equation}
\begin{equation}\label{eqgg0}
|g(z,\nu) -g(z',\nu)| \le c g_0(|z-z'|),
\end{equation}
for some $c>0$ and for all $(z,\nu)\in\R^m\times S^{n-1}$, where 
$g_0\in C^0([0,\infty);[0,\infty))$ is subadditive, nondecreasing, with {$g_0^{-1}(0) = \{0\}$},
and such that
for some $\gamma\in(0,1)$
 \begin{equation}\label{e:g0 superlinear}
  \lim_{s\to0} \frac{g_0(s)}{s^\gamma}=\ell\in(0,\infty)
 \end{equation}
{(extensive comments on the assumption in \eqref{e:g0 superlinear} will be given after the proof of Proposition~\ref{approximation}).}
{In view of \eqref{e:g grwoth g0}, $g$ is necessarily locally bounded. In addition, $g$ is continuous in the $z$-variable uniformly with respect to $\nu$, with $g_0$ as modulus of continuity, thanks to \eqref{eqgg0}.}
\begin{proposition}\label{p:lsc surface}
Let $g_0$ and $g$ {satisfy} \eqref{gBVell}-\eqref{e:g0 superlinear}, and let ${u_j}\in {(GSBV(\Omega))^m}$ {satisfy}
\begin{itemize}
\item[($g$-a')] $u_j\to u$ in {measure}, 
$u\in {(GSBV(\Omega))^m}$, and
$(\nabla u_j)_j$ is equiintegrable in  $L^1(A;\R^{m\times n})$ for every {$A\subset\subset\Omega$}.
\end{itemize}
Then
\begin{equation}\label{lscsurf}
\int_{J_u}g([u],\nu_u)\dH^{n-1}
\le\liminf_{j\to\infty} \int_{J_{u_j}}g([u_j],\nu_{u_j})\dH^{n-1}.
\end{equation}
\end{proposition}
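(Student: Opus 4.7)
My strategy is a blow-up argument combined with the weak-$\ast$ compactness of the surface energy measures, following the general scheme of \cite[Theorem~3.7]{Ambrosio1990}. The essential observation is that the uniform bounds on $\|u_j\|_{L^\infty}$ and on $\calH^{n-1}(J_{u_j})$ required in \cite[Theorem~3.3]{Ambrosio1990} enter only to secure compactness of the surface measures $\mu_j := g([u_j],\nu_{u_j})\,\calH^{n-1}\LL J_{u_j}$, and in the present setting this compactness is available directly. Assuming, without loss of generality, that $L := \liminf_j \int_{J_{u_j}} g([u_j],\nu_{u_j})\,\dH^{n-1}$ is finite, I pass to a subsequence realising the liminf: then $\mu_j(\Omega) \le L + 1$ for $j$ large, so up to a further subsequence $\mu_j \stackrel{\ast}{\weakto} \mu$ in $\calM_b(\overline\Omega)$ for some nonnegative Radon measure $\mu$ with $\mu(\overline\Omega) \le L$. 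The inequality \eqref{lscsurf} then reduces to the measure-theoretic claim $\mu \ge g([u],\nu_u)\,\calH^{n-1}\LL J_u$ as measures on $\Omega$.

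By the Besicovitch differentiation theorem, it suffices to prove that at $\calH^{n-1}$-a.e.\ $x \in J_u$ the Radon-Nikodym density $\theta(x) := \lim_{\rho \to 0^+} \rho^{-(n-1)} \mu(Q^\nu_\rho(x))$ satisfies $\theta(x) \ge g([u](x),\nu_u(x))$, where $Q^\nu_\rho(x)$ denotes a cube of side $\rho$ centred at $x$ with two faces orthogonal to $\nu := \nu_u(x)$. At such a generic point, the standard blow-up for $SBV$ functions gives $u(x + \rho\, \cdot\,) \to u^0 := u^+(x)\chi_{\{y\cdot\nu > 0\}} + u^-(x)\chi_{\{y\cdot\nu < 0\}}$ in $L^1(Q^\nu;\R^m)$ as $\rho \to 0^+$. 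Exploiting the convergence $u_j \to u$ in measure, the equiintegrability of $(\nabla u_j)_j$, and the weak-$\ast$ continuity of $\mu_j$ on cubes with $\mu$-negligible boundary, I extract sequences $\rho_k \to 0$ and $j_k \to \infty$ such that $u_{j_k}(x + \rho_k\, \cdot\,) \to u^0$ in measure on $Q^\nu$, $\rho_k^{-(n-1)} \int_{Q^\nu_{\rho_k}(x)} |\nabla u_{j_k}|\,\dx \to 0$, and $\rho_k^{-(n-1)} \mu_{j_k}(Q^\nu_{\rho_k}(x)) \to \theta(x)$. After adjusting boundary values to $u^0$ on a thin strip near $\partial Q^\nu$ by a Lipschitz interpolation, the $BV$-ellipticity \eqref{gBVell} applied to these modified competitors, together with \eqref{eqgg0} to estimate the error due to the modification, delivers
\begin{equation*}
g([u](x),\nu_u(x)) \le \liminf_{k\to\infty} \rho_k^{-(n-1)} \mu_{j_k}(Q^\nu_{\rho_k}(x)) = \theta(x).
\end{equation*}
Integrating over $J_u$ yields \eqref{lscsurf} when $u_j, u \in (SBV(\Omega))^m$; the extension to $(GSBV(\Omega))^m$ is achieved by applying the argument to the componentwise truncations $\tau_M(u_j)$, $\tau_M(u)$ and letting $M \to \infty$ via Fatou's lemma, using the upper bound in \eqref{e:g grwoth g0} for domination.

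The main technical obstacle is the boundary-value adjustment in the blow-up step. Without a uniform $L^\infty$ bound on $u_j$, the Lipschitz interpolation between $u_{j_k}(x+\rho_k\, \cdot\,)$ and $u^0$ on a thin annular strip near $\partial Q^\nu$ may introduce a correction layer carrying additional jumps of arbitrary amplitude or a large gradient contribution. The continuity bound \eqref{eqgg0} controls the jump-energy error by $g_0$ of the pointwise mismatch $|u_{j_k}(x+\rho_k\,\cdot\,) - u^0|$, which vanishes in measure on $Q^\nu$; combined with the subadditivity and monotonicity of $g_0$, the superlinear behaviour \eqref{e:g0 superlinear} near the origin, and a Fubini-type choice of the strip width, this guarantees that the added surface energy is $o(1)$ as $k \to \infty$. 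It is precisely the exponent $\gamma \in (0,1)$ that allows this absorption, thereby making the blow-up scheme viable in the $(G)SBV$ framework without the classical bound on $\calH^{n-1}(J_{u_j})$.
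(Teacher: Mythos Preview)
Your blow-up framework in Step~1 matches the paper's, but the argument breaks down at the point where you write ``the $BV$-ellipticity \eqref{gBVell} applied to these modified competitors''. The definition of $BV$-ellipticity in \eqref{e:BV ellipticity} admits only \emph{piecewise constant} test functions (equivalently, $SBV$ functions with $\nabla u=0$ a.e., cf.\ Lemma~\ref{lemmaBVellipiinf}). Your blown-up functions $w_k:=u_{j_k}(x+\rho_k\,\cdot\,)$ satisfy $\|\nabla w_k\|_{L^1(Q^\nu)}\to0$, but their gradients are not zero, and no Lipschitz boundary interpolation cures this: the cut-off $\theta w_k+(1-\theta)u^0$ still carries $\nabla w_k$ in the interior and introduces a diffuse term $\nabla\theta\,(w_k-u^0)$ on the strip. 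Thus the inequality \eqref{eqdefGbvellip} is simply not available for these competitors, and invoking it would be circular---it is essentially the lower semicontinuity you are trying to prove.

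The missing ingredient is precisely the paper's Step~2: one must first replace $w_k$ by genuinely piecewise constant functions $v_k$ taking finitely many values, with $H_g(v_k;Q^\nu)\le H_g(w_k;Q^\nu)+o(1)$. This is done via the coarea-type construction in Proposition~\ref{approximation} and Corollary~\ref{approximationGSBV}, and it is here---not in the boundary adjustment---that the power behaviour $g_0(s)\sim s^\gamma$ from \eqref{e:g0 superlinear} is decisively used (see the estimate \eqref{eqg0omegamina2epsb} and the discussion following the proof of Proposition~\ref{approximation}). The delicate point is that $\calH^{n-1}(J_{w_k})$ may be infinite, so the classical Ambrosio argument \cite[Theorem~3.3]{Ambrosio1990} does not apply verbatim; controlling the error when discretising $w_k$ requires the quantitative interplay between the vanishing $\|\nabla w_k\|_{L^1}$ and the $\gamma$-growth of $g_0$, with carefully tuned scales $\eta_k\ll\eps_k\ll\delta_k$. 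Only after this reduction does $BV$-ellipticity become applicable, and the boundary-matching in Step~3 is then handled for the (bounded, finitely-valued) $v_k$ via the auxiliary function $g_0(|v_k|)$ rather than a Lipschitz interpolation.
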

}

Our proof {follows} the key idea introduced in \cite[Theorem~3.3]{Ambrosio1990}, replacing
the sequence $u_j$ by a sequence of piecewise constant functions via {the} coarea formula.
However, since in our setting the length of the jump sets may be infinite, the estimates are more 
{delicate} and {rely} on monotonicity, subadditivity, and $\gamma$-growth of $g_0$ at the origin. {All approximation} errors are {explicitly}
quantified in terms of $g_0$.

{Instead, the lower semicontinuity property for volume energies under quasiconvexity and $q$-growth conditions, $q>1$,
on the integrand $\Psi$ was first established by Ambrosio in \cite{Ambrosio1994} (see also
\cite[Theorem~5.29]{AFP}) along sequences $(u_j)_j \subset SBV(\Omega;\R^m)$ such that
\begin{itemize}
\item[($\Psi$-a)] $u_j\to u$ in $L^1(\Omega;\R^m)$, $u\in SBV(\Omega;\R^m)$, and 
$\sup_j\|\nabla u_j\|_{L^q(\Omega)}<\infty$, $q\in(1,\infty)$;
\item[($\Psi$-b)] $\sup_j\calH^{n-1}(J_{u_j})<\infty$.
\end{itemize}
The result was then extended by Kristensen in \cite{Kristensen99} under the assumptions:
\begin{itemize}
\item[($\Psi$-a')] the conditions in ($\Psi$-a) hold
{for an exponent in the larger range} $q\in[1,\infty]$;
\item[($\Psi$-b')] there is $g_0\in C^0([0,\infty);[0,\infty))$ concave, nondecreasing, with $g_0(0)=0$, and such that
 \begin{equation}\label{e:g0 superlinear at 0}
  \lim_{s\to0} \frac{g_0(s)}{s}=\infty\,,
 \end{equation}
for which
\begin{equation}\label{e:bound surface nrg}
\sup_j\int_{J_{u_j}}g_0(|[u_j]|)\dH^{n-1}<\infty\,;
\end{equation}
\item[($\Psi$-c)]
{$\Psi\in C^0(\R^{m\times n};\R)$,}
$(\max\{-\Psi(\nabla u_j),0\})_j$ is equiintegrable.
\end{itemize}
Kristensen's proof hinges upon the fact that Young measures generated by sequences $(\nabla u_j)_j$ satisfying
($\Psi$-a') and ($\Psi$-b') are actually gradient $q$-Young measures for $\calL^n$-almost {every point}. The use of Young
measures allows {one} to {treat} the problem for a class of normal integrands depending on the lower order variables $(x,u)$.
We {present} here a more elementary proof of the result for autonomous functionals, which still relies on some fundamental ideas introduced in \cite{Kristensen99}. Key ingredients {include} an approximation argument for
nonnegative, superlinear quasiconvex functions by quasiconvex ones
{with linear growth}
{(see Proposition~\ref{prophdeltaqc} below)}, and an extension of
the quasiconvexity inequality for quasiconvex functions
{with linear growth} to $BV$ test functions
{(see Equation~\eqref{e:BV qcvxty} below).}
For the former, rather than {relying on} the refined theory of Young measures mentioned above, we employ a truncation argument {based on the} maximal function (cf. Lemma~\ref{lemmatrunc}), which is by now elementary.
\begin{proposition}\label{p:lsc bulk}
Let $\Psi:\R^{m\times n}\to\R$ be a quasiconvex integrand satisfying
\begin{equation}\label{e:Psi gc gen}
|\Psi(\xi)|\leq C(1+|\xi|^q)
\end{equation}
for some $q\in[1,\infty)$ and $C>0$. 
Let ${u_j}\in {(GSBV(\Omega))^m}$ satisfy
\begin{itemize}
{\item[($\Psi$-a'')] $u_j\to u$ 
{in measure},
$u\in {(GSBV(\Omega))^m}$,
and
$\sup_j\|\nabla u_j\|_{L^q(\Omega)}<\infty$;}
\item[($\Psi$-b'')] \eqref{e:bound surface nrg} holds for some $g_0\in C^0([0,\infty);[0,\infty))$ subadditive,
nondecreasing, with $g_0(0)=0$, and satisfying \eqref{e:g0 superlinear at 0};
\end{itemize}
and ($\Psi$-c).
Then,
\begin{equation}\label{e:lsc volume}
\int_\Omega\Psi(\nabla u)\dx\le\liminf_{j\to\infty} \int_\Omega\Psi(\nabla u_j)\dx.
\end{equation}
\end{proposition}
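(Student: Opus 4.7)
Following the strategy announced before the statement, I would combine three ingredients: the approximation of a quasiconvex integrand with $q$-growth by quasiconvex integrands of linear growth (Proposition~\ref{prophdeltaqc}), the extension of the quasiconvexity inequality to $BV$ test functions \eqref{e:BV qcvxty}, and a blow-up in which the superlinearity \eqref{e:g0 superlinear at 0} of $g_0$ at the origin makes the jump contribution negligible. The first move is the standard reductions: extract a subsequence realising the $\liminf$ (assumed finite), further extract so that $\nabla u_j \weakto \nabla u$ in $L^q$ (using ($\Psi$-a''), ($\Psi$-b''), \eqref{e:g0 superlinear at 0} and the $GSBV$-closure theorem of Ambrosio), and dispose of the negative part of $\Psi(\nabla u_j)$ by ($\Psi$-c) together with Vitali's theorem, thereby reducing to $\Psi\ge 0$. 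Then Proposition~\ref{prophdeltaqc} supplies a monotone family $\Psi_\delta$ of nonnegative quasiconvex integrands of linear growth with $\Psi_\delta\le\Psi$ and $\Psi_\delta\nearrow\Psi$ as $\delta\to 0^+$; by monotone convergence it suffices to prove the LSC of $\int_\Omega \Psi_\delta(\nabla\cdot)\,\dx$ for each fixed $\delta>0$.

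\emph{Blow-up and the $BV$-quasiconvexity inequality.} Along a further subsequence let $\mu$ be the weak-$*$ limit of $\Psi_\delta(\nabla u_j)\,\calL^n\res\Omega$. It suffices to prove $\dd\mu/\dd\calL^n(x_0)\ge \Psi_\delta(\nabla u(x_0))$ at $\calL^n$-a.e.\ $x_0$. At a suitable $x_0$ (a Lebesgue point of $\nabla u$ and of $\dd\mu/\dd\calL^n$ satisfying also $r^{-n}\sup_j\int_{J_{u_j}\cap B_r(x_0)} g_0(|[u_j]|)\,\dH^{n-1}\to 0$ as $r\to 0^+$; such $x_0$ form a set of full measure by ($\Psi$-b'') and Besicovitch differentiation), rescale by $v_k(y):=r_k^{-1}(u_{j_k}(x_0+r_k y)-u(x_0)-r_k\nabla u(x_0)y)$ on the unit cube $Q$, cut off near $\partial Q$ to enforce zero boundary values at controlled cost, and apply \eqref{e:BV qcvxty} with affine datum $A=\nabla u(x_0)$ to get
\begin{equation*}
\Psi_\delta(A)\,|Q|\le \int_Q \Psi_\delta(A+\nabla v_k)\,\dy +\int_{J_{v_k}\cap Q}\Psi_\delta^\infty([v_k]\otimes\nu_{v_k})\,\dH^{n-1}+o_k(1).
\end{equation*}
The bulk term tends to $|Q|\,\dd\mu/\dd\calL^n(x_0)$, and since $\Psi_\delta^\infty(a\otimes\nu)\le C_\delta|a|$ by linear growth, the surface term is bounded by $C_\delta r_k^{-n}\int_{J_{u_{j_k}}\cap B_{r_k}(x_0)} |[u_{j_k}]|\,\dH^{n-1}$. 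Split at a threshold $\eta$: on $\{|[u_{j_k}]|\le\eta\}$ one uses $|z|\le\varepsilon(\eta)g_0(|z|)$ with $\varepsilon(\eta)\to 0$ (from \eqref{e:g0 superlinear at 0}), bounding the contribution by $\varepsilon(\eta)\cdot r_k^{-n}\int_{J_{u_{j_k}}\cap B_{r_k}(x_0)} g_0(|[u_{j_k}]|)\,\dH^{n-1}$, which vanishes first in $k$ (by the choice of $x_0$) and then in $\eta$; on $\{|[u_{j_k}]|>\eta\}$ one invokes the Lipschitz truncation of Lemma~\ref{lemmatrunc} to replace $u_{j_k}$ by a Sobolev function on a bad set whose measure is controlled by the $L^q$-bound on $\nabla u_{j_k}$. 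A diagonal extraction in $(k,\eta,\lambda)$ produces the pointwise inequality, and letting $\delta\to 0^+$ by monotone convergence concludes.

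\emph{Main obstacle.} The technical core is the handling of large jumps: because $g_0$ is not coercive at infinity, $|[u_j]|\,\dH^{n-1}\res J_{u_j}$ need not have bounded total variation, so the jump term of the $BV$-quasiconvexity inequality cannot be controlled directly at an arbitrary density point. The interplay between the superlinearity of $g_0$ at $0$ (which absorbs small jumps via $|z|\ll g_0(|z|)$) and the maximal-function Lipschitz truncation of Lemma~\ref{lemmatrunc} (which trades large jumps for Sobolev behaviour at a quantifiable cost) is the key mechanism that enables the argument to go through in the present autonomous setting without the Young-measure machinery of Kristensen.
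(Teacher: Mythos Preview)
Your overall strategy---reduce to $\Psi\ge0$, approximate by linear-growth quasiconvex $\Psi_\delta$ via Proposition~\ref{prophdeltaqc}, blow up, and apply the $BV$-quasiconvexity inequality \eqref{e:BV qcvxty}---is exactly the paper's. But the treatment of the singular part of the blow-up has two real gaps.

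First, the claim that at $\calL^n$-a.e.\ $x_0$ one has $r^{-n}\sup_j\int_{J_{u_j}\cap B_r(x_0)}g_0(|[u_j]|)\,\dH^{n-1}\to0$ is too strong. Besicovitch differentiation of the weak-$*$ limit $\nu_\Psi$ of $g_0(|[u_j]|)\,\calH^{n-1}\res J_{u_j}$ only gives \emph{finiteness} of the $\calL^n$-density at a.e.\ point, not zero, and certainly not with a $\sup_j$. After diagonalisation what you actually obtain is the bounded quantity $\sup_i\rho_i^{-1}\int_{J_{w_i}\cap Q}g_0(\rho_i|[w_i]|)\,\dH^{n-1}<\infty$, and you must work with that.

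Second, and more seriously, the proposed handling of large jumps via Lemma~\ref{lemmatrunc} does not work: that lemma is a maximal-function truncation of \emph{Sobolev} functions; the bad set it produces is controlled by $\int_{\{|\nabla u|>\lambda\}}|\nabla u|^s$ and has nothing to do with jump amplitudes. Without a codomain bound, $\int_{\{|[u_{j_k}]|>\eta\}}|[u_{j_k}]|$ may well be infinite. The paper closes this gap by a further codomain truncation of the \emph{rescaled} sequence, $\hat w_i:=\mathcal T_{k_i}(w_i)$ with $\mathcal T_k$ as in \eqref{e:Tk}, so that $|[\hat w_i]|\le 2a_{2M}$ uniformly. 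The key computation is then
\[
|D^s\hat w_i|(Q_1)\le C(1+a_{2M})\,\frac{\rho_i}{g_0(\rho_i)}\longrightarrow 0,
\]
obtained by splitting at $|[\hat w_i]|=1$: on $\{|[\hat w_i]|<1\}$ one uses the subadditivity estimate \eqref{eqg0rho} in the form $|[\hat w_i]|\le 2\,g_0(\rho_i|[\hat w_i]|)/g_0(\rho_i)$, while on $\{|[\hat w_i]|\ge1\}$ one uses the $L^\infty$ bound on $[\hat w_i]$ together with $g_0(\rho_i|[\hat w_i]|)\ge g_0(\rho_i)$; both pieces are controlled by the bounded quantity $\rho_i^{-1}\int g_0(\rho_i|[\hat w_i]|)$. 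The superlinearity \eqref{e:g0 superlinear at 0} enters only through $\rho_i/g_0(\rho_i)\to0$. Once $|D^s\hat w_i|\to0$, \eqref{e:BV qcvxty} applied to $\hat w_i$ yields the pointwise inequality directly---no $\eta$-threshold split is needed, and Lemma~\ref{lemmatrunc} plays no role here (in the paper it is used only inside the proof of Proposition~\ref{prophdeltaqc}).
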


As a consequence of the superlinearity assumptions of the energy densities, we combine 
Propositions~\ref{p:lsc surface} 
and~\ref{p:lsc bulk} 
to deduce a lower semicontinuity result for the full energy in
\eqref{eq1} (cf. Theorem~\ref{t:lsc}). An $L^1$ lower semicontinuity result for free-discontinuity energies along sequences satisfying ($\Psi$-a) and ($g$-b),
i.e. with gradients equi-bounded in some $L^q$, $q>1$, and with equi-bounded measure of the jump sets, has been previously
established in \cite[Theorem~4]{BFLM}.

A key challenge in the analysis of free-discontinuity energies is their relaxation under minimal regularity
assumptions. Theorem~\ref{relaxation} provides a relaxation result for functionals of type \eqref{eq1} without
requiring $\Psi$ to be quasiconvex or $g$ to be $BV$-elliptic.
The proof makes essential use of the density result obtained
in \cite[Corollary~2.3]{CFI23}.
{\begin{theorem}\label{relaxation}
Let $\Psi:\R^{m\times n}\to[0,\infty)$ be continuous and satisfy
		\begin{equation}\label{e:Psi gc}
		\Big(\frac1c |\xi|^q-c\Big)\vee0\le \Psi(\xi)\le c(|\xi|^q+1),
		\end{equation}
for all $\xi\in\R^{m\times n}$ and some $q>1$, {and let $g_0$ and $g$ satisfy \eqref{e:g consistent}-\eqref{e:g0
superlinear}.}
Let $H:L^1(\Omega;\R^{m})\to[0,\infty]$ be
\begin{equation}\label{H}
H(u):=\begin{cases}\displaystyle \int_\Omega \Psi(\nabla u)\dx+\int_{J_u}g([u],\nu)\dH^{n-1}, & \text{if }u\in
SBV(\Omega;\R^m),\\
\infty, & \text{otherwise.}
\end{cases}
\end{equation}
Then, the relaxation $\overline{H}$ with respect to the strong topology of
$L^1(\Omega;\R^m)$ is the functional
\begin{equation}\label{Hbar}
\overline H(u)=\int_\Omega \Psi^\qc(\nabla u)\dx + \int_{J_u}g_{BV}([u],\nu_u)\dH^{n-1},
\end{equation}
if $u\in
{(GSBV(\Omega))^m}$
with $\nabla u\in L^q(\Omega;\R^{m\times n})$, and
$\overline H(u)=\infty$ otherwise.
\end{theorem}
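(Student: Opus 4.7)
The plan follows the standard two-sided scheme for relaxation results. For the lower bound, take a sequence $u_j\to u$ in $L^1(\Omega;\R^m)$ with $\liminf_j H(u_j)<\infty$. The coercivity $\Psi(\xi)\geq(\frac1c|\xi|^q-c)\vee 0$ from \eqref{e:Psi gc} and the lower bound $g\ge \frac1c g_0(|z|)$ from \eqref{e:g grwoth g0} yield uniform control of $\|\nabla u_j\|_{L^q}$ and of $\int_{J_{u_j}}g_0(|[u_j]|)\dH^{n-1}$. Since $g_0(s)/s^\gamma\to\ell>0$ at the origin, Ambrosio's $GSBV$ compactness theorem applies, forcing $u\in (GSBV(\Omega))^m$ with $\nabla u\in L^q$ and $\nabla u_j\weakto \nabla u$ in $L^q$. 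Now $\Psi^{\qc}\leq\Psi$ and $g_{BV}\leq g$ pointwise, and both envelopes inherit the structural hypotheses of the semicontinuity propositions: $\Psi^{\qc}$ is quasiconvex with $q$-growth, while $g_{BV}$ is automatically $BV$-elliptic and still satisfies \eqref{e:g consistent}--\eqref{eqgg0} with the same $g_0$ (since the envelope preserves growth bounds and modulus of continuity, and symmetry follows from the invariance of the $BV$-ellipticity class under the change $(z,\nu)\to(-z,-\nu)$). Applying Proposition~\ref{p:lsc bulk} and Proposition~\ref{p:lsc surface} separately with $\Psi$ replaced by $\Psi^{\qc}$ and $g$ replaced by $g_{BV}$ delivers the lower bound $\overline H(u)\leq \liminf_j H(u_j)$.

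For the upper bound we must construct, for $u\in (GSBV(\Omega))^m$ with $\nabla u\in L^q$ and $\int_{J_u}g_0(|[u]|)\dH^{n-1}<\infty$, a recovery sequence $u_j\in SBV(\Omega;\R^m)$ with $u_j\to u$ in $L^1$ and $H(u_j)\to \int_\Omega\Psi^{\qc}(\nabla u)\dx+\int_{J_u}g_{BV}([u],\nu_u)\dH^{n-1}$. Here the density result \cite[Corollary~2.3]{CFI23} is the central tool: it yields approximants $u^{(k)}\to u$ which are piecewise smooth on a locally finite polyhedral partition, with convergence of $\nabla u^{(k)}\to\nabla u$ in $L^q$ and of the surface energies $\int_{J_{u^{(k)}}}g_{BV}([u^{(k)}],\nu_{u^{(k)}})\dH^{n-1}\to \int_{J_u}g_{BV}([u],\nu_u)\dH^{n-1}$ (the latter using \eqref{e:g grwoth g0}--\eqref{eqgg0} and the $L^1$-type continuity of $g_{BV}$). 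Thus it suffices to exhibit the recovery sequence at the regular level and then diagonalize. On each smooth piece we use Dacorogna's classical local construction for quasiconvex envelopes to produce perturbations supported in small cubes that replace $\Psi(\nabla u^{(k)})$ by $\Psi^{\qc}(\nabla u^{(k)})$ in the limit; on a tubular neighbourhood of each flat piece of $J_{u^{(k)}}$ we use the variational definition of $g_{BV}$ as an infimum over piecewise constant $BV$-elliptic test functions to insert a periodic seed realizing $g_{BV}([u^{(k)}],\nu_{u^{(k)}})$ up to $\eps$. The two perturbations have disjoint supports, and the additional gradient mass generated by the seed along the jump has negligible volume contribution.

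The main obstacle is the upper bound. The delicate point is controlling the interaction between the two local constructions along the boundaries of the regions of smoothness and in thin neighbourhoods of $J_{u^{(k)}}$: the bulk recovery on cubes must be localized so that it does not spread onto the surface perturbation layer, while the surface seed itself produces short range gradients whose $q$-integral must be absorbed using the volume shrinking of its support, and new tiny jumps whose $g_0$-energy must be absorbed using the superlinearity \eqref{e:g0 superlinear}, subadditivity and the bound \eqref{e:g grwoth g0}. Once this cell-by-cell construction is implemented, the passage from the piecewise smooth level to the general $u$ proceeds by a diagonal extraction using continuity of both sides of \eqref{Hbar} along the approximating sequence given by \cite[Corollary~2.3]{CFI23}.
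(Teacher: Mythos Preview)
Your proposal is correct and follows essentially the same approach as the paper: the lower bound via Propositions~\ref{p:lsc surface} and~\ref{p:lsc bulk} applied to $\Psi^{\qc}$ and $g_{BV}$ (whose structural properties the paper establishes in Lemma~\ref{lemmaBVellipiinf}), and the upper bound by reducing to piecewise affine $u$ via the density result of \cite{CFI23} and then combining Dacorogna-type bulk oscillations inside the simplices with piecewise-constant $BV$-elliptic seeds on small cubes covering the flat faces, exactly as you sketch. The paper in fact proves the slightly stronger Theorem~\ref{relaxation2} (with $H$ restricted to piecewise affine functions) via this same cell-by-cell construction.
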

For the definition of the quasiconvex envelope $\Psi^{\qc}$ of $\Psi$ and of the $BV$-elliptic envelope $g_{BV}$ of $g$ we
refer to \eqref{e:qc envelope} and \eqref{e:BV elliptic envelope}, respectively.} {A similar statement holds by replacing the strong $L^1$-convergence by the convergence in measure. We leave the details to the interested reader.}

{A powerful approach to approximating free-discontinuity problems is through \emph{phase-field methods}, which
replace sharp {discontinuities} with diffuse interfaces.
This approach is widely used in numerical simulations of fracture
mechanics and {offers many advantages in} computational implementation. In the final part of this paper, we establish a
$\Gamma$-convergence result for {a class of} anisotropic phase-field  functionals
of Ambrosio-Tortorelli type,
{thereby} rigorously justifying their
use as approximations to functionals of {the form}}
\eqref{eq1}, {where} $\Psi$ and
$g$ {exhibit} superlinear growth in the sense
described above.

 Phase-field approximations are {especially} popular in the numerical literature {on fracture mechanics}, since functionals
defined on Sobolev spaces are {significantly more tractable from a computational standpoint}.
{In some contexts, they are also interpreted as genuine} diffuse
models and used in {place} of the {corresponding} singular limit models. {Various phase-field models have been proposed and
studied in the mechanics literature on cohesive-zone models for fracture; see, for example,
\cite{LORENTZ20111927,
Verhoosel2013_43,
Wu2017unified,
WuNguyenNguyenChiSutula2020phase,
Chen2021Deborst2021phase,
LammenContiMosler2023,
feng2023unified,
LammenContiMosler2025, ACF25-I, ACF25-II} and references therein.}
{For a broader overview of the literature on this topic,}
we refer to the introductions of
\cite{ContiFocardiIurlano2016,ContiFocardiIurlano2022}.

Our phase-field models are generalizations to the vector-valued anisotropic case of those considered in
\cite[Section~7.2]{ContiFocardiIurlano2016}. For all $\eps>0$, $p$, $q>1$, and $\ell>0$,  we consider the functionals
$\Functepspq\colon L^1(\Omega;\R^{m+1})\to[0,\infty]$ given by
\begin{equation}\label{functeps p intro}
 {\Functepspq(u,v)}:= \int_\Omega \left( f_{\eps,p,q}^q(v) \Psi(\nabla u) +
	\frac{(1-v)^{q'}}{q'q^{\sfrac{q'}q}\eps} + \eps^{q-1}|\nabla v|^q\right) \dx
\end{equation}
if $(u,v)\in W^{1,q}(\Omega;\R^m\times [0,1])$ and $\infty$ otherwise, where
$q'{=q/(q-1)}$ denotes the conjugate exponent of $q$,  and for every
$t\in[0,1)$
\begin{equation*}
f_p(t):=\frac{\ell t}{(1-t)^p},\qquad
f_{\eps,p,q}(t):= 1\wedge \eps^{{1-\sfrac1q}} f_p(t), \qquad
{f_{\eps,p,q}(1):=1}\,.
\end{equation*}
{For the precise set of assumptions on $\Psi$ we refer to Section~\ref{ss:data}. For the sake of simplicity, in this
introduction we only consider the case in which the $q$-recession function $\Psi_\infty$ of $\Psi$ satisfies the
so-called projection property (see \eqref{e:Psiinfty} for the definition of $\Psi_\infty$), namely
\begin{equation}\label{e:projectionproperty intro}
\Psi_\infty(\xi)\geq \Psi_\infty(\xi\nu\otimes\nu) \quad \text{for every $(\xi,\nu)\in\R^{m\times n}\times S^{n-1}$}.
\end{equation}
{For example, consider for $n=m=2$ and
$\alpha>0$ the functions
\begin{equation}
\begin{split}
 \psi^{(\alpha)}(\xi):= &
 (|\xi|^2-2)_+^2+\alpha (\det\xi-1)^2 ,\\
\hat\psi^{(\alpha)}(\xi):=& (|\xi|^2-2\det\xi )^2 +\alpha(\det\xi-1)^2,
\end{split}
 \end{equation}
 where $(f)_+^2:=(f\vee 0)^2$.
These functions are polyconvex, obey
the growth condition \eqref{e:Psi gc} with $q=4$, and
are minimized for $\xi\in \SO(2)$.
The $q$-recession functions are $\psi^{(\alpha)}_\infty(\xi)=|\xi|^4+\alpha(\det \xi)^2$, and $\hat\psi^{(\alpha)}_\infty(\xi)=(|\xi|^2-2\det\xi)^2+\alpha(\det \xi)^2$. The first one clearly obeys \eqref{e:projectionproperty intro}.
However,
considering 
$\nu=e_1$ and
$\xi=\mathrm{diag}(1,t)$ for $t\to0$ shows that
$\hat\psi^{(\alpha)}_\infty$
does not obey
\eqref{e:projectionproperty intro}.}

Under such an assumption we obtain the following result. 
\begin{theorem}\label{t:onedim intro}
Let $\Psi$ satisfy \eqref{e:Psi gc}, \eqref{e:projectionproperty intro}, {and \eqref{eqpsipsiinf}.}
Then for all $(u,v)\in L^1(\Omega;\R^{m+1})$ it holds
\[
\Gamma(L^1)\text{-}\lim_{\eps\to0}\Functepspq(u,v)=\Functlim(u,v),
\]
where
\begin{equation*}
\Functlim(u,v):=\int_\Omega \Psi^\qc(\nabla u)\dx + \int_{J_u}g([u],\nu_u)\dH^{n-1},
\end{equation*}
if $u\in (GSBV\cap L^1(\Omega))^m$ {with $\nabla u\in L^q(\Omega;\R^{m\times n})$}
and $v=1$ $\calL^n$-a.e. on $\Omega$, and $\Functlim(u,v):=\infty$ otherwise, where 
\begin{equation*}
g(z,\nu):=\lim_{T\uparrow\infty}\inf_{{\calV}_z^T}
\int_{-\sfrac T2}^{\sfrac T2}\Big(f_p^q(\beta(t))\Psi_\infty\big({\alpha'(t)}\otimes\nu\big)
+\frac{(1-\beta(t))^{q'}}{q'q^{\sfrac{q'}q}}+|\beta'(t)|^q\Big)\dd t
\end{equation*}
and
\begin{align*}
{\calV}^T_z:=\{(\alpha,\beta)\in & W^{1,q}((-\sfrac T2,\sfrac T2);\R^{m+1})\colon
\notag\\ &0\leq \beta\leq 1,\, \beta(\pm\sfrac T2)=1,\, \alpha(-\sfrac T2)=0,\,\alpha(\sfrac T2)=z\}.
\end{align*}
\end{theorem}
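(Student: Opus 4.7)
I will prove the $\Gamma$-$\liminf$ and the $\Gamma$-$\limsup$ inequalities separately, after an initial compactness step. Whenever $\sup_\eps\Functepspq(u_\eps,v_\eps)<\infty$, the potential term forces $v_\eps\to 1$ in $L^{q'}(\Omega)$, so necessarily $v=1$ a.e.; on $\{v_\eps\ge\sfrac12\}$ the coefficient $f_{\eps,p,q}^q(v_\eps)$ is bounded below and \eqref{e:Psi gc} gives $\nabla u_\eps$ locally bounded in $L^q$, while the pair $(1-v_\eps)^{q'}\eps^{-1}+\eps^{q-1}|\nabla v_\eps|^q$ plays the role of a Modica--Mortola surface bound on the set where $v_\eps$ is small. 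Together with a standard truncation this identifies the effective domain of the limit as $(GSBV(\Omega))^m$ with $\nabla u\in L^q$.

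\textbf{$\Gamma$-$\liminf$.} I would use the De Giorgi--Letta localization together with the Fonseca--M\"uller blow-up method. Setting $\mu_\eps(A):=\Functepspq(u_\eps,v_\eps;A)$ on open $A\subset\Omega$, extract (up to subsequence) a weak-$*$ limit Radon measure $\mu$ and disintegrate $\mu=\mu^a\calL^n+\mu^J\calH^{n-1}\LL J_u+\mu^s$; the aim is to bound the two absolutely continuous densities from below. Since $\Psi\ge\Psi^\qc$, it suffices to work with the functional where $\Psi^\qc$ replaces $\Psi$, which is quasiconvex. At $\calL^n$-a.e.\ Lebesgue point of $\nabla u$, the bulk blow-up has $v_\eps\to 1$ and $J_{u_\eps}=\emptyset$, so Proposition~\ref{p:lsc bulk} applied to the rescaled Sobolev fields yields $\mu^a\ge\Psi^\qc(\nabla u)$. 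At $\calH^{n-1}$-a.e.\ $x_0\in J_u$, blowing up at the scale $\eps$ in the direction $\nu_u(x_0)$ produces a family of Sobolev profiles on a large cylinder; the bulk term rescales through the $q$-recession $\Psi_\infty$ thanks to \eqref{eqpsipsiinf}, and the projection property \eqref{e:projectionproperty intro} allows one to lower bound the rescaled energy by its one-dimensional version with rank-one gradient $\alpha'\otimes\nu$, matching the cell problem that defines $g([u](x_0),\nu_u(x_0))$.

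\textbf{$\Gamma$-$\limsup$.} For the recovery sequence I would argue by two successive approximations. First, invoking the density result \cite[Corollary~2.3]{CFI23} underlying Theorem~\ref{relaxation}, it suffices to build recovery sequences for $u\in SBV\cap L^\infty(\Omega;\R^m)$ with polyhedral jump set and $\nabla u\in L^\infty$. For such $u$, on each face of $J_u$ I would insert an $\eps T$-tubular layer carrying an almost-optimal one-dimensional profile $(\alpha_T,\beta_T)\in\calV^T_{[u]}$ from the definition of $g$; outside these layers take $v_\eps=1$ and $u_\eps=u$, suitably mollified. A direct layer computation and dominated convergence on the bulk yield $\limsup\Functepspq(u_\eps,v_\eps)\le\int\Psi(\nabla u)\dx+\int_{J_u}g([u],\nu_u)\dH^{n-1}$. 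A second diagonal argument along a sequence $u^k\to u$ realizing $\int\Psi(\nabla u^k)\dx\to\int\Psi^\qc(\nabla u)\dx$, produced via the quasiconvex envelope and the same density theorem, delivers $\Psi^\qc$ in the bulk term of the final bound.

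\textbf{Main obstacle.} The most delicate step is the surface lower bound at jump points. Extracting from a generic $W^{1,q}$ pair $(u_\eps,v_\eps)$ a one-dimensional transition controlled by $\calV^T_z$ requires both the growth control \eqref{eqpsipsiinf} to pass from $\Psi$ to $\Psi_\infty$ on the diverging blow-up, and the projection property \eqref{e:projectionproperty intro} to discard the non-rank-one components of the rescaled gradient; the anisotropy precludes a direct reduction to the scalar result of \cite{ContiFocardiIurlano2016} and forces a careful slicing/Fubini argument in the direction $\nu_u$, coupled with the sharp Young inequality on the Modica--Mortola-type pair, which is what selects the exact constant in the limit profile cost.
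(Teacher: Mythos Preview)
Your overall architecture---blow-up for the lower bound, density plus explicit profile construction for the upper bound---matches the paper's, but there is a genuine gap in your compactness and bulk-liminf steps.

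The claim that $f_{\eps,p,q}^q(v_\eps)$ is bounded below on $\{v_\eps\ge\sfrac12\}$ is false. By definition $f_{\eps,p,q}(t)=1\wedge\eps^{1-1/q}f_p(t)$, and $f_p$ is bounded on $[\sfrac12,1-\delta]$ for any fixed $\delta>0$, so on that set the coefficient is of order $\eps^{q-1}$ and vanishes as $\eps\to0$. Consequently you obtain no uniform $L^q$ bound on $\nabla u_\eps$ from the elastic term alone, and you cannot invoke Proposition~\ref{p:lsc bulk} at Lebesgue points: that result requires $\sup_j\|\nabla u_j\|_{L^q}<\infty$, a hypothesis you have not established, and in any case the functional carries the degenerating weight $f_{\eps,p,q}^q(v_\eps)$ in front of $\Psi$, not the bare $\Psi$. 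The paper's remedy is to combine, via Young's inequality, the elastic term $\eps^{q-1}f_p^q(v)\Psi(\nabla u)$ with a fraction of the potential $(1-v)^{q'}/\eps$, producing a lower bound $\delta\int h_\delta(\nabla\tilde u)\,\dx$ with $h_\delta=\Psi\wedge C_\delta\Psi^{1/q}$ of \emph{linear} growth (see \eqref{eqFdelta p}--\eqref{e:lb with hdelta}); this yields $BV$ compactness for a truncated $\tilde u$, and $\Psi^\qc$ is recovered afterwards through $h_\delta^\qc\uparrow\Psi^\qc$ (Proposition~\ref{prophdeltaqc}), not via Proposition~\ref{p:lsc bulk}.

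Your surface lower bound sketch glosses over a second real difficulty. After blow-up at a jump point at scale $\rho$, the cap $1$ in $f_{\eps,p,q}^q=1\wedge\eps^{q-1}f_p^q$ rescales to a diverging cap $M_\rho^{q-1}$ with $M_\rho=1/\rho$, so the rescaled functional is $\calF_{\eps/\rho}^{M_\rho}$ of \eqref{Feps* p}, not $\calF_{\eps/\rho}^\infty$. This is precisely what forces the paper to introduce the auxiliary density $\gpinf$ in \eqref{eqdefGpsnu inf} and then to prove $\gpinf=\gpsup=g$ under the projection property (Proposition~\ref{euclidean}); your slicing/Fubini idea in the direction $\nu_u$ is indeed the right tool there, but the presence of the cap is exactly what distinguishes the superlinear case $p>1$ from the linear case $p=1$ treated in \cite{ContiFocardiIurlano2022}, and eliminating it requires the additional one-dimensional argument in Step~2 of Proposition~\ref{euclidean}, which you have not supplied.
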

More generally, discarding the assumption in \eqref{e:projectionproperty intro}
we provide separate lower and upper bounds for the sharp-interface limit of the functionals $\Functepspq$. The bounds have the same functional form, but the surface energy densities
are defined differently
(see {discussion after} \eqref{eqdefGpsnu inf} and \eqref{eqdefGpsnu sup}). We do not know whether the latter coincide or not in general,
except under the assumption in \eqref{e:projectionproperty intro} (cf. Theorem~\ref{t:finale p}).}

The proof of the $\Gamma-\liminf$ of Theorem~\ref{t:finale p} follows the same strategy as that used to treat the linear
case \cite{ContiFocardiIurlano2022}. However, two different scales appear when one looks at the local behavior of a
sequence $1-v_\eps\to 0$ for which the energy is bounded, $\eps^{\sfrac12}\ll \eps^{\sfrac{1}{2p}}$, coming from the
competition of the first two terms of the energy
{(this will become clear in Section~\ref{s:phase field} below).} This prevents us from eliminating the truncation by $1$
(by truncating $1-v_\eps$ itself by $\eps^{\sfrac12}$) when looking at the blow-up around a jump point of $u$. We are
then led to define
{a surface energy density $\gpinf$ (cf. \eqref{eqdefGpsnu inf})} involving a truncation of the coefficient
$\eps^{q-1}f_p^q(v)$. Notice that in the linear case $p=1$ the two scales match and the truncation by $1$ can be
neglected in the blow-up around a jump point of $u$.

{It suffices to prove the $\Gamma-\limsup$ inequality for functions $u\in SBV\cap L^\infty(\Omega;\R^m)$ for
which there exists a locally finite decomposition of $\R^n$ in simplexes, such that $u$ is affine in the interior of
each of them}, thanks to the density result \cite[Corollary~2.3]{CFI23} and to the relaxation Theorem~\ref{relaxation}.
The rest of the proof is an explicit construction obtained by properly rescaling and gluing together the optimal profile of
{a surface energy density $\gpsup(z,\nu)$ (with no truncation involved, {at variance with} $\gpinf$, cf.
\eqref{eqdefGpsnu sup}) with the affine function defining $u$ in each simplex. In particular
{we show that}
$\gpinf=\gpsup$ in the case where the
projection property in \eqref{e:projectionproperty intro} holds.}

{Finally, the compactness of families $\calF_{\eps,p,q}(u_\eps,v_\eps)$ under a $L^1$ bound on $u_\eps$ and the
convergence of minimizers of functionals of the form
\[\calF_{\eps,p,q}(u,v)+\int_\Omega\Big(\eta_\eps\Psi(\nabla u)+|u-w|^r\Big)\dd x,\]
where $\sfrac{\eta_\eps}{\eps^{q-1}}\to0$ and $w\in L^r(\Omega;\R^m)$ with $r>1$, can be addressed in a standard way
thanks to {Corollary}~\ref{propuvjump}; see for example \cite[Section~6]{CFI23}.
}

The paper is organized as follows: in Section~\ref{s:lsc} we prove Propositions~\ref{p:lsc surface} and \ref{p:lsc
bulk}, and in Section~\ref{s:relaxation} {we prove} Theorem~\ref{relaxation}.
In Section~\ref{s:phase field} we introduce in detail the
phase-field model and prove Theorem~\ref{t:onedim intro}
(see Theorem~\ref{t:onedim}, and the more general results in 
Propositions~\ref{p:lb GSBV} and \ref{p:ubp}).

\subsection{Notation}
In the entire paper $\Omega\subset\R^n$ is bounded, open, Lipschitz,  $\calA(\Omega)$ denotes the family of open subsets
of $\Omega$,
{$\calB(\Omega)$ denotes the family of Borel subsets
of $\Omega$,}
and $|\cdot|$ denotes the Euclidean norm, $|{\xi}|^2:=\sum_{ij}{\xi}_{ij}^2=\Tr\big({\xi}^T{\xi}\big)$ for
${\xi}\in\R^{m\times n}$.

{We will set $Q_1:=(-\frac12,\frac12)^n$ and we will denote by {$Q^\nu$,} $\nu\in S^{n-1}$, a unit cube with one face orthogonal to $\nu$ centered in the origin; for $t>0$ we set $Q^\nu_t:=tQ^\nu$.}

We use standard notation for Sobolev and $BV$ functions, referring to \cite{AFP} when needed.

{We shall denote by $\calF_{\eps,p,q}(u,v;A)$ the functional with integration restricted to $A$; if $A=\Omega$, the dependence on the set of integration will be dropped.} This
convention will be adopted for all the functionals introduced in what follows.

{
Many arguments below are based on a truncation procedure. We recall briefly the definition of the truncation operator
acting on vector-valued functions and its main properties.
We fix a sequence $(a_k)_k\subset(0,\infty)$ such that $2a_k<a_{k+1}$, $a_k\uparrow\infty$, and such that there are
functions $\mathcal{T}_k\in {C^1_c}(\R^m;\R^m)$ satisfying
\begin{equation}\label{e:Tk}
\mathcal{T}_k(z):=\begin{cases}
z, & \text{ if }|z|\leq a_k,\cr
0, & \text{ if } |z|\geq a_{k+1}
\end{cases}
\end{equation}
and {$\Lip(\mathcal T_k)\le 1$.}

{Let $u\in (GSBV(\Omega))^m$ obey
	$\nabla u\in L^1(\Omega;\R^{m\times n})$ and
	$\int_{J_u}g_0(|[u]|)\dH^{n-1}<\infty$ with $g_0$ as in \eqref{e:g0 superlinear}.} Setting $u_k:=\mathcal{T}_k(u){\in SBV(\Omega;\R^m)}$, we have that $u_k$ converges to $u$ in {measure},
$\nabla u_k=\nabla u$ $\calL^n$-a.e. in $\Omega_k:=\{x\in\Omega:\,|u(x)|\leq a_k\}$, $J_{u_k}\subseteq J_u$,
$\nu_{u_k}=\nu_u$, {$|[u_k]|\le |[u]|$} $\calH^{n-1}$-a.e. in $J_{u_k}$, {and $u^\pm_k(x)=u^\pm(x)$  $\calH^{n-1}$-a.e. in $J_{u_k}\cap\Omega_k$.}
{Since}
\begin{equation}\label{eqjuinfty}
\calH^{n-1}(\{x\in J_u:\,|u^\pm(x)|=\infty\})=0
\end{equation}
(see \cite[Proposition~2.12, Remark~2.13]{AlFoc}), we get {$\chi_{J_{u_k}}\to \chi_{J_u}$} and
$u_k^\pm\to u^\pm$
$\calH^{n-1}$-a.e. in $J_u$.

Following De Giorgi's averaging/slicing procedure on the codomain, the family $\mathcal{T}_k$ will be used in several
instances along the paper to obtain, from a sequence converging in {measure} to a limit belonging to $L^\infty$, a sequence with the same $L^1$ limit which is, in addition, equi-bounded in $L^\infty$. Moreover, this substitution can be done up
to paying an energy error which can be made arbitrarily small.
}

}

 \section{Lower semicontinuity}\label{s:lsc}

The aim of this section is to prove a lower semicontinuity result for free-discontinuity energies
with autonomous bulk densities superlinear for large gradients, and translation-independent 
surface densities superlinear for small jump amplitudes.

More precisely, define $H_g,\,H_\Psi: L^1(\Omega;\R^m)\times\calB(\Omega)\to[0,\infty]$ by
\begin{equation}
 H_g(u;A):=
 \int_{A\cap J_u}
 g([u],\nu_u) \dH^{n-1}
\end{equation}
and 
\begin{equation}
 H_\Psi(u;A):=
 \int_{A}
 \Psi(\nabla u) \dx
\end{equation}
for $u\in {(GSBV(\Omega))^m}$, and set both functionals equal to $\infty$ otherwise in $L^1$. 
If $A=\Omega$, we drop the dependence on the
set in
$H_g$ and $H_\Psi$.

Our aim is to prove the following: 
If $u_j\to u$ {strongly in $L^1$}, $u_j$, $u\in
{(GSBV(\Omega))^m}$,
$g$ and $\Psi$ superlinear and regular enough (see below for the precise hypotheses), then
\begin{equation}
 H_g(u)+H_\Psi(u)\le\liminf_{j\to\infty} (H_g(u_j)+H_\Psi(u_j)).
\end{equation}
Necessary conditions for $L^1$-lower semicontinuity require $g$ to be $BV$-elliptic and 
$\Psi$ to be quasiconvex (cf. \cite[Theorems~5.14 and 5.26]{AFP}). We record the two definitions in what follows for the sake of
convenience.

A Borel-measurable
function $g:\R^m\times S^{n-1}\to[0,\infty)$ is said to be $BV$-elliptic (see \cite[Definition 5.13]{AFP}) if for
every $z\in \R^m$, $\nu\in S^{n-1}$, and every $u\in SBV(Q^\nu;\R^m)$
{piecewise constant, with}
$\{u\neq z\chi_{\{x\cdot \nu>0\}}\}\subset\subset Q^\nu$
one has
\begin{equation}\label{e:BV ellipticity}
 g(z,\nu)\le
 \int_{Q^\nu\cap J_u}
 g([u],\nu_u) \dH^{n-1}.
\end{equation}
We recall that a function $u\in SBV(\Omega;\R^m)$ is called piecewise constant if 
{it is constant on each element of a Caccioppoli partition of $\Omega$, in the sense that}
there are countably many sets of finite perimeter $E_j$ such that
$\sum_j\calH^{n-1}{(\Omega\cap \partial^*E_j)}<\infty$, $\calL^n(\Omega\setminus \cup_j E_j)=0$,
and $u$ is constant on each of them. Equivalently,
$\calH^{n-1}(J_u)<\infty$ and $\nabla u=0$ {$\calL^n$-a.e.}, see \cite[Theorem~4.23]{AFP}.
{We shall show below (see Lemma~\ref{lemmaBVellipiinf}) that there are equivalent variants of {the definition of $BV$-ellipticity}, using classes of test functions which are either smaller (constant on finitely many
polyhedra) or larger ($SBV$ with $\nabla u=0$ {$\calL^n$-a.e.}).}

A locally bounded {Borel-measurable} function $\Psi:\R^{m\times n}\to{\R}$ is said to be quasiconvex
(see \cite[Definition 5.25]{AFP}) if for every $\xi\in \R^{m\times n}$, and every
$\varphi\in C_c^\infty(Q_1;\R^m)$ one has
\begin{equation}\label{e:quasiconvexity}
 \Psi(\xi)\le \int_{Q_1} \Psi(\xi+\nabla\varphi) \dx.
\end{equation}
In case of superlinear growth for $g$ and $\Psi$, the problem decouples,
and one can prove two corresponding separate lower semicontinuity inequalities {(cf.
\cite[Section~5.4]{AFP})}.
Therefore, in the next two sections we will address separately semicontinuity for surface integrals and for bulk
integrals, respectively, specifying explicitly the precise set of assumptions on the corresponding energy densities, and
finally deduce semicontinuity for free-discontinuity energies in Theorem~\ref{t:lsc} at the end of the section.

\subsection{Lower semicontinuity for surface energies}

In this section we prove Proposition~\ref{p:lsc surface}. To this aim,
we first adapt to our setting an approximation argument with piecewise constant functions introduced in \cite[Theorem
3.3]{Ambrosio1990} {for the case $\calH^{n-1}(J_u)<\infty$}.
For the following result to hold, $g$ does not need to be $BV$-elliptic, {but the quantitative superlinear growth in \eqref{e:g0 superlinear} is important to treat the part where the jump amplitude is small}.
\begin{proposition}\label{approximation}
Assume that $g_0$ and $g$ are as in \eqref{e:g consistent}-\eqref{e:g0 superlinear}.
Let $u\in SBV(\Omega;\R^m)$, $\eps>0$, $\delta>4\eps$, and $\eta\in(0,\eps)$.
Then there is a piecewise constant  $u_\eps\in SBV(\Omega;\R^m)$ such that  $\|u-u_\eps\|_{L^\infty(\Omega)}\le\eps$,
$|Du_\eps|(\Omega)\le {\sqrt m}|Du|(\Omega)$,
 \begin{equation}\label{e:stima energia approx cost a tratti}
\begin{split}
H_g(u_\eps)\le &\left(1+C \left(\frac\eps\delta\right)^\gamma
+ C \left(\frac \eta\eps\right)^{1-\gamma}\right)H_g(u)
\\ & +  C \left(\frac\eps\eta\right)^\gamma H_g(u,
\{|[u]|\in [\eta,\delta)\}) + C \eps^{\gamma-1} \|\nabla u\|_{L^1(\Omega)}.
 \end{split}\end{equation}
 \end{proposition}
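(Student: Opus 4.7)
I would follow the random-translation coarea argument of Ambrosio, but refine the bookkeeping to handle jump sets of possibly infinite measure. Fix $\tau\in[0,\eps)^m$, partition $\R^m$ into the half-open cubes $Q^\tau_k:=\tau+\eps k+[0,\eps)^m$ (for $k\in\Z^m$, with side rescaled by $1/\sqrt m$ if one wants the $L^\infty$ bound in Euclidean norm), pick $z^\tau_k$ as the center of $Q^\tau_k$, and set $u_\eps^\tau(x):=z^\tau_k$ whenever $u(x)\in Q^\tau_k$. Since the level sets $\{u\in Q^\tau_k\}$ have finite perimeter by the SBV coarea formula, $u_\eps^\tau$ is piecewise constant, and the pointwise inequality $|u(x)-u_\eps^\tau(x)|\le\eps$ is built in. Every other bound will be proved in expectation over $\tau$ and a suitable good $\tau^*$ will be selected at the end by a simple averaging argument.

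For the total variation bound, note that the $i$-th component satisfies $(u_\eps^\tau)^i=F_i(u^i)$ where $F_i:\R\to\R$ is a nondecreasing step function with discontinuities on $\eps\Z-\tau_i$. A direct identification gives $|D(u_\eps^\tau)^i|(\Omega)=\eps\sum_j\calH^{n-1}(\Omega\cap\partial^*\{u^i>j\eps-\tau_i\})$, and averaging in $\tau_i\in[0,\eps)$ together with the BV coarea formula yields $\E_{\tau_i}[|D(u_\eps^\tau)^i|(\Omega)]=|Du^i|(\Omega)$. Combining with $|Du_\eps^\tau|(\Omega)\le\sum_i|D(u_\eps^\tau)^i|(\Omega)$ (from $|v|_2\le|v|_1$ on $\R^m$) and $\sum_i|Du^i|(\Omega)\le\sqrt m\,|Du|(\Omega)$ gives the expected bound $\E_\tau[|Du_\eps^\tau|(\Omega)]\le\sqrt m\,|Du|(\Omega)$.

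The core is the energy estimate. I decompose $J_{u_\eps^\tau}$ into four disjoint pieces: $A_0:=J_{u_\eps^\tau}\setminus J_u$ (spurious jumps produced at the cube faces); and $A_k:=J_{u_\eps^\tau}\cap J_u$ restricted to $\{|[u]|\ge\delta\}$, $\{\eta\le|[u]|<\delta\}$, $\{|[u]|<\eta\}$ for $k=1,2,3$ respectively. On $A_0$, the expected surface measure is controlled, by coarea restricted to $\Omega\setminus J_u$, by $\sum_i\tfrac1\eps|Du^i|(\Omega\setminus J_u)\le\tfrac{\sqrt m}\eps\|\nabla u\|_{L^1}$, while on $A_0$ the jump amplitude is exactly $\eps$, so using \eqref{e:g grwoth g0} and $g_0(\eps)\le C\eps^\gamma$ the contribution is $\le C\eps^{\gamma-1}\|\nabla u\|_{L^1}$. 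On $A_1\cup A_2$ the endpoints $u^\pm$ lie in distinct cubes and $|[u_\eps]-[u]|\le\sqrt m\,\eps$, hence \eqref{eqgg0} gives $g([u_\eps],\nu)\le g([u],\nu)+cg_0(\sqrt m\,\eps)$; comparing $cg_0(\eps)$ to the pointwise lower bound $g([u],\nu)\ge c^{-1}g_0(|[u]|)$ and invoking \eqref{e:g0 superlinear} produces the multiplicative factors $1+C(\eps/\delta)^\gamma$ and $1+C(\eps/\eta)^\gamma$ on $A_1$ and $A_2$. The subtlest term is $A_3$: there $|[u]|<\eta<\eps$, so the cubes of $u^\pm$ can coincide, and the probability (over $\tau$) that they do not is at most $m|[u]|/\eps$; when they do not, the created jump has amplitude $\le\sqrt m\,\eps$ with $g\le Cg_0(\eps)\le C\eps^\gamma$. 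The expected contribution is then bounded by
\[
C\int_{J_u\cap\{|[u]|<\eta\}}\frac{|[u]|}{\eps}\,\eps^\gamma\,\dH^{n-1}
=C\int\Big(\frac{|[u]|}{\eps}\Big)^{1-\gamma}|[u]|^\gamma\,\dH^{n-1}
\le C\Big(\frac{\eta}{\eps}\Big)^{1-\gamma}H_g(u,\{|[u]|<\eta\}),
\]
since $|[u]|^\gamma\le C g_0(|[u]|)\le C g([u],\nu)$ on $\{|[u]|<\eta\}$ by \eqref{e:g0 superlinear} and \eqref{e:g grwoth g0}.

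Finally, both the total variation and the energy bound hold \emph{in expectation} over $\tau\in[0,\eps)^m$, so a standard averaging/pigeonhole argument yields a $\tau^*$ for which both hold simultaneously; setting $u_\eps:=u_\eps^{\tau^*}$ concludes the proof. The hardest point, and the real reason for the hypothesis \eqref{e:g0 superlinear}, is the treatment of $A_3$: one introduces jumps that were not present in $u$, and without the $\gamma$-growth at $0$ the factor $|[u]|/\eps$ coming from the splitting probability cannot be absorbed into $g([u],\nu)$; the strict inequality $\gamma<1$ is precisely what turns the split frequency into the small factor $(\eta/\eps)^{1-\gamma}$.
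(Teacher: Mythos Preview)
Your construction (random shift of an $\eps$-grid in the codomain) and your four-region decomposition are the same as the paper's. The difference is in how the small-jump/absolutely-continuous region is handled, and it is a genuine, instructive difference.

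The paper does \emph{not} estimate the energy in expectation. It first selects the shift so that the \emph{componentwise} bound $|Du_\eps^i|(\Omega)\le|Du^i|(\Omega)$ holds for every $i$ (one averaging, applied only to total variation), and then proves the energy bound \emph{deterministically} for this fixed $u_\eps$. The key device is the subtraction identity
\[
|Du_\eps^i|(\Omega\setminus A_\eta)=|Du_\eps^i|(\Omega)-|Du_\eps^i|(A_\eta)
\le|Du^i|(\Omega)-|Du^i|(A_\eta)+|Du_\eps^i-Du^i|(A_\eta),
\]
which converts the TV bound into control of exactly the region you call $A_0\cup A_3$. After multiplying by $g_0(\eps)/\eps$ and using $|Du|(J_u\setminus A_\eta)\le C\frac{\eta}{g_0(\eta)}H_g(u)$, one recovers the factor $(\eta/\eps)^{1-\gamma}$. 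Your probabilistic treatment (splitting probability $\lesssim|[u]|/\eps$ on $A_3$, coarea on $\Omega\setminus J_u$ for $A_0$) lands on the \emph{same} quantity $\frac{g_0(\eps)}{\eps}\int_{J_u\setminus A_\eta}|[u]|\,\dH^{n-1}$, just reached by a different route; it is arguably more transparent and avoids the extra cross term $2\eps\,\calH^{n-1}(A_\eta)$ that the subtraction identity produces.

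The one soft spot in your sketch is the final selection. A pigeonhole on two nonnegative random variables with $\E[X]\le a$ and $\E[Y]\le b$ only yields a point with $X\le 2a$ and $Y\le 2b$, so you get $|Du_\eps|(\Omega)\le 2\sqrt m\,|Du|(\Omega)$ rather than the stated $\sqrt m$. This is purely cosmetic here (the sharp TV constant is never used downstream), but if you want the statement exactly as written you should follow the paper's order: fix $\tau$ first for the componentwise TV bound, then run the energy estimate for that specific $u_\eps$ via the subtraction identity above.
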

\begin{proof}
{\bfseries Step~1. Construction.}
The construction is done componentwise. Assume for a moment that $m=1$.
For $\eps>0$ and $\rho\in[0,1)$ we set
  \begin{equation}
  u_{\eps,\rho}(x):=\eps \left\lfloor \frac{u(x)}{\eps}+\rho\right\rfloor
  =\sum_{{k\in\N\setminus\{0\}}}\eps \chi_{\{u\ge(k-\rho)\eps\}}
  {-\sum_{k\in\N}\eps \chi_{\{u<(-k-\rho)\eps\}}}
  \end{equation}
  {where we used the fact that for $t\in\R$
  \begin{equation*}
  \lfloor t\rfloor :=
  \max\{k\in\Z: k\le t\}
  = \#\{k\ge 1: k\le t\}-\#\{k\le 0: t<k\}.
  \end{equation*}}
  For $\calL^1$-almost all $\rho\in(0,1)$, all functions $\chi_{\{u\ge(k-\rho)\eps\}}$ and
$\chi_{\{u<(-k-\rho)\eps\}}$  are in $BV(\Omega)$, with pure-jump distributional gradient.
Therefore, for $\calL^1$-almost all $\rho$ we have $u_{\eps,\rho}\in {GBV}(\Omega)$ with 
$\nabla u_{\eps,\rho}=0$ $\calL^n$-a.e. on $\Omega$. It is also clear that 
$|u_{\eps,\rho}-u|<\eps$ pointwise.

We estimate, {using  $D\chi_{\{u<(-k-\rho)\eps\}}=
-D\chi_{\{u\ge (-k-\rho)\eps\}}$,}
\begin{equation}
 |Du_{\eps,\rho}|(\Omega)\le \eps \sum_{{k\in\Z}}
|D\chi_{\{u\ge (k-\rho)\eps\}}|(\Omega)
\end{equation}
so that, averaging over $\rho$,
\begin{align*}
 \int_{[0,1)} |Du_{\eps,\rho}|(\Omega)\dd\rho &\le \eps \sum_{{k\in\Z}}
\int_{[0,1)} |D\chi_{\{u\ge (k-\rho)\eps\}}|(\Omega)\dd\rho\notag\\
&=\int_{\R}|D\chi_{\{u\ge t\}}|(\Omega)\dt=|Du|(\Omega).
\end{align*}
We can therefore pick $\rho{\in[0,1)}$ such that
$|Du_{\eps,\rho}|(\Omega)\le |Du|(\Omega)$.

We repeat this for each component, using $\eps/\sqrt m$ in place of $\eps$, and obtain
$\rho\in[0,1)^m$ such that the
resulting function $u_\eps$ obeys
\begin{equation}\label{e:stime veps}
 u_\eps\in BV(\Omega;\R^m),\, \hskip2mm
 \nabla u_\eps=0 {\text{ $\calL^{n}$-a.e. on $\Omega$}},\,  \hskip2mm
 \|u-u_\eps\|_{L^\infty(\Omega)}\le \eps,\,
\end{equation}
and such that each component $i\in\{1,\dots, m\}$ obeys
\begin{equation}\label{e:stime veps2}
 |Du_\eps^i|(\Omega)\le |Du^i|(\Omega)
\end{equation}
{which in particular implies
\begin{equation}\label{e:stimevepsdu}
 |Du_\eps|(\Omega)\le \sqrt m |Du|(\Omega).
\end{equation}}

{\bfseries Step~2. Estimates.}
From the pointwise bound {in \eqref{e:stime veps}} we obtain
\begin{equation}\label{eqjujveps}
 |[u]-[u_\eps]|\le 2\eps \hskip4mm
 \text{ $\calH^{n-1}$-a.e. on $ J_u\cup J_{u_\eps}$},
\end{equation}
where as usual we set $[u]=0$ on $\Omega\setminus J_u$, and the same for $u_\eps$.
Necessarily, $\nu_u=\pm \nu_{u_\eps}$
$\calH^{n-1}$-a.e. on the set where at least one of the jumps in \eqref{eqjujveps} has
modulus larger than $2\eps$.
For $s>0$ we define
\begin{equation}
 A_s:=\{x\in J_u: |[u](x)|\ge s\}.
\end{equation}
In the following we consider separately the four sets
\begin{equation}
A_\delta, \hskip3mm A_{\eta}\setminus A_\delta, \hskip3mm
J_u\setminus A_{\eta}, \hskip3mm\text{ and }\hskip3mm\Omega\setminus J_u\,.
\end{equation}
As $\delta>2\eps$, by \eqref{eqjujveps} one has { $\calH^{n-1}(A_\delta\setminus J_{u_\eps})=0$.}
At the same time, by monotonicity of $g_0$
and $g_0(|z|)\le C g(z,\nu)$
we have
\begin{equation}\label{eqestAdelta}
\calH^{n-1}(A_\delta)
g_0(\delta)
 \le
 C  H_g(u).
\end{equation}
Monotonicity and subadditivity of $g_0$ also imply
\begin{equation}\label{e:stima Adelta}
 \int_{A_\delta} g_0(|[u]-[u_\eps]|) \dH^{n-1}
 \le g_0(2\eps) \calH^{n-1}(A_\delta)
 \le C \frac{g_0(\eps)}{g_0(\delta)} H_g(u)
\end{equation}
and analogously
\begin{equation}
 |Du-Du_\eps|(A_\delta)\le 2\eps \calH^{n-1}(A_\delta)
 \le C \frac{\eps}{g_0(\delta)} H_g(u) .
\end{equation}
 Here and in what follows we denote by $C>0$ a constant independent of $\eps$, $\delta$, $\eta$ {and $u$}, which
may vary from line to line.

We next turn to the second set.
Since $g_0$ is nondecreasing, {similarly}
\begin{equation}\label{eqguva2ed2}
 \int_{A_{\eta}\setminus A_{\delta}} g_0(|{[u]-}[u_\eps]|) \dH^{n-1}
 \le g_0(2\eps) \calH^{n-1}(A_{\eta}\setminus A_{\delta})
 \le  C\frac{g_0(\eps)}{g_0(\eta)} H_g(u;A_\eta\setminus A_{\delta}).
\end{equation}

Finally, on the rest $|[u]|\leq\eta<\eps$, and therefore $|[u_\eps]|\le  3\eps$.
We have, using that $|[u_\eps]|$ is either 0 or at least $\eps/\sqrt{m}$,
\begin{equation}\label{eqg0omegamina2epsa0}
\begin{split}
 \int_{\Omega\setminus A_{\eta}} g_0(|[u_\eps]|) \dH^{n-1}
 \le&  g_0( 3\eps) \calH^{n-1}(J_{{u_\eps}}\setminus A_{\eta})
 \le  C\frac{g_0(\eps)}{\eps}
 |Du_\eps|(\Omega\setminus A_{\eta}).
\end{split}\end{equation}
The last term is critical, since the first factor diverges, and thus the second factor requires a careful estimate.
To this aim, using \eqref{e:stime veps2} {and \eqref{eqjujveps}} we estimate as follows
{for any component $i\in\{1,\dots, m\}$}:
\begin{equation}\begin{split}\label{eqg0omegamina2epsa}
 |Du_\eps^{i}|(\Omega\setminus A_{\eta})
 &=|Du_\eps^{i}|(\Omega)
 -|Du_\eps^{i}|(A_{\eta})\\
& \le |Du^{i}|(\Omega)-|Du^{i}|(A_{\eta})+|Du_\eps^{i}-Du^{i}|(A_{\eta})\\
& \leq \|\nabla u^{i}\|_{L^1(\Omega)} +
|Du^{i}|(J_u)-|Du^{i}|(A_{\eta})+2\eps\calH^{n-1}(A_{\eta})\\
& = \|\nabla u^{i}\|_{L^1(\Omega)} +
 |Du^{i}|(J_u\setminus A_{\eta})
 +2\eps\left[\calH^{n-1}(A_{\delta})
 +\calH^{n-1}(A_{\eta}\setminus A_\delta)\right].
 \end{split}
\end{equation}
Summing over $i$ leads to
\begin{equation}\begin{split}\label{eqg0omegamina2eps}
 |Du_\eps|(\Omega\setminus A_{\eta})\le&
m\|\nabla u\|_{L^1(\Omega)} +
 m|Du|(J_u\setminus A_{\eta})\\
& +2\eps m\left[\calH^{n-1}(A_{\delta})
 +\calH^{n-1}(A_{\eta}\setminus A_\delta)\right].
 \end{split}
\end{equation}
In order to estimate the contribution in
 $J_u\setminus A_{\eta}$ we
  define $\psi(s):=\min_{t\in(0,s]} g_0(t)/t$, and observe that
  subadditivity of $g_0$ implies that
\begin{equation}
\frac{g_0(2t)}{2t}\le \frac{2g_0(t)}{2t} =\frac{g_0(t)}{t}
 \hskip5mm
 \text{ for all }t\in (0,s].
\end{equation}
Therefore
$\psi({2s})\geq\min_{t\in[s,{2s}]} g_0(t)/t
\ge g_0(s)/(2s)$.
In  $J_u\setminus A_{\eta}$ we
 have $0<|[u]|<\eta$, so that {by subadditivity of $g_0$}
\begin{equation}\label{e:stima Du Ju meno Aeta}
\begin{split}
 |Du|(J_u\setminus A_{\eta})
&= \int_{J_u\setminus A_{\eta}}
 \frac{|[u]|}{g_0(|[u]|)}
 g_0(|[u]|)  \dH^{n-1}\\
& \le
 \frac{1}{\psi({2\eta})}
 \int_{J_u\setminus A_{\eta}}
 g_0(|[u]|)  \dH^{n-1}
 \le C
 \frac{\eta}{g_0({\eta})}
 H_g(u;\Omega\setminus A_{\eta}).
\end{split}\end{equation}
At the same time, $|[u]|\ge\eta$  in $A_{\eta}$, 
and therefore, since $g_0$ is nondecreasing,
\begin{equation}\label{e:stima salto Aeta meno Adelta}
 \calH^{n-1}(A_{\eta}\setminus A_\delta) \le \frac{1}{g_0(\eta)}
 \int_{A_{\eta}\setminus A_\delta} g_0(|[u]|) \dH^{n-1}
 \le \frac{C}{g_0(\eta)}
 H_g(u,A_{\eta}\setminus A_\delta).
\end{equation}
Finally, $A_\delta$ had been estimated in~\eqref{eqestAdelta}.
Inserting the estimates \eqref{e:stima Du Ju meno Aeta}, \eqref{e:stima salto Aeta meno Adelta}
and \eqref{eqestAdelta} in \eqref{eqg0omegamina2eps}, the last term in
\eqref{eqg0omegamina2epsa0} can be estimated by
\begin{align}\label{eqg0omegamina2epsb}
 \frac{g_0(\eps)}{\eps}&|D{u_\eps}|(\Omega\setminus A_{\eta})\notag\\
\le& C\frac{g_0(\eps)}{\eps} \left[ \|\nabla u\|_{L^1(\Omega)} +
 |Du|(J_u\setminus A_{\eta})
 +2\eps\calH^{n-1}(A_{\delta})
 +2\eps\calH^{n-1}(A_{\eta}\setminus A_\delta)\right]
\notag \\
 \le& C\frac{g_0(\eps)}{\eps} \|\nabla u\|_{L^1(\Omega)} + C
\frac{g_0(\eps)}{\eps}
 \frac{\eta }{g_0({\eta})}
 H_g(u;\Omega\setminus A_{\eta})\notag\\
& + C \frac{g_0(\eps)}{g_0(\delta)}
 H_g(u) +C\frac{g_0(\eps)}{g_0(\eta)}
 H_g(u;A_{\eta}\setminus A_\delta)\notag\\
\le& C \eps^{\gamma-1}\|\nabla u\|_{L^1(\Omega)} +
 C  \left(\frac\eta\eps\right)^{1-\gamma}H_g(u; \Omega\setminus A_{\eta})\notag\\
 &+C\left(\frac\eps\delta\right)^{\gamma}H_g(u)
 + C \left(\frac\eps\eta\right)^{\gamma}
  H_g(u;A_{\eta}\setminus A_\delta),
\end{align}
where in the last line we used $g_0(s)\sim s^\gamma$ as $s\to0$ {(cf. \eqref{e:g0 superlinear})}.
Collecting the estimates in \eqref{e:stima Adelta}, \eqref{eqguva2ed2}, \eqref{eqg0omegamina2epsa0}, and \eqref{eqg0omegamina2epsb} and recalling that
$A_\delta\subseteq A_\eta$ we finally get using {\eqref{e:g grwoth g0} and} \eqref{eqgg0},
\begin{equation}\begin{split}
H_g(u_\eps)\le & H_g(u)+
c\int_{A_\eta} g_0(|[u]-[u_\eps]|) \dH^{n-1}
+c \int_{\Omega\setminus A_\eta} g_0(|[u_\eps]|) \dH^{n-1}\\
\le & \left[1+C \left(\frac\eps\delta\right)^\gamma 
+ C \left(\frac\eta\eps\right)^{1-\gamma} 
\right] H_g(u)\\
&+C \eps^{\gamma-1}\|\nabla u\|_{L^1(\Omega)} +
 C \left(\frac\eps\eta\right)^{\gamma}
  H_g(u;A_\eta\setminus A_\delta).
\end{split}\end{equation}
The conclusion in \eqref{e:stima energia approx cost a tratti} then follows at once.
\end{proof}
{
We stress that the specific power-type behaviour of $g_0$ at $0$ (cf. \eqref{e:g0 superlinear}) has been
used only in the last inequality of \eqref{eqg0omegamina2epsb}, and it is exploited in what follows to control the prefactor
$\frac{g_0(\eps)}{\eps} \frac{\eta }{g_0({\eta})}$ {in the third line of \eqref{eqg0omegamina2epsb}}.

We will apply Proposition~\ref{approximation} in Step~2 of the proof of Proposition~\ref{p:lsc surface} below to a sequence $u_\eps$ with
vanishing $\|\nabla u_\eps\|_{L^1}$, with the choice $\eps=M_\eps\eta_\eps$,
$M_\eps$ diverging.
On the one hand, choosing $\eps$ suitably with respect to $\|\nabla u_\eps\|_{L^1}$,
the first summand in \eqref{eqg0omegamina2epsb} vanishes. The third summand can be treated easily, using that $\eps/\delta_\eps\to0$. Moreover, the diverging prefactor $\frac{g_0(\eps)}{g_0({\eta})}$ in the fourth summand
can be compensated by selecting appropriately the slice
$A_{\eta_\eps}\setminus A_{\delta_\eps}$.
On the other hand, the subadditivity of $g_0$ implies only that $\frac{g_0(\eps)}{\eps}
\frac{\eta_\eps}{g_0({\eta_\eps})}=\frac{g_0(M_\eps\eta_\eps)}{M_\eps g_0(\eta_\eps)}$ is bounded, so that the second summand becomes a nonvanishing error term in the estimate.
{The power-type behaviour of $g_0$ at the origin is needed exactly to make the latter term infinitesimal. Of course, more general behaviours are allowed for $g_0$.}

Note that in the case $g_0(t)\sim t|\ln t|$ the previous subadditivity estimate is sharp, and thus with Proposition~\ref{p:lsc surface} below one cannot establish the lower semicontinuity of energies having densities with almost linear growth. For instance, with our method we cannot address the Read and Shockley model for {low-angle} grain boundaries in polycrystals. {If one assumes isotropy, however, then in this situation} lower semicontinuity follows from either a slicing argument or the biconvexity of the integrand
(see \cite[{Theorem~2.1}]{Ambrosio}, \cite[Theorem~3.7]{Ambrosio1990}, see also for another proof \cite[Theorem~4.7]{AFP}).
}

By truncation, we can obtain a similar approximation result for functions in $GSBV$.
 \begin{corollary}\label{approximationGSBV}
  Assume that $g_0$ and $g$ are as in \eqref{e:g consistent}-\eqref{e:g0 superlinear}.
Let $u\in {(GSBV(\Omega))^m}$
{with $|\nabla u| \in L^1(\Omega)$,}
$\eps>0$, $\delta>4\eps$, and $\eta\in(0,\eps)$.
 Then there is $u_\eps\in SBV(\Omega;\R^m)$
{which takes finitely many values away from a $\calL^n$-null set} with
 \begin{equation}\label{eq:convmeas}
\calL^n(\{|u-u_\eps|>\eps\})<\eps
 \end{equation}
{and}
 \begin{equation}\label{e:stima energia approx cost a tratt2i}
 \begin{split}
 H_g(u_\eps)\le &{C\eps+}\left(1+C \left(\frac\eps\delta\right)^\gamma
 + C \left(\frac \eta\eps\right)^{1-\gamma}\right)H_g(u)
 \\ & + C \left(\frac\eps\eta\right)^\gamma H_g(u;
 \{|[u]|\in [\eta,\delta)\}) + C \eps^{\gamma-1} \|\nabla u\|_{L^1(\Omega)}.
  \end{split}\end{equation}
  {If $u\in L^1(\Omega;\R^m)$, then $\|u_\eps-u\|_{L^1(\Omega)}\le C\eps$.}
  \end{corollary}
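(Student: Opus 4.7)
The plan is to reduce to Proposition~\ref{approximation} by composing with the cutoff operators $\mathcal{T}_k$ introduced in the Notation section, and then absorb the truncation error into the extra $C\eps$ term present in \eqref{e:stima energia approx cost a tratt2i}. Concretely, for each $k$ I set $u_k:=\mathcal T_k(u)\in SBV(\Omega;\R^m)\cap L^\infty$ (with $\|u_k\|_{L^\infty}\le a_{k+1}$), apply Proposition~\ref{approximation} to $u_k$ with the same parameters $\eps,\delta,\eta$, and take $u_\eps$ to be the piecewise constant function so produced. Since $u_k$ is bounded, the componentwise rounding in the proof of Proposition~\ref{approximation} yields a $u_\eps$ taking only finitely many values, as required.

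The second step is to quantify how $u_k$ compares to $u$ as $k\to\infty$. Recall from the Notation section that $J_{u_k}\subseteq J_u$, $\nu_{u_k}=\nu_u$, $|[u_k]|\le|[u]|$, $\nabla u_k=\nabla u$ on $\Omega_k:=\{|u|\le a_k\}$ (hence $\|\nabla u_k\|_{L^1}\le\|\nabla u\|_{L^1}$), $|u_k|\le|u|$ (since $\mathcal T_k(0)=0$ and $\mathrm{Lip}(\mathcal T_k)\le1$), and $u_k^\pm=u^\pm$ $\calH^{n-1}$-a.e.\ on $J_{u_k}\cap\Omega_k$; moreover \eqref{eqjuinfty} gives $\chi_{J_{u_k}}\to\chi_{J_u}$ and $u_k^\pm\to u^\pm$ $\calH^{n-1}$-a.e.\ on $J_u$. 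Assuming WLOG that $H_g(u)<\infty$ (otherwise there is nothing to prove), the domination $g([u_k],\nu_{u_k})\chi_{J_{u_k}}\le c^2 g([u],\nu_u)\chi_{J_u}$ (from \eqref{e:g grwoth g0} and monotonicity of $g_0$) and dominated convergence yield $H_g(u_k)\to H_g(u)$ and, for the fixed pair $(\eta,\delta)$, $H_g(u_k;\{|[u_k]|\in[\eta,\delta)\})\to H_g(u;\{|[u]|\in[\eta,\delta)\})$. Moreover $\calL^n(\{|u|>a_k\})\to0$, and in the $L^1$ case $\int_{\{|u|>a_k\}}|u|\dx\to0$ by dominated convergence.

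Finally I choose $k=k(\eps,\delta,\eta)$ large enough that $\calL^n(\{|u|>a_k\})<\eps$, $H_g(u_k)\le H_g(u)+\eps$, $H_g(u_k;\{|[u_k]|\in[\eta,\delta)\})\le H_g(u;\{|[u]|\in[\eta,\delta)\})+\eps(\eta/\eps)^\gamma$, and if $u\in L^1$ also $\int_{\{|u|>a_k\}}|u|\dx\le\eps$. The $L^\infty$ bound $\|u_k-u_\eps\|_{L^\infty}\le\eps$ from Proposition~\ref{approximation} gives $\{|u-u_\eps|>\eps\}\subseteq\Omega\setminus\Omega_k=\{|u|>a_k\}$, hence \eqref{eq:convmeas}. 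Substituting the three estimates above into \eqref{e:stima energia approx cost a tratti} (applied to $u_k$) produces \eqref{e:stima energia approx cost a tratt2i}. For the $L^1$ claim, $\|u-u_\eps\|_{L^1}\le\|u-u_k\|_{L^1}+\|u_k-u_\eps\|_{L^1}\le 2\int_{\{|u|>a_k\}}|u|\dx+\eps\calL^n(\Omega)\le C\eps$. I expect the main obstacle to be the middle term of the energy estimate: the prefactor $C(\eps/\eta)^\gamma$ can be arbitrarily large compared with $1$, and $\{|[u_k]|\in[\eta,\delta)\}$ need not coincide with $\{|[u]|\in[\eta,\delta)\}$ (truncation can pull points down from $\{|[u]|\ge\delta\}$ into this band); this forces $k$ to be chosen \emph{after} $\eta$ is fixed, with the truncation error measured on an $\eta$-dependent scale, and is precisely the place where the flexibility of $a_k\uparrow\infty$ together with \eqref{eqjuinfty} and dominated convergence is essential.
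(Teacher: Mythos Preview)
Your proof is correct and follows essentially the same strategy as the paper: truncate $u$ with $\mathcal T_k$ to land in $SBV\cap L^\infty$, then apply Proposition~\ref{approximation}. The only difference is cosmetic: where you invoke dominated convergence (via the eventually-constant pointwise convergence of $[u_k]$ to $[u]$ on $J_u$) to make $H_g(u_k)$ and $H_g(u_k;\{|[u_k]|\in[\eta,\delta)\})$ close to their counterparts for $u$, the paper instead picks $k$ directly so that $\int_{A_k} g([u],\nu_u)\,\dH^{n-1}\le \eta^\gamma\eps^{1-\gamma}$ on the ``bad'' set $A_k=\{|u^+|>a_k\}\cup\{|u^-|>a_k\}$, and then observes that $[\hat u]=[u]$ off $A_k$; both routes yield the same $C\eps$ error after multiplying by $(\eps/\eta)^\gamma$.
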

The fact that the set $u_{{\varepsilon}}(\Omega\setminus N)$ is finite for a $\calL^n$-null set $N$ implies that $\nabla u_\eps=0$ $\calL^n$-a.e.
on $\Omega$ and that $u_\eps\in L^\infty(\Omega;\R^m)$.
\begin{proof}
It suffices to reduce to a function in $L^\infty$ and then to
apply Proposition~\ref{approximation}. We can assume $H_g(u)<\infty$.
Then, by \eqref{eqjuinfty}
\begin{equation}
\lim_{M\to\infty} \int_{\{|u^+|\ge M\}\cap J_{u}} g([u],\nu_{u}) {\dH^{n-1}}
=\lim_{M\to\infty} \int_{\{|u^-|\ge M\}\cap J_{u}} g([u],\nu_{u}) {\dH^{n-1}}
=0.
\end{equation}
Recalling the definition {of the sequence $a_k$ before} \eqref{e:Tk}, there is $k$ such that
\begin{equation}\label{eqcoda}
\calL^n(\{|u|>a_k\})
\leq \eps
\end{equation}
and
\begin{equation}\label{eqgaketaeps}
\int_{A_k}
g([u],\nu_{u}) {\dH^{n-1}}
\leq
\eta^\gamma\eps^{1-\gamma},
\end{equation}
where $A_k:=(\{|u^+|>a_{k}\}\cup\{|u^-|>a_{k}\})\cap J_{u}$.
If {$u\in L^1(\Omega;\R^m)$}, we can additionally require $\int_{\{|u|>a_k\}}|u|\dx<\eps$.

Let $\hat
u:=\mathcal{T}_{k}(u)$. Then $\hat u\in L^\infty\cap SBV(\Omega;\R^m)$ and
$\|\nabla \hat u\|_{L^1(\Omega)}\le\|\nabla u\|_{L^1(\Omega)}$.
We estimate, using that
(up to $\calH^{n-1}$-null sets)
$J_{\hat u}\subseteq J_u$,
$|[\hat u]|\le |[u]|$ on $A_k$,
\eqref{e:g grwoth g0},
the monotonicity of $g_0$,
and finally using \eqref{eqgaketaeps},
\begin{equation}\begin{split}
 H_g(\hat u;A_k)
 {=}& \int_{A_k}
 g([\hat u],\nu_{\hat u}) {\dH^{n-1}}
 \le  C \int_{A_k}
 g_0(|[\hat u]|) {\dH^{n-1}}\\
 \le &  C \int_{A_k}
 g_0(|[u]|) {\dH^{n-1}}
 \le  C\eta^\gamma\eps^{1-\gamma}\,.
\end{split}\end{equation}
Recalling that $\eta\le\eps$ {and $[\hat u]=[u]$ on $J_{\hat u}\setminus A_k$,}
\begin{equation}\begin{split}
 H_g(\hat u)
 \le H_g(u)+H_g(\hat u;A_k)
 \le H_g(u)+C\eps
\end{split}\end{equation}
and
\begin{equation}\begin{split}
H_g(\hat u;
 \{|[\hat u]|\in [\eta,\delta)\})
\le & H_g(u;
 \{|[u]|\in [\eta,\delta)\})
+H_g(\hat u;A_k)\\
 \le &
  H_g(u;
 \{|[u]|\in [\eta,\delta)\})+C\eta^\gamma\eps^{1-\gamma}\,.
\end{split}\end{equation}
	The estimates \eqref{eq:convmeas} and \eqref{e:stima energia approx cost a tratt2i} follow using
Proposition~\ref{approximation} for $\hat u$ and recalling that $\eta\le\eps$. Indeed, by $\|\hat u-u_\eps\|_{L^\infty(\Omega)}\leq \eps$ and \eqref{eqcoda} we get
	\[\calL^n(\{|u-u_\varepsilon|>\eps\})\leq \calL^n(\{|\hat u-u|>0\})\leq \calL^n(\{|u|>a_k\})\leq \eps.
	\]
The $L^1$ estimate follows from
\begin{equation}\begin{split}
\|u-u_\eps\|_{L^1(\Omega)}
\le& \|\hat u-u_\eps\|_{L^1(\Omega)} + \|u-\hat u\|_{L^1(\Omega)}
\\
\le &\calL^n(\Omega) \eps +
\int_{\{|u|>a_k\}} (|u|+|\mathcal{T}_k(u)|)\dx\\
 \le & \calL^n(\Omega) \eps + 2 \int_{\{|u|>a_k\}} |u|\dx\le C\eps.\qedhere
 \end{split}
\end{equation}
\end{proof}

We are now ready to establish the claimed lower semicontinuity property for surface integrands.

\begin{proof}[Proof of Proposition~\ref{p:lsc surface}]
We divide the proof into several steps.

{\bf{Step~1. Blow-up.}} Without loss of generality we can assume that $(H_g(u_j))_j$ is bounded
and that the measures
\[\mu_g^j:=g([u_j],\nu_j)\calH^{n-1}\res J_{u_j}\]	
converge {weak-$*$} in $\calM(\Omega)$ to some positive finite Radon measure $\mu_g$. We shall show that
$H_g(u)\leq \mu_g(\Omega)$, and for this {it suffices} to show that
\[g([u](x_0),{\nu_u}(x_0))\leq \frac{\dd\mu_g}{\dH^{n-1}
{\res J_{u}}
}(x_0),\]
for $\calH^{n-1}$-almost every $x_0\in J_u$.
{By \cite[Theorem~4.34]{AFP} it suffices to}
prove the last inequality for points $x_0\in J_u$ such that
\[
\frac{\dd \mu_g}{\dd\calH^{n-1}\res J_u}(x_0)=\lim_{\rho\to0}\frac{\mu_g(Q^\nu_\rho(x_0))}{\calH^{n-1}(J_u\cap
Q^\nu_\rho(x_0))} \quad \text{exists finite}\,
\]
and
\[
\lim_{\rho\to 0}\frac{\mathcal{H}^{n-1}(J_u\cap Q^\nu_\rho(x_0))}{\rho^{n-1}}=1,
\]
 where $\nu:=\nu_u(x_0)$ and $Q^\nu_\rho(x_0){:=x_0+\rho Q^\nu}$ is the cube centred 
in $x_0$, with side length $\rho$, and one face orthogonal to $\nu$. We remark that such conditions define a set of full
measure in $J_u$.
First note that
\begin{align*}
&\frac{\dd \mu_g}{\dd\calH^{n-1}\res J_u}(x_0)=\lim_{\rho\to0}\frac{\mu_g(Q^\nu_\rho(x_0))}{\rho^{n-1}}
=\lim_{\substack{\rho\in I\\\rho\to 0}}\lim_{j\to\infty}\frac{\mu_g^j(Q^\nu_\rho(x_0))}{\rho^{n-1}}\,,
\end{align*} 
where we have used the fact that $\mu^j_g\weakto\mu_g$ weakly-$*$ in $\calM(\Omega)$ and we have set
$I:=\{\rho\in(0,\frac{2}{\sqrt n}\,\mathrm{dist}(x_0,\partial \Omega)):\,
{\mu_g}(\partial Q^\nu_\rho(x_0))=0\}$.
Therefore, a change of variables yields that
\begin{align}\label{blowupHg}
\frac{\dd {\mu_g}}{\dd\calH^{n-1}\res J_u}(x_0)
= \lim_{{\scriptsize \begin{array}{c}
		\rho\!\!\in\!\! I\\ \!\!\rho\!\!\to\!\!0\end{array}}}\lim_{j\to\infty}
{H_g(u_j^\rho;Q^\nu)}\,,
\end{align} 
where we have denoted by $u^\rho_j(y):=u_j(x_0+\rho y)$ the rescaling of $u_j$. 

Since $|\nabla u_j|$ is equiintegrable,
{and therefore in particular bounded in $L^1$},
by the Dunford-Pettis Theorem~\cite[Theorem~1.38]{AFP} it is weakly
precompact in $L^1(\Omega)$; hence there exists $h\in L^1(\Omega)$ such that a subsequence satisfies $|\nabla
u_j|\rightharpoonup h$ weakly in $L^1(\Omega)$ as $j\to\infty$.
Hence,
\begin{equation}\label{rescaledgrad}
\lim_{\rho\to0}\lim_{j\to\infty}\frac{1}{\rho^{n-1}}\int_{Q^\nu_\rho(x_0)}|\nabla u_j|\dx=
\lim_{\rho\to0}\frac{1}{\rho^{n-1}}\int_{Q^\nu_\rho(x_0)}h\,\dx=0,
\end{equation}
for $\calH^{n-1}$-almost every $x_0\in J_u$ by \cite[Theorem~2.56 {and equation (2.41)}]{AFP}. Note that by scaling one has
\begin{equation}\label{e:scaling nabla uj}
\lim_{\rho\to0}\lim_{j\to\infty}\frac{1}{\rho^{n-1}}\int_{Q^\nu_\rho(x_0)}|\nabla u_j|\dx=
\lim_{\rho\to0}\lim_{j\to\infty}\int_{Q^\nu}|\nabla u_j^\rho|\dy\,.
\end{equation}
Recalling that $u_j\to u$ in {measure} 
as $j\to \infty$, using \eqref{blowupHg},
\eqref{rescaledgrad}, and
\eqref{e:scaling nabla uj}, by diagonalization we can find  two
subsequences $\rho_k\to0$ and $j_k\to\infty$ such that as $k\to \infty$
\begin{eqnarray*}
u_{j_k}^{\rho_k}\to [u](x_0)\chi_{\{y\cdot\nu>0\}}+u^-(x_0) \quad \text{{in measure,}}
\\
\nabla u^{\rho_k}_{j_k}\to 0 \quad \text{in} \quad L^1(Q^\nu;\R^{m\times n}),\\
\frac{\dd \mu_g}{\dd\calH^{n-1}\res J_u}(x_0)= \lim_{k\to\infty}H_g(u_{j_k}^{\rho_k};Q^\nu)\,.
\end{eqnarray*}

\medskip

\noindent {\bf Step~2. Reduction to a piecewise constant sequence.}
By Step~1 it is enough to prove that for
$z\in \R^m$ and $\nu\in S^{n-1}$ we have
\[g(z,\nu)\leq \liminf_{j\to\infty}H_g(u_j;Q^\nu),\]
whenever $u_j\to u_0:= z\chi_{\{x\cdot\nu>0\}}$ in measure
and $\nabla u_j\to 0$ in $L^1(Q^\nu;\R^{m\times
n})$, as $j\to\infty$.
We can also assume that the sequence $H_g(u_j;Q^\nu)$
is bounded.

Consider any sequence $r_j\to0$ such that $ r_j^{\gamma-1} \|\nabla u_j\|_{L^1(Q^\nu)}\to0$, and then 
$M_j\to\infty$, $K_j\to\infty$ such that $r_jM_j^{2K_j+1}\to0$ and $M_j^\gamma K_j^{-1}\to0$.
Select  by averaging  $k_j\in \{1, \dots, K_j\}$ such that
\begin{equation}\label{choiceeps}
H_g(u_j; \{|[u_{{j}}]|\in [r_jM_j^{2k_j-1}, r_j M_j^{2k_j+1})\})
\le \frac{2}{K_j} H_g(u_j;Q^\nu).
\end{equation}
By {Corollary~\ref{approximationGSBV}}, applied for all $j\in\N$ with
{$\eps_j:=r_j M_j^{2k_j}$,}
$\eta_j:=\eps_j/M_j\to0$,
$\delta_j:=\eps_jM_j\to0$, we find
$v_j\in SBV(Q^\nu;\R^m)$ such that $\nabla v_j=0$ $\calL^n$-a.e. on $Q^\nu$, and $v_j$ takes finitely many values away from a $\calL^n$-null set,
\begin{equation}\label{eqconvmeasuve}
{{\calL^n}(\{|u_j-v_j|>\eps_j\})\le\eps_j,}
\end{equation}
\begin{equation}\label{eqHvvjujerw}
\begin{split}
H_g(v_j; {Q^\nu})\le &
{C \eps_j+}
\left(1+C \left(\frac{\eps_j}{\delta_j}\right)^\gamma
+ C \left(\frac {\eta_j}{\eps_j}\right)^{1-\gamma}\right)H_g(u_j; {Q^\nu})
\\ & + C \left(\frac{\eps_j}{\eta_j}\right)^\gamma H_g(u_j;
{Q^\nu\cap}\{|[u_j]|\in [\eta_j,\delta_j)\}) + C  \eps_j^{\gamma-1}\|\nabla u_j\|_{L^1(Q^\nu)}.
\end{split}\end{equation}
By definition $\frac{\eps_j}{\delta_j}=\frac{\eta_j}{\eps_j}=M_j^{-1}\to 0$ as $j\to\infty$,
and moreover, recalling the definition of $r_j$ and $k_j$ (cf. \eqref{choiceeps}),
\begin{align*}
{\liminf_{j\to\infty}}&\left(\left(\frac{\eps_j}{\eta_j}\right)^\gamma H_g(u_j;
{Q^\nu\cap}\{|[u_j]|\in [\eta_j,\delta_j)\}) +
{\frac{1}{\eps_j^{1-\gamma}}}\|\nabla u_j\|_{L^1{(Q^\nu)}}\right)\\
&\leq2\,{\limsup_{j\to\infty}} \frac{M_j^\gamma}{K_j}H_g(u_j;Q^\nu)
+{\limsup_{j\to\infty}\,} 
{\frac{1}{r_j^{1-\gamma}}}
\|\nabla u_j\|_{L^1{(Q^\nu)}}=0\,.
\end{align*}
In conclusion, {from \eqref{eqHvvjujerw}} we infer
\begin{equation}
\liminf_{j\to\infty} H_g(v_j;Q^\nu)\le 
\liminf_{j\to\infty} H_g(u_j;Q^\nu)\,.
\end{equation}
{From $u_j\to u_0$ in measure and \eqref{eqconvmeasuve} we deduce $v_j\to u_0$ in measure.}
{If $u_j\to u_0$ in $L^1$, then  $v_j\to u_0$ in
$L^1$.}

\medskip

{\bf Step~3. Conclusion.}
The rest of the proof is {similar to} \cite[Theorem~5.14]{AFP}.
{By Steps~1 and 2, it is sufficient to prove \eqref{lscsurf} for a sequence of piecewise constant functions, {each taking finitely many values}. Thus,} in order to apply the definition of $BV$-ellipticity of $g$, we need to
modify the boundary datum of $v_j$.

{Define $\phi:\R^{m}\to[0,\infty)$ by $\phi({z}):=g_0(|{z}|)$.
For any function
$w\in SBV(Q^\nu;\R^m)$ with $w(Q^\nu)$ a finite set, we have that  $\phi\circ w\in SBV\cap
L^\infty{(Q^\nu)}$, with $J_{\phi\circ w}\subseteq J_w$ (up to null sets) and
$\nabla (\phi\circ w)=0$ {$\calL^n$-a.e. on $Q^\nu$}.
Further, using subadditivity and monotonicity of $g_0$
\begin{equation}
 [\phi\circ w] = \phi(w^+)-\phi(w^-)
\le g_0(|w^-|+|[w]|)-g_0(|w^-|)
\le g_0(|[w]|)
\end{equation}
so that, repeating the computation with the two traces swapped,
\begin{equation}
|[\phi\circ w]| \le g_0(|[w]|).
\end{equation}
Therefore for every $j$ we have
\begin{equation}
 |D(\phi\circ v_j)|{(Q^\nu)}\le \int_{J_{v_j}} g_0(|[v_j]|)\dd\calH^{n-1}\le C H_g(v_j;Q^\nu)
\end{equation}
and we obtain that $\phi\circ v_j$ is precompact in ${L^1(Q^\nu)}$ (possibly after subtracting the average).
Since $g_0$ is continuous,
from the convergence in measure of $v_j$ to $u_0$, we obtain, possibly passing to a subsequence and dropping a null set,
pointwise convergence of $v_j$ to $u_0$ and of $\phi\circ v_j$ to $\phi\circ u_0$. Therefore
$\phi\circ v_j\to \phi\circ u_0$ in $L^1(Q^\nu)$.}
By
Fubini's theorem, up to subsequences,
for $\calL^1$-a.e. $t\in (0,1)$ there holds
\begin{equation}\label{boundary}
\lim_{j\to\infty}\int_{\partial Q_t^\nu}{|\phi\circ v_j-\phi\circ  u_0|}\dH^{n-1}=0.
\end{equation}
In addition, up to subsequences,
{by convergence in measure and another application of Fubini's theorem}
we may assume that
$v_j\to u_0$ $\calH^{n-1}\res \partial Q_t^\nu$-a.e. on $\partial Q_t^\nu$
and that $\calH^{n-1}(\partial Q_t^\nu\cap J_{v_j})=0$ for all $j$.

{Fix $M$ with $\|u_0\|_{L^\infty(Q^\nu)}<M$. By pointwise convergence and dominated convergence,
\begin{equation}
\lim_{j\to\infty}\int_{\partial Q_t^\nu}
 g_0(|v_j-u_0|)\chi_{\{|v_j|<M\}} \dH^{n-1}=0
\end{equation}
and
\begin{equation}
\lim_{j\to\infty}\int_{\partial Q_t^\nu}
 \chi_{\{|v_j|\ge M\}} \dH^{n-1}=0.
\end{equation}
At the same time,
using $g_0(|v_j-u_0|)\le
g_0(|u_0|)+ g_0(|v_j|)=
2g_0(|u_0|)+g_0(|v_j|)-g_0(|u_0|)$,
\begin{equation}\begin{split}
&\int_{\partial Q_t^\nu}
 g_0(|v_j-u_0|)\chi_{\{|v_j|\ge M\}} \dH^{n-1}\le\\
 &
\int_{\partial Q_t^\nu}
2 g_0(|u_0|)\chi_{\{|v_j|\ge M\}} \dH^{n-1}+
\int_{\partial Q_t^\nu}
|\phi\circ v_j-\phi\circ u_0| \dH^{n-1}.
\end{split}\end{equation}
Combining these estimates leads to
\begin{equation}\label{eqg0vju0pq}
\begin{split}
\lim_{j\to\infty} \int_{\partial Q_t^\nu}
 g_0(|v_j-u_0|) \dH^{n-1}=0
\end{split}\end{equation}
for $\calL^1$-almost every $t\in(0,1)$.
}

Moreover, by \cite[Lemma~5.15]{AFP}, for $\calL^1$-a.e. $t\in (0,1)$ the function
$w_j:=v_j\chi_{Q_t^\nu}+u_0\chi_{Q^\nu\setminus Q^\nu_t}$ satisfies $(w_j^+,w_j^-,\nu_w)=(v_j,u_0,\nu_t)$ for
$\calH^{n-1}$-a.e. $x\in\partial Q^\nu_t$, where $\nu_t$ denotes the inner normal vector to $Q^\nu_t$. Hence,
\begin{multline*}
\liminf_{j\to\infty} H_g(v_j;Q^\nu)\geq \liminf_{j\to\infty} H_g(v_j;Q^\nu_t)\\
\geq 
\liminf_{j\to\infty} \Big(H_g(w_j;Q^\nu)-\int_{\partial Q^\nu_t}g(v_j-u_0,\nu_t)\dH^{n-1}\Big)-H_g(u_0;Q^\nu\setminus
\overline{Q^\nu_t})\\
\geq g(z,\nu)-g(z,\nu)(1-t^{n-1}),
\end{multline*}
where in the last inequality we have used the $BV$-ellipticity of $g$ and
{\eqref{eqg0vju0pq}.}
Hence, as $t\to1^-$, we conclude from Step~2
\[
\liminf_{j\to\infty} H_g(u_j;Q^\nu)\geq\liminf_{j\to\infty} H_g(v_j;Q^\nu)\geq
g(z,\nu)=H_g(u_0;Q^\nu).\qedhere
\]
\end{proof}

\subsection{Lower semicontinuity for bulk energies}
The aim of this section is to prove Proposition~\ref{p:lsc bulk}. We first show that
nonnegative superlinear quasiconvex functions can be approximated by quasiconvex ones of linear growth.
In fact, for later purposes (cf. Section~\ref{s:liminf}) we {do not} assume quasiconvexity of $\Psi$,
and only suppose that
$\Psi:\R^{m\times n}\to[0,\infty)$ is continuous, and for some $q>1$ {satisfies} \eqref{e:Psi gc}, namely
\begin{equation*}
\Big(\frac1c |\xi|^q-c\Big)\vee0\le \Psi(\xi)\le c(|\xi|^q+1)
\hskip1cm\text{ for all }\xi\in\R^{m\times n}.
\end{equation*}
{We are then led to introduce the quasiconvex envelope
$h^\qc$ of a continuous function $h:\R^{m\times n}\to[0,\infty)$, which is}
defined as
\begin{equation}\label{e:qc envelope}
 h^\qc(\xi):=\sup\{\Phi(\xi):\, \Phi\leq  h,\, \Phi\textrm{ quasiconvex}\}\,,
\end{equation}
and characterized by (see \cite[Theorem~6.9]{Dacorogna})
\begin{equation}\label{eq:hqc}
 h^\qc(\xi){=}\inf \left\{\int_{(0,1)^n} h(\xi+\nabla \varphi) \dx: \varphi\in W^{1,\infty}_0((0,1)^n;\R^m) \right\}.
\end{equation}

\begin{proposition}\label{prophdeltaqc}
{Let $\Psi:\,\R^{m\times n}\to[0,\infty)$ be continuous and satisfy \eqref{e:Psi gc}}. For $\delta\in(0,1)$, $\ell>0$, and $p>1$, let  \begin{equation}\label{eqdefhdelta}
h_\delta(\xi):=\Psi(\xi)\wedge \frac{{\ell}}{(1-{\delta^{q'}})^{p-1}}\Psi^{\sfrac1q}(\xi),
\end{equation}
{where  as usual $q':=q/(q-1)$.}
Then
\begin{equation*}
\sup_{\delta\in(0,1)}h_\delta^\qc(\xi)=\lim_{\delta\uparrow 1}h_\delta^\qc(\xi)=\Psi^{\qc}(\xi)\,.     
    \end{equation*}
    \end{proposition}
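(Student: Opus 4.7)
The plan is to prove the three-way equality in three steps: monotonicity, an easy upper bound, and a Lipschitz-truncation argument for the reverse inequality.

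I first observe that $c_\delta := \ell/(1-\delta^{q'})^{p-1}$ is nondecreasing in $\delta\in(0,1)$ (since $p>1$), hence $h_\delta$ is pointwise nondecreasing in $\delta$, and so is its quasiconvex envelope $h_\delta^{\qc}$. This yields $\sup_\delta h_\delta^{\qc}(\xi) = \lim_{\delta\uparrow 1}h_\delta^{\qc}(\xi)$. The upper bound $\sup_\delta h_\delta^{\qc}(\xi)\leq \Psi^{\qc}(\xi)$ is immediate: $h_\delta\leq \Psi$ implies that every quasiconvex minorant of $h_\delta$ is also a quasiconvex minorant of $\Psi$, so $h_\delta^{\qc}\leq \Psi^{\qc}$ by the very definition of the envelope as a supremum.

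For the reverse inequality $\lim_{\delta\uparrow 1} h_\delta^{\qc}(\xi)\geq \Psi^{\qc}(\xi)$, fix $\xi$ and $\varepsilon>0$, and for each $\delta$ pick a near-minimizer $\varphi_\delta\in W^{1,\infty}_0((0,1)^n;\R^m)$ in the sense that $\int_{(0,1)^n} h_\delta(\xi+\nabla\varphi_\delta)\dd x \leq h_\delta^{\qc}(\xi)+\varepsilon$. I would then apply the maximal-function based Lipschitz truncation of Lemma~\ref{lemmatrunc} to $\varphi_\delta$ at a level $\lambda_\delta$ of order $c_\delta^{1/(q-1)}$, obtaining $\tilde\varphi_\delta\in W^{1,\infty}_0$ with $\|\nabla\tilde\varphi_\delta\|_\infty\leq C\lambda_\delta$. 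The choice of $\lambda_\delta$ ensures $\Psi(\xi+\nabla\tilde\varphi_\delta)\leq c_\delta^{q'}$ pointwise, so $h_\delta(\xi+\nabla\tilde\varphi_\delta) = \Psi(\xi+\nabla\tilde\varphi_\delta)$ on $(0,1)^n$. Combining with the infimum representation \eqref{eq:hqc} of $\Psi^{\qc}$ yields
\[
\Psi^{\qc}(\xi)\leq\int\Psi(\xi+\nabla\tilde\varphi_\delta)\dd x = \int h_\delta(\xi+\nabla\tilde\varphi_\delta)\dd x \leq \int h_\delta(\xi+\nabla\varphi_\delta)\dd x + \mathrm{err}_\delta \leq h_\delta^{\qc}(\xi)+\varepsilon+\mathrm{err}_\delta,
\]
where $\mathrm{err}_\delta$ is the contribution over the bad set $\{\tilde\varphi_\delta\neq\varphi_\delta\}$.

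The delicate point, and main obstacle, is to verify $\mathrm{err}_\delta\to 0$. The coercivity of $h_\delta$, in the form $h_\delta(\eta)\geq |\eta|^q/C-C$ for $|\eta|\leq c_\delta^{1/(q-1)}$ together with $h_\delta(\eta)\geq (c_\delta/C)|\eta|-C$ in the complementary regime, provides an $L^q$-type bound on $\nabla\varphi_\delta$ in the moderate-gradient region and an $L^1$ bound of size $O(1/c_\delta)$ on the large-gradient one; the maximal-function estimates then bound the measure of $\{\tilde\varphi_\delta\neq\varphi_\delta\}$, and combined with the linear growth $h_\delta(\eta)\leq C c_\delta(|\eta|+1)$ and the $L^\infty$ control $C\lambda_\delta$ on $\nabla\tilde\varphi_\delta$, this makes $\mathrm{err}_\delta$ vanish for the chosen $\lambda_\delta$. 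Balancing the competing quantities (the diverging coefficient $c_\delta$, the truncation level $\lambda_\delta$, and the various norms of $\nabla\varphi_\delta$ extracted from the coercivity of $h_\delta$) is the technical heart of the proposition; the rest is bookkeeping.
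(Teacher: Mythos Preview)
Your overall plan is right, and steps 1--2 (monotonicity, upper bound) are fine. The gap is in step 3: with a single truncation level $\lambda_\delta\sim c_\delta^{1/(q-1)}$ the error term $\mathrm{err}_\delta$ does \emph{not} vanish, it is only bounded. Indeed, $\mathrm{err}_\delta \le C\lambda_\delta^{q}\,\calL^n(\{\tilde\varphi_\delta\ne\varphi_\delta\})$ since $|\nabla\tilde\varphi_\delta|\le C\lambda_\delta$, and Lemma~\ref{lemmatrunc} with $s=1$ gives $\calL^n(\{\tilde\varphi_\delta\ne\varphi_\delta\})\le (c_*/\lambda_\delta)\int_{\{|\nabla\varphi_\delta|>\lambda_\delta\}}|\nabla\varphi_\delta|\,\dx$. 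The coercivity $h_\delta(\xi+\eta)\gtrsim c_\delta|\eta|$ on the large-gradient set and the uniform energy bound yield $\int_{\{|\nabla\varphi_\delta|>\lambda_\delta\}}|\nabla\varphi_\delta|\,\dx\le C/c_\delta$, so altogether
\[
\mathrm{err}_\delta \lesssim \lambda_\delta^{q-1}\cdot \frac{1}{c_\delta} \sim \frac{c_\delta}{c_\delta}=1.
\]
Choosing $\lambda_\delta$ smaller does not help: the contribution from the intermediate region $\{\lambda_\delta<|\nabla\varphi_\delta|\le c_\delta^{1/(q-1)}\}$, where only $h_\delta(\xi+\eta)\gtrsim|\eta|^q$ is available, gives $\lambda_\delta^{q-1}\int|\nabla\varphi_\delta|\,\dx\le \lambda_\delta^{q-1}\cdot \lambda_\delta^{1-q}\int|\nabla\varphi_\delta|^q\,\dx\le C$ again. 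So the ``balancing'' you allude to cannot be achieved by any single choice of $\lambda_\delta$.

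The missing ingredient is a pigeonhole (De Giorgi averaging) step: one introduces an additional parameter $N$, decomposes $(1,A_\delta^{1/q})$ into $K_{N,\delta}\sim\log_N A_\delta$ dyadic shells $\{N^k\le|\nabla\varphi_\delta|<N^{k+1}\}$, and selects $k_\delta$ so that the $h_\delta$-energy in that shell is at most $C/K_{N,\delta}$. Truncating at $\lambda:=N^{k_\delta}$ then splits the error into a shell part of size $C/K_{N,\delta}$ and a tail part (on $\{|\nabla\varphi_\delta|\ge N\lambda\}$) of size $CN^{1-q}$; letting first $\delta\uparrow1$ (so $K_{N,\delta}\to\infty$) and then $N\to\infty$ concludes. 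This extra selection step is exactly what is done in the paper's proof and is not bookkeeping---it is what makes the argument close.
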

The proof is based on the following variant of Zhang's truncation result
\cite{Zhang1992} (see also \cite[Sect.~6.6.2]{EvansGariepy}),
the version on a bounded domain stated below appears, for instance, in \cite[Proposition~A.1]{FrieseckeJamesMuller02}.
The treatment of boundary data is discussed for example in
\cite[Lemma~4.1]{DolzmannHungerbuehlerMueller2000}.
\begin{lemma}\label{lemmatrunc}
     Let $\Omega\subset\R^n$ be a bounded Lipschitz set, $s\in[1,\infty)$. There is $c_*=c_*(s,\Omega,m)>0$ such that
     for any $u\in W^{1,s}(\Omega;\R^m)$ and $\lambda>0$ there is $w_\lambda\in W^{1,\infty}(\Omega;\R^m)$ such that
     $\|\nabla w_\lambda\|_{L^\infty(\Omega)}\le c_*\lambda$ and
\[
\lambda^s\calL^n(\{w_\lambda\ne u\}) \le c_* \int_{\{|\nabla u|>\lambda\}} |\nabla u|^s \dx\,.
\]
If, additionally, $u\in W^{1,s}_0(\Omega;\R^m)$, then $w_\lambda\in W^{1,\infty}_0(\Omega;\R^m)$.
    \end{lemma}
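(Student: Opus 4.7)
The plan is to prove this by the classical maximal-function-based truncation argument of Acerbi--Fusco and Zhang. The idea is to identify a ``good set'' on which $u$ is essentially $\lambda$-Lipschitz already, McShane-extend the restriction of $u$ to that set, and estimate the size of the complement via a weak-type inequality for the Hardy--Littlewood maximal function.

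First I would extend $u$ to $\tilde u\in W^{1,s}(\R^n;\R^m)$ using a bounded extension operator for the Lipschitz domain $\Omega$ (this is where the $\Omega$-dependence of $c_*$ enters). Writing $M$ for the Hardy--Littlewood maximal function, define the good set
\[
F_\lambda:=\{x\in\R^n:\ M(|\nabla\tilde u|)(x)\le\lambda\}.
\]
The classical Haj{\l}asz--Hedberg pointwise inequality yields, for $\calL^n$-a.e.\ $x,y\in F_\lambda$,
\[
|\tilde u(x)-\tilde u(y)|\le C_n\,|x-y|\bigl(M(|\nabla\tilde u|)(x)+M(|\nabla\tilde u|)(y)\bigr)\le 2C_n\lambda|x-y|,
\]
so, after redefinition on a null set, $\tilde u|_{F_\lambda}$ is $2C_n\lambda$-Lipschitz. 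A componentwise McShane extension produces $w_\lambda\in W^{1,\infty}(\R^n;\R^m)$ with $w_\lambda=\tilde u$ on $F_\lambda$ and $\|\nabla w_\lambda\|_{L^\infty(\R^n)}\le c_*\lambda$; restricting to $\Omega$ gives the desired function.

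For the measure bound I use that $\{w_\lambda\ne u\}\cap\Omega\subset \Omega\setminus F_\lambda$. Splitting $|\nabla\tilde u|=|\nabla\tilde u|\chi_{\{|\nabla\tilde u|\le\lambda/2\}}+|\nabla\tilde u|\chi_{\{|\nabla\tilde u|>\lambda/2\}}$, sublinearity of $M$ and $M(h)\le\|h\|_{L^\infty}$ on the first piece give $\{M(|\nabla\tilde u|)>\lambda\}\subset\{M(|\nabla\tilde u|\chi_{\{|\nabla\tilde u|>\lambda/2\}})>\lambda/2\}$. The weak $(1,1)$ inequality then yields
\[
\calL^n\bigl(\{M(|\nabla\tilde u|)>\lambda\}\bigr)\le \frac{C}{\lambda}\int_{\{|\nabla\tilde u|>\lambda/2\}}|\nabla\tilde u|\,\dx\le \frac{C_s}{\lambda^s}\int_{\{|\nabla\tilde u|>\lambda/2\}}|\nabla\tilde u|^s\,\dx,
\]
using $|\nabla\tilde u|^{s-1}\ge(\lambda/2)^{s-1}$ on the integration set. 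Boundedness of the extension operator absorbs $\tilde u$ into $u$ on $\Omega$ up to a constant, and the threshold $\lambda/2$ can be upgraded to $\lambda$ by rerunning the argument with $\lambda$ replaced by $2\lambda$ at the cost of enlarging $c_*$, which the statement allows.

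For the zero-boundary case $u\in W^{1,s}_0(\Omega;\R^m)$ I extend $u$ by $0$ to $\tilde u\in W^{1,s}(\R^n;\R^m)$ and apply the same construction on the \emph{enlarged} good set $\tilde F_\lambda:=F_\lambda\cup(\R^n\setminus\Omega)$: the function $\tilde u$ vanishes on $\R^n\setminus\Omega$, and using the trace characterization of $W^{1,s}_0$ together with the Haj{\l}asz inequality across the boundary one checks that $\tilde u|_{\tilde F_\lambda}$ is still $C\lambda$-Lipschitz. The McShane extension then automatically vanishes on $\R^n\setminus\Omega$, giving $w_\lambda\in W^{1,\infty}_0(\Omega;\R^m)$. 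I expect the most delicate point to be precisely this last step: verifying Lipschitz continuity of $\tilde u$ across $\partial\Omega$ on $\tilde F_\lambda$, which requires estimating $|u(x)|$ for $x\in F_\lambda\cap\Omega$ in terms of $\lambda\,\dist(x,\partial\Omega)$ via a Poincaré-type argument along a chain of balls where the maximal function is controlled, or alternatively a Whitney decomposition of the bad set.
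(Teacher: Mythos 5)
Your overall scheme (good set via the Hardy--Littlewood maximal function, Haj\l asz pointwise inequality, componentwise McShane extension, weak $(1,1)$ estimate) is the standard one, and indeed the paper gives no proof of Lemma~\ref{lemmatrunc} but cites exactly the references whose proofs run along these lines. However, in the general case your argument has a genuine gap at the step ``boundedness of the extension operator absorbs $\tilde u$ into $u$ on $\Omega$ up to a constant''. What you need there is a level-set--restricted estimate of the form
\[
\int_{\{|\nabla\tilde u|>\lambda/2\}}|\nabla\tilde u|^s\dx\le C\int_{\{|\nabla u|>\lambda/C\}\cap\Omega}|\nabla u|^s\dx ,
\]
whereas operator boundedness only controls $\int_{\R^n}|\nabla\tilde u|^s\dx$ by $\|u\|_{W^{1,s}(\Omega)}^s$ (or, after subtracting the mean and using Poincar\'e, by $\int_\Omega|\nabla u|^s\dx$), which is strictly weaker. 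The failure is visible already when $|\nabla u|\le\lambda$ a.e.\ in $\Omega$: then the right-hand side of the lemma is zero, so the statement forces $w_\lambda=u$ a.e.; but your good set $F_\lambda$ is defined through $M(|\nabla\tilde u|)$, which sees the gradient of the extension \emph{outside} $\Omega$, and that gradient is not pointwise controlled by $\lambda$ (only by norms of $u$), so $\Omega\setminus F_\lambda$ can have positive measure and nothing in the construction guarantees $w_\lambda=u$ there. Enlarging $c_*$ or rerunning the argument with $2\lambda$ does not repair this, since the defect is not multiplicative. To close the gap you must either (i) use an extension whose gradient is controlled pointwise (or in distribution) by the interior gradient, e.g.\ the local bi-Lipschitz reflection across the Lipschitz boundary, which yields $|\nabla\tilde u|\le C\,M(\chi_\Omega|\nabla u|)$ off $\Omega$ and hence the desired level-set comparison, or (ii) dispense with the global extension and prove the Haj\l asz-type inequality intrinsically on $\Omega$ with the restricted maximal function $M(\chi_\Omega|\nabla u|)$ via a chain/Boman or Whitney argument; this domain-dependent step is precisely the nontrivial content of the results the paper cites.

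By contrast, your plan for the $W^{1,s}_0$ case is essentially sound: extending by zero gives $\nabla\tilde u=\chi_\Omega\nabla u$, so the issue above disappears, and the only delicate point is the one you correctly identify, namely the estimate $|u(x)|\le C\lambda\,\dist(x,\partial\Omega)$ for a.e.\ $x\in F_\lambda\cap\Omega$. This does follow from the exterior measure-density of $\R^n\setminus\Omega$ (here the Lipschitz character of $\Omega$ enters): on $B(x,2r)$ with $r=\dist(x,\partial\Omega)$ the function $\tilde u$ vanishes on a set of measure $\ge c|B|$, so Poincar\'e on $B(x,2r)$ plus the usual telescoping over shrinking balls gives $|u(x)|\le C r\,M(|\nabla\tilde u|)(x)\le C\lambda r$. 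As written this is only sketched, and it is where the constant $c_*(\Omega)$ arises in that case, so it should be carried out explicitly (or the corresponding statement of Dolzmann--Hungerb\"uhler--M\"uller invoked directly, as the paper does).
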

\begin{proof}[Proof of Proposition~\ref{prophdeltaqc}]
The inequality $\sup_{\delta\in(0,1)}h_\delta^\qc(\xi)\leq\Psi^{\qc}(\xi)$ follows immediately from 
$h_\delta\leq\Psi$.

To prove the reverse inequality, we discuss the case $\ell=1$ for the sake of notational simplicity. Define $A_\delta:=(1-\delta^{q'})^{\frac {(1-p)q}{q-1}}$, so that
$A_\delta\to\infty$ for $\delta\uparrow 1$ and
\begin{equation}\label{eqdefhfrompsianda}
 {h_\delta}(\eta)=\Psi(\eta)\wedge A_\delta^{1-\sfrac 1q}\Psi^{\sfrac 1q}(\eta)
 =\Psi^{\sfrac1q}(\eta) \left[\Psi(\eta)\wedge A_\delta\right]^{1-\sfrac1q}.
\end{equation}
We fix
$\xi\in\R^{m\times n}$ and, for any $\delta\in(0,1)$, choose
$\varphi_\delta\in W^{1,\infty}_0(Q_1;\R^m)$ such that
\begin{align}\label{e:stima hdeltaqc}
\int_{Q_1}h_\delta(\xi+\nabla \varphi_\delta)\dx\le
h_\delta^{\qc}(\xi)+1-\delta.
\end{align}
Let $N>1$, chosen below, and set
\begin{equation}\label{e:KMdelta}
 K_{N,\delta}:=\max\{k\in\N: N^{k+2}\le {A_\delta^{1/q}} \}.
\end{equation}
Obviously $\lim_{\delta\uparrow 1} K_{N,\delta}=\infty$.
We choose $k_\delta\in\N\cap[1,K_{N,\delta}]$ such that
\begin{equation}\label{eqchoicekdel}
 \int_{\{ N^{k_\delta}\le |\nabla\varphi_\delta| < N^{k_\delta+1}\} }
 h_\delta(\xi+\nabla\varphi_\delta) \dx
 \le \frac{h_\delta^\qc(\xi)+1}{K_{N,\delta}}
 \le \frac{\Psi^\qc(\xi)+1}{K_{N,\delta}}.
\end{equation}
By Lemma~\ref{lemmatrunc} applied with $s=1$
{and $\lambda:=N^{k_\delta}$} there is
$\theta_\delta\in W^{1,\infty}_0(Q_1;\R^m)$ such that
$|\nabla \theta_\delta|\le c_* {\lambda}$ and
\begin{equation*}
 \calL^n(\{ \theta_\delta\ne\varphi_\delta\}) 
 \le \frac{c_*}{{\lambda}} \int_{\{|\nabla\varphi_\delta|> {\lambda}\}} |\nabla\varphi_\delta| \dx.
\end{equation*}
The constant $c_*$ depends only on $n$ and $m$.

{From the definition of $h_\delta$ we deduce that
there are $\delta_0\in (0,1)$, $N_0\in\N$, $c'>0$, all depending only on 
$\xi$, $n$, $m$, and the parameters $c$ and $q$ from
\eqref{e:Psi gc}, such that for any $\delta\in(\delta_0,1)$ and $N\ge N_0$ the following holds:
\begin{equation}\label{eqetapsihlower}
 \text{if $|\eta|\le N\lambda$ then 
 $h_\delta(\xi+\eta)=\Psi(\xi+\eta)$},
\end{equation}
\begin{equation}\label{eqetapsihupper}
 \text{if $\lambda \le |\eta|\le N\lambda$ then 
 $h_\delta(\xi+\eta)\ge c' |\eta| \lambda^{q-1}$,}
\end{equation}
and
\begin{equation}\label{eqetapsihupper2}
 \text{if $|\eta|\ge N\lambda$ then 
 $h_\delta(\xi+\eta)\ge c' |\eta| (N\lambda)^{q-1}$.}
\end{equation}
To prove the first one, by \eqref{eqdefhfrompsianda} it suffices to show that in the relevant regime one has
$\Psi(\xi+\eta)\le A_\delta$. We compute from \eqref{e:Psi gc}
\begin{equation*}
\begin{split} \Psi(\xi+\eta)
 \le &c (|\xi+\eta|^q+1)
 \le c  ({2^{q-1}} |\xi|^q+ {2^{q-1}}|\eta|^q+1)\\
 \le &c {2^{q-1}} (|\xi|^q+1)+  c {2^{q-1}} N^{q(K_{N,\delta}+1)}.
 \end{split}
  \end{equation*}
For a suitable choice of $\delta_0$, the first term is bounded by $A_\delta/2$. By \eqref{e:KMdelta}, the second one is bounded by
$c {2^{q-1}} A_\delta/N^q$, so that for a suitable choice of $N_0$ this is also bounded by $A_\delta/2$.
This proves \eqref{eqetapsihlower}.}

{We next observe that if $|\eta|\ge\lambda$ then 
for a suitable choice of $N_0$ and $\tilde c>0$, depending on $|\xi|$, $q$, and $c$, we have
$|\xi+\eta|\ge|\eta|/2$ and
 by \eqref{e:Psi gc}
\begin{equation}\label{eqpsixietaxi}
 \Psi(\xi+\eta)\ge \frac1c |\xi+\eta|^q-c
 \ge \frac1{\tilde c}|\eta|^q .
\end{equation}
Together with \eqref{eqetapsihlower},
this proves \eqref{eqetapsihupper}.}

{Finally, we turn to \eqref{eqetapsihupper2}.
If $\Psi(\xi+\eta)\le A_\delta$, then it follows from \eqref{eqpsixietaxi}.
In order to obtain the other case, using again \eqref{e:KMdelta} and then
\eqref{eqpsixietaxi},
\begin{equation*}
A_\delta^{1-\sfrac 1q} \Psi^{\sfrac 1q}(\xi+\eta) \ge 
N^{(k_\delta+2)(q-1)}
\frac1{\tilde c^{1/q}} |\eta| 
\ge (N\lambda)^{q-1}
\frac1{\tilde c^{1/q}} |\eta|.
\end{equation*}
This concludes the proof of \eqref{eqetapsihupper2}.}

Recalling the choice of $\theta_\delta$,
{and using \eqref{eqetapsihlower} and $N_0\ge c_*$ to obtain
$h_\delta(\xi+\nabla \theta_\delta)
=\Psi(\xi+\nabla \theta_\delta)$,}
\begin{equation}\label{eqpsiqc}
\begin{split}
 \Psi^\qc(\xi)\le& \int_{Q_1} \Psi(\xi+\nabla \theta_\delta) \dx =
 \int_{\{\theta_\delta=\varphi_\delta\}} \Psi(\xi+\nabla \theta_\delta) \dx +
 \int_{\{\theta_\delta\ne\varphi_\delta\}} \Psi(\xi+\nabla \theta_\delta) \dx
 \\
 \le&
 \int_{Q_1} h_\delta(\xi+\nabla \varphi_\delta) \dx
 +c {({c_*^q}\lambda^q+1)}\calL^n(\{\theta_\delta\ne\varphi_\delta\})\\
 \le & h^\qc_\delta(\xi)+(1-\delta) + 
 {C\lambda^{q-1}} \int_{\{|\nabla\varphi_\delta|> \lambda\}} |\nabla\varphi_\delta| \dx.
\end{split}
\end{equation}
{We split the last term into two parts. The integral on $\{ \lambda<
  |\nabla\varphi_\delta|< N\lambda\}$
  is estimated using \eqref{eqchoicekdel} and \eqref{eqetapsihupper}, which lead to
\begin{equation}
\lambda^{q-1} \int_{\{ \lambda< |\nabla\varphi_\delta| < N\lambda\} }
 |\nabla\varphi_\delta| \dx\le C\frac{\Psi^\qc(\xi)+1}{K_{N,\delta}}.
\end{equation}
  For the contribution on the set 
 $\{
  |\nabla\varphi_\delta|\ge N\lambda\}$
  we instead  use \eqref{eqetapsihupper2} and 
  \eqref{e:stima hdeltaqc}, 
  to obtain
  \begin{equation}
\lambda^{q-1} \int_{\{ N\lambda\le |\nabla\varphi_\delta| \} }
 |\nabla\varphi_\delta| \dx\le C N^{1-q} (\Psi^\qc(\xi)+1).
\end{equation}  }

Inserting the last two inequalities into \eqref{eqpsiqc} leads to
\begin{equation*}
 \Psi^\qc(\xi)\le h^\qc_\delta(\xi)+(1-\delta) + 
 {C} \left(\frac{1}{N^{q-1}}+\frac{1}{K_{{N},\delta}}\right)
(\Psi^\qc(\xi)+1).
\end{equation*}
Taking first $\delta\to1$ and then $N\to\infty$ gives the result
(recall that $\delta\mapsto h_\delta(\xi)$ is nondecreasing).
    \end{proof}

We are now ready to prove the claimed lower semicontinuity property for bulk integrands.
\begin{proof}[Proof of Proposition~\ref{p:lsc bulk}]
We divide the proof into several steps.

{\bf Step~1. Reduction to coercive integrands.}
Assume that \eqref{e:lsc volume} holds for coercive integrands $\Psi$, namely satisfying for every
$\xi\in\R^{m\times n}$
\begin{equation}\label{e:Psi coercive}
\frac1c|\xi|^q-c\le \Psi(\xi)\leq c(|\xi|^q+1)\,.
\end{equation}
We show next how to deduce lower semicontinuity for integrands satisfying more generally \eqref{e:Psi gc gen}.
Recalling ($\Psi$-a''), assumption ($\Psi$-{c}) can be equivalently stated as
follows: for every $\eps>0$ there is $k_\eps{\ge1}$ such that
\begin{equation}\label{e:equiintegrability}
\sup_j\int_{\{\Psi(\nabla u_j)<-{k_\eps}\}}|\Psi(\nabla u_j)|\dx<\eps\,.
\end{equation}
Clearly, we may suppose that $k_\eps\to\infty$ as $\eps\to0$.

Let $q\geq 1$, and define $\Psi_\eps(\xi):=\max\{\Psi(\xi),-k_\eps\}+\eps|\xi|^q$. Then,
\begin{align*}
&\left|\int_\Omega \Psi_\eps(\nabla u_j)\dx-\int_\Omega \Psi(\nabla u_j)\dx\right|\\&\leq
\int_{{\{\Psi(\nabla u_j)<-k_\eps\}}} |\Psi(\nabla u_j)|\dx
+k_\eps\calL^n(\{\Psi(\nabla u_j)<-k_\eps\})+\eps\int_\Omega |\nabla u_j|^q\dx\\
&\leq C\eps
\end{align*}
for some $C> 0$, thanks to assumption {($\Psi$-a'')} and \eqref{e:equiintegrability}.
Clearly, $\Psi_\eps$ is coercive {and quasiconvex}, thus the corresponding integrand is lower
semicontinuous so that
\begin{align*}
\liminf_{j\to\infty}\int_\Omega \Psi(\nabla u_j)\dx\geq
\liminf_{j\to\infty}\int_\Omega \Psi_\eps(\nabla u_j)\dx-C\eps\geq \int_\Omega \Psi_\eps(\nabla u)\dx-C\eps\,,
\end{align*}
and the conclusion follows
{since $\Psi_\eps\ge \Psi$}
by letting $\eps\to 0$.
\medskip

We are thus left with proving inequality \eqref{e:lsc volume} for integrands satisfying 
\eqref{e:Psi coercive}.
{Since \eqref{e:Psi coercive} implies that $\Psi$ is bounded from below,
adding a constant, one easily reduces to the case of nonnegative integrands.}

{\bf Step~2. 
Reduction to a truncated sequence converging in $L^1$.}
Let $\mathcal T_k$ be the truncation defined in \eqref{e:Tk}. Having fixed $M\in\N$, by taking into account assumption
($\Psi$-a'') and averaging, we choose, for every $j$,
an integer
$k_j\in\{M+1,\dots, 2M\}$ such that
\begin{equation}\label{e:average}
\int_{\{a_{k_j}<|u_j|<a_{k_j+1}\}} |\nabla u_j|^q \dx
\leq \frac CM,
\end{equation}
for some $C>0$, which implies that $\hat u_j:=\mathcal{T}_{k_j}(u_j){\in SBV(\Omega;\R^m)}$ obeys
\begin{equation}\label{eqthatkwcm}
\begin{split}
&\int_{\Omega} \Psi(\nabla \hat u_j) \dx \le 
\int_{\Omega} \Psi(\nabla u_j) \dx+\frac CM
+C\calL^n(\{|u_j|\ge a_{M}\}).
\end{split}
\end{equation}
Indeed, we have
\begin{align*}
\int_{\Omega}& \Psi(\nabla \hat u_j) \dx \leq\int_{\{|u_j|\leq a_{k_j}\}} \Psi(\nabla u_j) \dx\\
&+\int_{\{a_{k_j}<|u_j|< a_{k_j+1}\}} \Psi(\nabla \hat u_j)  \dx +\Psi(0)\calL^n(\{|u_j|\ge a_{k_j+1}\})\\
&\leq\int_{\Omega} \Psi(\nabla u_j) \dx+C\int_{\{a_{k_j}<|u_j|< a_{k_j+1}\}} |\nabla u_j|^{q}\dx +C\calL^n(\{|u_j|\ge
a_{k_j}\})\,,
\end{align*}
where in the last inequality we have used that {$\mathrm{Lip}(\mathcal{T}_{k_j})\leq1$}
together with \eqref{e:Psi coercive}.
The inequality in \eqref{eqthatkwcm} then follows from \eqref{e:average}. 

Passing to a further subsequence we can assume $k_j=k$ independent of $j$ (but still depending on $M$). Notice that
$\hat u_j$ and $\hat u:=\mathcal{T}_{k}(u)$ satisfy ($\Psi$-a''), ($\Psi$-b''), and ($\Psi$-c) by definition of
$\mathcal T_k$. {Moreover, up to subsequences, $\hat u_j$ converges pointwise to $\hat u$ and hence in $L^1$ by dominated convergence.}

Let us assume for the moment that \eqref{e:lsc volume} holds for $\hat u_j$ and $\hat u:=\mathcal{T}_{k}(u)$. Then,
\eqref{eqthatkwcm} yields
\begin{multline*}
\int_\Omega\Psi(\nabla \hat u)\dx\le\liminf_{j\to\infty} \int_\Omega\Psi(\nabla \hat u_j)\dx\\
\leq \liminf_{j\to\infty}\int_{\Omega} \Psi(\nabla u_j) \dx+\frac CM
+C\calL^n(\{|u|\ge a_{M}\}).
\end{multline*}
By this and
\[
\Big|\int_\Omega\Psi({\nabla \hat u})\dx-\int_\Omega\Psi(\nabla u)\dx\Big|\leq C\int_{\{|u|>a_M\}}(|\nabla
u|^q+1)\dx,
\]
letting $M\to\infty$, we conclude that \eqref{e:lsc volume} holds.
In the rest of the proof we will assume ${u},\,u_j\in L^\infty\cap SBV(\Omega;\R^m)$, and $u_j\to u$ in
$L^1(\Omega;\R^m)$.

{\bf Step~3. Blow-up.} For every $j\in\N$ consider the measures defined by
\begin{equation*}
\mu_j:=\Psi(\nabla u_j)\calL^n\LL \Omega,\qquad\nu_j:=g_0(|[u_j]|)\calH^{n-1}\LL J_{u_j}\,.
\end{equation*}
Assumptions \eqref{e:Psi coercive}, {($\Psi$-a'')} and ($\Psi$-b'') {imply} that $(\mu_j)_j$ and $(\nu_j)_j$ are
equibounded
in mass. Passing to a subsequence we may assume that $\mu_j\weakto\mu_\Psi$ and 
$\nu_j\weakto{\nu_\Psi}$ weakly-$*$ in the sense of measures on $\Omega$
as $j\to\infty$, for some $\mu_\Psi,\,\nu_\Psi\in\calM^+_b(\Omega)$.
In addition, we may assume that the right-hand side  of \eqref{e:lsc volume} is a limit, 
which is finite in view of \eqref{e:Psi coercive} and assumption {($\Psi$-a'')}.
Notice that if $H_\Psi(u)\leq \mu_\Psi(\Omega)$ then
necessarily the conclusion in \eqref{e:lsc volume} holds. 
Clearly, to establish the former inequality it suffices to show that
\begin{equation}\label{eqlbvolulmepart}
\Psi(\nabla u(x_0))\leq \frac{\dd\mu_\Psi}{\dd\calL^n}(x_0)\,
\end{equation}
for $\calL^n$-a.e. $x_0\in \Omega$. 
By the Besicovitch derivation theorem \cite[Theorem~2.22]{AFP} we have 
\begin{equation}\label{e: Besicovitch}
\frac{\dd\mu_\Psi}{\dd\calL^n}(x_0)+\frac{\dd{\nu_\Psi}}{\dd\calL^n}(x_0)<\infty\,
\end{equation}
for $\calL^n$-a.e. $x_0\in \Omega$. Next we observe that for $\calL^n$-a.e. $x_0\in \Omega$ one has
\begin{equation*}
\frac{\dd \mu_\Psi}{\dd\calL^{n}}(x_0)
=\lim_{\rho\to0} \frac{\mu_\Psi(Q_\rho(x_0))}{\rho^n}
=\lim_{\rho\to0\atop \rho\in I} \lim_{j\to\infty}\frac{\mu_j(Q_\rho(x_0))}{\rho^n}
\end{equation*}
and
\begin{equation*}
\frac{\dd {\nu_\Psi}}{\dd\calL^{n}}(x_0)
=\lim_{\rho\to0} \frac{{\nu_\Psi}(Q_\rho(x_0))}{\rho^n}
=\lim_{\rho\to0\atop \rho\in I} \lim_{j\to\infty}\frac{\nu_j(Q_\rho(x_0))}{\rho^n}
\end{equation*}
where $Q_\rho(x_0):=x_0+(-\frac12\rho,\frac12\rho)^n$ and
$I:=\{\rho\in (0, {\frac{2}{\sqrt n}}\dist(x_0,\partial \Omega)):\,
\mu_\Psi(\partial Q_\rho(x_0))=\nu_\Psi(\partial Q_\rho(x_0))=0\}$.
We define {$u^\rho:Q_1\to\R^m$ by}
\begin{equation*}
u^\rho(y):=\frac{  u(x_0+\rho y)- u(x_0)} {\rho}.
\end{equation*}
By the Calder\'on-Zygmund theorem \cite[Theorem~3.83]{AFP}, for $\calL^n$-a.e. $x_0\in \Omega$,
after possibly extracting a further subsequence,
$u^\rho(y)\to \nabla u(x_0)y$ in $L^1(Q_1;\R^m)$ as $\rho\to0$.
We shall establish \eqref{eqlbvolulmepart} for all points $x_0$ satisfying
\eqref{e: Besicovitch},
and for which $u^\rho\to \nabla u(x_0)y$ in $L^1(Q_1;\R^m)$.

We further define
\begin{equation*}
u^\rho_j(y):=\frac{ u_j(x_0+\rho y)- u(x_0)} {\rho}
\end{equation*}
so that $u^\rho_j\to u^\rho$ in $L^1(Q_1;\R^m)$ as $j\to\infty$, for every fixed $\rho>0$.
We take a diagonal subsequence so that $w_i(y):=u^{\rho_i}_{{j_i}}(y)\to \nabla u(x_0)y$ in $L^1(Q_1;\R^m)$,
\begin{equation}\label{eqdmudlx0abdsdf}
\frac{\dd \mu_\Psi}{\dd\calL^{n}}(x_0)=
\lim_{i\to\infty}\int_{Q_1} \Psi(\nabla w_i) \dx\,,
\end{equation}
and
\begin{equation}\label{e:g0 wi bdd}
\sup_i\frac{1}{\rho_i}\int_{J_{w_i}\cap Q_1
} g_0(\rho_i|[w_i]|)\dd\calH^{n-1}<\infty\,.
\end{equation}
\medskip 

{\bf Step~{4}. Reduction to blow-ups bounded in $L^\infty$ and with vanishing singular total variations.}
By a truncation argument analogous to that used in Step~2, having fixed $M\in\N$, we find
$k_i\in\{M+1,\dots, 2M\}$ such that
$\hat w_i:=\mathcal{T}_{k_i}(w_i){\in SBV(Q_1;\R^m)}$, with $\mathcal{T}_{k_i}$ defined {in \eqref{e:Tk}}, satisfies
\begin{equation}\label{eqthatkwcm2}
\begin{split}
&\int_{Q_1} \Psi(\nabla \hat w_i) \dx \le 
\int_{Q_1} \Psi(\nabla w_i) \dx+\frac CM
+C\calL^n(\{|w_i|\ge a_{M}\}).
\end{split}
\end{equation}
Moreover, note that if ${a_M}>\|\nabla u(x_0)y\|_{L^\infty(Q_1)}$ then
$w_i\to \nabla u(x_0)y$ implies
$\hat w_i\to \nabla u(x_0)y$ in $L^1(Q_1;\R^m)$.
Since
 $\mathcal{T}_{k_i}\in C^1$, we deduce
$J_{\hat w_i}\subset J_{w_i}$; from the fact that $\mathcal{T}_{k_i}$ {is} $1$-Lipschitz and $g_0$
is monotone, we obtain 
\begin{equation}\label{e:g0 hatwi bdd}
\sup_i\frac{1}{\rho_i}\int_{J_{\hat w_i}\cap Q_1
} g_0(\rho_i|[\hat w_i]|)\dd\calH^{n-1}\leq \sup_i\frac{1}{\rho_i}\int_{J_{w_i}\cap Q_1
} g_0(\rho_i|[w_i]|)\dd\calH^{n-1}<\infty\,.
\end{equation}
{We observe that
subadditivity and monotonicity of $g_0$ imply}
\begin{equation}\label{eqg0rho}
\frac{g_0(\rho)}{\rho}\leq 2\frac{g_0(t\rho)}{t\rho}
\end{equation}
for every $\rho>0$ and $t\in(0,1]$.
Define the set $J_{\hat w_i}^1:=\{x\in J_{\hat w_i}: |[\hat w_i]|\geq1\}$. Then
\begin{align}\label{e:bd Ds hatwi}
|D^s\hat w_i|&(Q_1)=\int_{J_{\hat w_i}\cap Q_1} |[\hat w_i]|\dd\calH^{n-1}\notag\\
&\leq 2\int_{(J_{\hat w_i}\setminus J_{\hat w_i}^1)\cap Q_1
} \frac{g_0(\rho_i|[\hat w_i]|)}{g_0(\rho_i)}\dd\calH^{n-1}
+2a_{2M}\calH^{n-1}(J_{\hat w_i}^1\cap Q_1)\notag\\
&\leq 2\int_{{(J_{\hat w_i}\setminus J_{\hat w_i}^1)}\cap Q_1
} \frac{g_0(\rho_i|[\hat w_i]|)}{g_0(\rho_i)}\dd\calH^{n-1}
+2a_{2M}\int_{J_{\hat w_i}^1\cap Q_1} 
\frac{g_0(\rho_i|[\hat w_i]|)}{g_0(\rho_i)}\dd\calH^{n-1}\notag\\
&\leq C(1+a_{2M})\frac{\rho_i}{{g_0(\rho_i)}}\,,
\end{align}
where to deduce the first inequality we have used
{\eqref{eqg0rho}}
and the $L^\infty$  bound on $\hat w_i$; to deduce the second inequality we have used again the monotonicity of $g_0$,
and to deduce the last we have used \eqref{e:g0 hatwi bdd}.
Therefore, recalling that $g_0$ is superlinear at the origin, we infer that
\begin{align}\label{e:Ds hatwi vanishing}
|D^s\hat w_i|(Q_1)\to0\,.
\end{align}
\medskip

{\bf Step~{5}. Lower semicontinuity for coercive integrands.}
We are now ready to conclude the proof.
Indeed, thanks to \eqref{e:Psi coercive}, if $q>1$ we can find an increasing sequence of quasiconvex functions with
linear growth $(\Psi_k)_k$ such that $\sup_{k\in\N}\Psi_k(\xi)=\Psi(\xi)$ by applying Proposition~\ref{prophdeltaqc} with
$\delta=\delta_k\to 1^-$ and setting $\Psi_k:=h_{\delta_k}^\qc$.
We claim that for every $k\in\N$
\begin{equation}\label{e:lsc volume Fk}
\liminf_{i\to\infty}\int_{Q_1} \Psi_k(\nabla \hat w_i) \dx\geq \Psi_k(\nabla u(x_0))\,.
\end{equation}
Assuming the claim for the moment, by \eqref{eqdmudlx0abdsdf}, \eqref{eqthatkwcm2}, and \eqref{e:lsc volume Fk} we deduce that
\begin{align*}
\frac{\dd \mu_\Psi}{\dd\calL^{n}}(x_0)&=\lim_{i\to\infty}\int_{Q_1} \Psi(\nabla w_i) \dx
\geq\liminf_{i\to\infty}\int_{Q_1} \Psi(\nabla \hat w_i)\dx-\frac CM
\\&
\geq\liminf_{i\to\infty}\int_{Q_1} \Psi_k(\nabla \hat w_i) \dx-\frac CM
\geq \Psi_k(\nabla u(x_0))-\frac CM\,.
\end{align*}
The inequality in \eqref{eqlbvolulmepart} then follows at once by passing to the supremum over $k\in\N$ and letting
$M\to\infty$.

To conclude, we are left with establishing \eqref{e:lsc volume Fk}.
{From the definition in \eqref{eqdefhdelta} we}
check that $0\leq\Psi_k(\xi)\leq C_k{(|\xi|+1)}$, for some $C_k>0$ and all $\xi\in\R^{m\times n}$.
{Therefore we} may argue as in \cite[Lemma~1.6]{Kristensen99} to infer that for every $r\in(0,1)$
\begin{align}\label{e:BV qcvxty}
\int_{Q_1} \Psi_k(\nabla \hat w_i) \dx&\geq {r^n}\Psi_k(\nabla u(x_0))\notag\\
&-{C_k |D^s\hat w_i|(Q_1)}
-\frac{{C_k}}{1-r}\int_{Q_1\setminus {Q_r}}|\hat w_i-\nabla u(x_0) y|\dy\,.
\end{align}
Hence, \eqref{e:Ds hatwi vanishing} and the convergence $\hat w_i\to \nabla u(x_0)y$ in $L^1(Q_1;\R^m)$ imply
\eqref{e:lsc volume Fk} by letting $i\to\infty$ and then $r\to 1^-$.

If $q=1$ we may argue as above in order to deduce inequality \eqref{e:BV qcvxty} directly for $\Psi$. 
The conclusion then follows at once.
\end{proof}

\subsection{Lower semicontinuity for free-discontinuity energies}

As a consequence of Propositions~\ref{p:lsc surface} and~\ref{p:lsc bulk}, we obtain 
the following lower semicontinuity statement for a large class of superlinear free-discontinuity energies generalizing
\cite[Theorem~4.5, Remark~4.6]{Ambrosio1994}, and the result in \cite{Kristensen99} (see the comments after
\cite[Theorem~1.2]{Kristensen99} and Remark~1 after \cite[Theorem~6.2]{Kristensen99}).
\begin{theorem}\label{t:lsc}
Let $g:\R^m\times S^{n-1}\to[0,\infty)$ satisfy  \eqref{gBVell}-\eqref{e:g0 superlinear}, and let
$\Psi:\R^{m\times n}\to[0,\infty)$ be continuous, quasiconvex and satisfy \eqref{e:Psi gc gen} with
$q\in(1,\infty)$.
Then
\[
H_g(u)+H_\Psi(u)\leq\liminf_{j\to\infty}(H_g(u_j)+H_\Psi(u_j))\,,
\]
if $u$, $u_j\in {(GSBV(\Omega))^m}$ satisfy ($\Psi$-a''), ($\Psi$-b''), and ($\Psi$-c).
\end{theorem}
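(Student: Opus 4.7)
The plan is to establish the two inequalities
\begin{equation*}
H_g(u) \leq \liminf_{j\to\infty} H_g(u_j) \quad \text{and} \quad H_\Psi(u) \leq \liminf_{j\to\infty} H_\Psi(u_j)
\end{equation*}
separately, by invoking Propositions~\ref{p:lsc surface} and \ref{p:lsc bulk} respectively, and then to combine them via a standard subsequence argument exploiting the nonnegativity of both functionals. The superlinear behaviour of $g$ at the origin and the $q$-growth of $\Psi$ with $q>1$ at infinity are precisely what allows the surface and the bulk parts of the energy to be decoupled.

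First I would assume that the right-hand side is finite (otherwise there is nothing to prove) and extract a subsequence, not relabeled, along which $H_g(u_j) + H_\Psi(u_j)$ converges to the $\liminf$. Since both terms are nonnegative, each sequence $(H_g(u_j))_j$ and $(H_\Psi(u_j))_j$ is individually bounded, and a further diagonal extraction yields a subsequence along which both converge separately, say to limits $L_g$ and $L_\Psi$, with $L_g + L_\Psi$ equal to the $\liminf$ in the statement. It then suffices to establish $H_g(u)\le L_g$ and $H_\Psi(u)\le L_\Psi$ along this subsequence and add the two inequalities.

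To apply Proposition~\ref{p:lsc surface} I verify hypothesis ($g$-a'): convergence in measure $u_j \to u$ and the $(GSBV(\Omega))^m$ regularity of $u$ are directly contained in ($\Psi$-a''); equi-integrability of $(\nabla u_j)_j$ in $L^1(A;\R^{m\times n})$ for every $A\subset\subset\Omega$ is immediate from the uniform $L^q$-bound supplied by ($\Psi$-a''), since $q>1$ and, for any measurable $E\subseteq A$, H\"older's inequality gives $\int_E |\nabla u_j|\,\dx \le \calL^n(E)^{1/q'}\|\nabla u_j\|_{L^q(\Omega)}$. For Proposition~\ref{p:lsc bulk}, the assumptions ($\Psi$-a''), ($\Psi$-b''), ($\Psi$-c) together with quasiconvexity and the upper bound in \eqref{e:Psi gc gen} are all assumed verbatim in Theorem~\ref{t:lsc}.

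No genuine obstacle is expected: the heavy lifting is already encoded in Propositions~\ref{p:lsc surface} and \ref{p:lsc bulk}, and the present theorem is essentially their clean assembly. The only two minor points deserving a brief justification are the derivation of the equi-integrability in $L^1_{\loc}$ from the uniform $L^q$-bound with $q>1$ needed to enter Proposition~\ref{p:lsc surface}, and the simultaneous extraction of a subsequence making both energy components convergent, which is standard and uses solely the nonnegativity of $H_g$ and $H_\Psi$.
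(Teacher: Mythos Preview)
Your proposal is correct and follows essentially the same approach as the paper: verify ($g$-a') from the $L^q$-bound with $q>1$, apply Propositions~\ref{p:lsc surface} and \ref{p:lsc bulk} separately, and combine. The paper phrases the combination step slightly more economically via the superadditivity of the $\liminf$ operator (i.e.\ $\liminf_j(a_j+b_j)\ge \liminf_j a_j+\liminf_j b_j$ for nonnegative sequences) rather than extracting a subsequence along which both terms converge, but this is a cosmetic difference.
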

\begin{proof}
Proposition~\ref{p:lsc bulk} directly implies the lower semicontinuity inequality for the bulk part.
Furthermore, $(\nabla u_j)_j$ is equiintegrable in $L^1(\Omega;\R^{m\times n})$ by assumption {($\Psi$-a'')}
since $q>1$, so that ($g$-a') is satisfied. The conclusion then follows at once from 
Proposition~\ref{p:lsc surface} and the superadditivity of the $\liminf$ operator.
\end{proof}

\section{Relaxation}\label{s:relaxation}

In this section we dispense with the quasiconvexity assumption on $\Psi$ and the $BV$-ellipticity assumption on $g$, and
identify the lower semicontinuous envelope of the corresponding energy
${F:=}H_\Psi+H_g$ with respect to the strong $L^1$ topology.
We recall that, given a metric space $(X,d)$ endowed with the topology induced by $d$, and given a functional $F:X\to
[0,\infty]$, the relaxation $\overline F:X\to[0,\infty]$ is defined by
\[\overline F(u):=\inf\{\liminf_{k\to\infty}F(u_k):\ u_k\stackrel{d}{\to} u\}.\]	
{The relaxation of $H_\Psi$, which is the restriction of $F$ to Sobolev functions, is
an integral functional with energy density given by the quasiconvex
envelope of $\Psi$, which was defined in \eqref{e:qc envelope}, see \cite{Dacorogna}.}
{If one assumes that $g\ge c>0$ then, both in the case of partition problems (where $u$ takes values in a finite
 or discrete set) and for functionals defined on $SBV^p$, which in particular implies finiteness of the measure of the jump set, the relaxation
 is an integral functional with $g$ replaced by its $BV$-elliptic envelope, see
\cite{BFLM}.} The $BV$-elliptic envelope of
$g:\R^m\times S^{n-1}\to[0,\infty)$ {is defined by}
\begin{equation}\label{e:BV elliptic envelope}
g_{BV}(z,\nu):=\sup\{h(z,\nu):\, h\leq g,\, h\textrm{ $BV$-elliptic}\}\,.
\end{equation}
 We recall (see \eqref{e:BV ellipticity}) that $g:\R^m\times S^{n-1}\to[0,\infty)$ is $BV$-elliptic if
 for every $z\in\R^m$, every $\nu \in S^{n-1}$,
 every 
 {piecewise constant} 
 $u\in SBV(Q^\nu;\R^m)$ such that 
 $u-z\chi_{\{x\cdot\nu>0\}}$ has compact support in the cube $Q^\nu$ one has
 \begin{equation}\label{eqdefGbvellip}
  g(z,\nu)\le\int_{Q^\nu\cap J_u} g([u],{\nu_u}) \dH^{n-1}.
 \end{equation}
{Since we are dealing with test functions $u$ which take values in an infinite set, there are several variants
of this definition,
with different sets of test functions, which are not obviously equivalent.
We adopt the one from \cite{AFP} which uses piecewise constant functions, in the sense of functions which are constant
on a Caccioppoli partition. Such functions have a jump set of finite $\calH^{n-1}$-measure, which is a stronger property than finiteness of the integral in 
\eqref{eqdefGbvellip}. In
\cite[Sect.~3.1]{BFLM} the even smaller class of piecewise constant functions which take finitely many values is used.
Alternatively, one could consider all $SBV$ functions with $\nabla u=0$ almost everywhere, or require the level sets
to be polyhedral.
Based on the density result in
\cite{CFI23}, we show in Lemma~\ref{lemmaBVellipiinf} that these definitions are {all} equivalent.
To do this, it is useful to introduce some notation.}
 
We denote by $PA(\R^n;\R^m)$ the space of functions for which there
exists a locally finite decomposition of $\R^n$ into simplexes, such that $u$ is affine in the interior of each of them.
By restriction to $\Omega$, one obtains {that any $u\in PA(\R^n;\R^m)$ satisfies
$u\in SBV\cap L^\infty(\Omega;\R^m)$.}

 For $h:\R^m\times S^{n-1}\to[0,\infty)$ we define $T(h):\R^m\times S^{n-1}\to[0,\infty)$ by
\begin{equation}\label{eqdefThBV}
\begin{split}
  T(h)(z,\nu):=&\inf \left\{ \int_{Q^\nu\cap J_u} h([u],{\nu_u}) \dH^{n-1}:
  u\in PA(\R^n;\R^m); \nabla u=0\ \ {\calL^n\text{-a.e.}}; \right.\\
  & \hskip4cm \left.\phantom{\int_{Q^\nu}}
  \supp(u-z\chi_{\{x\cdot\nu>0\}})\subset\subset Q^\nu
  \right\}
  \end{split}
\end{equation}
{and, provided $h$ is Borel measurable, $T_*(h):\R^m\times S^{n-1}\to[0,\infty)$ by
\begin{equation}\label{eqdefTstarhBV}
\begin{split}
  T_*(h)(z,\nu):=&\inf \left\{ \int_{Q^\nu\cap J_u} h([u],{\nu_u}) \dH^{n-1}:
  u\in SBV(Q^\nu;\R^m); \nabla u=0\ \ {\calL^n\text{-a.e.}}; \right.\\
  & \hskip4cm \left.\phantom{\int_{Q^\nu}}
  \supp(u-z\chi_{\{x\cdot\nu>0\}})\subset\subset Q^\nu
  \right\}.
  \end{split}
\end{equation}
The set of test functions in \eqref{eqdefThBV} is smaller than in
\eqref{eqdefGbvellip}, since the test functions take finitely many values, and the level sets are polygonals. In \eqref{eqdefTstarhBV} the set
of test functions is larger than in
\eqref{eqdefGbvellip}, as it includes functions with $\calH^{n-1}(J_u\cap Q^\nu)=\infty$.}
\begin{lemma}\label{lemmaBVellipiinf}
Let $g:\R^m\times S^{n-1}\to[0,\infty)$ satisfy  \eqref{e:g consistent}-\eqref{eqgg0} for some
nondecreasing, subadditive function $g_0\in C^0([0,\infty);[0,\infty))$ with $g_0^{-1}(0)=\{0\}$.
Then $g_{BV}\in C^0(\R^m\times S^{n-1})$ satisfies {\eqref{gBVell}}-\eqref{eqgg0}, {with the same $g_0$}, and
\begin{equation}\label{eqgBVinf}
 g_{BV}=T(g)={T_*(g)}.
\end{equation}
\end{lemma}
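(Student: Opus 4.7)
The plan is to establish $g_{BV} = Tg = T_*g$ by proving the chain of inequalities $T_*g\le Tg$, $Tg\le g_{BV}$ and $g_{BV}\le Tg$, from which the $BV$-ellipticity of $g_{BV}$ is automatic.

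First I would verify that $Tg$ is itself $BV$-elliptic via a standard gluing argument: given a piecewise constant test function $u$ (which has $\calH^{n-1}(J_u)<\infty$ by definition), cover $\calH^{n-1}$-almost all of $J_u$ by disjoint small cubes $Q_{r_i}^{\nu_u(x_i)}(x_i)$ aligned with the approximate normal via a Besicovitch-type covering, and on each cube insert a rescaled near-optimal $PA$ test function for $Tg([u](x_i),\nu_u(x_i))$. The resulting glued function remains in $PA(\R^n;\R^m)$ with the correct boundary data, yielding
\[
Tg(z,\nu)\le \int_{J_u} Tg([u],\nu_u)\dH^{n-1}+\eta\le \int_{J_u} g([u],\nu_u)\dH^{n-1}+\eta
\]
for arbitrary $\eta>0$, which gives the required $BV$-ellipticity. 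Since $Tg\le g$ follows from the trivial test $u=z\chi_{\{x\cdot\nu>0\}}$ (realised as a PA function via a locally finite triangulation of each half-space), this yields $Tg\le g_{BV}$. The converse $g_{BV}\le Tg$ is immediate: any PA test function with $\nabla u=0$ is piecewise constant in the paper's sense, so the $BV$-ellipticity of $g_{BV}$ gives $g_{BV}(z,\nu)\le \int_{J_u} g([u],\nu_u)\dH^{n-1}$ for every admissible PA function $u$, and passing to the infimum yields the bound.

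The identity $Tg=T_*g$ reduces, since $PA\subset SBV$ gives $T_*g\le Tg$, to the reverse inequality. For this I would invoke the density result \cite[Corollary~2.3]{CFI23} to approximate an arbitrary SBV test function $u$ admissible for $T_*g(z,\nu)$ by PA functions $u_k$ with $\int_{J_{u_k}} g([u_k],\nu_{u_k})\dH^{n-1}\to \int_{J_u} g([u],\nu_u)\dH^{n-1}$; a boundary cut-off on $Q^\nu_t$ for $t\uparrow 1$ combined with the $g_0$-modulus \eqref{eqgg0} and trace convergence, as in Step~3 of the proof of Proposition~\ref{p:lsc surface}, restores the prescribed Dirichlet datum at infinitesimal cost. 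This is the step I expect to be the main obstacle, because the classical argument based on $L^1$-trace convergence is not sufficient when $\calH^{n-1}(J_u)$ may be infinite, and one must exploit the quantitative superlinear behaviour of $g_0$ near zero to control the small jumps produced near $\partial Q^\nu_t$.

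Once the three functionals have been identified, the structural properties of $g_{BV}$ follow directly from the infimum representations. Symmetry \eqref{e:g consistent} is inherited from the $(z,\nu)\leftrightarrow(-z,-\nu)$ symmetry of the admissible class via the translation $u\mapsto u-z$; the upper bound in \eqref{e:g grwoth g0} follows from $g_{BV}\le g$. For the lower bound I use $g_{BV}=T_*g$ together with a slicing argument: for $\calH^{n-1}$-a.e.\ transverse point $y$ in $Q^\nu\cap\{x\cdot\nu=0\}$ the signed sum of the jumps of $u$ along the fibre $y+\R\nu$ equals $z$, so subadditivity of $g_0$ yields $g_0(|z|)\le \sum_i g_0(|[u]_i|)$; integrating and using Fubini give $\int_{J_u} g_0(|[u]|)\dH^{n-1}\ge g_0(|z|)$, hence $T_*g(z,\nu)\ge c^{-1} g_0(|z|)$. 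Finally, the $g_0$-modulus of continuity \eqref{eqgg0} follows by perturbation: from an almost-optimal test function $u$ for $g_{BV}(z',\nu)$, the function $\tilde u:=u+(z-z')\chi_{\{x\cdot\nu>0\}}$ is admissible for $g_{BV}(z,\nu)$, and its extra surface energy is confined to $\{x\cdot\nu=0\}\cap Q^\nu$ and bounded by $c\,g_0(|z-z'|)\,\calH^{n-1}(Q^\nu\cap\{x\cdot\nu=0\})=c\,g_0(|z-z'|)$ via \eqref{eqgg0}.
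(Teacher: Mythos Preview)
Your covering argument for the $BV$-ellipticity of $Tg$ has a genuine gap. When the test function $u$ is merely piecewise constant on a Caccioppoli partition (the class entering \eqref{eqdefGbvellip}), inserting near-optimal $PA$ competitors on small cubes covering $J_u$ does \emph{not} produce a function in $PA(\R^n;\R^m)$: outside the cubes you are left with the original $u$, whose level sets need not be polyhedral and which may take infinitely many values. Hence the glued object is not admissible in \eqref{eqdefThBV}, and the displayed bound $Tg(z,\nu)\le \int_{J_u} Tg([u],\nu_u)\dH^{n-1}$ does not follow from this argument. (The covering works exactly when $u$ is already in $PA$, because then the cubes can be chosen so that $u$ is a pure jump on each of them and the glued function stays in $PA$; this is the content of the paper's identity $TTg=Tg$.)

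The paper circumvents this by a different logical route. It first proves $TTg=Tg$ (your covering argument, restricted to $PA$ test functions), and then obtains $BV$-ellipticity of $Tg$ as a consequence of the stronger statement $T_*Tg=Tg$. For the latter, given an $SBV$ test function $u$ with $\nabla u=0$ and $\supp(u-z\chi_{\{x\cdot\nu>0\}})\subset\subset Q^\nu$, it applies the density theorem of \cite{CFI23} to $\hat u:=u-z\chi_{\{x\cdot\nu>0\}}$, after checking directly from the construction in \cite[Section~4.3]{CFI23} that compact support is preserved. The approximating $PA$ sequence therefore automatically satisfies the boundary constraint in \eqref{eqdefThBV}, and Corollary~2.1 of \cite{CFI23} gives convergence of the $Tg$-surface energies. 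This completely avoids the cut-off on $Q^\nu_t$ that you flag as the main obstacle; in particular no quantitative superlinear behaviour of $g_0$ near $0$ is needed, consistently with the hypotheses of the lemma, which do not include \eqref{e:g0 superlinear}.

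Your treatment of the structural properties \eqref{e:g consistent}--\eqref{eqgg0} for $g_{BV}$ is essentially correct and parallels the paper's; the paper shortens the lower bound in \eqref{e:g grwoth g0} by quoting that $(z,\nu)\mapsto g_0(|z|)$ is $BV$-elliptic \cite[Example~2.8]{AmbrosioBraides1990}, which is exactly what your slicing argument proves, and derives \eqref{eqgg0} for $Tg$ from the subadditivity of $Tg$ in $z$ (a consequence of $TTg=Tg$), in the same spirit as your perturbation argument.
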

\begin{proof}
The fact that $g_{BV}$ is $BV$-elliptic follows from the fact that for any function $h$ as in \eqref{e:BV elliptic
envelope} and any test function $u$ as in \eqref{eqdefGbvellip}
one has
\begin{equation}
  h(z,\nu)\le\int_{Q^\nu\cap J_u} h([u],{\nu_u}) \dH^{n-1}
  \le\int_{Q^\nu\cap J_u} g_{BV}([u],{\nu_u}) \dH^{n-1}.
\end{equation}

{We first check that the definition of $T(g)$ is independent of rotations of $Q^\nu$ around the axis $\nu$. Let $z\in\R^m$, $\nu\in S^{n-1}$, and let $Q^\nu$ and $Q^{\nu}_*$ be unit cubes, with one face orthogonal to $\nu$. Fix $\eps>0$. For $\lambda\in(0,1)$ sufficiently small depending on $\eps$, select $x_1,\dots,x_{N_\lambda}\in Q^\nu\cap \{x\cdot\nu=0\}$ such that
the cubes $q_i:=x_i+\lambda Q^{\nu}_*$ are pairwise disjoint, and
\[\calH^{n-1}\Big((Q^\nu\cap \{x\cdot\nu=0\})\setminus \bigcup_{i=1}^{N_\lambda}q_i\Big)<\eps.\]
By definition of $T(g)(z,\nu;Q^\nu_*)$, where we have made explicit the dependence on the reference cube, there exists $u$ as in \eqref{eqdefThBV} such that $u_i(y):=u(\frac{y-x_i}{\lambda})$, $y\in q_i$, satisfies
\begin{equation}\label{contTh}\eps+T(g)(z,\nu;Q^\nu_*)\geq \frac{1}{\lambda^{n-1}}\int_{q_i\cap J_{u_i}}g([u_i],\nu_{u_i})\dH^{n-1}.
\end{equation}
Define
\[
\tilde u:=\sum_{i=1}^{N_\lambda} u_i\chi_{q_i}+z\chi_{\{x\cdot\nu>0\}\setminus \cup_i q_i},
\]
which is a competitor as in \eqref{eqdefThBV} for $T(g)(z,\nu; Q^\nu)$. Hence, using
\eqref{contTh}, we get
\begin{multline*}T(g)(z,\nu;Q^\nu)\leq \int_{Q^\nu\cap J_{\tilde u}}g([\tilde u],\nu_{\tilde u})\dH^{n-1}\\
\leq\sum_{i=1}^{N_\lambda}\int_{q_i\cap J_{u_i}}g([u_i],\nu_{u_i})\dH^{n-1}+
g(z,\nu)\eps\\
\leq \eps+T(g)(z,\nu;Q^\nu_*)+g(z,\nu)\eps.
\end{multline*}
Taking the limit as $\eps\to 0$, we get $T(g)(z,\nu;Q^\nu)\leq T(g)(z,\nu;Q^\nu_*)$ and, reversing the roles of $Q^\nu$ and $Q^\nu_*$, we get the equality.}

{One important ingredient of the proof is that for any
{locally bounded}
$h:\R^m\times S^{n-1}\to[0,\infty)$ one has
\begin{equation}\label{eqTThTH}
 T(T(h))=T(h).
\end{equation}
The inequality $T(T(h))\le T(h)$ is obvious; the reverse inequality follows from
a standard covering argument.} Indeed, fix $z\in\R^m$ and $\nu\in S^{n-1}$. For $\eps>0$, by definition of $T(T(h))$ there is $u$ as in \eqref{eqdefThBV} such that
\[
\int_{{Q^\nu\cap}J_u}T(h)([u],\nu_u)\dd\calH^{n-1}\leq T(T(h))(z,\nu) +\eps.
\]
Since $u$ is piecewise constant on a {triangulation} of $Q^\nu$, we can cover $J_u$ {(up to a null set)} by
countably many {pairwise disjoint} cubes {$Q_j=x_j+Q_{r_j}^{\nu_j}$
with $x_j\in J_u$, $\nu_j$ the normal to $J_u$ in $x_j$,
and $u(x)=u^\pm(x_j)$ almost everywhere on $Q_j\cap\{\pm(x-x_j)\cdot\nu_j>0\}$. Further,} {since $u\in L^\infty(Q^\nu)$} for some $M$ large
\[
\sum_{j> M}\int_{J_u\cap Q_j}h([u],\nu_u)\dd\calH^{n-1}\leq \eps.
\]
For $j=1,\dots,M$, by definition of $T(h)$ there is $w_j\in {PA(\R^n;\R^m)}$  such that $w_j=u$ {in a neighbourhood
of}  $\partial Q_j$ and
\[
\int_{{Q_j\cap} J_{w_j}}h([w_j],\nu_{w_j})\dd\calH^{n-1}\leq r_j^{n-1}T(h)([u],\nu_u)
{+\frac\eps M}
.
\]
Defining $w:=w_j$ in $Q_j$, for $j=1, \dots, M$, and $w:=u$ otherwise in $Q^\nu$, we get
\begin{multline*}
T(h)(z,\nu)\leq \int_{J_w} h([w],\nu_w)\dd\calH^{n-1}\leq \sum_{j=1}^M \int_{J_{w_j}\cap Q_j}
h([w_j],\nu_{w_j})\dd\calH^{n-1}+\eps\\\leq \sum_{j=1}^M \calH^{n-1}(J_u\cap Q_j)T(h)([u],\nu_u)
+ 2\eps\leq
T(T(h))(z,\nu)+ 3\eps.
\end{multline*}
As $\eps\to 0$, we get $T(h)\leq T(T(h))$. {This concludes the proof of \eqref{eqTThTH}.}

We next check that if $g$ obeys
\eqref{e:g consistent}--\eqref{eqgg0}, then so does $T(g)$. Indeed, property
\eqref{e:g consistent} is immediate.
The upper bound in \eqref{e:g grwoth g0} follows from $T(g)\le g$; the lower bound follows from the fact that since $g_0$ is subadditive the function $(z,\nu)\mapsto g_0(|z|)$ is $BV$-elliptic, {see \cite[Example 2.8]{AmbrosioBraides1990}}. Further, \eqref{eqTThTH} implies (again by rescaling and gluing competitors) that $T(g)$ is subadditive in the first argument. Hence
$T(g)(z,\nu)\le T(g)(z',\nu)+g(z-z',\nu)\le T(g)(z',\nu)+cg_0(|z-z'|)$, which proves
property \eqref{eqgg0}.

{Let us now show that 
$T(g)$ is continuous. To this end, first note that it is sufficient to show the continuity of $T(g)(z,\cdot)$ for every $z\in\R^{m}$
in view of the estimate in \eqref{eqgg0} for $T(g)$, which implies the continuity of $T(g)(\cdot,\nu)$ uniformly in $\nu\in S^{n-1}$.
Then fix $z\in \R^m$, $\eps>0$, $\nu,\nu'\in S^{n-1}$, two unit cubes $Q^\nu,\,Q^{\nu'}$, and 
consider $u$ as in \eqref{eqdefThBV} such that
\begin{equation}\label{contTh2}
\int_{Q^{\nu'}\cap J_{u}}g([u],\nu_{u})\dH^{n-1}\leq T(g)(z,\nu')+\eps\,.
\end{equation}
We assume that $Q^{\nu'}$  converges to $Q^{\nu}$ in the Hausdorff distance as $\nu'\to\nu$, and 
denote by $\lambda_{\nu'}\in(0,1]$ the maximum dilation factor such that $\lambda_{\nu'}Q^{\nu'}\subseteq Q^{\nu}$. 
Note then that $\lambda_{\nu'}\to1^-$ as $\nu'\to \nu$.
Next define $w(x):=u(\lambda_{\nu'}^{-1}x)$ if $x\in\lambda_{\nu'}Q^{\nu'}$ and $w:=z\chi_{\{x\cdot\nu\geq0\}}$ if 
$x\in \R^n\setminus\lambda_{\nu'}Q^{\nu'}$. By construction $w\in PA(\R^n;\R^m)$ with $\nabla w=0$ $\calL^n$-a.e. and 
$\mathrm{supp}(w-z\chi_{\{x\cdot\nu\geq0\}})\subset\subset Q^{\nu}$. Moreover, on setting $\Sigma^{\nu'}:=\{x\in \lambda_{\nu'}\partial Q^{\nu'}:\,(x\cdot\nu)(x\cdot\nu')<0\}\cup\{x\in Q^{\nu}\setminus\lambda_{\nu'}Q^{\nu'}:\,x\cdot\nu=0\}$, 
the growth condition in \eqref{e:g grwoth g0} and a change of variable yield
\begin{align*}
T(g)(z,\nu)&\leq\int_{Q^{\nu}\cap J_{w}}g([w],\nu_{w})\dH^{n-1}\\
&\leq\int_{\lambda_{\nu'}Q^{\nu'}\cap J_{w}}g([w],\nu_{w})\dH^{n-1}
+cg_0(|z|)\calH^{n-1}(\Sigma^{\nu'})\\
&=\lambda_{\nu'}^{n-1}\int_{Q^{\nu'}\cap J_{u}}g([u],\nu_{u})\dH^{n-1}
+cg_0(|z|)\calH^{n-1}(\Sigma^{\nu'})\\
&\leq \lambda_{\nu'}^{n-1}(T(g)(z,\nu')+\eps)+cg_0(|z|)\calH^{n-1}(\Sigma^{\nu'})\,.
\end{align*}
Since $\calH^{n-1}(\Sigma^{\nu'})\to 0$ as $\nu'\to \nu$ we infer that
\[
T(g)(z,\nu)\leq\liminf_{\nu'\to\nu}T(g)(z,\nu')\,.
\] 
Exchanging the roles of $\nu$ and $\nu'$ in the construction above, we conclude that 
\[
\limsup_{\nu'\to\nu}T(g)(z,\nu')\leq T(g)(z,\nu)\,,
\] 
and the claimed continuity follows at once. 
}

{In order to prove \eqref{eqgBVinf}, it suffices to show that
\begin{equation}\label{eqTstarTgTg}
T_*(T(g))=T(g).
\end{equation}
Indeed, combining \eqref{eqTstarTgTg} with the immediate inequalities $T_*(T(g))\le T_*(g)\le T(g)$ we deduce $T_*(g)=T(g)$.
Similarly, since the set of test functions in \eqref{eqdefTstarhBV} contains the one in
\eqref{eqdefGbvellip}, \eqref{eqTstarTgTg} implies in particular that $T(g)$ is $BV$-elliptic, hence it is one of the
functions entering \eqref{e:BV
elliptic envelope}, and
$T(g)\le g_{BV}$.
Conversely, from the definition {of $T(g)$}
and the fact that $g_{BV}$ is $BV$-elliptic
one obtains $g_{BV}\le  T(g)$.
Therefore $T(g)=g_{BV}$, which concludes the proof.}

{
It remains to prove \eqref{eqTstarTgTg}.
The inequality
$T_*(T(g))\le T(g)$ is obvious.}
Fix $z\in\R^m$, $\nu\in S^{n-1}$,
and a test function $u$ as in \eqref{eqdefTstarhBV}, that is,
a function
$u\in SBV(Q^\nu;\R^m)$ such that
$\nabla u=0$ {$\calL^n$-a.e.} {on $Q^\nu$} and
$ \supp(u-z\chi_{\{x\cdot\nu>0\}})\subset\subset Q^\nu$.
We need to show that
\begin{equation}\label{eqTgBVell}
T(g)(z,\nu)\le \int_{Q^\nu\cap J_u} T(g)([u],{\nu_u}) \dH^{n-1}.
\end{equation}
The key point is to apply the density result in
\cite{CFI23}. It provides a sequence $u_j\in PA(\R^n;\R^m)$, converging in $L^1$ and for which the corresponding energies on the right-hand side of \eqref{eqTgBVell} converge to that of $u$. Here $PA(\R^n;\R^m)$ represents
piecewise affine functions with polyhedral jump sets.
A key point is to ensure that the condition
$ \supp(u-z\chi_{\{x\cdot\nu>0\}})\subset\subset Q^\nu$ is preserved.
To do this, we observe {that} if one applies the construction in
\cite[Theorem~1.1]{CFI23} to a function $\tilde u$ with $\supp \tilde u\subset\subset \Omega$, then (for sufficiently
large $j$) one also has $\supp \tilde u_j\subset\subset \Omega$.
To see  this, it suffices to follow the proof of that theorem
in \cite[Section~4.3]{CFI23}.
One first extends $\tilde u$ by zero to a function defined on $\R^n$, then
remarks that,
{in the notation of that proof, the cubes $Q_z^\gamma$ with $z\in  A_\delta$ obey
$\dist(Q_z^\gamma,\partial\Omega)\ge \delta(\sqrt n/2-1/4)\ge\delta/3$,
so that the large-jump part of the reconstructed sequence vanishes around the boundary. Secondly, for the small-jump
and diffuse parts, by
\cite[{Theorem~1.1 Step~3 and }Proposition~4.3(iv)]{CFI23}
{the} construction vanishes outside
$B_{\eps\sqrt n}(\supp\tilde u)$.}

Therefore, an application of
\cite[Theorem~1.1]{CFI23}
to $\hat u:=u-z\chi_{\{x\cdot\nu>0\}}$ (with $p=1$ and {$g_0$})
leads to a piecewise constant sequence $\hat u_j\in PA(\R^n;\R^m)$ which is compactly supported in $Q^\nu$.
For large $j$ the function
$u_j:=\hat u_j+z\chi_{\{x\cdot\nu>0\}}$ is an admissible test function in
 \eqref{eqdefThBV} {with $h=T(g)$}. Further, {by} \cite[Corollary~2.1]{CFI23}, one has
\begin{equation}
 T(T(g))(z,\nu)\le \lim_{j\to\infty}
\int_{Q^\nu\cap J_{u_j}} T(g)([u_j],{\nu_{u_j}}) \dH^{n-1}=
\int_{Q^\nu\cap J_u} T(g)([u],{\nu_u}) \dH^{n-1}.
 \end{equation}
 Recalling that $T(T(g))=T(g)$, this proves \eqref{eqTgBVell}
and concludes the proof {of \eqref{eqTstarTgTg}}.
\end{proof}

{By Lemma~\ref{lemmaBVellipiinf} one easily obtains that for any $z\in\R^m$, $\nu\in S^{n-1}$, and}
$\eps>0$ there is a function {$u\in {PA(\R^n;\R^m)}$ with $\nabla u=0$ {$\calL^n$-a.e.}} such that
\begin{equation}\label{eq:scalcov}
\|u-z\chi_{\{x\cdot\nu>0\}}\|_{L^1(Q^\nu)}\le\eps \ \ \text{and} \ \ \int_{Q^\nu\cap J_u} g([u],{\nu_u}) \dH^{n-1}\le
  g_{BV}(z,\nu)+\eps.
\end{equation}
This is obtained by covering the mid-plane of $Q^\nu$ by small cubes ${x_i+}Q^\nu_\rho$ and by gluing together the
{corresponding translations of the function} $u_\rho$ defined on $Q^\nu_\rho$, obtained by suitably rescaling
a good competitor $u$ for \eqref{eqgBVinf}. The details are left to the reader.

Using this remark,
{we will in fact prove  the following stronger
version of Theorem~\ref{relaxation} {that will be used in Proposition~\ref{p:ubp}}.
\begin{theorem}\label{relaxation2}
Under the same hypotheses as in Theorem~\ref{relaxation}, let
${H_0}:L^1(\Omega;\R^{m})\to[0,\infty]$ be
\begin{equation}\label{Hg}
{H_0}(u):=\begin{cases}
\displaystyle \int_\Omega \Psi(\nabla u)\dx+\int_{J_u{\cap\Omega}}g([u],\nu_u)\dH^{n-1}, & \text{if }u\in
PA({\R^n};\R^m),\\ 
& {\calH^{n-1}(J_u\cap \partial\Omega)=0,}\\
\infty, & \text{otherwise.}
\end{cases}
\end{equation}
Then, the relaxation $\overline{H}_0$ with respect to the {strong topology of $L^1(\Omega;\R^m)$} is the functional
\begin{equation}\label{Hbarg}
\overline{H}_0(u)=\int_\Omega \Psi^\qc(\nabla u)\dx + \int_{J_u}g_{BV}([u],\nu_u)\dH^{n-1},
\end{equation}
if $u\in
{(GSBV(\Omega))^m}$
with $\nabla u\in L^q(\Omega;\R^{m\times n})$, and
$\overline{H}_0(u)=\infty$ otherwise.
\end{theorem}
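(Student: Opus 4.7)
\textbf{Plan of proof for Theorem~\ref{relaxation2}.}

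The strategy is to combine Theorem~\ref{relaxation}, which gives the relaxation of $H$ on the larger class $SBV(\Omega;\R^m)$, with a density argument that shows piecewise affine functions are not a real restriction in energy.

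\emph{Lower bound.} For any sequence $u_k\in PA(\R^n;\R^m)$ with $u_k\to u$ in $L^1(\Omega)$ and $\sup_k H_0(u_k)<\infty$, the bound on the bulk part together with \eqref{e:Psi gc} yields $\sup_k\|\nabla u_k\|_{L^q(\Omega)}<\infty$, and the bound on the surface part together with \eqref{e:g grwoth g0} yields $\sup_k\int_{J_{u_k}}g_0(|[u_k]|)\,\dH^{n-1}<\infty$. Thus the hypotheses $(\Psi$-a''$)$, $(\Psi$-b''$)$ and $(\Psi$-c$)$ of Theorem~\ref{t:lsc} hold (with convergence in $L^1$ implying convergence in measure). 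By Lemma~\ref{lemmaBVellipiinf} the envelope $g_{BV}$ is $BV$-elliptic and still satisfies \eqref{gBVell}--\eqref{e:g0 superlinear} with the same $g_0$, while $\Psi^\qc$ is quasiconvex and satisfies \eqref{e:Psi gc gen}. Applying Theorem~\ref{t:lsc} with integrands $\Psi^\qc$ and $g_{BV}$, and using $\Psi^\qc\le\Psi$, $g_{BV}\le g$, one obtains the desired lower bound.

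\emph{Upper bound.} Fix $u\in (GSBV(\Omega))^m$ with $\nabla u\in L^q(\Omega;\R^{m\times n})$. By Theorem~\ref{relaxation} there exists $v_k\in SBV(\Omega;\R^m)$ with $v_k\to u$ in $L^1(\Omega)$ and
\begin{equation*}
H(v_k)=\int_\Omega\Psi(\nabla v_k)\dx+\int_{J_{v_k}}g([v_k],\nu_{v_k})\dH^{n-1}\;\longrightarrow\;\overline H(u).
\end{equation*}
For each fixed $v_k$, I would apply the density result \cite[Corollary~2.3]{CFI23} to produce $w_{k,j}\in PA(\R^n;\R^m)$ such that, as $j\to\infty$, $w_{k,j}\to v_k$ in $L^1(\Omega)$, $\nabla w_{k,j}\to \nabla v_k$ in $L^q(\Omega;\R^{m\times n})$, and $\int_{J_{w_{k,j}}}g_0(|[w_{k,j}]|)\dH^{n-1}\to\int_{J_{v_k}}g_0(|[v_k]|)\dH^{n-1}$. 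From the continuity estimate \eqref{eqgg0} and \eqref{e:g grwoth g0}, together with the approximation of traces provided by the construction in \cite{CFI23}, one upgrades this to convergence $\int_{J_{w_{k,j}}}g([w_{k,j}],\nu_{w_{k,j}})\dH^{n-1}\to \int_{J_{v_k}}g([v_k],\nu_{v_k})\dH^{n-1}$. The bulk contribution converges by Vitali's theorem, using continuity of $\Psi$ together with the equiintegrability provided by $L^q$ convergence and the upper bound in \eqref{e:Psi gc}. Hence $H_0(w_{k,j})\to H(v_k)$ as $j\to\infty$, and a diagonal extraction gives a recovery sequence $u_k\in PA(\R^n;\R^m)$ with $\limsup_k H_0(u_k)\le\overline H(u)$.

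\emph{Handling the boundary condition.} The requirement $\calH^{n-1}(J_{u_k}\cap\partial\Omega)=0$ is a constraint on the triangulation used in $PA(\R^n;\R^m)$. Since $\partial\Omega$ is Lipschitz and the family of triangulations obtained from \cite{CFI23} contains at most countably many $(n-1)$-faces, for almost every small translation $\tau\in\R^n$ the translated triangulation has the property that $\partial\Omega$ meets each face in an $\calH^{n-1}$-null set. Composing with such a translation of arbitrarily small size preserves all the convergence and energy-convergence statements above, so the boundary condition can be arranged at no extra cost.

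\emph{Main obstacle.} The delicate point is ensuring that the density result from \cite{CFI23} delivers convergence of the full anisotropic surface integral of $g$, not merely of its isotropic control $g_0$, together with simultaneous $L^q$-convergence of the gradients. Without this quantitative matching one would only get an inequality $\limsup_j H_0(w_{k,j})\le H(v_k)+\text{error}$, which would weaken the final upper bound. The assumption \eqref{eqgg0}, which says that $g_0$ is a modulus of continuity for $g$ in the first variable, is precisely what converts $g_0$-energy convergence into $g$-energy convergence, provided one has control on where the trace approximation is bad; this is where the fine structure of \cite[Corollary~2.3]{CFI23} is essential.
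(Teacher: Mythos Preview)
Your lower bound is fine and matches the paper's argument. The upper bound, however, has a genuine logical gap: you invoke Theorem~\ref{relaxation} to produce the intermediate $SBV$ recovery sequence $v_k$, but in the paper's structure Theorem~\ref{relaxation} is not proved independently---it is obtained as an immediate consequence of Theorem~\ref{relaxation2} (note the sentence ``we will prove in fact the following stronger version of Theorem~\ref{relaxation}'' preceding the statement). So as written your argument is circular.

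The paper avoids this by constructing the $PA$ recovery sequence directly from $u$, without passing through an intermediate $SBV$ relaxation. The scheme is: (i) apply \cite[Corollary~2.4]{CFI23} with the integrands $\Psi^{\qc}$ and $g_{BV}$ to obtain $u_1\in PA(\R^n;\R^m)$ with $\calF(u_1;\Omega')\le\calF(u;\Omega)+\delta$ and $\calH^{n-1}(J_{u_1}\cap\partial\Omega)=0$; (ii) on each simplex, insert a compactly supported oscillation $\varphi_k$ realising $\Psi^{\qc}$ in terms of $\Psi$; (iii) on a fine cover of each face of $J_{u_1}$ by small cubes, insert a near-optimal competitor from the definition $g_{BV}=Tg$ (Lemma~\ref{lemmaBVellipiinf}) realising $g_{BV}$ in terms of $g$. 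Steps (ii)--(iii) are precisely the passage from $(\Psi^{\qc},g_{BV})$ to $(\Psi,g)$, and this is the content that your plan defers to Theorem~\ref{relaxation} without supplying.

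Your two-step route could be salvaged, but only by first proving Theorem~\ref{relaxation} independently---and that proof would require essentially the same explicit construction. The density result \cite{CFI23} alone does not bridge the gap: it approximates a given $SBV$ function in energy \emph{with the same integrands}, so it converts an $SBV$ recovery sequence for $H$ into a $PA$ one (your step (b)), but it cannot by itself manufacture a sequence whose $H$-energy drops from $H(u)$ down to $\calF(u)$.

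On the boundary condition: your translation idea is close in spirit to what the paper does, which is to exploit the freedom in the shift parameter $\zeta$ inside the \cite{CFI23} construction (see the discussion after \eqref{eqcalfu1u}) to ensure $\calH^{n-1}(J_{u_1}\cap\partial\Omega)=0$ for $\calL^n$-a.e.\ choice of $\zeta$.
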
}
\begin{proof}
Let $\calF$ be the functional in the right-hand side of \eqref{Hbarg}. We first show that
$\calF\leq \overline{H}_0$. In
view of the definition of $H_0$, we immediately have $\calF\leq H_0$. Since the functional
$\calF$ is lower semicontinuous with respect to the {strong topology of $L^1(\Omega;\R^m)$} thanks to Lemma \ref{lemmaBVellipiinf},
Propositions~\ref{p:lsc surface}, and \ref{p:lsc bulk}, we conclude that $\calF \leq \overline{H}_0$.

Conversely, let us prove that $\calF\geq \overline{H}_0$.
By a standard diagonal argument,
it suffices to prove the following:
Given $u\in {(GSBV\cap L^1(\Omega))^m}$ such that $\calF(u)<\infty$ and $\delta>0$,
there is $U_\delta\in {PA(\R^n;\R^m)}$ {with $\calH^{n-1}(J_{{U_\delta}}\cap \partial\Omega)=0$} such that $\|u-U_\delta\|_{L^1({\Omega})}\le C\delta$
and $H_0(U_\delta{;\Omega})\le \calF(u{;\Omega})+C\delta$.

A careful inspection of the proof of
\cite[{Corollary~2.4}]{CFI23} {applied with} $\Psi^{\qc}$ {(in place of $\Psi$),}
$g_0$, and {$g_{BV}$ in place of $g$}, {gives that}
there exist an open set $\Omega'$, with $\overline{\Omega}\subset\Omega'$, and a function $u_1\in PA(
{\R^n};\R^m)$ such that $\|u_1-u\|_{{L^1(\Omega')}}\le \delta$, {$\calH^{n-1}(J_{u_1}\cap\partial\Omega)=0$} and
\begin{equation}\label{eqcalfu1u}
\calF(u_1;{\Omega'})\le \calF(u;{\Omega})+\delta.
\end{equation}
{Indeed, \cite[Corollary~2.4]{CFI23} is based on \cite[Theorem~1.1]{CFI23}. An extension of $u$ to $\R^n$, not relabelled, satisfying good energy estimates and such that $\calH^{n-1}(J_u\cap \partial\Omega)=0$, is  provided first in \cite[Theorem~1.1, Step~1]{CFI23}.
	Then, in {\cite[Theorem~1.1, end of Step~3]{CFI23}} 
	one {can check} 
	that near $\partial\Omega$ our function $u_1$, 
	{that is $w_\zeta$ in the notation there, is defined as the piecewise affine function $\Pi_{\eps,\zeta}u$, with $\zeta\in B_\eps$.}
	The choice of $\zeta$ is performed in \cite[Theorem~1.1, Step~7]{CFI23} and can be {further} refined excluding a $\calL^n$-negligible set {without affecting the conclusions of \cite[Theorem~1.1]{CFI23}. More precisely,} using that $\calH^{n-1}\res\partial \Omega$ is finite, 
{we choose $\zeta\in B_\eps$ satisfying $\calH^{n-1}(\partial\Omega\cap J_{\Pi_{\eps,\zeta}u})=0$,
in addition to the conditions imposed in \cite[Theorem~1.1, Step~7]{CFI23}. Indeed, the latter condition 
holds for every $\zeta\in B_\eps$ up to a $\calL^n$-negligible subset.}
The properties of $\Pi_{\eps,\zeta}$ in \cite[Proposition~4.3]{CFI23} finally give \eqref{eqcalfu1u} when 
$\Psi$ and $g$ are replaced by $|\cdot|^q$ and $g_0$. The analogous estimate with the given $\Psi$ and 
$g$ follows by the proof of \cite[Corollary~2.4]{CFI23}.}

In particular, we can assume that there are finitely many simplexes $E_k$
such that $\Omega \cap E_k$ has positive measure for each $k$, $u_1$ is affine on each of them, and the sets $\Omega\cap E_k$ cover $\Omega$ up to a null set. 

Let
\begin{equation}\label{eqdefMdelta}
 M_\delta:=1+\| \Psi(\nabla u_1)\|_{L^\infty(\Omega)}
 +\| \nabla u_1\|_{L^\infty(\Omega)}
 +\|[u_1]\|_{L^\infty(\Omega\cap J_{u_1};\calH^{n-1})}
 + \calH^{n-1}(J_{u_1}).
\end{equation}
This constant is finite, since $\nabla u_1$ takes finitely many values, and $[u_1]$ can be estimated  in terms of finitely many
affine functions. It depends on $u$ and $\delta$ via $u_1$; we keep only the dependence on $\delta$ explicit.

For each $k$ we select a Lipschitz set $E_k'\subset \Omega\cap E_k$ such that
$\dist(E_k',\partial E_k)>0$ and
\begin{equation}\label{eqchoiceEkp}
\calL^n(\Omega\cap E_k\setminus E_k')\le
\frac\delta{M_\delta} \calL^n(\Omega\cap E_k).
\end{equation}
{We refer to Figure~\ref{fig-relax-upper} for  a sketch of the decomposition of the domain.}
Then, by the definition of quasiconvexity and a standard covering argument (see for example \cite[Theorem~9.1]{Dacorogna} and {\cite[Proposition~2.8 in Chapter~X]{ekeland1999convex}), there exists $\varphi_k\in W^{1,\infty}_0(E_k';\R^m)$ piecewise affine} such that
$\|\varphi_k\|_{L^\infty{(E_k')}}\le\delta$
and
\begin{equation}\label{eqqcdelta}
\int_{E_k'}\Psi(\nabla u_1+\nabla \varphi_k)\dx\le
\delta\calL^n(E_k') + \int_{E_k'}\Psi^{\qc}(\nabla u_1)\dx.
\end{equation}
{Extending $\varphi_k$ to $0$ to the rest of $\Omega$},
we then define $u_2:\Omega\to\R^m$ by
\begin{equation}\label{eq:u2}
 u_2:=
 u_1+\sum_k 
 \varphi_{k}.
\end{equation}
We observe that 
\begin{equation}\label{equ2u1}\|u_2-u_1\|_{L^\infty(\Omega)} \le\delta,
\end{equation}
with
$u_2=u_1$ in a neighbourhood of $\cup_k\partial E_k$. Further,
from \eqref{eqqcdelta} we obtain
\begin{equation}
\int_{\Omega\cap E_k}\Psi(\nabla u_2)\dx\le
\Psi(\nabla u_1|_{E_k}) \calL^n(\Omega\cap E_k\setminus E_k')+
\delta\calL^n(E_k')+ \int_{\Omega\cap E_k}\Psi^{\qc}(\nabla u_1)\dx.
\end{equation}
Summing over $k$, recalling \eqref{eqchoiceEkp},
the fact that the $E_k$ are disjoint and cover $\Omega$,
and the definition of $M_\delta$, leads to
\begin{equation}\label{eqestpsinablau2}
 \int_\Omega \Psi(\nabla u_2)\dx
 \le 2\delta\calL^n(\Omega) +
 \int_\Omega \Psi^\qc(\nabla u_1)\dx .
\end{equation}

\begin{figure}
\begin{center}
 \includegraphics[width=8cm]{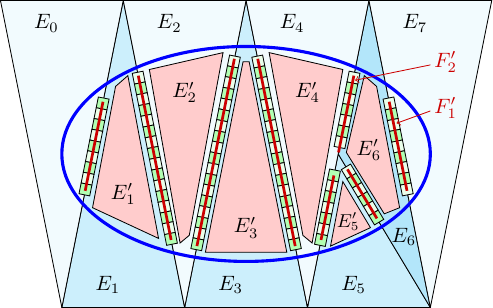}
 \end{center}
 \caption{Sketch of the construction in the proof of {Theorem~\ref{relaxation2}}.
 The set $\Omega$ (blue boundary) is covered by finitely many simplexes $E_k$ (light blue); a large part of the
interior of each of them is contained in the sets $E_k'$.
The codimension-one sets $F_j'$ (red) cover most of the part of the faces inside $\Omega$, and are in turn covered by the
union of
small cubes of size $\rho$ (green).}
 \label{fig-relax-upper}
\end{figure}

We next turn to the interfaces between the different sets $E_k$, which contain the jump set of $u_1$.
Consider the sets
$\Omega\cap \partial E_k\cap \partial E_{k'}$, $k\ne k'$.
We denote by $F_j$, $j=1,\dots, J$, those having positive $\calH^{n-1}$ measure,
and on which $[u_1]$ is not identically zero.
Then
\begin{equation}
 \calF(u_1)=
 \sum_k \int_{E_k\cap\Omega} \Psi^\qc(\nabla u_1) \dx+
 \sum_j \int_{F_j} g_{BV}(u_1^+-u_1^-,\nu_j) \dH^{n-1},
\end{equation}
where $\nu_j$ is the normal to $F_j$.
We remark that for each $j$ the function {$u_1$} is affine on each side of $F_j$.
As above, we fix {relatively} open sets $F_j'\subset F_j$ such that $\dist(F_j',\partial F_j)>0$
(the boundary here is taken in the $(n-1)$-dimensional sense, each of them being contained in an $(n-1)$-dimensional
plane)
and
\begin{equation}\label{eqchoiceFjp}
 \sum_j \calH^{n-1}(F_j\setminus F_j')\le \frac{\delta}{g_0(M_\delta)}.
\end{equation}

{Fix $\rho>0$, chosen below depending on $u_1$, $E_k'$, and $\delta$,} and cover a large part of each $F_j$
by cubes on a scale $\rho$. To do this,
let {$A_j:\R^n\to\R^n$} be an affine {isometry} that maps $\R^{n-1}$
{(which we identify with $\R^{n-1}\times \{0\}\subset\R^n$)}  to the $(n-1)$-dimensional affine space containing $F_j$.
For  $\pcube\in \rho\Z^{n-1}$ we let
$Q_{j,\pcube}:=A_j(\pcube+\rho(-\frac12,\frac12)^n)$.
Let $I_j$ denote the set of $\pcube\in\rho\Z^{n-1}$ such that $Q_{j,\pcube}\cap F_j'\ne\emptyset$.
Since
$\dist(E_k',\partial E_k)>0$ for all $k$ and
$\dist(F_j',F_l)>0$ for $j\ne l$,
for $\rho$ sufficiently small,
for all $j$ and all $\pcube\in I_j$ one has
$u_1=u_2$ on $Q_{j,\pcube}$
and that, for all $l\ne j$,
$Q_{j,\pcube}\cap F_l=\emptyset$,
and $Q_{j,\pcube}\cap Q_{l,\pcube'}=\emptyset$ for $\pcube'\in I_l$.

For each $j$ and $\pcube\in I_j$,
let $v_{j,\pcube}^\pm:=u_1^\pm(A_j(\pcube))$ denote the two traces of $u_1$ at the center of $Q_{j,\pcube}$ (the traces are pointwise defined,
since $u_1$ is piecewise affine).
From these two values we construct
a piecewise constant function by
\begin{equation}
V_{j,\pcube}(x):=
 \begin{cases}
 v_{j,\pcube}^+, & \text{ if } x\in
 A_j(\pcube+\R^{n-1}\times[0,\infty)),\\
 v_{j,\pcube}^-, & \text{ if } x\in
 A_j(\pcube+\R^{n-1}\times(-\infty,0)).
 \end{cases}
\end{equation}
We remark that $\|V_{j,\pcube}-u_1\|_{L^\infty(Q_{j,\pcube})}\le \rho \sqrt n M_\delta$.

By \eqref{eq:scalcov} there is $u_{j,\pcube}\in {PA(\R^n;\R^m)}$ with $\nabla u_{j,\pcube}=0$
$\calL^n$-a.e. on $Q_{j,\pcube}$ such that
\begin{equation}\label{eqgbvujp}
 \int_{Q_{j,\pcube}\cap J_{u_{j,\pcube}}} g([u_{j,\pcube}],\nu_{u_{j,\pcube}}) \dH^{n-1}\le
 \rho^{n-1} (\frac{\delta}{M_\delta}+g_{BV}(
 {[V_{j,\pcube}]},\nu_j) ),
\end{equation}
with
\begin{equation}\label{equVjp}
 {\supp (u_{j,\pcube}-V_{j,\pcube})\subset\subset} Q_{j,\pcube},\quad\text{and}\quad \|u_{j,\pcube}-V_{j,\pcube}\|_{L^1(Q_{j,\pcube})}\le \delta \rho^n.
\end{equation}
We set
\begin{equation}\label{e:urho}
 u^{(\rho)}:=u_2
 +\sum_j \sum_{\pcube\in I_j}
 \chi_{Q_{j,\pcube}} (u_{j,\pcube}-V_{j,\pcube}).
\end{equation}
By \eqref{equVjp}, no additional jump is inserted on $\partial Q_{j,\pcube}$.
In particular, $u^{(\rho)}\in SBV(\Omega;\R^m)$ with
\begin{equation}\label{eqnablaurhonablau2}
 \nabla u^{(\rho)}=\nabla u_2,
\end{equation}
so that \eqref{eqestpsinablau2} holds {with $u^{(\rho)}$ in place of $u_2$}, for any $\rho>0$. Further,
\begin{equation}\label{eqjumpurho}
\begin{split}
&{\calH^{n-1}(J_{u^{(\rho)}}\cap \partial\Omega)=0,}\\
& [u^{(\rho)}]=[u_{j,\pcube}]+[u_2-V_{j,\pcube}] \text{ in } Q_{j,\pcube},\\
& [u^{(\rho)}]=[u_2] \text{ outside }\bigcup_j\bigcup_{\pcube\in I_j} Q_{j,\pcube}.
\end{split}
\end{equation}
We recall that $u_1=u_2$ in each $Q_{j,\pcube}$, so that
\begin{equation}\label{e:stima u2 Vjp}
\|u_2-V_{j,\pcube}\|_{L^\infty(Q_{j,\pcube})} =
\|u_1-V_{j,\pcube}\|_{L^\infty(Q_{j,\pcube})}
\le \rho \sqrt n M_\delta.
\end{equation}
Choosing $\rho$ sufficiently small, we can have
\begin{equation}\label{eqg0rhodelta}
 g_0(2 \rho \sqrt n M_\delta)\le \frac{\delta}{M_\delta}.
\end{equation}
Since both $u_2$ and $V_{j,\pcube}$ jump only on the midplane of $Q_{j,\pcube}$, the second line of \eqref{eqjumpurho}, \eqref{eqgg0}, \eqref{eqgbvujp}, {\eqref{e:stima u2 Vjp}, and \eqref{eqg0rhodelta}} lead to
\begin{align}
 \int_{Q_{j,\pcube}\cap J_{u^{(\rho)}}}&
 g([u^{(\rho)}],{\nu_{u^{(\rho)}}}) \dH^{n-1}\notag\\
 \le&
 \int_{Q_{j,\pcube}\cap J_{u_{j,\pcube}}}
 g([u_{j,\pcube}],{\nu_{u_{j,\pcube}}}) \dH^{n-1}
 +{c} g_0(2 \rho \sqrt n M_\delta)\rho^{n-1}\notag
 \\
 \le &
 \int_{Q_{j,\pcube}\cap F_j} \left[\frac{\delta}{M_\delta}+
 g_{BV}([V_{j,\pcube}],{\nu_j})\right] \dH^{n-1}
 +{c}\frac{\delta}{M_\delta}\rho^{n-1}.
\end{align}
In turn, since 
$g_{BV}$ obeys \eqref{eqgg0}, we obtain {by \eqref{e:stima u2 Vjp}}
\begin{equation}
 g_{BV}([V_{j,\pcube}],{\nu_j})
 \le
 g_{BV}([u_1],\nu_j)+
 {c}g_0(2\rho \sqrt n M_\delta)
\end{equation}
 $\calH^{n-1}$-a.e. on $Q_{j,\pcube}\cap F_j$,
so that with \eqref{eqg0rhodelta} the above estimate reduces to
\begin{equation}\begin{split}\label{eqsplit1}
 \int_{Q_{j,\pcube}\cap J_{u^{(\rho)}}}
 g([u^{(\rho)}],\nu_{u^{(\rho)}}) \dH^{n-1}
 \le &
 3 {c}\frac{\delta}{M_\delta}\rho^{n-1}+
 \int_{Q_{j,\pcube}\cap F_j}
 g_{BV}([u_1],\nu_j) \dH^{n-1}
 \end{split}
\end{equation}
and summing over $\pcube\in I_j$,
\begin{equation}\begin{split}
\sum_{\pcube\in I_j}
\int_{Q_{j,\pcube}\cap J_{u^{(\rho)}}}
 g([u^{(\rho)}],\nu_{u^{(\rho)}}) \dH^{n-1}
 \le &
 3 {c}\frac{\delta}{M_\delta} \calH^{n-1}(F_j)+
 \int_{\Omega\cap F_j}
 g_{BV}([u_1],\nu_j) \dH^{n-1}
 .
 \end{split}
\end{equation}
At the same time, the last identity in \eqref{eqjumpurho} implies
\begin{equation}\begin{split}\label{eqsplit2}
\int_{F_j\setminus \cup_\pcube Q_{j,\pcube}}
 g([u^{(\rho)}],\nu_{u^{(\rho)}}) \dH^{n-1}
 =&
 \int_{F_j\setminus \cup_\pcube Q_{j,\pcube}}
 g([u_1],\nu_j) \dH^{n-1}\\
 \le&{c}  g_0(M_\delta) \calH^{n-1}(F_j\setminus F_j'),
 \end{split}
\end{equation}
where we used \eqref{eqdefMdelta} and monotonicity of $g_0$.
Since these sets cover the jump set of $u^{(\rho)}$, we conclude {by \eqref{eqsplit1} and \eqref{eqsplit2}}
\begin{equation}\begin{split}
 \int_{\Omega\cap J_{u^{(\rho)}}}
 g([u^{(\rho)}],\nu_{u^{(\rho)}}) \dH^{n-1}\le&
{c}   g_0(M_\delta)\sum_j \calH^{n-1}(F_j\setminus F_j')+
 3{c}\frac{\delta}{M_\delta}\sum_j\calH^{n-1}(F_j)\\
 & + \int_{\Omega\cap J_{u_1}} g_{BV}([u_1],{\nu_{u_1}}) \dH^{n-1}.
\end{split}\end{equation}
For $\rho$ sufficiently small we conclude from
\eqref{eqchoiceFjp} and \eqref{eqdefMdelta} that
\begin{equation}\label{e:stima urho u1}
 \int_{\Omega\cap J_{u^{(\rho)}}}
 g([u^{(\rho)}],\nu_{u^{(\rho)}}) \dH^{n-1}\le
{C}\delta+  \int_{\Omega\cap J_{u_1}}
 g_{BV}([u_1],{\nu_{u_1}}) \dH^{n-1}
\end{equation}
and, from {\eqref{e:urho},} \eqref{equ2u1} and \eqref{equVjp}, that
\begin{align}
\|u^{(\rho)}-u_1\|_{L^1(\Omega)}\le &
\calL^n(\Omega)
\|u_2-u_1\|_{L^\infty(\Omega)}\notag\\
&+\sum_{j,\pcube}\|u_{j,\pcube}-V_{j,\pcube}\|_{L^1(Q_{j,\pcube})}\leq {C}\delta\calL^n(\Omega).
\end{align}
{Combining \eqref{eqestpsinablau2},
\eqref{eqnablaurhonablau2},
and \eqref{e:stima urho u1}, we obtain
${H_0(u^{(\rho)})}\le \calF(u_1)+C\delta$, and with \eqref{eqcalfu1u}
the proof is concluded.}
\end{proof}

\section{Phase-field approximation}\label{s:phase field}

In this section we establish a variational approximation of lower semicontinuous energies as in \eqref{eq1} by phase-field models.

\subsection{Model}\label{ss:data}

We assume that $\Psi:\R^{m\times n}\to[0,\infty)$ is continuous, and for some
$q>1$ satisfies \eqref{e:Psi gc}, namely
\begin{equation*}
\Big(\frac1c |\xi|^q-c\Big)\vee0\le \Psi(\xi)\le c(|\xi|^q+1)
\hskip1cm\text{ for all }\xi\in\R^{m\times n}.
\end{equation*}
Moreover, we assume that the limit 
\begin{equation}\label{e:Psiinfty}
 \Psiinfty(\xi):=\lim_{t\to\infty} \frac{\Psi{(t\xi)}}{t^q}
\end{equation}
exists, and that the convergence is uniform on the set of $\xi$ with $|\xi|=1$. This means that for every $\delta>0$ there is $t_\delta$ such
that $|\Psi(t\xi)/t^q-\Psiinfty(\xi)|\le \delta$ for all $t\ge t_\delta$
and all $\xi$ with $|\xi|=1$, which is the same as
\begin{equation}\label{eqpsipsiinf}
|\Psi(\xi)-\Psiinfty(\xi)|\le \delta|\xi|^q\hskip5mm \text{ for all $|\xi|\ge t_\delta$}.
\end{equation}
By scaling, $\Psiinfty(t\xi)=t^q\Psiinfty(\xi)$ and in particular $\Psiinfty(0)=0$.
Uniform convergence also implies $\Psiinfty\in C^0(\R^{m\times n})$ and \eqref{e:Psi gc} {yields}
\begin{equation}\label{e:Psiinftygc}
\frac1c |\xi|^q\le \Psi_\infty(\xi)\le c|\xi|^q
\hskip1cm\text{ for all }\xi\in\R^{m\times n}.
\end{equation}

Following the analysis in the scalar case in \cite[Section~7.2]{ContiFocardiIurlano2016}
we provide an approximation of a model with power-law growth at small openings
(cf. \cite{ContiFocardiIurlano2022} for the analogous model with linear growth).
For all $\eps>0$, $q>1$, and  $p>0$ we consider the functional
$\Functepspq:L^1(\Omega;\R^{m+1})\times\calB(\Omega)\to[0,\infty]$ given by
\begin{equation}\label{functeps p}
{ \Functepspq(u,v;A):= \int_A \left( f_{\eps,p,q}^q(v) \Psi(\nabla u) +
\frac{(1-v)^{q'}}{q'q^{\sfrac{q'}q}\eps} + \eps^{q-1}|\nabla v|^q\right) \dx}
\end{equation}
if $(u,v)\in W^{1,q}(\Omega;\R^m\times [0,1])$ and $\infty$ otherwise, where
{$q'$ denotes the conjugate exponent of $q$, and for every
$t\in[0,1)$}
\begin{equation*}
 f_p(t):=\frac{\ell t}{(1-t)^p},\qquad
 f_{\eps,p,q}(t):= 1\wedge \eps^{{1-\sfrac1q}} f_p(t), \qquad
{f_{\eps,p,q}(1):=1}\,.
\end{equation*}
{Here and below, $\ell>0$ is a fixed parameter.}
Let us remark that $(0,\infty)\ni p\mapsto f_{\eps,p,q}(t)$ is increasing for all
$t\in[0,1]$. Therefore, we deduce that $\Functepspq\geq\calF_{\eps,1,q}$ on $L^1(\Omega;\R^{m+1})$ if $p\geq1$.
In particular, for $q=2$, the asymptotic analysis of the family $(\calF_{\eps,1,2})_{\eps>0}$ has been performed in
\cite{ContiFocardiIurlano2022}, where its convergence to a functional of cohesive type with surface energy densities
with linear growth for small jump amplitudes was established.
For every  $q>1$ and $p=1$ one can prove a completely analogous result following the same arguments.
Instead, here we will focus on the supercritical case $p>1$ leading to surface energy densities with superlinear growth
for small jump amplitudes, as investigated in \cite[Theorem~7.4]{ContiFocardiIurlano2016} in the scalar isotropic case.
In the subcritical setting, i.e. $p\in(0,1)$, a comparison argument yields that the
$\Gamma$-limit is then trivial, i.e. identically zero {(cf. \cite{ACF25-I})}.

To state our result we need to introduce some notation.
For all {Borel} subsets $A\in \calB(\Omega)$, and $(u,v)\in W^{1,q}(\Omega;\R^m\times [0,1])$, it is convenient
to define for $\cutoffconst\in(0,\infty]$
\begin{equation}\label{Feps* p}
\Functepspq^\cutoffconst(u,v;A):=\int_A \Big((\cutoffconst^{q-1}\wedge \eps^{q-1}f_{p}^q(v)) \Psiinfty(\nabla u)
+\frac{(1-v)^{q'}}{q'q^{\sfrac{q'}q}\eps} + \eps^{q-1}|\nabla v|^q\Big)\dx.
\end{equation}
In particular, we have
\begin{equation}\label{Feps infty}
\Functepspq^\infty(u,v;A)=\int_A \Big(\eps^{q-1}f_{p}^q(v) \Psiinfty(\nabla u)
+\frac{(1-v)^{q'}}{q'q^{\sfrac{q'}q}\eps} + \eps^{q-1}|\nabla v|^q\Big)\dx.
\end{equation}

Lower and upper bounds for $\Gamma(L^1)$-limits of $(\Functepspq)_\eps$  will be expressed
in terms of two different surface energies, defined respectively as
\begin{equation}\label{eqdefGpsnu inf}\begin{split}
\gpinf(z,\nu):= \inf \{&\liminf_{j\to\infty}
\calF_{\eps_j,p,q}^{\cutoffconst_j}
(u_j,v_j; Q^\nu): \\
&\|u_j- z\chi_{\{x\cdot\nu>0\}}\|_{L^1(Q^\nu)}\to0,\, \eps_j\to 0, \,\cutoffconst_j\to\infty\}
\end{split}
\end{equation}
and
\begin{equation}\label{eqdefGpsnu sup}
\begin{split}
\gpsup(z,\nu):=
\inf \{&\liminf_{j\to\infty}
\calF_{\eps_j,p,q}^\infty
(u_j,v_j; Q^\nu): \\
&\|u_j- z\chi_{\{x\cdot\nu>0\}}\|_{L^1(Q^\nu)}\to0, \, \eps_j\to 0\}\,,
\end{split}
\end{equation}
with $(z,\nu)\in \R^{m}\times S^{n-1}$.
{By Lemma~\ref{lemmaQnuQnup}, these expressions depend on $\nu$ and not on the choice of $Q^\nu$.}
Obviously, one can restrict to sequences $v_j\to1$ in $L^1(Q^\nu)$.
It is immediate to see that
\begin{equation}\label{e:trivial inequality gpinf gpsup}
\gpinf(z,\nu)\le\gpsup(z,\nu)\quad \text{ for all $(z,\nu)\in \R^m\times S^{n-1}$}.
\end{equation}
In general, we are not able to prove the equality of the above functions.
As a consequence, we carry out the full $\Gamma$-convergence analysis under the conditional assumption
\begin{equation}\label{e:gpinf=gpsup}
\gpinf(z,\nu)=\gpsup(z,\nu)\quad \text{ for all $(z,\nu)\in \R^m\times S^{n-1}$},
\end{equation}
which we actually show to be verified in the case where the $q$-recession function $\Psi_\infty$ of
$\Psi$ satisfies the projection property, namely
\begin{equation}\label{e:projectionproperty}
\Psi_\infty(\xi)\geq \Psi_\infty(\xi\nu\otimes\nu) \quad \text{for every $(\xi,\nu)\in\R^{m\times n}\times S^{n-1}$}.
\end{equation}
In general, we provide lower and upper bounds for the $\Gamma-\liminf$ and $\Gamma-\limsup$ which differ only as far as
the surface energy density is concerned. Indeed, a major difficulty in the asymptotic analysis is to prove that the
limit does not depend on the chosen infinitesimal sequence, a fact that is related to the lack of a rescaling property of the
functional $\calF_{\eps,p,q}$.
This issue was solved in the linear case $p=1$ (and $q=2$) in any dimension by an elementary truncation argument in the
$v$-variable (see \cite[Proposition~4.1]{ContiFocardiIurlano2022}). The latter has no immediate analogue in the current
superlinear setting due to the presence of the two exponents $p>1$ (in $f_p$) and $q$ (in the dissipation potential), which
yield two different
powers of $\eps$ as truncation thresholds. On the other hand, the one-dimensional superlinear setting was dealt
with in \cite[Theorem~7.4]{ContiFocardiIurlano2016} with an ad hoc argument, exploiting an $\eps$-independent
alternative
characterization of the surface energy density (cf. \eqref{e:gscal}). A different argument is used in what follows to
settle the case in which $\Psi_\infty$ satisfies the projection property.

\begin{theorem}\label{t:finale p}
Let $\Psi$ satisfy \eqref{e:Psi gc} and \eqref{eqpsipsiinf}, and let
$\Functepspq$ be the functional defined in \eqref{functeps p}, with $p,\,q>1$.
Assume that \eqref{e:gpinf=gpsup} holds, and denote by $g(z,\nu)$ the common value.

Then for all $(u,v)\in L^1(\Omega;\R^{m+1})$ it holds
$$\Gamma(L^1)\text{-}\lim_{\eps\to0}\Functepspq(u,v)=\Functlim(u,v),$$
where
\begin{equation}\label{Fp}
\Functlim(u,v):=\int_\Omega \Psi^\qc(\nabla u)\dx + \int_{J_u}g([u],\nu_u)\dH^{n-1},
\end{equation}
if $u\in (GSBV\cap L^1(\Omega))^m$ {with $\nabla u\in L^q(\Omega;\R^{m\times n})$}
and $v=1$ $\calL^n$-a.e. on $\Omega$, and $\Functlim(u,v):=\infty$ otherwise. {In particular, $g$ is $BV$-elliptic.}
\end{theorem}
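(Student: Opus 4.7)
The plan is to establish the two $\Gamma$-convergence inequalities separately, and then invoke hypothesis \eqref{e:gpinf=gpsup} to identify the surface densities appearing in the two bounds with the common value $g$.

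For the $\Gamma$-liminf, I consider a sequence $(u_\eps,v_\eps) \to (u,v)$ in $L^1(\Omega;\R^{m+1})$ with $\liminf_\eps \Functepspq(u_\eps,v_\eps) < \infty$. The dissipation term $(1-v)^{q'}/\eps$ forces $v=1$ $\calL^n$-a.e. I then localize by passing to a weak-$*$ subsequential limit $\mu$ of the energy measures $\Functepspq(u_\eps,v_\eps;\cdot)$ and, via a blow-up procedure, prove two separate pointwise bounds on the Radon--Nikodym derivatives of $\mu$. The bulk bound $\frac{d\mu}{d\calL^n}(x_0) \ge \Psi^\qc(\nabla u(x_0))$ will be obtained by approximating $\Psi^\qc$ from below by its linear-growth quasiconvex approximants $h_\delta^\qc$ from Proposition~\ref{prophdeltaqc}, using that $f_{\eps,p,q}^q(v_\eps) \to 1$ in measure, and invoking the lower semicontinuity of Proposition~\ref{p:lsc bulk}. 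The surface bound $\frac{d\mu}{d\calH^{n-1} \res J_u}(x_0) \ge \gpinf([u](x_0),\nu_u(x_0))$ is obtained by blowing up around $\calH^{n-1}$-a.e.\ jump point and recognizing that the rescaled energy is naturally dominated by the truncated functional $\calF_{\eps,p,q}^{\cutoffconst_\eps}$ for a suitable $\cutoffconst_\eps \to \infty$, which matches the definition \eqref{eqdefGpsnu inf}.

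For the $\Gamma$-limsup, by the density result \cite[Corollary~2.3]{CFI23} together with Theorem~\ref{relaxation} and a diagonal argument, the construction of a recovery sequence reduces to the case $u \in PA(\R^n;\R^m) \cap L^\infty(\Omega;\R^m)$ with polyhedral jump set $J_u \cap \Omega$ consisting of finitely many flat faces $F_j$ with constant jump $z_j$ and normal $\nu_j$. On each affine piece I will take $u_\eps = u$ and $v_\eps = 1$, which contributes exactly $\int_\Omega \Psi(\nabla u)\,\dx = \int_\Omega \Psi^\qc(\nabla u)\,\dx$. Near each face $F_j$ I will cover most of $F_j$ by pairwise disjoint cubes $Q^{\nu_j}_\rho(x_i) \subset\subset \Omega$ of side $\rho$ and, on each cube, rescale an almost-optimal competitor $(\alpha_{\eps,j},\beta_{\eps,j})$ for $\gpsup(z_j,\nu_j)$. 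A standard surgery in the $v$-variable allows the competitors to be chosen with $\beta_{\eps,j} = 1$ near $\partial Q^{\nu_j}$, so that the diffuse and bulk constructions glue consistently across $\partial Q^{\nu_j}$. Summing the contributions gives an upper bound $\int_\Omega \Psi^\qc(\nabla u)\,\dx + \sum_j \calH^{n-1}(F_j) \gpsup(z_j,\nu_j) + O(\rho) + o_\eps(1)$; letting first $\eps \to 0$ and then $\rho \to 0$ yields the required inequality.

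The hypothesis \eqref{e:gpinf=gpsup} then identifies both surface densities with $g$, completing the proof. The main obstacle is precisely the asymmetry between $\gpinf$ (truncated, arising in the lower bound) and $\gpsup$ (untruncated, arising in the upper bound), which is intrinsic to the superlinear regime $p > 1$: as explained in the introduction, two different scales $\eps^{1/q}$ and $\eps^{1/(qp)}$ compete in the blow-up of the coefficient $\eps^{q-1} f_p^q(v)$, and cannot be reconciled by the simple $v$-truncation that suffices in the linear case $p = 1$. Bridging this gap is exactly what \eqref{e:gpinf=gpsup} encodes, and it will be verified unconditionally in the subsequent part of the section under the projection property \eqref{e:projectionproperty}.
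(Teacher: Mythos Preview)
Your overall strategy matches the paper's: the lower bound is Proposition~\ref{p:lb GSBV} (yielding surface density $\gpinf$) and the upper bound is Proposition~\ref{p:ubp}. However, there is a genuine gap in how you handle the upper bound and, consequently, in how you conclude.

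For $u\in PA(\R^n;\R^m)$ the direct recovery construction gives the bulk term $\int_\Omega \Psi(\nabla u)\dx$, \emph{not} $\int_\Omega \Psi^\qc(\nabla u)\dx$; your claimed equality $\Psi(\nabla u)=\Psi^\qc(\nabla u)$ is false since $\Psi$ is not assumed quasiconvex. The passage from $\Psi$ to $\Psi^\qc$ happens precisely when you invoke the relaxation Theorem~\ref{relaxation} (more precisely Theorem~\ref{relaxation2}) together with the lower semicontinuity of the $\Gamma$-limsup. But that same relaxation simultaneously replaces $\gpsup$ by its $BV$-elliptic envelope $(\gpsup)_{BV}$. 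Hence the upper bound you actually obtain for general $u$ is $\calF_{\sup}$ with surface density $(\gpsup)_{BV}$, not $\gpsup$.

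This means your final matching step is incomplete: the sandwich reads
\[
\gpinf \le (\gpsup)_{BV} \le \gpsup,
\]
and hypothesis \eqref{e:gpinf=gpsup} gives $\gpinf=\gpsup$, which forces $\gpsup=(\gpsup)_{BV}$. Only now do the lower and upper bounds agree, and only now can you conclude that $g=\gpsup$ is $BV$-elliptic, which is part of the theorem's statement and which you do not address. This is exactly the short argument the paper gives in its proof of Theorem~\ref{t:finale p}.

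A minor point: for $u\in PA$, the jump $[u]$ is affine (not constant) on each face $F_j$, so writing $\calH^{n-1}(F_j)\gpsup(z_j,\nu_j)$ with a constant $z_j$ is imprecise; the paper's construction handles this by freezing the jump at cube centers (see the functions $w_{j,y}$) and estimating the error via the continuity of $\gpsup$.
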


In case the projection property holds for $\Psi_\infty$ we are able to prove the equality in \eqref{e:gpinf=gpsup} and
then deduce Theorem~\ref{t:onedim intro} in the introduction
(cf. \cite[Remark~3.6]{ContiFocardiIurlano2022}). We recall the statement for the sake of convenience.
\begin{theorem}\label{t:onedim}
Let $\Psi$ satisfy \eqref{e:Psi gc}, \eqref{eqpsipsiinf}, and \eqref{e:projectionproperty}.
Then  $\gpinf(z,\nu)=\gpsup(z,\nu)=g(z,\nu)$  for all $(z,\nu)\in \R^m\times S^{n-1}$, where
\begin{equation}\label{e:Psi sliceable}
g(z,\nu):=\lim_{T\uparrow\infty}\inf_{{\calV}_z^T}
\int_{-\sfrac T2}^{\sfrac T2}\Big(f_p^q(\beta(t))\Psi_\infty\big({\alpha'(t)}\otimes\nu\big)
+\frac{(1-\beta(t))^{q'}}{q'q^{\sfrac{q'}q}}+|\beta'(t)|^q\Big)\dd t
\end{equation}
and
\begin{align}\label{e:utildeznu}
\calV^T_z:=\{(\alpha,\beta)\in & W^{1,q}((-\sfrac T2,\sfrac T2);\R^{m+1})\colon
\notag\\ &0\leq \beta\leq 1,\, \beta(\pm\sfrac T2)=1,\, \alpha(-\sfrac T2)=0,\,\alpha(\sfrac T2)=z\}.
\end{align}
In particular, the $\Gamma(L^1)$-convergence result stated in Theorem~\ref{t:finale p} holds in this case.
\end{theorem}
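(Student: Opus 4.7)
The plan is to establish separately the two inequalities $\gpsup(z,\nu)\le g(z,\nu)$ and $g(z,\nu)\le\gpinf(z,\nu)$; combined with \eqref{e:trivial inequality gpinf gpsup} they yield the chain of equalities $\gpinf=\gpsup=g$, and hence the assumption \eqref{e:gpinf=gpsup} of Theorem~\ref{t:finale p} is verified, giving the final assertion about $\Gamma$-convergence.

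For the upper bound $\gpsup\le g$ I would argue by an explicit one-dimensional construction. Given $T>0$ and a near-optimal competitor $(\alpha,\beta)\in\calV_z^T$ for the infimum in \eqref{e:Psi sliceable} (which may be chosen with $\beta<1$ on the open interval to avoid degeneracies of $f_p^q$), I set
\[
u_\eps(x):=\alpha\bigl((x\cdot\nu)/\eps\bigr),\qquad v_\eps(x):=\beta\bigl((x\cdot\nu)/\eps\bigr)
\]
inside the strip $\{|x\cdot\nu|\le T\eps/2\}\cap Q^\nu$, and extend them by $z\chi_{\{x\cdot\nu>0\}}$ and $1$ respectively outside. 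Clearly $u_\eps\to z\chi_{\{x\cdot\nu>0\}}$ in $L^1(Q^\nu)$ and $v_\eps\to 1$. The change of variables $t=(x\cdot\nu)/\eps$, together with the $q$-homogeneity of $\Psiinfty$ and Fubini applied to the transversal unit $(n-1)$-slice of $Q^\nu$, shows that $\calF_{\eps,p,q}^\infty(u_\eps,v_\eps;Q^\nu)$ equals exactly the one-dimensional integral over $(-T/2,T/2)$ on the right-hand side of \eqref{e:Psi sliceable}; passing $\eps\to 0$ and then $T\to\infty$ along near-optimal $(\alpha,\beta)$ gives $\gpsup(z,\nu)\le g(z,\nu)$.

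For the lower bound $g\le\gpinf$ I would take an admissible sequence $(u_j,v_j,\eps_j,M_j)$ realizing $\gpinf(z,\nu)$ and slice in the direction of $\nu$. Writing $x=x'+s\nu$ with $x'$ in the hyperplane $\Pi_\nu:=\nu^\perp$ through the center of $Q^\nu$ and $s\in(-1/2,1/2)$, and setting $u_j^{x'}(s):=u_j(x'+s\nu)$ and analogously $v_j^{x'}$, the projection property \eqref{e:projectionproperty} gives the pointwise inequality $\Psiinfty(\nabla u_j(x))\ge\Psiinfty((u_j^{x'})'(s)\otimes\nu)$. By Fubini this yields
\[
\calF_{\eps_j,p,q}^{M_j}(u_j,v_j;Q^\nu)\ge\int_{P}I_j(x')\,dx',
\]
where $P:=Q^\nu\cap\Pi_\nu$ has unit $(n-1)$-measure and $I_j(x')$ is the one-dimensional capped functional on the slice at $x'$. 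By Fatou's lemma and the $L^1$-convergence (which after extracting a subsequence gives $u_j^{x'}\to z\chi_{(0,1/2)}$ in $L^1(-1/2,1/2)$ and $v_j^{x'}\to 1$ for a.e.~$x'\in P$), the proof reduces to showing the one-dimensional lower bound $\liminf_j I_j(x')\ge g(z,\nu)$ for a.e.~slice.

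The main obstacle is this 1D lower bound in the presence of the cap $M_j^{q-1}$, because the capped functional lies below the uncapped one and can a priori sit strictly below the cohesive energy. Rescaling $t=s/\eps_j$ turns the cap into $(M_j/\eps_j)^{q-1}\to\infty$, so for every fixed $L>0$ and $j$ large the rescaled integrand dominates the corresponding $L$-capped one; restricting to a bounded interval $(-T/2,T/2)$ and performing a standard Fonseca-M\"uller boundary adjustment of the rescaled slices $(\tilde\alpha_j,\tilde\beta_j)(t):=(u_j^{x'},v_j^{x'})(\eps_j t)$ on two thin layers near $\pm T/2$ (chosen by a mean-value selection using the $L^1$ convergences above) produces admissible competitors in $\calV_z^T$ with asymptotically negligible energy perturbation, hence $\liminf_j I_j(x')\ge g_L^T(z,\nu)$, the infimum in \eqref{e:Psi sliceable} with $f_p^q$ replaced by $L^{q-1}\wedge f_p^q$. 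Sending $T\to\infty$ and then $L\to\infty$ closes the argument provided the monotone convergence $g_L\uparrow g$ holds, which is the crux of the whole step: it reduces to a density argument showing that near-optimal profiles for $g$ may be chosen with $\beta$ uniformly bounded away from $1$ on their effective support (so that $f_p^q(\beta)$ is bounded and the $L$-cap eventually deactivates), and is the vector-valued analogue of the scalar construction in \cite[Theorem~7.4]{ContiFocardiIurlano2016}, ultimately relying on the superlinearity of the integrand ensured by $p>1$.
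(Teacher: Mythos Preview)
Your upper bound $\gpsup\le g$ via the one-dimensional test profile is exactly how the paper proceeds.

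For the lower bound $g\le\gpinf$ the paper takes a different and cleaner route than yours. Rather than slicing first and then repairing boundary values slice by slice, it invokes Proposition~\ref{p:gp boundary value} to replace the optimizing sequence $(u_j,v_j)$ for $\gpinf$ by one satisfying \eqref{eqpropboundrybv} on $\partial Q^\nu$. Then for $\calH^{n-1}$-a.e.\ $y\in\nu^\perp$ the slice $((u_j)_y^\nu,(v_j)_y^\nu)$ already lies in $\calV_z^1$ with no Fonseca--M\"uller correction needed, and the projection property plus Fubini yield directly
\[
\gpinf(z,\nu)\ge\liminf_{j\to\infty}\inf_{\calV_z^1}\widetilde{\calF}^{M_j}_{\eps_j}(\alpha,\beta;(-\tfrac12,\tfrac12)).
\]
The cap is then removed not through a monotone limit $g_L\uparrow g$ but by a direct modification of almost-minimizers $(\alpha_j,\beta_j)$ of the capped one-dimensional problem: on the set $\{\beta_j>\lambda_j\}$ where the cap is active one has $\int_{\{\beta_j>\lambda_j\}}|\alpha_j'|\,\dd t\le C M_j^{1/q-1}\to 0$, so one sets $\eta_j'=\alpha_j'\chi_{\{\beta_j\le\lambda_j\}}$ and then corrects the endpoint defect $z-\eta_j(\tfrac12)\to 0$ via a small matrix $A_j\to 0$, obtaining $\zeta_j:=\eta_j+A_j\eta_j\in\calV_z^1$. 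Since $\zeta_j'=0$ wherever the cap is active, the capped and uncapped energies of $(\zeta_j,\beta_j)$ coincide, and \eqref{eqPsiinftyxieta2} absorbs the $A_j$ perturbation. This gives $g(z,\nu)\le\gpinf(z,\nu)$ in one stroke.

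Your route has two genuine gaps. First, the Fonseca--M\"uller adjustment of rescaled slices on a fixed window $(-T/2,T/2)$ would require that $\tilde\alpha_j(t)=u_j^{x'}(\eps_j t)$ converge to $z\chi_{\{t>0\}}$ there, but the $L^1(-\tfrac12,\tfrac12)$ convergence of $u_j^{x'}$ gives no rate and hence no control on a shrinking window of width $\eps_j T$; moreover, correcting $\alpha$ near $\pm T/2$ in the uncapped functional is expensive wherever $\beta$ is close to $1$, since $f_p^q(\beta)$ blows up. Second, the $g_L\uparrow g$ statement you flag as the crux is not automatic: the constraint $\beta(\pm T/2)=1$ forces $f_p^q(\beta)$ to blow up at the endpoints, so one cannot simply bound $\beta$ away from $1$; what is really needed is that near-optimal profiles satisfy $\alpha'=0$ on $\{\beta>1-\eta\}$, and establishing this amounts to exactly the modification argument the paper performs. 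In effect your deferred ``crux'' is the same idea, but the paper deploys it earlier and thereby avoids both the boundary-adjustment issue and the monotone-limit detour.
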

The proof uses in a crucial way the density result obtained in \cite{CFI23}.
In particular, this completes the proof of the scalar case \cite[Theorem~7.4]{ContiFocardiIurlano2016} for which a density theorem was missing.

Having fixed $p,q>1$, throughout the rest of the paper, to simplify the notation,
we omit the subscripts $p$ and $q$ in the approximating functionals
$\Functepspq$, {in the limiting functional $\Functlim$},
and also in $\Functepspq^M$ for every $M\in(0,\infty]$.
Thus, we will simply write $\Functeps$, {$\calF$} and $\Functeps^M$, respectively.
{Similarly, we will write $f_{\eps}$ for $f_{\eps,p,q}$.}

\subsection{Characterizations of the surface energy densities}\label{ss:g}

In this section we prove several structural properties of  the surface energy densities $\gpinf$ and $\gpsup$.

{We first show that they are well-defined, in the sense that they do not depend on the choice of $Q^\nu$.  This is a standard argument, well known to experts. 
We briefly sketch it here to show that the interaction of the rescaling with the sequence $M_j$ does not cause any problem.
\begin{lemma}\label{lemmaQnuQnup}
 The definitions of $\gpsup$ and $\gpinf$ do not depend on the choice of the cube $Q^\nu$.
\end{lemma}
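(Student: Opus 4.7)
By symmetry it suffices to prove $\gpinf_{\tilde Q^\nu}(z,\nu) \le \gpinf_{Q^\nu}(z,\nu)$ for any two unit cubes $Q^\nu,\tilde Q^\nu$ with a face orthogonal to $\nu$ and centered at the origin, writing temporarily $\gpinf_{Q^\nu}$ to stress the dependence; the analogous inequality for $\gpsup$ is obtained along the same lines (and is slightly simpler, as no $M$-truncation is involved). The argument is the classical covering-and-scaling construction.

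\textbf{Step 1 (scaling identity).} A direct change of variables, exploiting that $\Psi_\infty$ is positively $q$-homogeneous, shows that for any $r\in(0,1)$, any $M\in(0,\infty]$, and any $(u,v)\in W^{1,q}(Q^\nu;\R^m\times[0,1])$, the rescaled functions $\tilde u(x):=u(x/r)$, $\tilde v(x):=v(x/r)$ on $rQ^\nu$ satisfy
\begin{equation*}
\calF^{rM}_{r\eps}(\tilde u,\tilde v;rQ^\nu)=r^{n-1}\,\calF^M_\eps(u,v;Q^\nu),
\end{equation*}
with the convention $r\cdot\infty=\infty$. In particular, an admissible sequence $(u_j,v_j,\eps_j,M_j)$ for the infimum defining $\gpinf$ on $Q^\nu$ yields, after rescaling, an analogous admissible family on $rQ^\nu$, with parameters $r\eps_j\to0$ and $rM_j\to\infty$, still converging in $L^1(rQ^\nu)$ to $z\chi_{\{x\cdot\nu>0\}}$.

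\textbf{Step 2 (covering and gluing).} Fix $\eta\in(0,1)$. For $r>0$ small choose pairwise disjoint translates $x_i+rQ^\nu\subset\tilde Q^\nu$, $i=1,\dots,N$, with $x_i\in\{x\cdot\nu=0\}$, such that the $(n-1)$-measure of the uncovered part of $\{x\cdot\nu=0\}\cap\tilde Q^\nu$ is at most $\eta$; this forces $Nr^{n-1}\le 1$. On each small cube we place the rescaled competitor from Step~1. Outside their union we want to set $u=z\chi_{\{x\cdot\nu>0\}}$ and $v=1$, but $z\chi_{\{x\cdot\nu>0\}}\notin W^{1,q}(\tilde Q^\nu)$. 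Two adjustments are needed: $(a)$ a De~Giorgi slicing inside a thin annular shell $(x_i+rQ^\nu)\setminus(x_i+(r-\delta_r)Q^\nu)$ selects by averaging a slice where the traces of the rescaled $u_j$ are close in $L^1$ to $z\chi_{\{x\cdot\nu>0\}}$ and the energy contribution is negligible, allowing a Lipschitz interpolation for $u$ and the replacement $v=1$; $(b)$ on the uncovered part of the midplane of $\tilde Q^\nu$ we insert a one-dimensional optimal profile of thickness of order $\eps_j$ joining $0$ to $z$, whose total cost is bounded by $C\eta$ uniformly in $j$ thanks to \eqref{e:Psiinftygc} and $|v|\le 1$.

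\textbf{Conclusion and main obstacle.} Taking first $\liminf_{j\to\infty}$ and then sending $r\to 0$ and $\eta\to 0$, Step~1 gives
\begin{equation*}
\gpinf_{\tilde Q^\nu}(z,\nu)\le Nr^{n-1}\,\gpinf_{Q^\nu}(z,\nu)+C\eta \le \gpinf_{Q^\nu}(z,\nu)+C\eta,
\end{equation*}
from which the claim follows. The only delicate point is the gluing across $\partial(x_i+rQ^\nu)$, precisely because the limit profile $z\chi_{\{x\cdot\nu>0\}}$ fails to be in $W^{1,q}$; this is handled by the combination of the De~Giorgi boundary-layer slicing with the insertion of the one-dimensional transition profile in the uncovered region.
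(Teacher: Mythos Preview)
Your overall strategy—establish the scaling identity for $\calF_\eps^M$ under $x\mapsto x/r$ (correct, by $q$-homogeneity of $\Psi_\infty$) and combine it with a covering argument—is sound, but you go in the \emph{hard} direction. You tile $\tilde Q^\nu$ with rescaled copies of $Q^\nu$ and then have to glue, which forces the De~Giorgi slicing and the insertion of transition profiles in Step~2. The paper does the opposite and thereby avoids gluing entirely: starting from an optimal sequence $(u_j,v_j,\eps_j,M_j)$ on $Q^\nu$, it covers the midplane of $Q^\nu$ by pairwise disjoint cubes $q_k=x_k+\lambda Q_*^\nu$ (oriented like the \emph{target} cube and contained in the source), selects by averaging one index $k_*$ with
\[
\limsup_{j\to\infty}\Big(\calF_{\eps_j}^{M_j}(u_j,v_j;q_{k_*})-\tfrac{1}{K_\lambda}\gpinf(z,\nu;Q^\nu)\Big)\le 0,
\]
and simply restricts and rescales to $Q_*^\nu$ via $u_j^{(\lambda)}(x):=u_j(x_{k_*}+\lambda x)$, with $\eps_j^{(\lambda)}=\eps_j/\lambda$ and $M_j^{(\lambda)}=M_j/\lambda$. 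No boundary adjustment is needed; letting $\lambda\to0$ and using $\lambda^{n-1}K_\lambda\to1$ concludes.

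Your gluing step, by contrast, is not fully justified as written. The interpolation in~(a) needs $L^q$ control of $u_j-z\chi_{\{x\cdot\nu>0\}}$ on the annular shells (the cutoff gradient contributes a term of order $\delta_r^{-q}\int|u_j-z\chi|^q$ to the elastic energy), whereas the definition of $\gpinf$ only provides $L^1$ convergence; you would first have to truncate via the operators $\mathcal T_k$ as in Step~1 of Proposition~\ref{p:gp boundary value}. Moreover, when you force $v=1$ in the shell, the coefficient $\eps^{q-1}f_p^q(v)$ diverges and only the $M_j$-cutoff (itself diverging) controls the elastic term—this interaction has to be tracked. These issues are repairable, but the paper's restrict-and-rescale trick is both shorter and more robust, and is worth knowing.
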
}
\begin{proof}
We deal explicitly with $\gpinf$, the case  of $\gpsup$ is similar but simpler, as one does not have to scale $M_j$.
Fix $\nu\in S^{n-1}$, $z\in \R^m$, and two possible choices of the cube, call them $Q^\nu$ and $Q_*^\nu$ for definiteness.
They differ by a rotation around the axis $\nu$.
For clarity, in this proof we denote the two corresponding expressions for $\gpinf$ by 
$\gpinf(z,\nu; Q^\nu)$ and 
$\gpinf(z,\nu;{Q_*^\nu})$, respectively.
We intend to show that $\gpinf(z,\nu; Q_*^\nu)\le \gpinf(z,\nu;{ Q^\nu})$.

By definition of $\gpinf$ there are sequences $u_j$, $v_j$, $\eps_j\to0$, $M_j\to\infty$ such that
\begin{equation}\label{eqQnuQnuchoice}
\lim_{j\to\infty} \calF_{\eps_j}^{M_j}(u_j,v_j;Q^\nu)
= \gpinf(z,\nu; Q^\nu),
 \hskip5mm
\lim_{j\to\infty} \|u_j-z\chi_{\{x\cdot\nu>0\}}\|_{L^1(Q^\nu)}=0.
\end{equation}
Let $\lambda\in(0,1)$. Let $\{x_k\}_{k=1, \dots, K_\lambda}$ be
points with $x_k\cdot \nu=0$ and such that the cubes
$q_k:=x_k+\lambda Q_*^\nu$ are pairwise disjoint and contained in $Q^\nu$. If one chooses the points on a regular grid aligned with $Q_*^\nu$, one can ensure that
\begin{equation}\label{eqQnuQnuKlam}
 \lim_{\lambda\to0}\lambda^{n-1}K_\lambda=1.
\end{equation}
From \eqref{eqQnuQnuchoice} we obtain that for any fixed $\lambda$
\begin{equation}
{\limsup_{j\to\infty}} \sum_{k=1}^{K_\lambda} \left[ \calF_{\eps_j}^{M_j}(u_j,v_j;q_k)-\frac{\gpinf(z,\nu;{Q^\nu})}{K_\lambda} \right] {\leq} 0,
\end{equation}
therefore, we can choose $k_*\in\{1,\dots, K_\lambda\}$ such that
\begin{equation}
 \limsup_{j\to\infty} \left[\calF_{\eps_j}^{M_j}(u_j,v_j;q_{k_*})-\frac{\gpinf(z,\nu;Q^\nu)}{K_\lambda} \right] \le0.
\end{equation}
{We also obtain that
$\displaystyle \lim_{j\to\infty}\|u_j-z\chi_{\{x\cdot\nu>0\}}\|_{L^1(q_{k_*})}=0$
by the second of \eqref{eqQnuQnuchoice} and $q_{k_*}\subset Q^\nu$.}
We define $u^{(\lambda)}_j(x):=u_j(x_{k_*}+\lambda x)$, 
$v^{(\lambda)}_j(x):=v_j(x_{k_*}+\lambda x)$, 
$\eps^{(\lambda)}_j:=\eps_j/\lambda $,
$M^{(\lambda)}_j:=M_j/\lambda $.
Then, as $j\to\infty$, one has
$\eps^{(\lambda)}_j\to0$,
$M^{(\lambda)}_j\to\infty$,
$u^{(\lambda)}_j\to z\chi_{\{x\cdot\nu>0\}}$ in $L^1(Q_*^\nu;{\R^m})$,
and a simple scaling, using that $\Psi_\infty$ is positively $q$-homogeneous, gives
\begin{equation*}\begin{split}
 \limsup_{j\to\infty} \calF_{\eps_j^{(\lambda)}}^{M^{(\lambda)}_j}(u^{(\lambda)}_j,v^{(\lambda)}_j;Q_*^\nu)= &
 \limsup_{j\to\infty} \lambda^{1-n}\calF_{\eps_j}^{M_j}(u_j,v_j;q_{k_*})\\\le &
 \frac{1}{\lambda^{n-1}K_\lambda}\gpinf(z,\nu;{Q^\nu}),
\end{split}\end{equation*}
implying
\begin{equation*}
 \gpinf(z,\nu;{Q_*^\nu})\le
 \frac{1}{\lambda^{n-1}K_\lambda}
  \gpinf(z,\nu;{Q^\nu}).
\end{equation*}
Since $\lambda$ was arbitrary, with \eqref{eqQnuQnuKlam} the proof is concluded.
\end{proof}

We start by showing that test sequences in the definitions of $\gpinf$ in \eqref{eqdefGpsnu inf} and of $\gpsup$ in
\eqref{eqdefGpsnu sup} may be taken converging in $L^q$ and satisfying periodic boundary conditions in
$(n-1)$ directions orthogonal to $\nu$ and mutually orthogonal to each other, analogously to what was established in
\cite[Section~3.1]{ContiFocardiIurlano2022} for $p=1$. This is the content of the next two propositions.
We fix a mollifier $\varphi_1\in C^\infty_c(B_1{;[0,\infty)})$, with $\int_{B_1}\varphi_1\dx=1$ {and $\varphi_1(-x)=\varphi_1(x)$,} and set
$\varphi_\eps(x):=\eps^{-n}\varphi_1(x/\eps)$.

\begin{proposition}\label{proplbboundarysingle}
{There are
$\eps_0>0$, $\gamma_1>0$, $\gamma_2>0$, depending only on $p$ and $q$, such that the following holds.}
Let $(u,v)\in L^q(Q^\nu;\R^{m+1})$,
$\eps\in(0,{\eps_0})$, $M\in(0,\infty]$, and let {$(z,\nu)\in \R^m\times S^{n-1}$}.
Then there exists $(u^*,v^*)$ such that
\begin{equation*}
\calF_{\eps}^M(u^*,v^*; Q^\nu)\le
\calF_{\eps}^M(u,v; Q^\nu)
+  {C d_q^{\gamma_1}+
C \eps^{\gamma_2}},
\end{equation*}
where
$d_q:=
\|u-{z}\chi_{\{x\cdot\nu>0\}}\|_{L^q(Q^\nu)}$,
and
\begin{equation}\label{eqpropboundrybv}
u^*=({z}\chi_{\{x\cdot\nu>0\}})\ast\varphi_{\eps}\,,\hskip1cm
v^*=\chi_{\{|x\cdot\nu|\ge2\eps\}}\ast\varphi_{\eps} \hskip1cm
\text{ on } \partial Q^\nu\,.
\end{equation}
The constant $C$ may depend on $q$, $p$, $\ell$, $z$, $\nu$,  and $\Psi$, but not on $u$, $v$, $M$, and $\eps$.
\end{proposition}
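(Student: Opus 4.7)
My plan is to realize $(u^*,v^*)$ as a boundary modification of $(u,v)$, interpolating it with the reference couple $\tilde u:=(z\chi_{\{x\cdot\nu>0\}})\ast\varphi_\eps$ and $\tilde v:=\chi_{\{|x\cdot\nu|\ge 2\eps\}}\ast\varphi_\eps$ on a thin annular region near $\partial Q^\nu$ whose location is selected via a De~Giorgi slicing and pigeonhole argument. First I would record the basic properties of the reference data: by the support of $\varphi_\eps$ one has $\tilde u\equiv 0$ on $\{x\cdot\nu\le-\eps\}$, $\tilde u\equiv z$ on $\{x\cdot\nu\ge\eps\}$, with $|\nabla\tilde u|\le C|z|/\eps$, while $\tilde v\equiv 0$ on $\{|x\cdot\nu|\le\eps\}$, $\tilde v\equiv 1$ on $\{|x\cdot\nu|\ge 3\eps\}$, with $|\nabla\tilde v|\le C/\eps$. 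Crucially $\{\nabla\tilde u\neq 0\}\cap\{\tilde v\neq 0\}=\emptyset$, so the leading integrand of $\calF_\eps^\infty$ vanishes on $(\tilde u,\tilde v)$ and a direct computation gives $\calF_\eps^\infty(\tilde u,\tilde v;S)\le C\rho$ on any boundary strip $S$ of width $\rho\ge 3\eps$. Moreover $\|\tilde u-z\chi_{\{x\cdot\nu>0\}}\|_{L^q(Q^\nu)}\le C\eps^{1/q}$, so $\|u-\tilde u\|_{L^q(Q^\nu)}\le d_q+C\eps^{1/q}$.

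Next I set $\rho:=\eps^{\gamma_2}+d_q^{\gamma_1}$ for exponents $\gamma_1,\gamma_2>0$ to be tuned later, and decompose $Q^\nu\setminus Q_{1-\rho}^\nu$ into $N$ pairwise disjoint concentric shells $A_1,\ldots,A_N$ of thickness $\rho/N$. Summing the localized quantities
\begin{equation*}
X_k:=\calF_\eps^\infty(u,v;A_k)+\calF_\eps^\infty(\tilde u,\tilde v;A_k)+(N/\rho)^q\int_{A_k}(|u-\tilde u|^q+|v-\tilde v|^q)\dx
\end{equation*}
yields $\sum_k X_k\le \calF_\eps^\infty(u,v;Q^\nu)+C\rho+C(N/\rho)^q(d_q^q+\eps)$, so by pigeonhole I can select $k^*$ with $X_{k^*}\le N^{-1}\sum_k X_k$. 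A smooth cut-off $\eta$ equal to $1$ on $Q_{1-k^*\rho/N}^\nu$, vanishing outside $Q_{1-(k^*-1)\rho/N}^\nu$, with $|\nabla\eta|\le CN/\rho$, then defines $u^*:=\eta u+(1-\eta)\tilde u$ and $v^*:=\eta v+(1-\eta)\tilde v$, both in the admissible range, and coinciding with $(\tilde u,\tilde v)$ near $\partial Q^\nu$ as required by \eqref{eqpropboundrybv}. Outside $A_{k^*}$ the energy of $(u^*,v^*)$ reduces either to $\calF_\eps^\infty(u,v;Q_{1-k^*\rho/N}^\nu)$ or to $\calF_\eps^\infty(\tilde u,\tilde v;\cdot)\le C\rho$, both already controlled. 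In $A_{k^*}$ the potential and $|\nabla v|^q$ terms split cleanly, by $q$-convexity and the identity $1-v^*=\eta(1-v)+(1-\eta)(1-\tilde v)$, into pieces each bounded by $X_{k^*}$.

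The main technical obstacle is the product $f_p^q(v^*)\Psi_\infty(\nabla u^*)$ on $A_{k^*}$: because $f_p(t)=\ell t/(1-t)^p$ blows up as $t\uparrow 1$ and $v^*$ is a convex combination of $v$ and $\tilde v$, no uniform subadditive bound on $f_p^q(v^*)$ is available. I would resolve this by a slab decomposition of $A_{k^*}$ along $|x\cdot\nu|\lessgtr\eps$ and $\lessgtr 3\eps$. On the central slab $\{|x\cdot\nu|<\eps\}$ the equality $\tilde v=0$ gives $v^*=\eta v\le v$ and monotonicity of $f_p$ transfers the bound to $(u,v)$; on the outer slab $\{|x\cdot\nu|\ge 3\eps\}$ the identity $\nabla\tilde u=0$ reduces $\Psi_\infty(\nabla u^*)$ to $C(\Psi_\infty(\nabla u)+|\nabla\eta|^q|u-\tilde u|^q)$, and the divergent prefactor $\eta^{-pq}$ stemming from $1-v^*=\eta(1-v)$ is compensated against the $X_{k^*}$-controlled quantity $\int(1-v^*)^{q'}/\eps$ via a Young-type splitting of the form $f_p^q(v^*)\Psi_\infty(\nabla u^*)\le\delta f_p^q(v)\Psi_\infty(\nabla u)+C_\delta(N/\rho)^q|u-\tilde u|^q(1-v^*)^{-pq}$ followed by a further sub-slicing of the shell; the intermediate slab $\{\eps\le|x\cdot\nu|\le 3\eps\}$ has volume $O(\eps\rho/N)$ within $A_{k^*}$ and is handled by the crude pointwise bound $\eps^{q-1}f_p^q(v^*)|\nabla\tilde u|^q\le C/\eps$ together with the uniform bound $\tilde v<1$ on it. Optimizing the free parameters $N$, $\rho$, $\gamma_1$, $\gamma_2$ as appropriate powers of $\eps$ and $d_q$ finally produces the claimed remainder $Cd_q^{\gamma_1}+C\eps^{\gamma_2}$.
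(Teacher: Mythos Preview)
Your overall architecture (reference couple, De~Giorgi slicing, cut-off interpolation of $u$) matches the paper, but the treatment of $v^*$ has a genuine gap. With the linear interpolation $v^*=\eta v+(1-\eta)\tilde v$, on the outer slab $\{|x\cdot\nu|\ge 3\eps\}$ you have $\tilde v=1$, hence $1-v^*=\eta(1-v)$ and
\[
f_p^q(v^*)=\frac{\ell^q (v^*)^q}{(1-v^*)^{pq}}=\eta^{-pq}\Big(\frac{v^*}{v}\Big)^q f_p^q(v)\ge \eta^{-pq}f_p^q(v),
\]
since $v^*\ge v$ there. Multiplying by the part $\Psi_\infty(\eta\nabla u)=\eta^q\Psi_\infty(\nabla u)$ of $\Psi_\infty(\nabla u^*)$ you get at least $\eta^{q(1-p)}f_p^q(v)\Psi_\infty(\nabla u)$, and $\eta^{q(1-p)}\ge 1$ for every $\eta\in(0,1]$ because $p>1$. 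So the claimed Young-type splitting with a factor $\delta<1$ in front of $f_p^q(v)\Psi_\infty(\nabla u)$ is simply false. The second term of your splitting is equally problematic: it carries $(1-v^*)^{-pq}$, whereas the only available control from the energy is $\int(1-v^*)^{q'}/\eps$, and $pq>q'$ for $p>1$, so no H\"older or sub-slicing argument can close this. The intermediate slab suffers from the same issue near $|x\cdot\nu|=3\eps$, where $\tilde v\uparrow 1$.

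The paper's remedy is to abandon convex interpolation for $v^*$ altogether. One introduces an auxiliary parameter $\eta\in[\eps^{1/q},1/10]$ and sets $v^*:=\min\{v_{\mathrm{in}},v_{\mathrm{mid}},v_{\mathrm{out}}\}$, where $v_{\mathrm{in}},v_{\mathrm{out}}$ are Lipschitz extensions of $v$ and $\tilde v$ (respectively) by $1$, and $v_{\mathrm{mid}}$ equals $1-\eta$ on the selected shell $R_k$ and rises to $1$ at slope $1/\eps$ outside it. This forces $v^*\le 1-\eta$ on $R_k$, hence the \emph{uniform} bound $\eps^{q-1}f_p^q(v^*)\le \eps^{q-1}\ell^q/\eta^{pq}$ there, which makes the elastic error on $R_k$ estimable by $C(\eps+d_q^q)/\eta^{q(p+1)}$. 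The price of inserting $v_{\mathrm{mid}}$ is $\calF_\eps^\infty(0,v_{\mathrm{mid}})\le C\eta$; optimizing $\eta$ against the competing powers then yields the exponents $\gamma_1,\gamma_2$. This ``cap $v^*$ away from $1$ on the transition layer'' is the missing idea in your argument.
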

\begin{proof}
{\bf Step~1. Construction of $u^*$ and $v^*$.}
We write
\begin{equation}\label{eqdefUjVj}
U:=({z} \chi_{\{x\cdot\nu>0\}})\ast\varphi_{\eps}, \hskip1cm
V:=\chi_{\{|x\cdot\nu|\ge2\eps\}}\ast\varphi_{\eps}.
\end{equation}
Obviously, $\|U-{z}\chi_{\{x\cdot\nu>0\}}\|_{L^q(Q^\nu)}^q\le C\eps$,
with $C>0$ depending on ${z}$, so that $ \|u-U\|_{L^q(Q^\nu)}^q\le C(\eps + d_q^q)$.
Moreover, by construction {$\nabla U=0$ if $|x\cdot\nu|\ge\eps$,}
$V=0$ if $|x\cdot\nu|\le\eps$, and $V=1$ if $|x\cdot\nu|\ge3\eps$.
Therefore, by $\Psiinfty(0)=0$ and $f_p(0)=0$, we have
\[
\calF_{\eps}^M(U,V;Q^\nu){=}\calF_{\eps}^M(0,V;Q^\nu)
\leq C+\eps^{q-1}\int_{\{x\in Q^\nu:\,\eps< |x\cdot\nu|<3\eps\}}|\nabla V|^q\dx\leq C\,,
\]
as $\|\nabla V\|_{L^{\infty}(\R^n)}\leq \sfrac C\eps$.
This implies that there is $c_0>0$ such that if
$d_q\vee \calF_{\eps}^M(u,v; Q^\nu)\ge c_0$,
then the pair $(U,V)$ will give the desired conclusion.
Therefore, in the rest of the proof we can assume that
\begin{equation*}
d_q\le c_0\text{ and } \calF_{\eps}^M(u,v; Q^\nu)\le c_0 .
\end{equation*}
Next, we fix $\eta\in [\eps^{1/q},{\sfrac1{10}}]$
and set
$K:=\lfloor 4\eta^q/\eps\rfloor\ge4$,
{so that $(2K+1)\eps<1$ and $K\eps\ge \eta^q$.}
We let $R_k:=Q^\nu_{1-k\eps}\setminus Q^\nu_{1-(k+1)\eps}$, where, as usual, $Q^\nu_r=rQ^\nu$ is the
scaled cube.
We select $k\in \{K+1,\dots, 2K\}$ such that
\begin{equation}\label{eqchoice1}
\|u-U\|_{L^q(R_k)}^q\le \frac C{K}\|u-U\|_{L^q(Q^\nu)}^q
\le C \frac{\eps+d_q^q}{K}\,,
\end{equation}
and
\begin{equation}\label{eqchoice2}
\calF_{\eps}^M(u,v;R_k)
+\calF_{\eps}^M(U,V;R_k)
\le \frac C{K}  .
\end{equation}
We fix $\theta\in C^1_c(Q^\nu_{1-k\eps};{[0,1]})$ with
$\theta=1$ on $Q^\nu_{1-(k+1)\eps}$
and $|\nabla \theta|\le 3/\eps$,
and define
\begin{equation*}
u^*:= \theta u  + (1-\theta) U.
\end{equation*}
The construction of $v^*$ is more complex. In the interior region, it should match $v$, and in the exterior region, $V$. In the
interpolation region $R_k$, which has volume of order $\eps$, it should not be larger than $v$ and $V$, but also
not larger than $1-\eta$.
The {parameter $\eta\in(0,{\sfrac{1}{10}})$} will be chosen so that one can obtain a good estimate for the integral of
$\eps^{q-1}f_p^q(v)\Psi_\infty(\nabla u^*)$ over $R_k$.
Therefore, we first define
\begin{equation}\label{eqdefhatv}
v_\mathrm{mid}(x):= \min\{1, 1-\eta+\frac1{\eps} \dist(x, R_k)\},
\end{equation}
which coincides with $1-\eta$ in the interpolation region $R_k$, and with $1$ at distance larger than $\eta\eps$ from it, in particular inside $Q_{1-(k+2)\eps}^\nu$ {and outside $Q_{1-(k-1)\eps}^\nu$}. Then, {we} set
\begin{equation}\label{eqdefhatvv}
v_\mathrm{out}(x):= \min\{1, V(x)+\frac{2}{\eps} \dist(x, Q^\nu\setminus Q^\nu_{1-(k+1)\eps})\}
\end{equation}
which coincides with $V$ outside $Q^\nu_{1-(k+1)\eps}$, and with 1 inside $Q^\nu_{1-(k+2)\eps}$ 
as well as for $|x\cdot\nu|\geq3\eps$, and finally
\begin{equation}
v_\mathrm{in}{(x)}:=\min\{1,v{(x)}+\frac2{k\eps} \dist(x, Q^\nu_{1-k\eps})\}\,,
\end{equation}
which is equal to $v$ on $Q^\nu_{1-k\eps}$ and to $1$ at distance larger than $\frac {k\eps}2$ from it.
We then combine these three ingredients to obtain
\begin{equation*}
v^*:=\min\{v_\mathrm{in}, v_\mathrm{mid}, v_\mathrm{out}\}.
\end{equation*}
On $\partial Q^\nu$,  both $v_\mathrm{in}$ and $v_\mathrm{mid}$ equal $1$,
hence $v^*=v_\mathrm{out}=V$ on {this} set.
\smallskip

\noindent{\bf Step~2. Estimate of the elastic energy.} By the definition of $u^*$,
\begin{equation*}
|\nabla u^*|\le |\nabla u|+|\nabla U| + \frac{3}{\eps} |u-U|
\end{equation*}
therefore, by \eqref{e:Psiinftygc}, in $R_k$ we have
\begin{equation*}
{\Psiinfty}(\nabla u^*)\le C{\Psiinfty}(\nabla u)+C{\Psiinfty}(\nabla U)
+ \frac{C}{\eps^q} |u-U|^q\,.
\end{equation*}
Set $F(t):=M^{q-1}\wedge\eps^{q-1}f_p^q(t)$ for brevity. One easily checks that $F$ is increasing, with $F(0)=0$. {We observe that $v^*= \min\{v, 1-\eta, V\}$ in $R_k$, and
since, by definition, $V=0$ on ${R_k}\cap\{|x\cdot\nu|\leq\eps\}$, we get
$v^*=V=0$ on ${R_k}\cap\{|x\cdot\nu|\leq\eps\}$. Moreover, as} $\nabla U=0$
on ${R_k}\cap\{|x\cdot\nu|\geq\varepsilon\}$, the term $F(v^*)\Psiinfty(\nabla U)$ vanishes in $R_k$. Therefore, using $F(v^*)\le F(v_\mathrm{mid})\le \eps^{q-1}\ell^q/\eta^{pq}$ in $R_k$ we have
\begin{equation*}
F(v^*)\Psiinfty(\nabla u^*)
\le C F(v) \Psiinfty(\nabla u)
+C \frac{|u-U|^q}{\eps \eta^{pq}}.
\end{equation*}
Integrating over $R_k$ and using \eqref{eqchoice2} in the first term,
and \eqref{eqchoice1} in the second one,
\begin{equation*}
\int_{R_k}F(v^*) \Psiinfty(\nabla u^*)\dx
\le \frac C{K}+ C\frac {\eps+d_q^q}{K\eps \eta^{pq} }.
\end{equation*}
Recalling that the definition of $K$ implies $ K\eps\ge\eta^q$,
\begin{equation*}
\int_{R_k}F(v^*) \Psiinfty(\nabla u^*)\dx
\le \frac CK + C \frac
{\eps+d_q^q}{\eta^{q(p+1)}}.
\end{equation*}
Using that {$u^*=U$ and $v^*\leq v_{\mathrm{out}}= V$ in $Q^\nu\setminus Q^\nu_{1-k\eps}$,  that }$V=0$ on $Q^\nu\cap\{{|}x\cdot\nu{|}\leq\eps\}$, and {$\nabla U=0$} on $Q^\nu\cap\{|x\cdot\nu|\geq\eps\}$, we have
\begin{equation}\label{eq:nablaU}
\int_{Q^\nu\setminus Q^\nu_{1-k\eps}}F(v^*) {\Psiinfty}(\nabla u^*)\dx
{=0.}
\end{equation}
In $Q^\nu_{1-(k+1)\eps}$, instead, we have $u^*=u$ and $v^*\le {v_{\mathrm{in}}{=}} v$, so that
\begin{equation*}
\int_{Q^\nu_{1-(k+1)\eps}} F(v^*) {\Psiinfty}(\nabla u^*)\dx
\le\int_{Q^\nu_{1-(k+1)\eps}} F(v) {\Psiinfty}(\nabla u)\dx.
\end{equation*}
Adding terms,
\begin{equation}\label{eqfinelastcu}
\int_{Q^\nu}F(v^*) \Psiinfty(\nabla u^*)\dx
\le  \int_{Q^\nu}F(v) \Psiinfty(\nabla u)\dx+\frac CK + C \frac
{\eps+d_q^q}{\eta^{q(p+1)}} .
\end{equation}
\smallskip

\noindent{\bf Step~3. Estimate of the energy of the phase field.} By the definition of $v^*$,
\begin{equation}\label{eqcalFst0}
\calF_{\eps}^M(0,v^*; Q^\nu)
\le
\calF_{\eps}^M(0,v_\mathrm{in}; Q^\nu)+
\calF_{\eps}^M(0,v_\mathrm{mid}; Q^\nu)+
\calF_{\eps}^M(0,v_\mathrm{out}; Q^\nu).
\end{equation}
From \eqref{eqdefhatv} we have $|1-v_\mathrm{mid}|\le \eta$
with $|\{v_\mathrm{mid}\ne 1\}|\le C\eps$ and
$|\nabla  v_\mathrm{mid}|\le 1/\eps$
with $|\{\nabla v_\mathrm{mid}\ne 0\}|\le C\eps\eta$, so that
\begin{equation*}
\calF_{\eps}^M(0,v_\mathrm{mid}; Q^\nu)=
\int_{Q^\nu}\Big(\frac{(1-v_\mathrm{mid})^{q'}}{q'q^{\sfrac{q'}q}\eps}
+\eps^{q-1}|\nabla v_\mathrm{mid}|^q\Big)
\dx \le C \eta .
\end{equation*}
From the definition of $V$ and $v_\mathrm{out}$,
we see that $|\{v_\mathrm{out}\ne 1\}|\le C \eps\cdot K\eps$
and $\eps|\nabla v_\mathrm{out}|\le C$, so that
\begin{equation*}
\calF_{\eps}^M(0,v_\mathrm{out}; Q^\nu)\le C K\eps.
\end{equation*}
Let $c_q>0$ be such that for every $\delta\in(0,1]$
and every $a,\,b\in\R^n$ one has $|a+b|^q\leq(1+\delta)|a|^q
+c_q \delta^{1-q} |b|^q$. As
$v_\mathrm{in}=v$ in ${Q^\nu_{1-k\eps}}$, $v_\mathrm{in}\ge v$,
and $|\nabla v_\mathrm{in}|\le |\nabla v|+2/(k\eps)$ in $Q^\nu \setminus Q^\nu_{1-k\eps}$
\begin{align*}
\calF_{\eps}^M(0,v_\mathrm{in}; Q^\nu)&
\le(1+\delta)\calF_{\eps}^M(0,v; Q^\nu)
+{\frac{c_q\eps^{q-1}}{\delta^{q-1}(k\eps)^q}}\calL^n(Q^\nu\setminus Q^\nu_{1-k\eps})\\
&\le{(1+\delta)}\calF_{\eps}^M(0,v; Q^\nu)+\frac{c_q}{\delta^{q-1}k^{q-1}}.
\end{align*}
Recalling that $k\ge {K+1}$ and $K\eps\le 4\eta^q\le 4\eta$,
\eqref{eqcalFst0} leads to
\begin{equation*}
\calF_{\eps}^M(0,v^*; Q^\nu)
\le{(1+\delta)}
\calF_{\eps}^M(0,v; Q^\nu)+C\eta+\frac{c_q}{\delta^{q-1}K^{q-1}}
\end{equation*}
so that choosing $\delta=K^{-1/q'}$
\begin{equation}\label{eqfinpfcu}
\calF_{\eps}^M(0,v^*; Q^\nu)
\le
\calF_{\eps}^M(0,v; Q^\nu)+C\eta+\frac{C}{K^{1/q'}}.
\end{equation}
\noindent{\bf Step~4. Conclusion of the proof.}
{Combining
\eqref{eqfinpfcu}}
with \eqref{eqfinelastcu},
using
$1/K\le 1/K^{1/q'}$
 and $K\ge \eta^q/\eps$ to eliminate irrelevant terms
leads to
\begin{equation*}
\calF_{\eps}^M(u^*,v^*; Q^\nu)\le
\calF_{\eps}^M(u,v; Q^\nu)
+ C \frac{\eps+d_q^q}{\eta^{q(p+1)}}+C\eta
+{C
\left(\frac{\eps}{\eta^{q}}\right)^{1/q'}}
\end{equation*}
{for any $\eta\in [\eps^{1/q},1/10]$. It remains to choose $\eta$.
If $d_q\le \eps^{1/q}$, {using $\eta^q\le \eta$,} the error term is controlled by
\begin{equation*}
 \frac{\eps}{\eta^{q(p+1)}}
 +\eta
+
\left(\frac{\eps}{\eta^{q}}\right)^{1/q'}
 \le
 \eta+2
\left(\frac{\eps}{\eta^{q(p+1)}}\right)^{1/q'};
\end{equation*}
we pick $\eta:=\eps^{1/(q'+q(p+1))}\ge\eps^{1/q}$
and conclude the proof.
Similarly, if instead
$d_q>\eps^{1/q}$,
\begin{equation*}
 \left(\frac{d_q}{\eta^{p+1}}\right)^q
 +\eta
+
\left(\frac{d_q}{\eta}\right)^{q/q'}
 \le
 \eta+2
\left(\frac{d_q}{\eta^{p+1}}\right)^{q/q'},
\end{equation*}
we conclude with $\eta:=d_q^{q/(q'+q(p+
1))}\wedge \sfrac1{10}$.
}
\end{proof}
 
We are now ready to show equivalent characterizations of $\gpinf$ and $\gpsup$.
We follow the arguments developed in \cite[Sections~3.1 and 3.2]{ContiFocardiIurlano2022}.

\begin{proposition}\label{p:gp boundary value}
For every $(z,\nu)\in\R^m\times S^{n-1}$
\begin{equation}\label{e:gpinf periodicity}
\begin{split}
\gpinf(z,\nu) =\inf \{\liminf_{j\to\infty}&
\calF^{{\cutoffconst_j}}_{\eps_j}(u_j,v_j; Q^\nu):\, \|u_j- z\chi_{\{x\cdot\nu>0\}}\|_{L^q(Q^\nu)}\to0, \\
&\eps_j\to 0,\,\cutoffconst_j\to\infty,\,\textrm{$(u_j,v_j)$ satisfy \eqref{eqpropboundrybv}}\}
\end{split}\end{equation}
and
\begin{equation}\label{e:gpsup periodicity}
\begin{split}
\gpsup(z,\nu) =\inf \{\liminf_{j\to\infty}&
\calF^\infty_{\eps_j}(u_j,v_j; Q^\nu):\, \|u_j- z\chi_{\{x\cdot\nu>0\}}\|_{L^q(Q^\nu)}\to0, \\
&\eps_j\to 0,\,\textrm{$(u_j,v_j)$ satisfy \eqref{eqpropboundrybv}}\}.
\end{split}\end{equation}

\end{proposition}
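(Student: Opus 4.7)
The inequalities $\gpinf(z,\nu)\le\ldots$ in \eqref{e:gpinf periodicity} and $\gpsup(z,\nu)\le\ldots$ in \eqref{e:gpsup periodicity} are immediate, since $L^q(Q^\nu)$-convergence implies $L^1(Q^\nu)$-convergence and the trace condition only restricts the admissible class. To prove the converse, given any sequence $(u_j,v_j,\eps_j,\cutoffconst_j)$ realizing the infimum in \eqref{eqdefGpsnu inf}, the plan is to modify it in two steps to a sequence admissible for \eqref{e:gpinf periodicity} with the same limit energy. The analogous argument for \eqref{e:gpsup periodicity} is identical, with the cutoff $\cutoffconst_j$ absent throughout and Proposition~\ref{proplbboundarysingle} applied as stated.

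Fix $N\in\N$ with $a_N>|z|$. Since $\int_{Q^\nu}(\eps_j^{q-1}f_p^q(v_j)\wedge \cutoffconst_j^{q-1})\Psi_\infty(\nabla u_j)\,\dx$ is bounded and the annuli $\{a_k<|u_j|<a_{k+1}\}$ are disjoint for $k=N+1,\dots,2N$, an averaging argument yields $k_j\in\{N+1,\dots,2N\}$ such that the elastic contribution on the annulus at level $k_j$ is at most $C/N$. Setting $\hat u_j:=\mathcal T_{k_j}(u_j)$ with $\mathcal T_k$ as in \eqref{e:Tk}, the fact that $\mathcal T_{k_j}$ is $1$-Lipschitz, vanishes for $|z|\ge a_{k_j+1}$ and acts as the identity on the range of $z\chi_{\{x\cdot\nu>0\}}$, combined with \eqref{e:Psiinftygc}, gives
\[
\calF_{\eps_j}^{\cutoffconst_j}(\hat u_j,v_j;Q^\nu)\le \calF_{\eps_j}^{\cutoffconst_j}(u_j,v_j;Q^\nu)+C/N,
\]
while $\|\hat u_j\|_{L^\infty(Q^\nu)}\le a_{2N+1}$ together with $\hat u_j\to z\chi_{\{x\cdot\nu>0\}}$ in $L^1(Q^\nu)$ implies $\hat u_j\to z\chi_{\{x\cdot\nu>0\}}$ in $L^q(Q^\nu)$. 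I would then apply the cutoff analogue of Proposition~\ref{proplbboundarysingle} to $(\hat u_j,v_j)$, producing $(u_j^*,v_j^*)$ satisfying \eqref{eqpropboundrybv} with
\[
\calF_{\eps_j}^{\cutoffconst_j}(u_j^*,v_j^*;Q^\nu)\le \calF_{\eps_j}^{\cutoffconst_j}(\hat u_j,v_j;Q^\nu)+Cd_{q,j}^{\gamma_1}+C\eps_j^{\gamma_2},
\]
where $d_{q,j}:=\|\hat u_j-z\chi_{\{x\cdot\nu>0\}}\|_{L^q(Q^\nu)}\to0$, and where one also has $u_j^*\to z\chi_{\{x\cdot\nu>0\}}$ in $L^q(Q^\nu)$. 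A standard diagonal extraction over $N\to\infty$ then yields a sequence admissible for \eqref{e:gpinf periodicity} with $\liminf$ at most $\gpinf(z,\nu)$.

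The main technical point is establishing the cutoff analogue of Proposition~\ref{proplbboundarysingle}. The construction of $(u^*,v^*)$ and the estimates on the phase-field and dissipation terms do not involve the elastic coefficient and carry over verbatim; the only estimate requiring adjustment is the elastic bound in the transition region $R_k$. There, combining $v^*\le v$ on $R_k$ (which gives $F(v^*)\wedge \cutoffconst^{q-1}\le F(v)\wedge \cutoffconst^{q-1}$, with the shorthand $F(v):=\eps^{q-1}f_p^q(v)$) with the construction-dependent bound $F(v^*)\le F(v_{\mathrm{mid}})\le \eps^{q-1}\ell^q/\eta^{pq}$ yields the pointwise estimate
\[
(F(v^*)\wedge \cutoffconst^{q-1})\Psi_\infty(\nabla u^*)\le C(F(v)\wedge \cutoffconst^{q-1})\Psi_\infty(\nabla u)+\frac{C|u-U|^q}{\eps\eta^{pq}},
\]
so the slice-averaging analogous to \eqref{eqchoice2} applied to the cutoff integrand completes the argument exactly as in the proof of Proposition~\ref{proplbboundarysingle}.
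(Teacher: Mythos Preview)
Your proposal is correct and follows essentially the same two-step strategy as the paper: first truncate via $\mathcal T_k$ and average to upgrade $L^1$-convergence to $L^q$-convergence with an $O(1/N)$ energy error, then apply Proposition~\ref{proplbboundarysingle} to fix the boundary data. You are in fact slightly more careful than the paper on one point: the paper states Proposition~\ref{proplbboundarysingle} only for $\calF_\eps^\infty$ and then invokes it for $\gpinf$ without comment, whereas you explicitly verify that the construction and estimates go through with the truncated coefficient $F(v)\wedge M^{q-1}$, using $v^*\le v$ on $R_k$ for the $\Psi_\infty(\nabla u)$ term and the $v_{\mathrm{mid}}$-bound for the $|u-U|^q$ term.
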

\begin{proof}
We give the proof only for $\gpinf$, the one for $\gpsup$ being completely analogous.

{{\bf Step~1. Reduction to an optimal sequence in \eqref{eqdefGpsnu inf} converging in $L^q(Q^\nu;\R^{m+1})$.}
Let $\eps_j\to 0$, {$\cutoffconst_j\to\infty$}, and
$(u_j,v_j)\to (z\chi_{\{x\cdot\nu>0\}},1)$ in $L^1(Q^\nu;\R^{m+1})$ be such that
\begin{equation*}
{\gpinf(z,\nu)}
 = \lim_{j\to\infty}\calF^{{\cutoffconst_j}}_{\eps_j}(u_j,v_j; Q^\nu).
\end{equation*}
Recall that $v_j\in[0,1]$ $\calL^n$-a.e. in $Q^\nu$; therefore $v_j\to 1$ in $L^q(Q^\nu)$.
By the definition of $\mathcal{T}_k$ in \eqref{e:Tk}, we claim that for all $j,\,N\in\N$
there is $k_{N,j}\in\{N+1,\ldots,2N\}$ such that
\begin{equation}\label{e:truncation g}
\calF^{{\cutoffconst_j}}_{\eps_j}(\mathcal{T}_{k_{N,j}}(u_j), v_j; Q^\nu)\leq\Big(1+\frac
CN\Big)\calF^{{\cutoffconst_j}}_{\eps_j}(u_j, v_j; Q^\nu)\,,
\end{equation}
where $C>0$ is a constant independent of $N$ and $j$.
If $a_N>{|z|=\|z\chi_{\{x\cdot\nu>0\}}\|_{L^\infty(Q^\nu)}}$ then
${\mathcal{T}_{k_{N,j}}}(u_j)\to z\chi_{\{x\cdot\nu>0\}}$ in $L^q(Q^\nu;\R^m)$, and \eqref{e:truncation g} yields
\[
\limsup_{j\to\infty}\calF^{{\cutoffconst_j}}_{\eps_j}(\mathcal{T}_{k_{N,j}}(u_j), v_j; Q^\nu)\leq\Big(1+\frac
CN\Big){\gpinf(z,\nu)} \,,
\]
which, by the arbitrariness of $N\in\N$, implies
\begin{equation*}\begin{split}
{\gpinf(z,\nu)} =\inf &\{\liminf_{j\to\infty}
\calF^{{\cutoffconst_j}}_{\eps_j}(u_j,v_j; Q^\nu): \\
&\|u_j- z\chi_{\{x\cdot\nu>0\}}\|_{L^q(Q^\nu;\R^m)}\to0, \eps_j\to 0,
{\cutoffconst_j\to\infty}
\}.
\end{split}\end{equation*}
We are left with establishing \eqref{e:truncation g}. To this aim, consider $\mathcal{T}_k(u_j)$ and note that
\begin{align}\label{e:Feps* Tk}
\calF^{{\cutoffconst_j}}_{\eps_j}&(\mathcal{T}_k(u_j),v_j;Q^\nu)=\calF^{{\cutoffconst_j}}_{\eps_j}(u_j,v_j;\{|u_j|\leq
a_k\})\notag\\
&+\calF^{{\cutoffconst_j}}_{\eps_j}(\mathcal{T}_k(u_j),v_j;\{a_k<|u_j|< a_{k+1}\})
+\calF^{{\cutoffconst_j}}_{\eps_j}(0,v_j;\{|u_j|\geq a_{k+1}\})\,.
\end{align}
We estimate the second term in \eqref{e:Feps* Tk}. The growth conditions on {$\Psi_\infty$ (cf. \eqref{e:Psiinftygc})}
and
{$\Lip(\mathcal{T}_k)\leq 1$} yield for a constant $C>1$
\begin{align}\label{e:Feps* Tk 2}
\calF^{{\cutoffconst_j}}_{\eps_j}&(\mathcal{T}_k(u_j),v_j;\{a_k<|u_j|<a_{k+1}\})
\notag\\&\leq C\int_{\{a_k<|u_j|< a_{k+1}\}}
{(\cutoffconst_j^q\wedge \eps_j^{q-1} f^q_{p}(v_j))}\Psiinfty(\nabla u_j)\dx\notag\\
&+\mathcal F^{{\cutoffconst_j}}_{\eps_j}(0,v_j;\{a_k<|u_j|< a_{k+1}\})\,.
\end{align}
Collecting \eqref{e:Feps* Tk} and \eqref{e:Feps* Tk 2}
and using that for $A$ and $B$ disjoint
$\calF_{\eps_j}^{{\cutoffconst_j}}(u_j,v_j;A)+\calF_{\eps_j}^{{\cutoffconst_j}}(0,v_j;B)\le
\calF_{\eps_j}^{{\cutoffconst_j}}(u_j,v_j;A\cup B)$,
we conclude that
\begin{equation*}
\calF^{{\cutoffconst_j}}_{\eps_j}(\mathcal{T}_k(u_j),v_j;Q^\nu)\leq
\calF^{{\cutoffconst_j}}_{\eps_j}(u_j,v_j;Q^\nu)
+ C\calF^{\cutoffconst_j}_{\eps_j}(u_j,v_j;\{a_k<|u_j|<a_{k+1}\})\,.
\end{equation*}
Let now $N\in\N$. By averaging, there exists $k_{N,j}\in\{N+1,\ldots,2N\}$ such that
\begin{align}\label{e:truncation}
\calF^{{\cutoffconst_j}}_{\eps_j}(\mathcal{T}_{k_{N,j}}(u_j),v_j;Q^\nu)&\leq\frac
1N\sum_{k=N+1}^{2N}\calF^{{\cutoffconst_j}}_{\eps_j}(\mathcal{T}_k(u_j),v_j;Q^\nu)
\notag\\
&\leq\Big(1+\frac CN\Big)\calF^{{\cutoffconst_j}}_{\eps_j}(u_j,v_j;Q^\nu)\,,
\end{align}
i.e. \eqref{e:truncation g}.
 \smallskip

 \noindent{\bf Step~2. {Construction of an optimal sequence converging in $L^q(Q^\nu;\R^{m+1})$ and satisfying \eqref{eqpropboundrybv}}.}
 In view of Step~1 there is an optimal sequence for $\gpinf(z,\nu)$ in
 \eqref{eqdefGpsnu inf} converging in $L^q(Q^\nu;\R^{m+1})$.}
 Therefore, we may apply Proposition~\ref{proplbboundarysingle} to modify the sequence to satisfy 
  \eqref{eqpropboundrybv} without increasing the limiting energy, and conclude.
\end{proof}
We show next that the infimum in the definition of $\gpsup$ is achieved for any infinitesimal sequence $\eps_j$ 
(cf. \cite[Proposition~3.2]{ContiFocardiIurlano2022}).
	\begin{proposition}\label{periodicity}
		For any $(z,\nu)\in \R^m\times S^{n-1}$ and
		any $\eps_j^*\downarrow0$ there is $(u_j^*,v_j^*)\to (z\chi_{\{x\cdot\nu>0\}},1)$ in $L^q(Q^\nu;\R^{m+1})$,
		with $v_j^*\in[0,1]$ $\calL^n$-a.e. in $Q^\nu$, and satisfying \eqref{eqpropboundrybv}, such that
		\begin{equation}\label{e:g periodic sequence}
		\lim_{j\to\infty}
		\calF^{\infty}_{\eps_j^*}({{u_j^*,v_j^*}}; Q^\nu)=  \gpsup(z,\nu)\,.
		\end{equation}
	\end{proposition}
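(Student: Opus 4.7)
The plan is to leverage Proposition~\ref{p:gp boundary value}, which supplies a recovery sequence at \emph{some} scales $\tilde\eps_k\downarrow 0$ satisfying the boundary conditions \eqref{eqpropboundrybv}, and then to convert it into a recovery sequence at the prescribed scales $\eps_j^*$ via rescaling and periodic tiling in the $n-1$ directions orthogonal to $\nu$. The main obstacle is to carry out the tiling without introducing jumps in the Sobolev sense on the interfaces; the key structural fact that makes the argument work is that the boundary data in \eqref{eqpropboundrybv} depend only on $x\cdot\nu$ (since the convolution of $\chi_{\{x\cdot\nu>0\}}$ with any mollifier yields a function of $x\cdot\nu$ alone).

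First I would record the scaling identity: setting $u_\lambda(x):=\tilde u(x/\lambda)$, $v_\lambda(x):=\tilde v(x/\lambda)$ on $\lambda Q^\nu$, the $q$-homogeneity of $\Psi_\infty$ yields
\[
\calF_{\lambda\tilde\eps}^\infty(u_\lambda,v_\lambda;\lambda Q^\nu)=\lambda^{n-1}\calF_{\tilde\eps}^\infty(\tilde u,\tilde v;Q^\nu),
\]
while the identity $(\chi_A*\varphi_{\tilde\eps})(\cdot/\lambda)=\chi_{\lambda A}*\varphi_{\lambda\tilde\eps}$ shows that the boundary values of $(u_\lambda,v_\lambda)$ on $\partial(\lambda Q^\nu)$ are exactly those prescribed by \eqref{eqpropboundrybv} at the new scale $\lambda\tilde\eps$.

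Next, by a diagonal argument I would select $k(j)\to\infty$ so that $\lambda_j:=\eps_j^*/\tilde\eps_{k(j)}\to 0$ and $\calF_{\tilde\eps_{k(j)}}^\infty(\tilde u_{k(j)},\tilde v_{k(j)};Q^\nu)\to\gpsup(z,\nu)$. Taking $\nu=e_n$ without loss of generality and $N_j:=\lfloor 1/\lambda_j\rfloor$, on each of the $N_j^{n-1}$ pairwise disjoint translates $\lambda_j i+\lambda_j Q^\nu$ (with $i\in\Z^{n-1}\times\{0\}$, $|i_\alpha|\le (N_j-1)/2$) I would place the rescaled pair, and on the complement in $Q^\nu$ I would set $u_j^*:=(z\chi_{\{x\cdot\nu>0\}})*\varphi_{\eps_j^*}$ and $v_j^*:=\chi_{\{|x\cdot\nu|\ge 2\eps_j^*\}}*\varphi_{\eps_j^*}$. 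Because each tile's boundary values on $\partial(\lambda_j i+\lambda_j Q^\nu)$ coincide exactly with these extension formulas (both depend only on $x\cdot\nu$), the resulting pair $(u_j^*,v_j^*)$ lies in $W^{1,q}(Q^\nu;\R^m\times[0,1])$, and \eqref{eqpropboundrybv} holds on $\partial Q^\nu$ by construction.

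The tiles' total contribution sums to $(N_j\lambda_j)^{n-1}\calF_{\tilde\eps_{k(j)}}^\infty(\tilde u_{k(j)},\tilde v_{k(j)};Q^\nu)\to\gpsup(z,\nu)$. On the complement, $\nabla u_j^*$ is supported in $\{|x\cdot\nu|\le\eps_j^*\}$ where $v_j^*\equiv 0$, so $f_{p}^q(v_j^*)\Psi_\infty(\nabla u_j^*)$ vanishes identically, and the remaining phase-field and dissipation terms contribute at most $C\eps_j^*+C(1-N_j\lambda_j)\to 0$. Combined with the obvious lower bound $\liminf_j\calF_{\eps_j^*}^\infty(u_j^*,v_j^*;Q^\nu)\ge\gpsup(z,\nu)$ from \eqref{e:gpsup periodicity}, this yields \eqref{e:g periodic sequence}. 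The $L^q$-convergence of $(u_j^*,v_j^*)$ to $(z\chi_{\{x\cdot\nu>0\}},1)$ is immediate, since the tiled slab has thickness $\lambda_j\to 0$ and the extension converges in $L^q$ to the target.
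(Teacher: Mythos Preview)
Your proposal is correct and follows essentially the same approach as the paper: starting from an optimal sequence at scales $\tilde\eps_k$ provided by Proposition~\ref{p:gp boundary value}, you diagonalize to obtain $\lambda_j=\eps_j^*/\tilde\eps_{k(j)}\to0$, tile the midplane of $Q^\nu$ with $\lfloor1/\lambda_j\rfloor^{n-1}$ rescaled copies, and fill the remainder with the mollified boundary data, exploiting that the latter depend only on $x\cdot\nu$ to ensure the glued pair stays in $W^{1,q}$. The only minor imprecision is the $L^q$-convergence justification: small thickness alone is not enough, but the scaling identity $\|u_j^*-z\chi\|_{L^q(\text{tiles})}^q=N_j^{n-1}\lambda_j^{\,n}\|\tilde u_{k(j)}-z\chi\|_{L^q(Q^\nu)}^q\le\lambda_j\|\tilde u_{k(j)}-z\chi\|_{L^q(Q^\nu)}^q\to0$ makes this precise.
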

	\begin{proof}
We first apply  Proposition~\ref{p:gp boundary value} to infer that for some $\eps_j\to 0$ and
$(u_j,v_j)\to (z\chi_{\{x\cdot\nu>0\}},1)$ in $L^q(Q^\nu;\R^{m+1})$ satisfying \eqref{eqpropboundrybv} it holds
\begin{equation}\label{e:gsup=lim Fepsj}
\gpsup(z,\nu)= \lim_{j\to\infty}\calF^{\infty}_{\eps_j}(u_j,v_j; Q^\nu)
\end{equation}
(cf. \eqref{e:gpsup periodicity}).
Since $\displaystyle{\lim_{k\to\infty}\lim_{j\to\infty}} \eps_j^*/\eps_k=0$, we can
select a nondecreasing sequence $k(j)\to\infty$ such that {$\lambda_j:=\eps_j^*/\eps_{k(j)}\to0$.}
We let $\tilde Q^\nu:=(\Id_n-\nu\otimes \nu)Q^\nu\subset \nu^\perp\subset\R^n$ {and}
select $x_1, \dots, x_{I_j}\in \tilde Q^\nu$, with $I_j:=\lfloor1/\lambda_j\rfloor^{n-1}$,
such that $x_i+\tilde Q^\nu_{\lambda_j}$ are pairwise disjoint subsets of $\tilde Q^\nu$. We set
\begin{equation*}
u_j^*(x):=\begin{cases}
u_{k(j)}(\frac{x-x_i}{{\lambda_j}}), & \text{ if } x-x_i\in Q^\nu_{{\lambda_j}} \text{ for some $i$},\\
{U_j^*}(x), & \text{ otherwise in $Q^\nu$}
\end{cases}
\end{equation*}
and
\begin{equation*}
v_j^*(x):=\begin{cases}
v_{k(j)}(\frac{x-x_i}{{\lambda_j}}), & \text{ if } x-x_i\in Q^\nu_{{\lambda_j}} \text{ for some $i$},\\
{V_j^*}(x),  & \text{ otherwise in $Q^\nu$},
\end{cases}
\end{equation*}
where {$U_j^*$ and $V_j^*$} are defined as in \eqref{eqdefUjVj} using $\eps_j^*$. One easily verifies that
$U_j^*(x)=U_{k(j)}(\frac{x-y}{\lambda_j})$ for all $y\in \nu^\perp$, and the same for $V_j^*$.
By the boundary conditions \eqref{eqpropboundrybv}, these functions are continuous and therefore in
$W^{1,q}(Q^\nu;\R^{m+1})$. We further estimate
\begin{equation*}
\calF^{\infty}_{\eps_j^*}(u_j^*, v_j^*; Q^\nu)\le
I_j \lambda_j^{n-1} \calF^{\infty}_{\eps_{k(j)}}(u_{k(j)}, v_{k(j)}; Q^\nu)
+ C \calH^{n-1}({\tilde  Q}^\nu\setminus \cup_i (x_i+{\tilde  Q} ^\nu_{\lambda_j}){)}.
\end{equation*}
Taking $j\to\infty$ and recalling \eqref{e:gsup=lim Fepsj}
concludes the proof.
\end{proof}

{In what follows we provide a further equivalent characterization for the surface energy density
$\gpsup$ in the spirit of \cite[Proposition~4.3]{ContiFocardiIurlano2016} (see also \cite[Proposition
3.3]{ContiFocardiIurlano2022}).
\begin{proposition}\label{p:chargT}
For any $(z,\nu)\in \R^m\times S^{n-1}$ one has
\begin{equation}\label{e:characg}
\gpsup(z,\nu) =\lim_{T\to\infty}\inf_{(u,v)\in \calU_{z,\nu}^T}
\frac{1}{T^{n-1}}\calF^\infty_1(u,v;Q^\nu_T)\,,
\end{equation}
where
\begin{multline}\label{e:uznu}
\calU^T_{z,\nu}:=\Big\{(u,v)\in W^{1,q}(Q^\nu_T;\R^{m+1})\colon 0\leq v\leq 1,\
v=\chi_{\{|x\cdot\nu|\ge 2\}}\ast\varphi_{1}\, \text{ and }\\
u=(z\chi_{\{x\cdot\nu>0\}})\ast\varphi_{1}
\text{ on } \partial Q^\nu_{{T}} \Big\}.
\end{multline}
\end{proposition}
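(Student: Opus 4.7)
The plan is to reduce \eqref{e:characg} to the scaling invariance of $\calF^\infty_1$ and then invoke Proposition~\ref{periodicity}. For $T>0$ and $(u,v)\in W^{1,q}(Q^\nu_T;\R^m\times[0,1])$, setting $\tilde u(x):=u(Tx)$ and $\tilde v(x):=v(Tx)$ on $Q^\nu$, the positive $q$-homogeneity of $\Psi_\infty$ (which follows from \eqref{e:Psiinfty}) combined with the change of variable $y=Tx$ yields
\[
\calF^\infty_1(u,v;Q^\nu_T)=T^{n-1}\,\calF^\infty_{1/T}(\tilde u,\tilde v;Q^\nu).
\]
A direct computation based on $\varphi_\eps(\cdot)=\eps^{-n}\varphi_1(\cdot/\eps)$ shows that $\bigl((z\chi_{\{y\cdot\nu>0\}})\ast\varphi_1\bigr)(Tx)=\bigl((z\chi_{\{x\cdot\nu>0\}})\ast\varphi_{1/T}\bigr)(x)$, with the analogous identity for the phase field datum. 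Hence the Dirichlet condition defining $\calU^T_{z,\nu}$ transforms precisely into \eqref{eqpropboundrybv} with $\eps=1/T$, and the infimum $\psi(T)$ on the right-hand side of \eqref{e:characg} equals the infimum of $\calF^\infty_{1/T}(\,\cdot\,;Q^\nu)$ over all $(u,v)\in W^{1,q}(Q^\nu;\R^m\times[0,1])$ obeying \eqref{eqpropboundrybv}.

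The inequality $\limsup_{T\to\infty}\psi(T)\le \gpsup(z,\nu)$ then follows immediately: given any $T_j\to\infty$, Proposition~\ref{periodicity} applied with $\eps_j^*:=1/T_j$ produces $(u_j^*,v_j^*)\to(z\chi_{\{x\cdot\nu>0\}},1)$ in $L^q(Q^\nu;\R^{m+1})$ obeying \eqref{eqpropboundrybv} with $\calF^\infty_{1/T_j}(u_j^*,v_j^*;Q^\nu)\to\gpsup(z,\nu)$, and these pairs are admissible in the infimum defining $\psi(T_j)$.

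The reverse inequality $\liminf_{T\to\infty}\psi(T)\ge \gpsup(z,\nu)$ is the delicate step. Take $(\bar u_j,\bar v_j)$ nearly optimal for $\psi(T_j)$ up to error $1/T_j$, with $\eps_j:=1/T_j$; by the characterization \eqref{e:gpsup periodicity} of Proposition~\ref{p:gp boundary value} it suffices to modify $(\bar u_j,\bar v_j)$ so that in addition to \eqref{eqpropboundrybv} also $\|\bar u_j-z\chi_{\{x\cdot\nu>0\}}\|_{L^q(Q^\nu)}\to 0$ holds, at asymptotically no extra energy cost. The Modica--Mortola term forces $\bar v_j\to 1$ in $L^{q'}(Q^\nu)$; the main obstacle is compactness of $\bar u_j$, since boundedness of the bulk term $\eps_j^{q-1}f_p^q(\bar v_j)\Psi_\infty(\nabla\bar u_j)$ gives no uniform $W^{1,q}$ control. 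I would exploit a two-scale cutoff $\delta_j\to 0$ chosen in the window $\eps_j^{1/q'}\ll \delta_j\ll \eps_j^{(q-1)/(pq)}$ -- nonempty precisely because $p>1$ -- yielding both $|\{\bar v_j<1-\delta_j\}|\to 0$ and $\int_{\{\bar v_j\ge 1-\delta_j\}}|\nabla\bar u_j|^q\,dx\to 0$. After truncating $\bar u_j$ at height $|z|$ (permissible since $f_p$ is nondecreasing and the boundary trace is bounded by $|z|$), a boundary layer surgery in the spirit of Proposition~\ref{proplbboundarysingle} -- gluing $(\bar u_j,\bar v_j)$ to the canonical recovery sequence from Proposition~\ref{periodicity} in a thin annulus adjacent to $\partial Q^\nu$ -- produces the sought $L^q$-converging admissible sequence for \eqref{e:gpsup periodicity} and concludes the argument.
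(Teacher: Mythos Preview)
Your scaling identity and the upper bound $\limsup_T\psi(T)\le\gpsup(z,\nu)$ via Proposition~\ref{periodicity} are correct and match the paper exactly.

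The lower bound, however, has a genuine gap. You try to show that near-minimisers $(\bar u_j,\bar v_j)$ for $\psi(T_j)$, after modification, converge in $L^q$ to $z\chi_{\{x\cdot\nu>0\}}$, so as to invoke \eqref{e:gpsup periodicity}. But your sketch does not deliver this convergence. The two-scale cutoff gives you a set $B_j:=\{\bar v_j<1-\delta_j\}$ of small measure on whose complement $\nabla\bar u_j$ is small in $L^q$; yet nothing controls the \emph{perimeter} of $B_j$, so on a typical line in the $\nu$ direction the bad set can split the good set into arbitrarily many pieces, across each of which the (truncated) function may oscillate by $2|z|$. The Modica--Mortola term bounds $\int|\nabla\Phi(\bar v_j)|$, hence the perimeter of \emph{some} super-level set of $\bar v_j$, but not of the specific level $1-\delta_j$ you need (which tends to $1$). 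The final ``surgery in the spirit of Proposition~\ref{proplbboundarysingle}'' is circular: that proposition already presupposes smallness of $d_q$, precisely what you are trying to establish.

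The paper bypasses all of this by a direct tiling construction. One fixes $\rho>0$, picks a \emph{single} $T$ large and a near-optimal $(u_T,v_T)\in\calU^T_{z,\nu}$, and for any $\eps_j\to0$ covers the midplane of $Q^\nu$ by disjoint translates of $\eps_j Q^\nu_T$, placing a rescaled copy of $(u_T,v_T)$ in each and the mollified boundary profile elsewhere. This sequence manifestly converges to $z\chi_{\{x\cdot\nu>0\}}$ in $L^1$ (the cubes shrink), so the \emph{original} definition \eqref{eqdefGpsnu sup} of $\gpsup$ applies directly --- no need for the periodic characterisation, and no compactness issue whatsoever. A one-line counting of cubes gives $\gpsup(z,\nu)\le T^{1-n}\calF^\infty_1(u_T,v_T;Q^\nu_T)\le\liminf_T g_T(z,\nu)+\rho$.
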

\begin{proof}
For every $(z,\nu)\in \R^m\times S^{n-1}$ and $T>0$ set
\begin{equation}\label{eqdefgTznu}
g_T(z,\nu):=\inf_{(u,v)\in \calU_{z,\nu}^T}\frac{1}{T^{n-1}}\calF^\infty_1(u,v;Q^\nu_T)\,.
\end{equation}
We first prove that
\begin{equation}\label{e:limsup leq g}
\limsup_{T\to\infty}g_T(z,\nu)\leq \gpsup(z,\nu) \,.
\end{equation}
If $T_j\uparrow\infty$ is a sequence achieving the $\limsup$ on the left{-}hand side above,
thanks to Proposition~\ref{periodicity} we may
consider $(u_j,v_j)\in W^{1,q}(Q^\nu;\R^{m+1})$ with $0\leq v_j\leq 1$, $(u_j,v_j)\to (z\chi_{\{x\cdot\nu>0\}},1)$ in
$L^q(Q^\nu;\R^{m+1})$,
\begin{equation}\label{e:bound}
u_j=(z\chi_{\{x\cdot\nu>0\}})\ast\varphi_{\frac1{T_j}}\,,\qquad
v_j=\chi_{\{|x\cdot\nu|\ge \frac2{T_j}\}}\ast\varphi_{\frac1{T_j}}
\text{ on } \partial Q^\nu,
\end{equation}
and
\begin{equation}\label{e:estf}
\lim_{j\to\infty}\calF^{\infty}_{\sfrac1{T_j}}(u_j,v_j; Q^\nu)=\gpsup(z,\nu) .
\end{equation}
Then, define $(\tilde{u}_j(y),\tilde{v}_j(y)):=\big(u_j(\frac y{T_j}),v_j(\frac y{T_j})\big)$ for $y\in Q^\nu_{T_j}$,
and note that, by a change of variables,
\begin{equation*}
\frac{1}{T_j^{n-1}}{\calF^{\infty}_{1}}(\tilde{u}_j,\tilde{v}_j;Q^\nu_{T_j})=
\calF^{\infty}_{\sfrac1{T_j}}(u_j,v_j;Q^\nu)\,,
\end{equation*}
and that $(\tilde{u}_j,\tilde{v}_j)\in\calU^{T_j}_{z,\nu}$
in view of \eqref{e:bound}. Then, by \eqref{e:estf}, the choice of
$T_j$, and the definition of $g_T(z,\nu)$, we conclude \eqref{e:limsup leq g}.

In order to prove the converse inequality
\begin{equation}\label{e:liminf geq g}
\liminf_{T\to\infty}g_T(z,\nu)\geq \gpsup(z,\nu) \,,
\end{equation}
we assume for the sake of notational simplicity $\nu=e_n$.
We then fix $\rho>0$ and take $T>6$, depending on $\rho$,
and $(u_T,v_T)\in \calU^T_{z,e_n}$ such that
\begin{equation}\label{e:estf2}
\frac{1}{T^{n-1}}\calF^\infty_1(u_T,v_T;Q^{e_n}_T)\leq
\liminf_{T\to\infty}g_T(z,{e_n})+\rho\,.
\end{equation}
Let $\eps_j\to0$ and set
\begin{equation}\label{eqdefujscaleT}
u_j(y):=\left\{ \begin{array}{ll}
{\displaystyle u_T\left(\frac{y}{\eps_{j}}-d\right)},
& \textrm{if } y\in 
\eps_j (Q^{e_n}_T+d)\subset\subset Q^{e_n},\\
{{(z\chi_{\{x\cdot {e_n} >0\}}\ast \varphi_{1})(\frac{y}{\eps_j})}}, & \text{otherwise in }Q^{e_n},
\end{array} \right.
\end{equation}
\begin{equation}\label{eqdefvjscaleT}
v_j(y):=\left\{ \begin{array}{ll}
{\displaystyle v_T\left(\frac{y}{\eps_{j}}-d\right)} ,
& \textrm{if } y\in\eps_j (Q^{e_n}_T+d)\subset\subset Q^{e_n},\\
{{(\chi_{\{|x\cdot {e_n}| >2\}}\ast \varphi_{1})(\frac{y}{\eps_j})}}, & \text{otherwise in }Q^{e_n},
\end{array} \right.
\end{equation}
with $d\in\Z^{n-1}\times\{0\}$.
Then, $(u_j,v_j)\to(z\chi_{\{x\cdot{e_n} >0\}},1)$ in {$L^1(Q^{e_n};\R^{m+1})$}, and {letting $I_{\eps_j}:=\{d\in
\Z^{n-1}\times\{0\}:\,\eps_j (Q^{{e_n}}_T+d)\subset\subset Q^{e_n}\}$}, a change of variables yields
(cf. also the discussion after \eqref{eqdefUjVj})
\begin{align*}
\gpsup(z,{e_n})\leq&\limsup_{j\to\infty}\calF^{\infty}_{\eps_j}(u_j,v_j;Q^{{e_n}})\\
\leq&\limsup_{j\to\infty}\Big(\sum_{d\in I_{\eps_j}}
\calF^{\infty}_{\eps_j}(u_j,v_j;\eps_j(Q^{{e_n}}_T+d))\\
&+{\frac{c}{\eps_j}}\calL^{n}\Big({Q^{{e_n}}\cap}
{\{|x_n|\leq 3\eps_j\}}\setminus \!\!\bigcup_{d\in
I_{{\eps_j}}}\eps_j (Q^{{e_n}}_T+d)\Big)\Big)\\
=& \limsup_{j\to\infty}
\eps_j^{n-1}\#I_{\eps_j}\,\calF^\infty_1(u_T,v_T;Q^{{e_n}}_T) \\
\leq&\frac1{T^{n-1}}\calF^\infty_1(u_T,v_T;Q^{{e_n}}_T)\leq
\liminf_{T\to\infty}g_T(z,{e_n})+\rho\,,
\end{align*}
by the choice of $(u_T,v_T)$ and $T$ {(cf. \eqref{e:estf2})}.
As $\rho\to 0$, we get {\eqref{e:liminf geq g}}.

Estimates \eqref{e:limsup leq g} and \eqref{e:liminf geq g} yield the existence of the limit of $g_T(z,\nu)$ as
$T\uparrow\infty$ and equality \eqref{e:characg}, as well.
\end{proof}

With this representation of $\gpsup$ at hand we can obtain a version of
Proposition~\ref{periodicity} which also accounts for a regularization term
of the form $\eta_\eps\int_\Omega |\nabla u|^q\dx$, {which is useful to prove the
convergence of minimizers stated in the introduction.}
\begin{corollary}\label{propuvjump}
For any $\eps_j\downarrow0$ and $\eta_j\downarrow0$
with $\sfrac{\eta_j}{\eps_j^{q-1}}\to0$, and any $(z,\nu)\in \R^m\times S^{n-1}$
there is $(u_j,v_j)\to (z\chi_{\{x\cdot\nu>0\}},1)$ in
$L^q(Q^\nu;\R^{m+1})$, with $v_j\in[0,1]$ $\calL^n$-a.e.
in $Q^\nu$, such that
\begin{equation*}
\lim_{j\to\infty}
\calF^{\infty}_{\eps_j}({{u_j,v_j}}; Q^\nu) = \gpsup(z,\nu) \,,
\end{equation*}
\[
\lim_{j\to\infty} \eta_j \int_{Q^\nu} |\nabla u_j|^q\dx=0\,,
\]
and
\begin{equation*}
u_j=(z\chi_{\{x\cdot\nu>0\}})\ast\varphi_{\eps_j}\,,\hskip1cm
v_j=\chi_{\{|x\cdot\nu|\ge2\eps_j\}}\ast\varphi_{\eps_j} \hskip1cm
\text{ on } \partial Q^\nu.
\end{equation*}
\end{corollary}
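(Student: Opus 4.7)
The plan is to build $(u_j,v_j)$ by the rescaling construction used in the proofs of Proposition~\ref{periodicity} and Proposition~\ref{p:chargT}, starting from near-minimizers of the finite-scale problem $g_T(z,\nu)$ from \eqref{eqdefgTznu} and performing a careful diagonal choice of $T$ that accommodates the new gradient-decay condition. Concretely, for every integer $T\ge 2$ the characterization in Proposition~\ref{p:chargT} produces a pair $(w_T,\zeta_T)\in\calU^T_{z,\nu}$ with
\[
\frac{1}{T^{n-1}}\calF^\infty_1(w_T,\zeta_T;Q^\nu_T)\le g_T(z,\nu)+\frac{1}{T},
\]
and we record $M_T:=\int_{Q^\nu_T}|\nabla w_T|^q\dx<\infty$, which is finite since $w_T\in W^{1,q}$.

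Using $\eta_j/\eps_j^{q-1}\to 0$, select a nondecreasing sequence $T_j\in\N$ with $T_j\to\infty$ and $(\eta_j/\eps_j^{q-1})(M_{T_j}/T_j^{n-1})\to 0$; for instance let $J(T):=\min\{j:\,\eta_j/\eps_j^{q-1}\le 1/(T\cdot T^{n-1}\cdot(1+M_T))\}$ (finite by hypothesis, taken strictly increasing) and put $T_j:=\sup\{T:J(T)\le j\}$. Define $(u_j,v_j)$ on $Q^\nu$ as in \eqref{eqdefujscaleT}-\eqref{eqdefvjscaleT}: tile a neighbourhood of $\{x\cdot\nu=0\}\cap Q^\nu$ by disjoint copies $\eps_j(Q^\nu_{T_j}+d)\subset\subset Q^\nu$ with $d$ ranging over a sublattice of $\nu^\perp$ of step $\eps_j T_j$, set $u_j(y):=w_{T_j}(y/\eps_j-d)$ and $v_j(y):=\zeta_{T_j}(y/\eps_j-d)$ on each such cube, and extend outside by the mollified boundary data $(z\chi_{\{x\cdot\nu>0\}})*\varphi_{\eps_j}$ and $\chi_{\{|x\cdot\nu|\ge 2\eps_j\}}*\varphi_{\eps_j}$; the boundary conditions in the definition of $\calU^{T_j}_{z,\nu}$ guarantee continuity across cube faces and the required traces on $\partial Q^\nu$.

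The three properties are verified as in the proofs of Propositions~\ref{periodicity} and \ref{p:chargT}: the $L^q$-convergence $(u_j,v_j)\to(z\chi_{\{x\cdot\nu>0\}},1)$ follows from the total volume $O(\eps_j T_j)\to 0$ of the cubes (with an auxiliary diagonal condition on $\|w_T\|_{L^q}$ if necessary); the usual scaling computation yields
\[
\calF^\infty_{\eps_j}(u_j,v_j;Q^\nu)=\frac{1}{T_j^{n-1}}\calF^\infty_1(w_{T_j},\zeta_{T_j};Q^\nu_{T_j})+o(1)\longrightarrow\gpsup(z,\nu),
\]
where the $o(1)$ absorbs both the boundary cubes and the contribution of the mollified extension in the complement (there $\nabla u_j\ne 0$ only on $\{|x\cdot\nu|<\eps_j\}$, which lies inside the cubes once $T_j>2$, so the elastic part vanishes, and the phase-field part integrates to $O(\eps_j T_j)$); the matching $\liminf\ge\gpsup$ is immediate from the definition of $\gpsup$. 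The new input is the gradient decay: a change of variables on each cube gives $\int_{\eps_j(Q^\nu_{T_j}+d)}|\nabla u_j|^q\dx=\eps_j^{n-q}M_{T_j}$, so summing over $O(1/(\eps_j T_j)^{n-1})$ cubes and including the strip near $\partial Q^\nu$ (contributing $O(\eps_j^{1-q})$ through the mollified extension) yields
\[
\eta_j\int_{Q^\nu}|\nabla u_j|^q\dx\le C\Bigl(\frac{\eta_j}{\eps_j^{q-1}}\cdot\frac{M_{T_j}}{T_j^{n-1}}+\frac{\eta_j}{\eps_j^{q-1}}\Bigr)\longrightarrow 0.
\]

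The main delicate point is the diagonal choice of $T_j$: the near-minimizers $w_T$ are only a priori in $W^{1,q}(Q^\nu_T)$, so $M_T$ may grow at an arbitrary rate in $T$ and one cannot fix $T$ first and let $\eps_j\to 0$. The assumption $\eta_j/\eps_j^{q-1}\to 0$ is precisely what affords the freedom to let $T_j\to\infty$ slowly enough to balance the growth of $M_T$ against the vanishing of $\eta_j/\eps_j^{q-1}$, while still keeping the energy convergent to $\gpsup(z,\nu)$.
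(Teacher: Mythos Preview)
Your proposal is correct and follows essentially the same approach as the paper: both arguments reuse the tiling/rescaling construction from \eqref{eqdefujscaleT}--\eqref{eqdefvjscaleT} based on near-minimizers of $g_T$, observe that the resulting sequence satisfies $\|\nabla u_j\|_{L^q(Q^\nu)}^q\le C_{T}/\eps_j^{q-1}$ (your $C_T$ being essentially $M_T/T^{n-1}+C$), and then diagonalize by letting $T_j\to\infty$ slowly enough that $\eta_j C_{T_j}/\eps_j^{q-1}\to 0$. The only cosmetic difference is that you spell out the diagonal choice more explicitly; just make sure your $T_j$ also grows slowly enough to guarantee $\eps_j T_j\to 0$ (needed for the cubes to fit and for the $L^q$-convergence), which is automatic once one imposes all the slowness conditions simultaneously.
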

\begin{proof}
We use the same construction as {in \eqref{eqdefujscaleT}-\eqref{eqdefvjscaleT}} (without loss of generality, explicitly written only for
$\nu=e_n$), and compute similarly
\begin{align*}
\|\nabla u_j\|_{L^q(Q^{{e_n}})}^q
&\leq\sum_{d\in I_{\eps_j}}
\|\nabla u_j\|_{L^q(\eps_j(Q^{{e_n}}_T+d))}^q
+\frac{C}{\eps_j^q}\calL^{n}\left(Q^{{e_n}}\cap\{|{x_n}|\leq \eps_j\}
\right)\\
&={\eps_j^{n-q}}\# I_{\eps_j}\,\|\nabla u_T\|_{L^q(Q^{{e_n}}_T)}^q
+\frac{C}{\eps_j^{q-1}}\le \frac{C_T}{\eps_j^{{q-1}}}.
\end{align*}
To conclude the proof, it suffices to choose $T_j\to\infty$ so slowly that
$\eta_j C_{T_j}/\eps_j^{q-1}\to0$.
\end{proof}
We finally prove that $\gpinf$ and $\gpsup$ coincide in the case where $\Psi_\infty$ satisfies the projection property {in \eqref{e:projectionproperty}}. {Before starting the proof, we observe that continuity of $\Psi_\infty$ implies that for any $\delta>0$ there is $C(\delta)>0$ such that
	\begin{equation}\label{eqPsiinftyxieta}
	\Psi_\infty(\xi+\eta)\le (1+\delta)\Psi_\infty(\xi)+C(\delta)|\eta|^q \hskip5mm
	\text{ for all }\xi,\eta\in\R^{m\times n},
	\end{equation}
	which by $q$-homogeneity and coercivity immediately implies, {for $L>0$,}
	\begin{equation}\label{eqPsiinftyxieta2}
	\Psi_\infty(\xi+\eta)\le (1+\delta+C(\delta)L^q)\Psi_\infty(\xi) \hskip5mm
	\text{ for all }\xi,\eta\in\R^{m\times n} \text{ with } |\eta|\le L|\xi|.
	\end{equation}
	By the $q$-homogeneity of $\Psi_\infty$, it suffices to prove \eqref{eqPsiinftyxieta} under the normalization $|\xi+\eta|=1$. If this were false, there would be $\delta>0$ and sequences with
	\begin{equation*}
	\Psi_\infty(\xi_k+\eta_k)> (1+\delta)\Psi_\infty(\xi_k)+k|\eta_k|^q.
	\end{equation*}
	As $\xi_k+\eta_k$ is bounded, we obtain $\eta_k\to0$, and therefore, passing to a subsequence, $\xi_k\to\xi_*$ with $|\xi_*|=1$.
	Continuity of $\Psi_\infty$ at $\xi_*$ then implies
	\begin{equation*}
	\Psi_\infty(\xi_*)\ge (1+\delta)\Psi_\infty(\xi_*),
	\end{equation*}
	a contradiction. This proves \eqref{eqPsiinftyxieta}.}}

\begin{proposition}\label{euclidean}
Let $\Psi_\infty$ satisfy the projection property in \eqref{e:projectionproperty}. Then
\[\gpinf(z,\nu)=\gpsup(z,\nu)=g(z,\nu)\quad \text{ for every }(z,\nu)\in\R^m\times S^{n-1},\]
where $g$ is defined in \eqref{e:Psi sliceable}.

In particular, if $\Psi_\infty(\xi)=|\xi|^q$, then $g(z,\nu)=g_\scal(|z|)$
for all $(z,\nu)\in \R^m\times S^{n-1}$, where for every $s\geq0$
\begin{equation}\label{e:gscal}
g_\scal(s):=\inf_{\mathcal U_s}\int_0^1|1-\beta|
(f_p^q(\beta)|\alpha'|^q+|\beta'|^q)^{\sfrac1q}\,\dd \tau
\end{equation}
and
\[
\mathcal U_s:=\{{(\alpha,\,\beta)}\in W^{1,q}((0,1);\R^2):\,
0\leq\beta\leq 1,\,\beta(0)=\beta(1)=1,\,\alpha(0)=0,\,\alpha(1)=s\}\,.
\]
\end{proposition}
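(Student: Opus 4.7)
The plan is to establish the chain $g(z,\nu)\le\gpinf(z,\nu)\le\gpsup(z,\nu)\le g(z,\nu)$, giving equality throughout. The middle inequality is immediate from $\calF^M_\eps\le\calF^\infty_\eps$ pointwise.

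For $\gpsup\le g$ I would perform the standard one-dimensional recovery. Given $\eta>0$, pick $(\alpha,\beta)\in\calV^{T_0}_z$ with one-dimensional energy at most $g(z,\nu)+\eta$, extend $\alpha$ by the constant values $0$ and $z$ outside $[-T_0/2,T_0/2]$ and $\beta$ by $1$, and define $(u_\eps,v_\eps)(x):=(\alpha,\beta)(x\cdot\nu/\eps)$ on $Q^\nu$. Fubini applied to the unit cube $Q^\nu$ (whose cross-section orthogonal to $\nu$ has unit area), together with the $q$-homogeneity of $\Psi_\infty$ and the change of variable $t=x\cdot\nu/\eps$, shows that $\calF^\infty_\eps(u_\eps,v_\eps;Q^\nu)$ collapses to exactly the one-dimensional integral for $(\alpha,\beta)$. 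Since $u_\eps\to z\chi_{\{x\cdot\nu>0\}}$ in $L^1(Q^\nu)$, this gives $\gpsup\le g+\eta$.

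For the hard direction $g\le\gpinf$, I would use the projection property to slice. Given a test sequence $(u_j,v_j,\eps_j,M_j)$ realising $\gpinf$, Proposition \ref{p:gp boundary value} allows one to impose the boundary conditions \eqref{eqpropboundrybv}, which for $\eps_j$ small ensure that the slices $\alpha^y_j(t):=u_j(y+t\nu)$, $\beta^y_j(t):=v_j(y+t\nu)$ satisfy $\alpha^y_j(\pm 1/2)=0,z$ and $\beta^y_j(\pm 1/2)=1$. The projection property $\Psi_\infty(\nabla u_j)\ge\Psi_\infty(\partial_\nu u_j\otimes\nu)$, Fubini, and the rescaling $s=t/\eps_j$ lead to
\begin{equation*}
\calF^{M_j}_{\eps_j}(u_j,v_j;Q^\nu)\ge\inf_{(\alpha,\beta)\in\calV^{T_j}_z}\int_{-T_j/2}^{T_j/2}I^{K_j}(\alpha,\beta)\,\dd s=:h(T_j,K_j),
\end{equation*}
with $T_j=1/\eps_j$, $K_j=(M_j/\eps_j)^{q-1}$, and $I^K$ denoting the integrand in \eqref{e:Psi sliceable} with $f_p^q$ replaced by $K\wedge f_p^q$. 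It remains to prove $\liminf_j h(T_j,K_j)\ge g(z,\nu)$.

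This last step is the main obstacle, precisely the issue flagged after Proposition \ref{periodicity}: the simple $v$-truncation of \cite{ContiFocardiIurlano2022} fails for $p>1$ because the two scales do not align. My plan is a compactness argument: take a near-optimal $(\alpha_j,\beta_j)\in\calV^{T_j}_z$, translate to centre the transition of $\alpha_j$, and extract a locally uniform limit $(\alpha_\infty,\beta_\infty)$ on $\R$. The truncated bulk bound forces $\int_{\{f_p^q(\beta_j)>K_j\}}|\alpha'_j|^q\lesssim 1/K_j\to 0$, so the transition of $\alpha_j$ avoids the truncation-active set in the limit; hence $\alpha_\infty(\pm\infty)=0,z$, $\beta_\infty(\pm\infty)=1$, and $\beta_\infty<1$ on $\supp(\alpha'_\infty)$. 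Fatou with $K_j\wedge f_p^q(\beta_j)\to f_p^q(\beta_\infty)$ on $\{\beta_\infty<1\}$, combined with lower semicontinuity of the Modica--Mortola-type terms, would then give $\liminf\int I^{K_j}(\alpha_j,\beta_j)\ge\int I^\infty(\alpha_\infty,\beta_\infty)\ge g(z,\nu)$, after a diagonal approximation of $(\alpha_\infty,\beta_\infty)$ by profiles in $\calV^T_z$ for growing $T$.

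The identification $g(z,\nu)=g_\scal(|z|)$ in the isotropic case $\Psi_\infty(\xi)=|\xi|^q$ follows from a Young inequality computation. The parameter choice $\lambda=q^{-1/(q-1)}$ in $ab\le\lambda a^{q'}/q'+b^q/(\lambda^{q-1}q)$, with $a=|1-\beta|$ and $b=(f_p^q|\alpha'|^q+|\beta'|^q)^{1/q}$, exactly matches the dissipation coefficient $(q'q^{q'/q})^{-1}$, yielding the pointwise bound $|1-\beta|(f_p^q|\alpha'|^q+|\beta'|^q)^{1/q}\le f_p^q|\alpha'|^q+(1-\beta)^{q'}/(q'q^{q'/q})+|\beta'|^q$. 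The reparameterisation invariance of the left-hand side gives $g\ge g_\scal$, and profiles saturating Young's inequality produce the matching upper bound, as in \cite[Theorem 7.4]{ContiFocardiIurlano2016}.
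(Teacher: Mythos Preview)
Your overall strategy—the chain $g\le\gpinf\le\gpsup\le g$, the one-dimensional recovery for $\gpsup\le g$, and the slicing via the projection property to reduce $g\le\gpinf$ to a one-dimensional statement—is exactly the paper's route, and your treatment of the isotropic case is essentially the computation behind \cite[Proposition~7.3]{ContiFocardiIurlano2016}, which the paper simply cites.

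The genuine gap is in your handling of the truncation removal, i.e.\ the passage $\liminf_j h(T_j,K_j)\ge g(z,\nu)$. Your compactness plan does not go through as written. The functional gives no control on $\alpha_j'$ where $\beta_j$ is close to $0$, since $f_p(0)=0$; a near-optimal $\alpha_j$ may oscillate arbitrarily on $\{\beta_j<\delta\}$ at zero energy cost, so locally uniform (or even $W^{1,q}_{\loc}$) convergence of $\alpha_j$ cannot be extracted from the energy bound. Even granting a limit, your inference ``$\alpha_\infty(\pm\infty)=0,z$ and $\beta_\infty<1$ on $\supp\alpha_\infty'$'' does not follow from the single observation that $\int_{\{f_p^q(\beta_j)>K_j\}}|\alpha_j'|^q\to 0$: the set $\{\beta_j>\lambda_j\}$ is not the same as $\{\beta_\infty=1\}$, and nothing localises the transition to a bounded window. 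Finally, what you call ``Fatou'' is really a weak lower semicontinuity statement (you only have $\alpha_j'\rightharpoonup\alpha_\infty'$), which needs a separate argument.

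The paper sidesteps all of this by a direct modification rather than compactness. Working on the fixed interval $(-\tfrac12,\tfrac12)$ with $\widetilde\calF^{M_j}_{\eps_j}$, it uses precisely your observation that $\int_{\{\beta_j>\lambda_j\}}|\alpha_j'|\le CM_j^{1/q-1}\to 0$ (where $\lambda_j$ is the truncation threshold), but then defines $\eta_j(t):=\int_{-1/2}^t\alpha_j'\chi_{\{\beta_j\le\lambda_j\}}$, which kills $\alpha_j'$ exactly on the truncation-active set. Since on $\{\beta_j\le\lambda_j\}$ the truncated and untruncated coefficients agree, one gets $\widetilde\calF^\infty_{\eps_j}(\eta_j,\beta_j)\le\widetilde\calF^{M_j}_{\eps_j}(\alpha_j,\beta_j)$ for free. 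The endpoint defect $e_j:=z-\eta_j(\tfrac12)$ satisfies $|e_j|\to 0$, and is repaired by $\zeta_j:=(\Id+A_j)\eta_j$ with $A_j(z-e_j)=e_j$ and $|A_j|\le 2|e_j|/|z|\to 0$; the continuity estimate \eqref{eqPsiinftyxieta2} then shows the energy of $\zeta_j$ exceeds that of $\eta_j$ only by a factor $1+o(1)$. This produces an admissible competitor for $\widetilde\calF^\infty_{\eps_j}$ on $(-\tfrac12,\tfrac12)$, hence (after rescaling) for $g$, with no compactness needed.
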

\begin{proof}
Let $(z,\nu)\in \R^m\times S^{n-1}$, $z\ne0$, and $T>0$. It is convenient first to recall some notation
used in the slicing technique and some results, for which we refer to \cite{AFP}:
if $(u,v)\in \calU^T_z$, then for $\calH^{n-1}$-a.e. $y\in \tilde Q^\nu_T:=(\Id_n-\nu\otimes \nu)Q^\nu_T\subset
\nu^\perp$
the slices
\[
u_y^\nu(t):=u(y+t\nu),\quad v_y^\nu(t):=v(y+t\nu)
\]
belong to ${\calV}^T_z$ defined in \eqref{e:utildeznu}.
Moreover, for every $(\alpha,\beta)\in {W^{1,q}_\loc(\R;\R^{m+1})}$,
$\eps>0$,
$A\in\calB(\R)$,
and $M\in(0,\infty]$,
set
\begin{align*}
\widetilde{\calF}^{\cutoffconst}_{\eps}(\alpha,\beta;{A}):=\int_A
\Big(&(\cutoffconst^{q-1}\wedge \eps^{q-1}f_p^q(\beta(t)))\Psi_\infty(\alpha'(t)\otimes\nu)\\ &
+\frac{(1-\beta(t))^{q'}}{q'q^{\sfrac{q'}q}\eps}+\eps^{q-1}|\beta'(t)|^q\Big)\dd t\,.
\end{align*}

\noindent{\bf Step~1. We show that $\gpsup(z,\nu)=g(z,\nu)$.}

To do this, we set
\[
\widetilde{g}_T(z,\nu):=\inf_{{\calV}_z^T}\widetilde{\calF}^\infty_1(\alpha,\beta;(-\sfrac T2,\sfrac
T2))\,,
\]
{and note that \eqref{e:Psi sliceable} can be rewritten as $g(z,\nu)=\displaystyle{\lim_{T\to\infty}\widetilde{g}_T(z,\nu)}$.
On the one hand,
to prove that
\begin{equation}\label{e:g leq gpsup}
\liminf_{T\to\infty}\widetilde{g}_T(z,\nu)\geq \gpsup(z,\nu) \,,
\end{equation}
we assume for the sake of notational simplicity $\nu=e_n$.
We then fix $\rho>0$ and take $T>0$, depending on $\rho$,
and $(\alpha_T,\beta_T)\in \calV^T_{z}$ such that
\begin{equation}\label{e:estf2 bis}
\widetilde{\calF}^{\infty}_1(\alpha_T,\beta_T;(-\sfrac T2,\sfrac T2))\leq
\liminf_{T\to\infty}\widetilde{g}_T(z,e_n)+\rho\,.
\end{equation}
Let $\eps_j\to0$ and set
\[
u_j(y):=\left\{ \begin{array}{ll}
{\displaystyle \alpha_T\left(\frac{y_n}{\eps_{j}}\right)},
& \textrm{if } |y_n|{\leq} \frac{T\eps_j}2,\\
z\chi_{\{x\cdot {e_n} >0\}}, & \text{otherwise in }Q^{e_n},
\end{array} \right.
\]
\[
v_j(y):=\left\{ \begin{array}{ll}
{\displaystyle \beta_T\left(\frac{y_n}{\eps_{j}}\right)} ,
& \textrm{if } |y_n|{\leq}\frac{T\eps_j}2,\\
1, & \text{otherwise in }Q^{e_n}.
\end{array} \right.
\]
Then, $(u_j,v_j)\to(z\chi_{\{x\cdot{e_n} >0\}},1)$ in {$L^1(Q^{e_n};\R^{m+1})$}, and a change of variable yields
\begin{align*}
\limsup_{j\to\infty}& \calF^{\infty}_{\eps_j}(u_j,v_j;Q^{{e_n}})
=\limsup_{j\to\infty}\calF^{\infty}_{\eps_j}(u_j,v_j;\{|y_n|\leq T\eps_j/2\})\\
&= 
\widetilde{\calF}^\infty_1(\alpha_T,\beta_T;(-\sfrac T2,\sfrac T2))
\leq\liminf_{T\to\infty}\widetilde{g}_T(z,{e_n})+\rho\,,
\end{align*}
by the choice of $(\alpha_T,\beta_T)$ and $T$ {(cf. \eqref{e:estf2 bis})}.
As $\rho\to 0$,  we conclude \eqref{e:g leq gpsup}, recalling the definition of $\gpsup$ in \eqref{eqdefGpsnu sup}.
}

On the other hand, for every $(u,v)\in  \calU^T_z$, $T>0$, we use the projection property in \eqref{e:projectionproperty}
and Fubini's theorem to deduce that
\begin{align*}
\calF^\infty_1(u,v;Q^\nu_T)\geq
\int_{\tilde Q^\nu_T}\, \widetilde{\calF}^\infty_1(u_y^\nu,v_y^\nu;(-\sfrac T2,\sfrac T2))\mathrm{d}\calH^{n-1}(y)\,.
\end{align*}
In turn, as $(u_y^\nu,v_y^\nu)\in {\calV}^T_z$ for $\calH^{n-1}$-a.e. $y\in \tilde Q^\nu_T$,
the latter inequality implies $g_T(z,\nu)\geq\widetilde{g}_T(z,\nu)$. {Therefore, Proposition~\ref{p:chargT} yields that
\[
\limsup_{T\to\infty}\widetilde{g}_T(z,\nu)\leq \limsup_{T\to\infty}g_T(z,\nu)=\gpsup(z,\nu) \,,
\]
and the claim in Step~1 follows by also taking into account \eqref{e:g leq gpsup}.}
\smallskip

 \noindent{\bf Step~2. We show that $\gpinf(z,\nu)=\gpsup(z,\nu)$.}

By the trivial inequality in \eqref{e:trivial inequality gpinf gpsup} and Step~1, it is sufficient to show that
$\gpinf(z,\nu)\geq g(z,\nu)$. To this aim, by taking into account {Proposition~\ref{p:gp boundary value}}, we may consider a
sequence $(u_j,v_j)\to (z\chi_{\{x\cdot \nu>0\}},1)$ in $L^q(Q^\nu;\R^{m+1})$ in the definition of $\gpinf$ that 
additionally satisfies \eqref{eqpropboundrybv}
(cf. \eqref{e:gpinf periodicity}). Hence, we may argue as above to deduce that
\begin{align*}
\calF^{\cutoffconst_j}_{\eps_j}(u_j,v_j;Q^\nu)&\geq \int_{\tilde Q^\nu}\,
\widetilde{\calF}^{\cutoffconst_j}_{\eps_j}((u_j)^\nu_y,(v_j)^\nu_y;(-\sfrac12,\sfrac12))\mathrm{d}\calH^{n-1}(y)\\
&\geq \inf_{{\calV}^1_z}\widetilde{\calF}^{\cutoffconst_j}_{\eps_j}(\alpha,\beta;(-\sfrac12,\sfrac12))\,.
\end{align*}
Therefore,
\begin{equation}\label{e:stima gpinf dal basso}
\gpinf(z,\nu)\geq\liminf_{j\to\infty}\inf_{{\calV}^1_z}
\widetilde{\calF}^{\cutoffconst_j}_{\eps_j}(\alpha,\beta;(-\sfrac12,\sfrac12))\,.
\end{equation}
Up to extracting a subsequence not relabeled, we may assume the latter $\liminf$ to be a limit, and consider
$(\alpha_j,\beta_j)\in {\calV}^1_z$ to be such that
\[
\widetilde{\calF}^{\cutoffconst_j}_{\eps_j}(\alpha_j,\beta_j;(-\sfrac12,\sfrac12))
\le
\inf_{{\calV}^1_z}\widetilde{\calF}^{\cutoffconst_j}_{\eps_j}(\alpha,\beta;(-\sfrac12,\sfrac12))
+\frac1j
\,.
\]
Denote by $\lambda_j\in(0,1)$ the unique root in $(0,1)$ of the equation $\frac{\ell
t}{(1-t)^p}=(\sfrac{\cutoffconst_j}{\eps_j})^{\sfrac1{q'}}$. Clearly, $\lambda_j\to 1$ as $j\to\infty$, and more
precisely $(1-\lambda_j)^p(\sfrac{\cutoffconst_j}{\eps_j})^{\sfrac1{q'}}\to\ell$  as $j\to\infty$. Then, as
\[
\sup_j\widetilde{\calF}^{\cutoffconst_j}_{\eps_j}(\alpha_j,\beta_j;(-\sfrac12,\sfrac12))\leq C\,,
\]
using the $q$-homogeneity of $\Psi_\infty$ we infer
\begin{equation}\label{e:variaz alphaj sopralivello betaj bound}
\cutoffconst_j^{q-1}\int_{\{\beta_j>\lambda_j\}}|\alpha_j'|^q\dd t\leq C\,
\end{equation}
so that, by Jensen's inequality, we deduce that
\begin{equation}\label{e:variaz alphaj sopralivello betaj}
\int_{\{\beta_j>\lambda_j\}}|\alpha_j'|\dd t\leq
C\Big(\frac1{\cutoffconst_j}\calL^1(\{\beta_j>\lambda_j\})\Big)^{1-\frac 1q}
\leq C\cutoffconst_j^{\frac 1q-1}\,.
\end{equation}
Define
\[
\eta_j(t):=\int_{-\sfrac12}^t\alpha_j'(x)\chi_{\{\beta_j\leq\lambda_j\}}(x)\dd x\,.
\]
Then $\eta_j\in W^{1,q}((-\sfrac12,\sfrac12);\R^m)$ with $\eta_j(-\sfrac12)=0$.
Clearly, $\eta_j'=\alpha_j'$ $\calL^1$-a.e. on $\{\beta_j\leq\lambda_j\}$ and
 $\eta_j'=0$ $\calL^1$-a.e. on $\{\beta_j>\lambda_j\}$. Therefore,
 the $q$-homogeneity of $\Psi_\infty$ implies 
\begin{align}\label{e:stima energ xij}
\widetilde{\calF}^\infty_{\eps_j}&(\eta_j,\beta_j;(-\sfrac12,\sfrac12))=
\widetilde{\calF}^\infty_{\eps_j}(\alpha_j,\beta_j;\{\beta_j\leq\lambda_j\})
+\widetilde{\calF}^\infty_{\eps_j}(0,\beta_j;\{\beta_j>\lambda_j\})
\notag\\
&{=}\widetilde{\calF}^{\cutoffconst_j}_{\eps_j}(\alpha_j,\beta_j;\{\beta_j\leq\lambda_j\})
+\widetilde{\calF}^{\cutoffconst_j}_{\eps_j}(0,\beta_j;\{\beta_j>\lambda_j\})
\notag\\&\leq
\widetilde{\calF}^{\cutoffconst_j}_{\eps_j}(\alpha_j,\beta_j;(-\sfrac12,\sfrac12))
\leq\inf_{{\calV}^1_z}\widetilde{\calF}^{\cutoffconst_j}_{\eps_j}(\alpha,\beta;(-\sfrac12,\sfrac12))+\frac1j\,.
\end{align}

Finally, 
we observe that $\eta_j(\sfrac12)=z-\int_{\{\beta_j>\lambda_j\}}\alpha_j'\dd t$ and define $e_j:=z-\eta_j(\sfrac12)$.
By \eqref{e:variaz alphaj sopralivello betaj},
we infer that $e_j\to0$
as $j\to\infty$. As
{$|z|\ne 0$, for $j$ sufficiently large we have $|e_j|\le |z|/2$, and we can
choose a matrix $A_j\in \R^{m\times m}$ such that
$A_j(z-e_j)=e_j$ and
$|A_j|\le 2 |e_j|/|z|$. We
set
\begin{equation*}
 \zeta_j:=\eta_j+ A_j \eta_j,
\end{equation*}
check that $\zeta_j\in {\calV}^1_z$,
and estimate with
\eqref{eqPsiinftyxieta2}, for any $\delta>0$,
\begin{equation*}
 \Psi_\infty(\zeta_j'\otimes \nu)
 \le (1+\delta + C(\delta) |A_j|^q) \Psi_\infty(\eta_j'\otimes \nu)
\end{equation*}
and therefore, since $A_j\to0$,
\begin{equation}\label{e:stima enrg zetaj}
\limsup_{j\to\infty}
\widetilde{\calF}^\infty_{\eps_j}(\zeta_j,\beta_j;(-\sfrac12,\sfrac12))\le
(1+\delta)\limsup_{j\to\infty}
\widetilde{\calF}^\infty_{\eps_j}(\eta_j,\beta_j;(-\sfrac12,\sfrac12))
\end{equation}
for any $\delta>0$, and therefore after letting $\delta\to0$.
}
Hence, the existence of the limit defining $g(z,\nu)$ established in Step~1 yields
\begin{align*}
g(z,\nu)&\leq\liminf_{j\to\infty}\widetilde{\calF}^\infty_{\eps_j}(\zeta_j,\beta_j;(-\sfrac12,\sfrac12))
\stackrel{\eqref{e:stima enrg
zetaj}}{\leq}\liminf_{j\to\infty}\widetilde{\calF}^\infty_{\eps_j}(\eta_j,\beta_j;(-\sfrac12,\sfrac12))\\
&\stackrel{ \eqref{e:stima energ xij}}{\leq}
\liminf_{j\to\infty}\inf_{{\calV}^1_z}\widetilde{\calF}^{\cutoffconst_j}_{\eps_j}(\alpha,\beta;(-\sfrac12,\sfrac12))
\stackrel{ \eqref{e:stima gpinf dal basso}}{\leq}\gpinf(z,\nu)\,.
\end{align*}

\smallskip

 \noindent{\bf Step~3. We show that if $\Psi_\infty=|\cdot|^q$, then $g(z,\nu)= g_\scal(|z|)$.}

We first claim that
\[
g(z,\nu)=\widetilde{g}(|z|)\,,
\]
for every $(z,\nu)\in\R^m\times S^{n-1}$, where for every $s\geq 0$
\[
\widetilde{g}(s):=
\lim_{T\uparrow\infty }\inf_{(\gamma,\beta)\in\calU^T_s}\calF^\infty_1(\gamma,\beta;(-\sfrac T2,\sfrac T2))\,,
\]
and the space of test functions can be written as
\begin{multline*}
\calU^T_s:=\{(\gamma,\beta)\in {W^{1,q}}((-\sfrac T2,\sfrac T2);\R^2)\colon
0\leq \beta\leq 1,\ \beta(\pm\sfrac T2)=1,\\ \gamma(-\sfrac T2)=0\,,\ \gamma(\sfrac T2)=s\}\,.
\end{multline*}
Note that the existence of the limit defining $\widetilde{g}$ is guaranteed by Proposition~\ref{p:chargT} in the case $n=1$,
and that the claim is trivial if $z=0$.

To prove the above claim for $|z|>0$, first notice that  if $(\alpha,\beta)\in {\calV}^T_z$, then
$(|\alpha|,\beta)\in \calU^T_{|z|}$,
with $\widetilde{\calF}^\infty_1(\alpha,\beta;(-\sfrac T2,\sfrac T2))\ge \calF^\infty_1(|\alpha|,\beta;(-\sfrac T2,\sfrac
T2))$,
so that $g(z,\nu)\geq\widetilde{g}(|z|)$.

To establish the opposite inequality, {given} $T>0$ and $(\gamma,\beta)\in \calU^{T}_{|z|}$,
we obtain a competitor $(\alpha,\beta)$ for the problem defining $g$ by setting $\alpha(t):=\gamma(t)\frac z{|z|}$.
Moreover,  $\widetilde{\calF}^\infty_1(\alpha,\beta;(-\sfrac T2,\sfrac T2))= \calF^\infty_1(\gamma,\beta;(-\sfrac
T2,\sfrac T2))$, so that the inequality follows.

Finally, the equality  $\widetilde{g}(s)=g_\scal(s)$ is established in \cite[Proposition~7.3]{ContiFocardiIurlano2016}.
\end{proof}

\subsection{Structural properties of the surface energy densities}

In this section, we establish some structural properties of $\gpinf$ and $\gpsup$.
\begin{lemma}\label{lemmapropg}
Let $\gpinf,\,\gpsup:\R^m\times S^{n-1}\to[0,\infty)$ be defined by  \eqref{eqdefGpsnu inf} and \eqref{eqdefGpsnu sup},
respectively. Then, the following properties hold.
\begin{enumerate}
\item\label{e:gp growth}
There is $C>0$ such that, for all $(z,\nu)\in \R^m\times S^{n-1}$,
\begin{equation*}
\frac1C (|z|^{\frac2{p+1}}\wedge 1) \le  \gpinf(z,\nu)\leq
\gpsup(z,\nu) \le C (|z|^{\frac2{p+1}}\wedge 1)\,.
\end{equation*}
\item\label{e:gp subadd}
For any $\nu\in S^{n-1}$ and $z,\,z'\in\R^m$ one has
\[
\gpsup(z+z',\nu)\le \gpsup(z,\nu)+\gpsup(z',\nu).
\]
\item\label{e:gp cont}
$\gpinf,\,\gpsup\in C^0(\R^m\times S^{n-1})$.
\end{enumerate}
\end{lemma}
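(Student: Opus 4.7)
I will establish the three claims by reducing to a one-dimensional analysis combined with the cube characterization of $\gpsup$ in Proposition~\ref{p:chargT}, together with slicing and rotation arguments.

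For the upper bound in (i), by \eqref{e:trivial inequality gpinf gpsup} it suffices to treat $\gpsup$. By Proposition~\ref{p:chargT} it is enough to exhibit, for every $\nu\in S^{n-1}$ and every sufficiently large $T$, a pair $(u,v)\in \calU^T_{z,\nu}$ with $\calF^\infty_1(u,v;Q^\nu_T)\le CT^{n-1}(|z|^{2/(p+1)}\wedge 1)$. The plan is to take a product profile $u(y)=\hat\alpha(y\cdot\nu)$, $v(y)=\hat\beta(y\cdot\nu)$ (corrected near $\partial Q^\nu_T$ on an $O(1)$-wide boundary layer to match the regularized boundary data). For $|z|\le 1$ we choose $\hat\alpha$ a smooth monotone transition from $0$ to $z$ over an interval of width $W$ and $\hat\beta=1-\eta\phi$ with $\phi$ a fixed unit-support bump; the main contribution evaluates to $T^{n-1}\bigl(A|z|^q/(\eta^{pq}W^{q-1})+B\eta^{q'}W+C\eta^q/W^{q-1}\bigr)+o(T^{n-1})$, and taking $\eta=|z|^{1/(p+1)}$ with $W$ chosen to balance the three summands yields the bound $C|z|^{2/(p+1)}T^{n-1}$. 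For $|z|>1$ we instead let $\hat\beta$ dip to $0$ over a region of order one and concentrate $\hat\alpha'$ inside $\{\hat\beta=0\}$: since $f_p(0)=0$, the bulk term vanishes, and well and gradient together give the uniform $O(T^{n-1})$ bound.

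For the lower bound in (i), I would use $\Psi_\infty(\xi)\ge |\xi|^q/c\ge|\xi\nu|^q/c$ together with $|\nabla v|\ge|\partial_\nu v|$, and apply Fubini in the direction $\nu$ to reduce the estimate to the following one-dimensional bound: for any $(\alpha,\beta)\in W^{1,q}((-\sfrac12,\sfrac12);\R^{m+1})$ with $\beta\in[0,1]$ and $\alpha(\pm\sfrac12)$ approximately equal to $0$ and $z$,
\begin{equation*}
\int_{-\sfrac12}^{\sfrac12}\!\Bigl[\frac{M^{q-1}\!\wedge\!\eps^{q-1}f_p^q(\beta)}{c}|\alpha'|^q+\frac{(1-\beta)^{q'}}{q'q^{q'/q}\eps}+\eps^{q-1}|\beta'|^q\Bigr]\dt\ge c_0(|z|^{2/(p+1)}\wedge 1),
\end{equation*}
uniformly for $\eps$ small and $M$ large. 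On the untruncated set $\{\eps^{q-1}f_p^q(\beta)\le M^{q-1}\}$, the two-variable Young inequality $A^{q'}/q'+B^q/q\ge AB$ applied to $A=(1-\beta)/\eps^{1/q'}$ and $B=\eps^{1/q'}f_p(\beta)|\alpha'|$ produces a lower bound of the form $\int(1-\beta)f_p(\beta)|\alpha'|\dt$, which combined with the Modica--Mortola pair $(1-\beta)^{q'}/\eps+\eps^{q-1}|\beta'|^q$ is in turn bounded below by a multiple of $|z|^{2/(p+1)}\wedge 1$ after an optimization in the minimum value $\beta_*$ of $\beta$; on the complementary set the truncation forces $\beta$ close to $1$, where the well term delivers the matching contribution. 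The $L^1$-convergence of $u_j$ to $z\chi_{\{x\cdot\nu>0\}}$ identifies $\alpha$'s boundary values on $\calH^{n-1}$-a.e.\ slice $y\in\tilde Q^\nu$, and Fatou's lemma completes the argument. The combined analysis of the truncated and untruncated regimes in this one-dimensional estimate is the main technical hurdle.

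For (ii), given nearly optimal competitors $(u_T,v_T)\in\calU^T_{z,\nu}$ and $(u'_T,v'_T)\in\calU^T_{z',\nu}$ in the sense of Proposition~\ref{p:chargT}, I construct on $Q^\nu_{2T}$ a test pair for $z+z'$ by placing $(u_T,v_T)$ in the slab adjoining $\{y\cdot\nu=-T\}$, the translate $(u'_T+z,v'_T)$ in the slab adjoining $\{y\cdot\nu=T\}$, and interpolating by the constant pair $(z,1)$ in the central slab; the energy is bounded by $(g_T(z,\nu)+g_T(z',\nu))T^{n-1}+o(T^{n-1})$, and division by $(2T)^{n-1}$ together with a standard tiling refinement to absorb the factor $2^{n-1}$ yields $\gpsup(z+z',\nu)\le\gpsup(z,\nu)+\gpsup(z',\nu)$. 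Property (iii) for $\gpsup$ follows at once: (ii) combined with (i) gives the Hölder estimate $|\gpsup(z,\nu)-\gpsup(z',\nu)|\le C(|z-z'|^{2/(p+1)}\wedge 1)$, and continuity in $\nu$ is obtained by rotating a near-optimal pair by an isometry $R$ with $R\nu_k=\nu$ that maps one cube onto an admissible cube in the other direction, the energy change being controlled by the modulus of continuity of $\Psi_\infty$ on the unit sphere together with $q$-homogeneity and the $L^q$-bound on $\nabla u$ deduced from the energy bound itself. For $\gpinf$, the same Hölder estimate in $z$ is obtained by modifying a near-optimal sequence via an $\eps_j$-thin transition layer carrying the increment $z-z'$: the cost of the layer is controlled by the already-established upper bound for $\gpsup$, and for $\eps_j/M_j\to 0$ the truncation is inactive on the layer; continuity in $\nu$ follows from the same rotation argument as for $\gpsup$.
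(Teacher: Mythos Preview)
Your plan for \ref{e:gp growth}, \ref{e:gp subadd}, and \ref{e:gp cont} for $\gpsup$ is essentially the route the paper sketches (it defers \ref{e:gp growth} to the one-dimensional analysis in \cite[Prop.~7.3]{ContiFocardiIurlano2016} and \ref{e:gp subadd}, \ref{e:gp cont}-$\gpsup$ to \cite[Lemma~3.8]{ContiFocardiIurlano2022}). Two minor points: in \ref{e:gp subadd} your stacking on $Q^\nu_{2T}$ places the two transitions at $x\cdot\nu=\pm T/2$, so on the lateral boundary the trace is $0$, then $z$, then $z+z'$, which is not the datum required in $\calU^{2T}_{z+z',\nu}$; the standard fix is to put both transitions at heights $O(1)$ near the midplane so that the mismatch region has $\calH^{n-1}$-measure $O(T^{n-2})=o(T^{n-1})$. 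For $\gpsup$, deducing $z$-continuity from \ref{e:gp subadd}+\ref{e:gp growth} is a clean alternative to the paper's argument.

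The genuine gap is in \ref{e:gp cont} for $\gpinf$. Your ``$\eps_j$-thin transition layer carrying the increment $z-z'$'' runs into the following obstruction. To keep the limit equal to $z\chi_{\{x\cdot\nu>0\}}$ you must place the layer at heights $a_j\to0$; there you have \emph{no} information on the original $v_j$. If $v_j$ is close to $1$ in the layer (which is the generic situation, since $\int(1-v_j)^{q'}\le C\eps_j$), then $\eps_j^{q-1}f_p^q(v_j)\Psi_\infty(\nabla\tilde u_j)$ picks up a term of order $f_p^q(v_j)|z-z'|^q$, which is unbounded. Forcing $v_j$ down via $v_j\wedge w_j$ controls that term but creates the cross contribution $\eps_j^{q-1}f_p^q(\beta_*)\int_{\text{layer}}\Psi_\infty(\nabla u_j)$, and the energy bound on $(u_j,v_j)$ only controls $\int(M_j^{q-1}\wedge\eps_j^{q-1}f_p^q(v_j))\Psi_\infty(\nabla u_j)$, not $\int\Psi_\infty(\nabla u_j)$ on a prescribed slab. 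The remark that ``the truncation is inactive on the layer'' does not help: truncation only lowers the functional, so its inactivity is never the issue.

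The paper avoids this entirely by treating $z$- and $\nu$-continuity of $\gpinf$ \emph{together} via a multiplicative transformation: given a near-optimal sequence $(u_j,v_j,\eps_j,M_j)$ for $\gpinf(z',\nu')$, set
\[
\tilde u_j(x):=\tfrac{|z|}{|z'|}\,R\,u_j(T^{-1}x),\qquad \tilde v_j(x):=v_j(T^{-1}x),
\]
with $R\in SO(m)$, $T\in SO(n)$ chosen so that $R\tfrac{z'}{|z'|}=\tfrac{z}{|z|}$, $T\nu'=\nu$, $TQ^{\nu'}=Q^\nu$, and $|R-\Id_m|+|T-\Id_n|\to0$ as $(z',\nu')\to(z,\nu)$. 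A change of variables and the estimate \eqref{eqPsiinftyxieta2} (which encodes continuity and $q$-homogeneity of $\Psi_\infty$) give
\[
\gpinf(z,\nu)\le\Bigl(1\vee\bigl(\tfrac{|z|}{|z'|}\bigr)^q(1+\delta+C_\delta(|T^{-1}-\Id_n|^q+|R-\Id_m|^q))\Bigr)\gpinf(z',\nu'),
\]
and symmetry yields continuity at every $(z,\nu)$ with $z\ne0$; the case $z=0$ follows from \ref{e:gp growth}. This argument works verbatim for $\gpsup$ as well. I would recommend replacing your layer construction by this rotation--scaling argument.
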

{Note that property \ref{e:gp growth} immediately implies that there is $C>0$ such that, for all $\nu\in S^{n-1}$,
\begin{equation}\label{e:eqg in zero}
\frac1C\leq \liminf_{z\to 0}\frac{\gpinf(z,\nu)}{|z|^{\frac2{p+1}}}\leq \limsup_{z\to
0}\frac{\gpsup(z,\nu)}{|z|^{\frac2{p+1}}}\leq C\,.
\end{equation}
}
\begin{proof}
The bounds in \ref{e:gp growth} may be derived by estimating $\calF_\eps$ by its one-dimensional counterpart
(see \cite[Prop.~7.3]{ContiFocardiIurlano2016}); the properties in \ref{e:gp subadd} and
\ref{e:gp cont} for $\gpsup$ follow by arguing analogously to \cite[Lemma~3.8]{ContiFocardiIurlano2022},
which deals with the case $p=1$).

To conclude, we establish property \ref{e:gp cont} for $\gpinf$; the same argument works for
$\gpsup$, as well. By \ref{e:gp growth}, for every
$\nu\in S^{n-1}$ the function $\gpinf$ is continuous in $(0,\nu)$.
Therefore, let $(z,\nu),\,(z',\nu')\in\R^m\times S^{n-1}$, with $|z|\cdot|z'|\neq 0$, and consider functions
$(u_j,v_j)\to z'\chi_{\{x\cdot\nu'>0\}}$ in $L^q(Q^{\nu'};\R^{m+1})$ and sequences $\eps_j\to0$,
$\cutoffconst_j\to\infty$.
Let $(R,T)\in SO(m)\times SO(n)$ be such that $R\frac{z'}{|z'|}=\frac z{|z|}$, {$T\nu'=\nu$,
and $|R-\Id_m|\le C |
\frac{z'}{|z'|}
-\frac{z}{|z|}|$,
$|T-\Id_n|\le C |\nu-\nu'|$.
By Lemma~\ref{lemmaQnuQnup}, we can assume that
$TQ^{\nu'}=Q^\nu$. We}  define {on $Q^\nu$} the maps
$(\widetilde{u}_j,\widetilde{v}_j)$ by
\[
\widetilde{u}_j(x):=\frac{|z|}{|z'|}Ru_j(T^{-1}x),\quad\widetilde{v}_j(x):=v_j(T^{-1}x)\,.
\]
Then, a straightforward change of variables leads to
\[
\int_{Q^\nu}|\widetilde{u}_j-z\chi_{\{x\cdot\nu>0\}}|^q\dx
=\left(\frac{|z|}{|z'|}\right)^q\int_{Q^{\nu'}}|u_j-z'\chi_{\{x\cdot\nu'>0\}}|^q\dx\,,
\]
to $\|\widetilde{v}_j-1\|_{L^q(Q^\nu)}=\|v_j-1\|_{L^q(Q^{\nu'})}$, and moreover to
\begin{equation}\label{e:stima enrg widetilde}
\calF^{\cutoffconst_j}_{\eps_j}(\widetilde u_j,\widetilde v_j;Q^\nu)=\left(\frac{|z|}{|z'|}\right)^q
\int_{Q^{\nu'}}(\cutoffconst_j^{q-1}\wedge 
\eps_j^{q-1}
f_p^q(v_j))\Psi_\infty(
{R\nabla u_jT^{-1}})\dx
+\calF^{\cutoffconst_j}_{\eps_j}(0,v_j;Q^{\nu'})\,.
\end{equation}
Therefore, for any $\delta>0$, using \eqref{eqPsiinftyxieta2}, we have $\calL^n$-a.e. on $Q^{\nu'}$  that
$R\nabla u_jT^{-1}=\nabla u_j+(R-\Id_m)\nabla u_jT^{-1}+\nabla u_j(T^{-1}-\Id_n)$ and therefore 
\begin{equation}
 \Psi_\infty(R\nabla u_jT^{-1})
 \le (1+\delta + C_\delta|T^{{-1}}-\Id_{n}|^q+C_\delta|{R}
 -\Id_{m}|^q)\Psi_\infty(\nabla u_j).
\end{equation}
In particular, the latter estimate together with \eqref{e:stima enrg widetilde} implies
\begin{align*}&\calF^{\cutoffconst_j}_{\eps_j}(\widetilde u_j,\widetilde v_j;Q^\nu)\\&\leq
\left\{1\vee \left(\frac{|z|}{|z'|}\right)^q(1+
{\delta+C_\delta}
(|T^{-1}-\Id_n|{^q}+|{R} 
-\Id_m|{^q}))\right\}
\calF^{\cutoffconst_j}_{\eps_j}(u_j,v_j;Q_{\nu'})
\end{align*}
which in turn implies
\begin{equation}\label{e: stima gpinfznu}
\gpinf(z,\nu)\leq\left\{1\vee\left(\frac{|z|}{|z'|}\right)^q
(1+{\delta+C_\delta}
(|T^{-1}-\Id_n|{^q}+|{R} 
-\Id_m|{^q}))\right\}\gpinf(z',\nu')\,.
\end{equation}
Clearly, $T^{-1}\to \Id_n$ as $\nu'\to\nu$, and ${R}
\to \Id_m$ as $z'\to z$, so that
\[
\gpinf(z,\nu)\leq {(1+\delta)}\liminf_{(z',\nu')\to(z,\nu)}\gpinf(z',\nu')
\]
for any $\delta>0$, and hence after letting $\delta\to0$.
Exchanging the roles of $(z,\nu)$ and $(z',\nu')$ yields the reverse inequality and therefore continuity of $\gpinf$ in $(z,\nu)$.
\end{proof}
 
\subsection{$\Gamma-\liminf$}\label{s:liminf}

We start by determining the domain of the eventual $\Gamma$-limit and proving a lower bound inequality which will turn
out to be optimal in the case where the projection property holds for $\Psi_\infty$. To this aim,
consider the functional $\calF_{\inf}(\cdot,\cdot;\cdot):L^1(\Omega;\R^{m+1})\times\calA(\Omega)\to[0,\infty]$ defined
by
\begin{equation}\label{Fp inf}
\calF_{\inf}(u,v;A):=\int_A \Psi^\qc(\nabla u)\dx + \int_{J_u\cap A}\gpinf([u],\nu_u)\dH^{n-1},
\end{equation}
if $A\in\calA(\Omega)$,
$u\in (GSBV\cap L^1(\Omega))^m$ with $\nabla u\in L^q(\Omega;\R^{m\times n})$,
$v=1$ $\calL^n$-a.e. on $\Omega$, and $\calF_{\inf}(u,v;A):=\infty$ otherwise.

We follow in part the approach developed in \cite[Section~4]{ContiFocardiIurlano2022}.

One important ingredient in the lower bound is the fact that the energy density $\Psi^\qc$ is recovered in the limit thanks to Proposition~\ref{prophdeltaqc}. 
We stress that no extra hypothesis on $\Psi$ is assumed besides \eqref{e:Psi gc} and
\eqref{eqpsipsiinf}.
We are now ready to identify the domain of the eventual $\Gamma$-limit and to prove the lower bound.
\begin{proposition}\label{p:lb GSBV}
 Let $(u_\eps,v_\eps)\to (u,v)$ in $L^1(\Omega;\R^{m+1})$ with
\begin{equation}\label{e:finite enrg}
\liminf_{\eps\to0}\Functeps(u_\eps,v_\eps)<\infty\,,
\end{equation}
then $v=1$ $\calL^n$-a.e. in $\Omega$ and $u\in (GSBV(\Omega)\cap L^1(\Omega))^m$ {with $\nabla u\in L^q(\Omega;\R^{m\times
n})$}.
 Moreover, for all $A\in \calA(\Omega)$
 \begin{equation}\label{e:stima dal basso}
   \calF_{\inf}(u,v;A)\le \Gamma(L^1)\hbox{-}\liminf_{\eps\to0} \Functeps(u, 1;A)\,.
 \end{equation}
\end{proposition}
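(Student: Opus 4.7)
Pick a subsequence along which the liminf in \eqref{e:finite enrg} is attained as a limit, and consider the sequence of non-negative Radon measures
\begin{equation*}
\mu_\eps := \Big(f_{\eps,p,q}^q(v_\eps)\Psi(\nabla u_\eps) + \frac{(1-v_\eps)^{q'}}{q'q^{\sfrac{q'}q}\eps} + \eps^{q-1}|\nabla v_\eps|^q\Big)\calL^n \res \Omega,
\end{equation*}
which by assumption is equibounded in mass. Up to a further subsequence $\mu_\eps \weakto \mu$ in $\calM_b^+(\Omega)$. First I would observe that the second summand enforces $(1-v_\eps)\to 0$ in $L^{q'}(\Omega)$, hence $v=1$ $\calL^n$-a.e.\ on $\Omega$; moreover $\Psi(\nabla u_\eps)\ge \tfrac1c|\nabla u_\eps|^q-c$ combined with the pointwise bound $f_{\eps,p,q}^q(v_\eps)\le \eps^{q-1}\ell^q v_\eps^q/(1-v_\eps)^{pq}$ and with the Young/Modica--Mortola trick $\eps^{q-1}|\nabla v_\eps|^q + (1-v_\eps)^{q'}/(q'q^{q'/q}\eps)\ge |\nabla \Phi(v_\eps)|$ for a suitable primitive $\Phi$ yields, after truncating $u_\eps$ via $\mathcal T_k$ as in \eqref{e:Tk}, equi-bounds on $\|\nabla \mathcal T_k(u_\eps)\|_{L^q}$ and on $\calH^{n-1}(J_{\mathcal T_k(u_\eps)})$. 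Ambrosio's $SBV$ compactness theorem (\cite[Theorem~4.7--4.8]{AFP}) then gives $\mathcal T_k(u) \in SBV(\Omega;\R^m)$ for every $k$, and a De Giorgi averaging on $k$ concludes $u \in (GSBV(\Omega))^m$ with $\nabla u \in L^q(\Omega;\R^{m\times n})$.

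\textbf{Localization and Radon--Nikodym.} It is enough to prove the two pointwise inequalities
\begin{equation*}
\frac{\dd\mu}{\dd\calL^n}(x_0)\ge \Psi^\qc(\nabla u(x_0))\quad\text{for $\calL^n$-a.e.\ }x_0\in\Omega,
\end{equation*}
\begin{equation*}
\frac{\dd\mu}{\dd\calH^{n-1}\res J_u}(x_0)\ge \gpinf([u](x_0),\nu_u(x_0))\quad\text{for $\calH^{n-1}$-a.e.\ }x_0\in J_u.
\end{equation*}
Combined with Besicovitch derivation and inner regularity of Radon measures this gives \eqref{e:stima dal basso} on every $A\in\calA(\Omega)$, as standard (e.g.\ \cite[Lemma~2.4 and Remark~2.5]{FonsecaMuller1993}).

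\textbf{Bulk inequality.} Fix $\delta\in(0,1)$ and replace $\Psi$ by the lower bound $h_\delta$ from \eqref{eqdefhdelta} combined with the truncation built into $f_{\eps,p,q}$: since $f_{\eps,p,q}^q(v)\le 1$ and $f_{\eps,p,q}^q(v)\eps^{-(q-1)}=f_p^q(v)\wedge\eps^{-(q-1)}$, the elementary inequality $a\wedge b\,\Psi(\xi) \ge h_\delta(\xi)$ valid (with an appropriate normalization) provided $\eps^{q-1}f_p^q(v)\ge (1-\delta^{q'})^{1-p}\Psi^{1-1/q}(\xi)$, together with the phase-field dissipation bounding the complementary region, leads at a point $x_0\notin J_u$ to the blow-up inequality
\begin{equation*}
\frac{\dd\mu}{\dd\calL^n}(x_0)\ge h_\delta^{\qc}(\nabla u(x_0)).
\end{equation*}
Here one performs the blow-up at scale $\rho_\eps\to 0$ on a Lebesgue point of $\nabla u$, selects via diagonal extraction a sequence with $v_\eps \to 1$ in $L^q$ (so that $f_{\eps,p,q}(v_\eps)\to 1$ on the region where $\eps^{q-1}f_p^q(v_\eps)\ge 1$ with vanishing complement), and invokes Proposition~\ref{p:lsc bulk} applied to the quasiconvex integrand $h_\delta^\qc$ (which has linear growth, so that assumptions $(\Psi$-b''$)$ and $(\Psi$-c$)$ are trivially satisfied in the limit $\eps\to 0$). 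Letting $\delta\uparrow 1$ and using Proposition~\ref{prophdeltaqc} yields $\Psi^\qc(\nabla u(x_0))$.

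\textbf{Surface inequality (main obstacle).} Fix $x_0\in J_u$ where the Radon--Nikodym limit and the usual blow-up properties of $u$ hold. Choose radii $\rho_k\downarrow 0$ with $\mu(\partial Q^{\nu}_{\rho_k}(x_0))=0$, and extract a diagonal subsequence $\eps_k\downarrow 0$ such that the rescaled pairs
\begin{equation*}
\tilde u_k(y):=u_{\eps_k}(x_0+\rho_k y),\qquad \tilde v_k(y):=v_{\eps_k}(x_0+\rho_k y)
\end{equation*}
satisfy $\tilde u_k\to [u](x_0)\chi_{\{y\cdot\nu>0\}}+u^-(x_0)$ in $L^1(Q^{\nu})$ and
\begin{equation*}
\frac{\dd\mu}{\dd\calH^{n-1}\res J_u}(x_0)=\lim_{k\to\infty}\calF_{\tilde\eps_k,p,q}(\tilde u_k,\tilde v_k;Q^{\nu}),\qquad \tilde\eps_k:=\eps_k/\rho_k.
\end{equation*}
A scaling computation shows that the rescaled functional coincides with $\calF_{\tilde\eps_k,p,q}^{M_k}$ evaluated using $\Psi_\infty$ plus an error controlled by \eqref{eqpsipsiinf}, where the truncation level is $M_k:=\rho_k^{-(q-1)/q}\to\infty$ since $\rho_k\to 0$; more precisely, the coefficient $f_{\eps_k,p,q}^q(v_\eps)\Psi(\nabla u_\eps)$ rescales to $(1\wedge \tilde\eps_k^{q-1}f_p^q(\tilde v_k))\rho_k^{-(q-1)}\Psi(\rho_k^{-1}\cdot)$, and the cap $1$ becomes a cap $M_k^{q-1}$ after factoring out $\rho_k^{-(q-1)}$; then \eqref{eqpsipsiinf} replaces $\Psi$ by $\Psi_\infty$ up to vanishing errors. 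Translating by $u^-(x_0)$ (which leaves the energy invariant) and invoking the very definition of $\gpinf$ in \eqref{eqdefGpsnu inf} gives the desired surface lower bound. The main obstacle is precisely the careful bookkeeping to ensure that the truncation level diverges in the blow-up (so that $\gpinf$, and not merely $\gpsup$, appears), and that the replacement of $\Psi$ by $\Psi_\infty$ under rescaling is controlled via \eqref{eqpsipsiinf} uniformly in $k$.
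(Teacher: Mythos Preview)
Your surface inequality (blow-up at jump points, rescaling, invoking the definition of $\gpinf$) is essentially the paper's Step~3, though you have a scaling error: the truncation level after rescaling is $M_{\rho_k}=1/\rho_k$, not $\rho_k^{-(q-1)/q}$. Indeed, after the change of variables and the $q$-homogeneity of $\Psi_\infty$, the term $(1\wedge \eps_k^{q-1}f_p^q(v_{\eps_k}))\Psi_\infty(\nabla u_{\eps_k})$ becomes $\rho_k^{-1}\big((1/\rho_k)^{q-1}\wedge (\eps_k/\rho_k)^{q-1}f_p^q(\tilde v_k)\big)\Psi_\infty(\nabla\tilde u_k)$, so $\rho_k^{1-n}\calF_{\eps_k}^1(u_{\eps_k},v_{\eps_k};Q^\nu_{\rho_k}(x_0))=\calF^{1/\rho_k}_{\eps_k/\rho_k}(\tilde u_k,\tilde v_k;Q^\nu)$. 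This does not affect the conclusion since $1/\rho_k\to\infty$ anyway.

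The genuine gap is in your treatment of the bulk inequality and of the claim $u\in(GSBV(\Omega))^m$. You assert that truncating $u_\eps$ via $\mathcal T_k$ yields equi-bounds on $\|\nabla\mathcal T_k(u_\eps)\|_{L^q}$ and on $\calH^{n-1}(J_{\mathcal T_k(u_\eps)})$, and then apply Ambrosio's compactness. But $u_\eps\in W^{1,q}$, so $\mathcal T_k(u_\eps)\in W^{1,q}$ has \emph{empty} jump set; and the energy $\int f_{\eps,p,q}^q(v_\eps)\Psi(\nabla u_\eps)$ does \emph{not} control $\int|\nabla u_\eps|^q$, since $f_{\eps,p,q}(v_\eps)$ degenerates where $v_\eps$ is small. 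A codomain truncation does nothing to remedy this. The paper's device, which you are missing, is a \emph{domain} truncation driven by the phase field: from the Modica--Mortola bound $\int|\nabla\Phi(v_\eps)|\le C\Functeps(u_\eps,v_\eps)$ and coarea one selects a level $\bar t\in(\Phi(\delta^{q'}),\Phi(\delta))$ with $\calH^{n-1}(\partial^*\{\Phi(v_\eps)>\bar t\})$ bounded, and sets $\tilde u_\eps:=u_\eps\chi_{\{\Phi(v_\eps)>\bar t\}}\in(GSBV)^m$. On $\{\Phi(v_\eps)>\bar t\}$ one has $v_\eps>\delta^{q'}$, so the coefficient is bounded below and the elastic energy dominates $\int h_\delta(\nabla\tilde u_\eps)$; the jump set of $\tilde u_\eps$ sits in $\partial^*\{\Phi(v_\eps)>\bar t\}$. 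This produces the key inequality \eqref{e:lb with hdelta}, after which one passes to the limit using the $BV$ relaxation of $\int h_\delta(\nabla\cdot)$ (the paper cites \cite[Proposition~4.2]{ContiFocardiIurlano2022}), obtaining both $\int h_\delta^\qc(\nabla u)$ and a Cantor term $\int h_\delta^{\qcinfty}(\dd D^c u)$. The vanishing of $D^cu$ is then forced by $h_\delta^{\qcinfty}(\xi)\ge c(1-\delta^{q'})^{1-p}|\xi|\to\infty$ as $\delta\uparrow 1$. Your sketch neither constructs such an $SBV$ sequence nor addresses the Cantor part; and your invocation of Proposition~\ref{p:lsc bulk} is inapplicable, since its hypothesis $(\Psi$-b''$)$ requires a $GSBV$ sequence with bounded $g_0$-surface energy, which the Sobolev sequence $(u_\eps)$ does not provide.
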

\begin{proof}
We divide the proof into several steps: in the first we argue as in \cite[Sections~4.3 and 4.4]{ContiFocardiIurlano2022}
 to show that $u\in (GBV(\Omega))^m$, in the second we establish  $u\in (GSBV(\Omega))^m$ and a lower bound inequality
for the diffuse part, in the third
we discuss the lower bound for the surface energy, and in the last step we infer \eqref{e:stima dal basso}.
\smallskip

\noindent{\bf Step~1. We show that  $u\in (GBV(\Omega))^m$ and $v=1$ $\calL^n$-a.e. on $\Omega$.}

We fix $A\in\mathcal{A}(\Omega)$, $\delta\in (0,1)$, and $\eps>0$.
We compute, for any pair $(u,v)\in W^{1,q}(\Omega;\R^{m}\times[0,1])$,
\begin{align}\label{eqFdelta p}
  \Functeps&(u,v;A)\geq\int_{A\cap\{\eps^{q-1} f^q_p(v)>1\}}\Psi(\nabla u)\dx\notag\\
 & +\int_{A\cap\{\eps^{q-1} f^q_p(v)\leq 1\}}\Big(\eps^{q-1} f^q_p(v) \Psi(\nabla u) +\delta^{q'}
\frac{(1-v)^{q'}}{q'q^{\sfrac{q'}{q}}\eps}\Big)\dx\notag\\
 &+ \int_A\Big((1-\delta^{q'})\frac{(1-v)^{q'}}{q'q^{\sfrac{q'}{q}}\eps} + \eps^{q-1} |\nabla v|^q\Big)\dx\notag \\
  &\geq\int_{A\cap\{\eps^{q-1} f^q_p(v)>1\}}\Psi(\nabla u)\dx
  +\delta\int_{A\cap\{\eps^{q-1} f^q_p(v)\leq 1\}}\frac{\ell v}{(1-v)^{p-1}}\Psi^{\sfrac1q}(\nabla u)\dx\notag\\
  &+(1-\delta^{q'})^{\sfrac1{q'}} \int_A|\nabla(\Phi(v))|\dx\notag\\
  &\geq\delta  \int_A\Big(\Psi(\nabla u)\wedge \frac{\ell v}{(1-v)^{p-1}}\Psi^{\sfrac1q}(\nabla u)\Big)\dx
  +(1-\delta^{q'})^{\sfrac1{q'}}\int_A |\nabla(\Phi(v))|\dx\,,
\end{align}
where $\Phi:[0,1]\to[0,\frac12]$ is defined by
\begin{equation}\label{e:Phi}
\Phi(t):=\int_0^t (1-s) \ds = t-\frac12 t^2\,.
\end{equation}
We observe that $\Phi$ is strictly increasing, $\Phi(1)=\frac12$ and, in particular, $\Phi$ is bijective. By the coarea
formula, there is
$\bar t\in (\Phi(\delta^{q'}),\Phi(\delta))$ such that
\begin{equation*}
 (\Phi(\delta)-\Phi(\delta^{q'}))
 \calH^{n-1}({A\cap}\partial^*\{\Phi(v)>\bar t\}) \le \int_A |\nabla (\Phi(v))|\dx\,,
\end{equation*}
thus, if we define $\tilde u:=u\chi_{\{\Phi(v)>\bar t\}} \in {(GSBV(A))^m}$ (not highlighting the dependence on $A$,
$\eps$, and $\delta$)
we obtain from \eqref{eqFdelta p} and the monotonicity of $[0,1)\ni t\mapsto\frac t{(1-t)^{p-1}}$,
\begin{equation}\label{e:lb with hdelta}
  \Functeps(u,v;A)\ge
  {\delta^{q'+1}}\int_A h_\delta(\nabla \tilde u) \dx + \beta_\delta\calH^{n-1}(A\cap J_{\tilde u})
  -
h_{{\delta}}(0) \calL^n(\{v\leq\delta\})\,,
\end{equation}
where we have set $\beta_\delta:=(1-\delta^{q'})^{\sfrac1{q'}}
(\Phi(\delta)-\Phi(\delta^{q'}))$, and $h_\delta:\R^m\to[0,\infty)$ is defined in \eqref{eqdefhdelta}.
We may thus argue as in \cite[Remark~4.7]{ContiFocardiIurlano2022} to infer that
if $(u_\eps,v_\eps)\to(u,v)$ in $L^1(\Omega;\R^{m+1})$ and
$\sup_\eps\Functeps(u_\eps,v_\eps)<\infty$, then necessarily
$v=1$ $\calL^n$-a.e. on $\Omega$ and $u\in (GBV(\Omega))^m$.
\smallskip

\noindent{\bf Step~2. We show that   $u\in (GSBV(\Omega))^m$, and that
for every $A\in\calA(\Omega)$ one has
\begin{equation}\label{e:lb p diffuse}
\int_A \Psi^\qc(\nabla u)\dx \le \liminf_{\eps\to0}\Functeps(u_\eps,v_\eps;A)\,.
\end{equation}}

Let $A\in\calA(\Omega)$ be given.
By taking into account \eqref{e:lb with hdelta}
and the usual averaging and truncation argument (cf. \eqref{e:truncation}),
we infer that for every $N\in\N$ there is $k_N\in\{N+1,\ldots,2N\}$ such that
\begin{align*}
  \Functeps(u_\eps,v_\eps;A)\ge  &{\delta^{q'+1}}\int_A h_\delta(\nabla (\mathcal{T}_{k_N}(\widetilde{u}_\eps))) \dx\\
&+ \beta_\delta\calH^{n-1}(A\cap J_{\mathcal{T}_{k_N}(\widetilde{u}_\eps)}) - {h_\delta(0)} \calL^n(\{v_\eps\leq\delta\})-\frac
c{k_N}\,.
\end{align*}
In addition, it is not restrictive to assume $k_N$ independent from $\eps$, up to passing to a subsequence realizing the
$\liminf$ for $\Functeps(u_\eps,v_\eps{;A})$. Therefore, for every $\delta\in(0,1)$,
using formulas (4.23) and (4.24) in \cite[Proposition~4.2]{ContiFocardiIurlano2022} yields that
\begin{align}\label{e:rough lb p}
\liminf_{\eps\to0}\Functeps(u_\eps,v_\eps;A)\geq&\delta^{q'+1}
\int_A h_\delta^\qc(\nabla (\mathcal{T}_{k_N}(u)))\dx \nonumber \\
&
+\delta^{q'+1}\int_A h_\delta^{\qcinfty}(\dd D^c\mathcal{T}_{k_N}(u))-\frac c{k_N}\,.
\end{align}
Here $h_\delta^{\qcinfty}$ denotes the (linear) recession function of $h_\delta^\qc$
defined by
 \begin{equation}\label{eqdefqcinfty}
  h_\delta^{\qcinfty}(\xi):=\limsup_{t\to\infty} \frac{h_\delta^\qc(t\xi)}{t}.
 \end{equation}
Note that the growth conditions imposed on $\Psi$ in \eqref{e:Psi gc} and the very definition of $h_\delta$ in \eqref{eqdefhdelta} yield that
 $h_\delta$ itself is linear for large $|\xi|$
and, more precisely, that
$h_\delta(\xi)= \frac{\ell}{(1-{\delta^{q'}})^{p-1}}\Psi^{\sfrac1q}(\xi)$
for $|\xi|$ large enough. Hence, by \eqref{eqdefqcinfty} we conclude that
\[
h_\delta^{\qcinfty}(\xi)\geq\frac{\ell}{c^{\sfrac1q}(1-{\delta^{q'}})^{p-1}}|\xi|.
\]
In particular, we conclude that $|D^c \mathcal{T}_{k_N}(u)|(A)=0$ by letting $\delta\uparrow 1$ in \eqref{e:rough lb p}. Thus, $ \mathcal{T}_{k_N}(u)\in (SBV(A))^m$ for every $N\in\N$,
in turn implying $u\in(GSBV(A))^m$.  By taking this into account, \eqref{e:rough lb p} rewrites as
\begin{align*}
\liminf_{\eps\to0}\Functeps(u_\eps,v_\eps;A)\geq{\delta^{q'+1}}\int_A h_\delta^\qc(\nabla
(\mathcal{T}_{k_N}(u)))\dx-\frac c{k_N}\,.
\end{align*}
Recalling that $\delta\mapsto h_\delta(\xi)$, and hence
$\delta\mapsto h_\delta^\qc(\xi)$, are nondecreasing, by Beppo Levi's theorem
and Proposition~\ref{prophdeltaqc}, we obtain
\begin{equation*}
\begin{split}
 \int_A \Psi^\qc(\nabla  (\mathcal{T}_{k_N}(u)))\dx=&
 \lim_{\delta\uparrow 1}  \int_A h_\delta^\qc(\nabla  (\mathcal{T}_{k_N}(u)))\dx\\
=& \lim_{\delta\uparrow 1}{\delta^{q'+1}}
 \int_A h_\delta^\qc(\nabla  (\mathcal{T}_{k_N}(u)))\dx
 \le \liminf_{\eps\to0}\Functeps(u_\eps,v_\eps;A)+\frac c{k_N}\,,
 \end{split}
\end{equation*}
and finally \eqref{e:lb p diffuse} follows at once by letting $N\to\infty$ in the latter inequality.

Since $u\in (GSBV(A))^m$ for every $A\in\mathcal{A}(\Omega)$, we have $u\in (GSBV(\Omega))^m$.
Finally, since the growth condition \eqref{e:Psi gc} holds also for $\Psi^\qc$, we conclude in addition that $\nabla u\in
L^q(A;\R^{m\times n})$
 with a bound uniform  with respect to $A$, so that $\nabla u\in L^q(\Omega;\R^{m\times n})$.
\smallskip

\noindent{\bf Step~3. We show that if $(u_\eps,v_\eps)\to(u,v)$ in $L^1(\Omega;\R^{m+1})$, then for every $A\in\calA(\Omega)$ it holds
\begin{equation}\label{e:lb p surface}
\int_{J_u\cap A} \gpinf([u],\nu_u)\dd\calH^{n-1}\leq
\liminf_{\eps\to0}\Functeps(u_\eps,v_\eps;A)\,.
\end{equation}}

Up to subsequences and with a small abuse of notation, we can assume the previous $\liminf$
to be a limit. Let us define the measures $\mu_\eps\in \calM^+_b(A)$ as
\[
\mu_\eps:=\left(f_{\eps}^q(v_\eps) \Psi(\nabla u_\eps) +
\frac{(1-v_\eps)^{q'}}{q'q^{\sfrac{q'}q}\eps}+\eps^{q-1}|\nabla v_\eps|^q\right)
\calL^n{\LL {A}}\,.
\]
Extracting a further subsequence, we can assume that
\begin{equation}\label{convmeps}
\mu_\eps\rightharpoonup \mu \quad \text{weakly-$*$ in }\calM (A)=(C_c^0(A))'
\end{equation}
as $\eps\to0$, for some $\mu\in \calM^+_b(A)$, so that
\[
\liminf_{\eps\to0}\Functeps(u_\eps,v_\eps;A)\geq \mu(A)\,.
\]
Equation \eqref{e:lb p surface} will follow once we have proved that
\begin{equation}\label{densg}
\frac{\dd \mu}{\dd\calH^{n-1}\res J_u}(x_0)\geq \gpinf([u](x_0),\nu_u(x_0)), \quad  \text{for $\calH^{n-1}$-a.e.\ } x_0\in
J_u\cap A\,.
\end{equation}
We will prove the last inequality for points $x_0\in J_u\cap A$ such that
\[
\frac{\dd \mu}{\dd\calH^{n-1}\res J_u}(x_0)=\lim_{\rho\to0}\frac{\mu(Q^\nu_\rho(x_0))}{\calH^{n-1}(J_u\cap
Q^\nu_\rho(x_0))} \quad \text{exists and is finite}\,,
\]
and
\[
\lim_{\rho\to 0}\frac{\mathcal{H}^{n-1}(J_u\cap Q^\nu_\rho(x_0))}{\rho^{n-1}}=1,
\]
where $\nu:=\nu_u(x_0)$ and $Q^\nu_\rho(x_0){:=x_0+\rho Q^\nu}$ is the cube centred
in $x_0$, with side length $\rho$, and one face orthogonal to $\nu$. We remark that such conditions define a set of full
measure in $J_u\cap A$.
First note that
\begin{align}\label{eq:weakconv}
&\frac{\dd \mu}{\dd\calH^{n-1}\res J_u}(x_0)=\lim_{\rho\to0}\frac{\mu(Q^\nu_\rho(x_0))}{\rho^{n-1}}
=\lim_{\substack{\rho\in I\\\rho\to 0}}\lim_{\eps\to0}\frac{\mu_\eps(Q^\nu_\rho(x_0))}{\rho^{n-1}}\,,
\end{align}
where $I:=\{\rho\in(0,\frac{2}{\sqrt n}\,\mathrm{dist}(x_0,\partial A)):\,
\mu(\partial Q^\nu_\rho(x_0))=0\}$, and we have used \eqref{convmeps} for the second equality.

By \eqref{eqpsipsiinf}, for every $\delta\in(0,1)$ one has
$\Psi(\xi)\ge (1-\delta)\Psiinfty(\xi)$ for $\xi$ sufficiently large. Since $\Psiinfty$ is continuous,
there is $C(\delta)>0$ such that
\begin{equation*}
\Psi (\xi)+ C(\delta) \ge (1-\delta)\Psiinfty(\xi)\hskip5mm \text{ for all }\xi\in\R^{{m\times n}}\,.
\end{equation*}
We choose $\delta_\rho\to0$ such that $\rho C(\delta_\rho)\to0$. As $f_{\eps}^q(v_\eps)\le 1$ we have
\begin{equation*}
f_{\eps,p,q}^q(v_\eps) \Psi(\nabla u_\eps)
\ge (1-\delta_\rho)f_{\eps}^q(v_\eps)\Psiinfty  (\nabla u_\eps) -C(\delta_\rho)\,.
\end{equation*}
As $\rho^{1-n} \calL^n(Q_\rho^\nu) C(\delta_\rho)=\rho C(\delta_\rho)\to0$ as $\rho\to0$,
we conclude by \eqref{eq:weakconv} that
\begin{equation}\label{gamm}
\frac{\dd \mu}{\dd\calH^{n-1}\res J_u}(x_0)
\geq \limsup_{{\scriptsize \begin{array}{c}
\rho\!\!\in\!\! I\\ \!\!\rho\!\!\to\!\!0\end{array}}}\limsup_{\eps\to0} \frac{\calF_\eps^{1}(u_\eps,v_\eps;
Q^\nu_\rho(x_0))}{\rho^{n-1}}\,,
\end{equation}
{where we recall that $\calF_\eps^{1}$ is the functional defined in \eqref{Feps* p}
for $M=1$.}
Setting $y:=(x-x_0)/\rho\in Q^\nu$, changing variables {to
$w^\rho(y):=w(\rho y+x_0)$ for $y\in Q^\nu$,
and setting $\cutoffconst_\rho:=\sfrac1\rho$, we obtain
\begin{equation*}
\calF_\eps^1(u_\eps,v_\eps; Q^\nu_\rho(x_0))=
\rho^{{n-1}}\calF_{\sfrac\eps\rho}^{\cutoffconst_\rho}(u_\eps^\rho,v_\eps^\rho; Q^\nu).
\end{equation*}
Thus, the previous estimate becomes}
\begin{equation*}
\frac{\dd \mu}{\dd\calH^{n-1}\res J_u}(x_0)\geq \limsup_{{\scriptsize \begin{array}{c}
\rho\!\!\in\!\! I\\ \!\!\rho\!\!\to\!\!0\end{array}}}\limsup_{\eps\to0}
\calF_{\sfrac\eps\rho}^{\cutoffconst_\rho}(u_\eps^\rho,v_\eps^\rho;Q^\nu).
\end{equation*}
Recalling that $u_\eps\to u$ in $L^1(\Omega;\R^m)$,
by diagonalization, we can find subsequences $\rho_k\to0$ and $\eps_k\to0$ such that $u_{{\eps_k}}^{\rho_k}\to
[u](x_0)\chi_{\{y\cdot\nu>0\}}+u^-(x_0)$ in $L^1(Q^\nu;\R^m)$,
$\sfrac{\eps_k}{\rho_k}\to0$, and
{\begin{equation*}
\frac{\dd \mu}{\dd\calH^{n-1}\res J_u}(x_0)\geq
\lim_{k\to\infty}\calF_{\sfrac{{\eps_k}}{\rho_k}}^{\cutoffconst_{\rho_k}}(u_{{{\eps_k}}}^{\rho_k},v_{{{\eps_k}}}^{\rho_k
};Q^\nu).
\end{equation*}}
Since $\calF_\eps^{\cutoffconst}$ is invariant under translations of the first argument, we find
\[
\frac{\dd \mu}{\dd\calH^{n-1}\res J_u}(x_0)\geq
\liminf_{k\to\infty}\calF_{\sfrac{{\eps_k}}{\rho_k}}^{\cutoffconst_{\rho_k}}
(u_{{\eps_k}}^{\rho_k},v_{{\eps_k}}^{\rho_k};Q^\nu)\geq {\gpinf}([u](x_0),\nu_u(x_0)),
\]
which is \eqref{densg}, and this concludes the proof.
\smallskip

\noindent{\bf Conclusion.}
The lower bound inequality in \eqref{e:stima dal basso}
is a consequence of \eqref{e:lb p diffuse}, of \eqref{e:lb p surface}, and of
\cite[Proposition~1.16]{Braides},  taking into account that the terms on the left-hand side in \eqref{e:lb p diffuse}
and in \eqref{e:lb p surface} define mutually singular measures.
\end{proof}
 
\subsection{$\Gamma-\limsup$}

The next proposition establishes an upper bound for the $\Gamma$-limits. To this aim,
consider the functional $\calF_{\sup}(\cdot,\cdot;\cdot):L^1(\Omega;\R^{m+1})\times\calA(\Omega)\to[0,\infty]$ defined
by
\begin{equation}\label{Fp sup}
\calF_{\sup}(u,v;A):=\int_{A} \Psi^\qc(\nabla u)\dx + \int_{J_u\cap A}
(\gpsup)_{BV}([u],\nu_u)\dH^{n-1},
\end{equation}
if $A\in\calA(\Omega)$, $u\in (GSBV\cap L^1(\Omega))^m$ with $\nabla u\in L^q(\Omega;\R^{m\times n})$,
$v=1$ $\calL^n$-a.e. on $\Omega$, and $\calF_{\sup}(u,v;A):=\infty$ otherwise.

We recall
that by \eqref{eqpsipsiinf}, for $\delta>0$ there exists $C(\delta)>0$ such that
\begin{equation}\label{psipsiinf}
\Psi(\xi)\leq (1+\delta)\Psi_\infty(\xi)+C(\delta), \text{ for all }\xi\in\R^{m\times n}.
\end{equation}

\begin{proposition}\label{p:ubp}
Let $\Functeps$ be the functional defined in \eqref{functeps p}. Then for all
$(u,v)\in L^1(\Omega;\R^{m+1})$ it holds
\begin{equation}\label{Gammalimsup0}\Gamma(L^1)\text{-}\limsup_{\eps\to0}\Functeps(u,v)\leq \calF_{\sup}(u,v).
\end{equation}
\end{proposition}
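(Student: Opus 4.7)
The strategy is the standard two-step approach for $\Gamma$-limsup inequalities in free-discontinuity problems: first reduce to piecewise affine target functions via the relaxation theorem, then build an explicit recovery sequence for such functions by patching optimal transition profiles near the jump set.

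\emph{Reduction to piecewise affine functions.} By Lemma~\ref{lemmapropg} and \eqref{e:eqg in zero}, $\gpsup$ is continuous, subadditive in the first entry, symmetric under $(z,\nu)\mapsto(-z,-\nu)$, and of growth comparable to $|z|^{2/(p+1)}\wedge 1$ with $\gamma:=2/(p+1)\in(0,1)$, so that it falls within the scope of Theorem~\ref{relaxation2} (with $g_0(s)\sim s^{2/(p+1)}\wedge 1$, modulo a check of the Lipschitz-type estimate \eqref{eqgg0}). Applying the theorem with $g=\gpsup$ yields, for any $u$ in the domain of $\calF_{\sup}$, a sequence $u_k\in PA(\R^n;\R^m)$ with $\calH^{n-1}(J_{u_k}\cap\partial\Omega)=0$, $u_k\to u$ in $L^1(\Omega;\R^m)$, and
\[
\int_\Omega \Psi(\nabla u_k)\,\dx+\int_{J_{u_k}}\gpsup([u_k],\nu_{u_k})\,\dd\calH^{n-1}\longrightarrow \calF_{\sup}(u,1).
\]
Since the $\Gamma(L^1)$-limsup is $L^1$-lower semicontinuous, a standard diagonal argument reduces the task to producing, for each fixed $u\in PA(\R^n;\R^m)$, a sequence $(u_\eps,v_\eps)\to (u,1)$ in $L^1$ with $\limsup_{\eps\to 0}\calF_\eps(u_\eps,v_\eps)\le \int_\Omega \Psi(\nabla u)\,\dx+\int_{J_u}\gpsup([u],\nu_u)\,\dd\calH^{n-1}$.

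\emph{Recovery sequence on piecewise affine $u$.} Such a $u$ has jump set $J_u=\bigcup_j F_j$, a finite union of polyhedral pieces each contained in a hyperplane with normal $\nu_j$; on each side of $F_j$, $u$ is affine. Fix a mesoscopic scale $\rho>0$ and tile a large portion of each $F_j$ by pairwise disjoint cubes $Q_{j,y}=y+\rho Q^{\nu_j}$ centered at grid points $y\in F_j$ aligned with $\nu_j$. For each such tile, apply Corollary~\ref{propuvjump} with parameter $\tilde\eps:=\eps/\rho$, data $(z,\nu)=([u](y),\nu_j)$, and $\eta_\eps\equiv 0$: this provides $(\tilde u_y,\tilde v_y)\in W^{1,q}(Q^{\nu_j};\R^{m+1})$ converging to $([u](y)\chi_{\{\xi\cdot\nu_j>0\}},1)$ in $L^q$, satisfying the boundary conditions \eqref{eqpropboundrybv} (with $\eps$ replaced by $\tilde\eps$), and with $\calF^\infty_{\tilde\eps}(\tilde u_y,\tilde v_y;Q^{\nu_j})\to\gpsup([u](y),\nu_j)$. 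Then set
\[
u_\eps(x):=u^-(y)+\tilde u_y\bigl(\tfrac{x-y}{\rho}\bigr),\qquad v_\eps(x):=\tilde v_y\bigl(\tfrac{x-y}{\rho}\bigr)\qquad\text{for }x\in Q_{j,y},
\]
and $u_\eps=u$, $v_\eps=1$ elsewhere. On $\partial Q_{j,y}$ the boundary conditions imply that $u_\eps$ is a mollification (at scale $\eps$) of the piecewise constant step with value $[u](y)$ at the centre; since the true affine $u$ differs from this step by $O(\rho)$, a short interpolation in a layer of width $O(\eps)$ adjacent to $\partial Q_{j,y}$ reconciles the two definitions without introducing new jumps, and $v_\eps=1$ near $\partial Q_{j,y}$ matches the bulk choice.

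\emph{Energy estimate and main obstacle.} By the $q$-homogeneity of $\Psi_\infty$ and a change of variables one obtains $\calF^\infty_\eps(u_\eps,v_\eps;Q_{j,y})=\rho^{n-1}\calF^\infty_{\tilde\eps}(\tilde u_y,\tilde v_y;Q^{\nu_j})$, so that summation over tiles yields a surface contribution approaching $\sum_j\calH^{n-1}(F_j)\gpsup([u]_j,\nu_j)$ by continuity of $\gpsup$. Combining the pointwise inequality $f_\eps^q(v)\le 1$ with \eqref{psipsiinf}, $\Psi(\xi)\le (1+\delta)\Psi_\infty(\xi)+C(\delta)$, one passes from $\calF^\infty_\eps$ to $\calF_\eps$ at the cost of an additive error $C(\delta)\calL^n(\bigcup Q_{j,y})=O(\rho)$ and a relative correction $\delta$. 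Outside the tiled cubes, $v_\eps\equiv 1$ and $u_\eps=u$, producing exactly $\int_\Omega\Psi(\nabla u)\,\dx$ up to a vanishing boundary-layer term. Sending first $\eps\to 0$ (so $\tilde\eps=\eps/\rho\to 0$), then $\rho\to 0$ and $\delta\to 0$, and finally diagonalizing in $k$, closes the argument. The principal difficulty lies in the boundary-layer construction: the profile from Corollary~\ref{propuvjump} is adapted to a \emph{constant} jump value $[u](y)$ at the cube centre, whereas $u$ itself varies linearly on each side of $F_j$ and differs from this constant step by $O(\rho)$ on $\partial Q_{j,y}$. Controlling the elastic density $\Psi(\nabla u_\eps)$ inside the transition region, where $|\nabla u_\eps|\sim\eps^{-1}$, requires using the truncation $f_\eps\le 1$ in $\calF_\eps$ (absent in $\calF^\infty_\eps$) together with the growth comparison \eqref{psipsiinf}; this is precisely what forces the ordering $\eps/\rho\to 0$ before $\rho\to 0$.
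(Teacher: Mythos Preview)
Your overall architecture matches the paper's: reduce via Theorem~\ref{relaxation2} (with $g=\gpsup$ and $g_0(s)\sim s^{2/(p+1)}\wedge 1$) to piecewise affine targets, then build recovery sequences by covering $J_u$ with cubes of side $\rho$ and inserting the optimal profiles from Proposition~\ref{periodicity}/Corollary~\ref{propuvjump}. The reduction step is fine; the subadditivity and growth of $\gpsup$ from Lemma~\ref{lemmapropg} do yield \eqref{eqgg0}, so Theorem~\ref{relaxation2} applies.

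There is, however, a genuine gap in the gluing step. First, you set $u_\eps=u$ and $v_\eps=1$ outside the tiled cubes, but the tiles never cover all of the faces $F_j$ (only the compactly contained portion $F_j'$); on the uncovered part $u$ still jumps, so your $u_\eps\notin W^{1,q}$ and $\calF_\eps(u_\eps,v_\eps)=\infty$. The paper fixes this by taking $U_\eps:=u\ast\varphi_\eps$ and $V_\eps:=\chi_{\R^n\setminus (F_*)_{2\eps}}\ast\varphi_\eps$ as the ``background'' pair, so that the residual jump set only contributes through a phase-field cost bounded by $\calH^{n-1}(F_*\setminus\bigcup Q_{j,y})$, which is made small by \eqref{eqchoiceFjp2}. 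Second, and more seriously, your proposed interpolation layer of width $O(\eps)$ near $\partial Q_{j,y}$ cannot work. On the lateral faces of $Q_{j,y}$ away from the jump plane (where $|x\cdot\nu_j|\ge 3\eps$), the boundary condition from Corollary~\ref{propuvjump} forces $v_\eps=1$, hence $f_\eps(v_\eps)=1$, and the truncation you invoke gives no help. The mismatch between the constant value $u^\pm(y)$ inside and the affine $u$ outside is $O(\rho M_u)$, so interpolating over width $\eps$ produces $|\nabla u_\eps|\sim\rho/\eps$; integrating $\Psi(\nabla u_\eps)\sim(\rho/\eps)^q$ over a layer of volume $\sim\eps\rho^{n-1}$ and summing over $\sim\rho^{1-n}$ cubes yields a contribution $\sim\rho^q/\eps^{q-1}\to\infty$ as $\eps\to0$ with $\rho$ fixed. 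The paper resolves this by introducing a second intermediate scale: inside each $Q_{j,\pcube}$ it places a smaller cube $q_{j,\pcube}$ of side $(1-\delta)\rho$ carrying the optimal profile, and in the annulus $Q_{j,\pcube}\setminus q_{j,\pcube}$ it interpolates between the piecewise constant $w_{j,\pcube}$ and $u$ via a cutoff $\theta_{j,\pcube}$ with $|\nabla\theta_{j,\pcube}|\le C/(\delta\rho)$, followed by mollification (see \eqref{e:Wjy}). This keeps $|\nabla W_{j,\pcube}|\le C_\delta M_u$ uniformly in $\eps$, so the elastic cost in $\Omegamid$ is $O(\rho)$ (cf.\ \eqref{eqpsiomegamid}), and one sends $\eps\to0$, then $\rho\to0$, then $\delta\to0$.
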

\begin{proof}
It is enough to prove \eqref{Gammalimsup0} when $v=1$ $\calL^n$-a.e. and $u$ is such that
{$\calF_{\sup}(u,1)<\infty$}.

Let us assume for the moment that
\begin{equation}\label{Gammalimsup}
\Gamma(L^1{(\Omega)})\text{-}\limsup_{\eps\to0}\Functeps(u,1)\leq {H_1}(u; {\Omega'})
\end{equation}
{for {all} open sets $\Omega'$ with $\overline\Omega\subset\Omega'$ and {all}}
  $u\in PA({\R^n};\R^m)$, where {for $A\in\calB(\R^n)$}
  {\begin{equation*}
  {H_1}(u;{A}):=\begin{cases}
  \displaystyle \int_{{A}} \Psi(\nabla u)\dx+\int_{J_u{\cap{A}}}\gpsup([u],\nu_u)\dH^{n-1}, & \text{if }u\in
  PA({\R^n};\R^m),\\
  \infty, & \text{otherwise.}
  \end{cases}
  \end{equation*}}
{Then, 
\begin{equation}
\Gamma(L^1{(\Omega)})\text{-}\limsup_{\eps\to0}\Functeps(u,1)\leq {H_1(u;\overline\Omega)=H_0(u),}
 \end{equation}
  for any $u\in PA({\R^n};\R^m)$
with $\calH^{n-1}(J_u\cap\partial\Omega)=0$, {where $H_0$ is the functional introduced in \eqref{Hg}, with bulk density $\Psi$ and surface density $\gpsup$.}}
The relaxation Theorem~\ref{relaxation2} and the lower semicontinuity of
$\mathcal{F}''(\cdot):=\Gamma(L^1)$-$\limsup_{k\to\infty}\Functeps(\cdot,1)$ with respect to $L^1(\Omega;\R^m)$ convergence imply that \eqref{Gammalimsup0} holds for all $u\in {(GSBV(\Omega))^m}$.

Hence, we only need to show that \eqref{Gammalimsup} holds.
The construction is based on
Proposition~\ref{periodicity} and on a decomposition of the domain
similar  to the one used in proving
Theorem~\ref{relaxation2}, {for which one covers} a large part of the jump set with cubes of scale $\rho$. The current construction, however, needs to produce Sobolev functions and not functions with jumps; therefore the interfaces need to be regularized. We do this using a mollification on a scale $\eps$ close to the jump set, and an interpolation on an intermediate scale $(1-\lambda)\rho$ around the boundaries of the cubes.
We use the same notation as in Theorem~\ref{relaxation2} to emphasize the similarities, but since there are several differences in the details, we repeat the common parts of the argument for greater clarity.

Fix $u\in PA(\R^n;\R^m)$ and an open set $\Omega'$ with $\Omega\subset\subset\Omega'$.
Let $\{F_j\}_{j\in\N}$ denote the faces  of the simplexes in the decomposition of $\R^n$ associated to $u$.
Possibly splitting the simplexes, we can ensure that $\text{diam}(F_j)\le \frac12 \dist(\Omega,\partial\Omega')$ for all $j$. We
set
\[
F_*:=\bigcup_j F_j. \hskip1cm
\]
Let
\begin{equation}
 M_u:=\|\nabla u\|_{L^\infty(\Omega')} + \calH^{n-1}(F_*\cap\Omega')+\calH^{n-2}({\bigcup_j\partial F_j}\cap\Omega'),
\end{equation}
{where the boundary operator is understood in the $(n-1)$-dimensional sense;}
since $u$ is piecewise affine, we have ${M_u}<\infty$.
We first construct a sequence with bounded energy. We recall that $u\in PA(\R^n;\R^m)$ by assumption, and define
for any $\eps>0$ similarly to \eqref{eqdefUjVj}
\begin{equation}
U_\eps:=u\ast\varphi_{\eps}\,,\hskip1cm
V_\eps:=\chi_{\R^n\setminus (F_*)_{2\eps}}\ast\varphi_{\eps} \,.
\end{equation}
We observe that, since $\varphi_\eps\in C^\infty_c(B_\eps)$ is an even function
and $u$ is affine on each connected component of $\R^n\setminus F_*$, we have $U_\eps=u$ outside $(F_*)_{\eps}$.
At the same time, $V_\eps=0$ in $(F_*)_\eps$ and
 $V_\eps=1$ outside $(F_*)_{3\eps}$,
so that with $f_{\eps}(V_\eps)=0$ in $(F_*)_\eps$
and $f_{\eps}(V_\eps)\le 1$ everywhere we obtain,
for $\eps<\frac13\dist(\Omega,\partial\Omega')$,
\begin{equation}
 \calF_\eps(U_\eps, V_\eps)
 \le \int_{\Omega}
 \Psi(\nabla u) \dx
 +
 \calF_\eps^\infty(0,V_\eps;\Omega\cap (F_*)_{3\eps}),
\end{equation}
where we recall that
\begin{equation}
\calF_\eps^\infty(0,v;A)=
\int_A
 \Big(\frac{(1-v)^{q'}}{q'q^{\sfrac{q'}q}\eps} + \eps^{q-1}|\nabla v|^q\Big)\dx
\end{equation}
only contains the phase-field part of the functional.

Using $V_\eps\in[0,1]$ and $|\nabla V_\eps|\le C/\eps$ leads to
\begin{equation}
 \calF_\eps^\infty(0,V_\eps;\Omega\cap (F_*)_{3\eps})
\le \frac{C}{\eps} \calL^n(\Omega\cap (F_*)_{3\eps}).
\end{equation}
Since $F_*$ is locally a finite union of polygons,
we conclude that
\begin{equation}
 \limsup_{\eps\to0}
 \calF_\eps^\infty(0,V_\eps;\Omega\cap (F_*)_{3\eps})<\infty,
\end{equation}
so that the sequence $(U_\eps,V_\eps)$ is bounded in energy, as claimed.

We
next modify the sequence in a set covering a large part of $(F_*)_{3\eps}$ in order to replace this estimate by one with the optimal constant. To do this, we
shall introduce a parameter $\rho>0$ and cover a large part of each face $F_j$
by cubes on a scale $\rho$,
as in the proof of Theorem~\ref{relaxation2}.
In order to quantify ``a large part'' it is useful first to introduce another parameter $\delta>0$, that will be chosen later.
From now on, it is convenient to restrict to the indices in the finite set
$J:=\{j: F_j\cap\overline\Omega\ne \emptyset\}$.
We first fix relatively open sets $F_j'\subset F_j$ such that $\dist(F_j',{\partial F_j})
>0$ and
\begin{equation}\label{eqchoiceFjp2}
 \sum_{j\in J} \calH^{n-1}(F_j\setminus F_j')\le \delta.
\end{equation}
Let {$A_j:\R^n\to\R^n$} be an affine {isometry} that maps $\R^{n-1}\times \{0\}$ to the $(n-1)$-dimensional affine space containing $F_j$,  {and set $\nu_j:={(\nabla A_j) e_n}$}.
For $\pcube\in \rho\Z^{n-1}$ we let
$Q_{j,\pcube}:=A_j(\pcube+Q_\rho)$.
Let $I_j$ denote the set of $\pcube\in\rho\Z^{n-1}$ such that $Q_{j,\pcube}\cap F_j'\ne\emptyset$.
Since $\dist(F_j',F_l)>0$ for $j\ne l$,
for $\rho$ sufficiently small,
for all $l\ne j\in J$, all $\pcube\in I_j$, and all
$\pcube'\in I_l$, one has
$Q_{j,\pcube}\cap F_l=\emptyset$
and $Q_{j,\pcube}\cap Q_{l,\pcube'}=\emptyset$.

 In each cube $Q_{j,\pcube}$, we intend to apply Proposition~\ref{periodicity}. However, {this} proposition {provides} a construction for limit functions that are constant on the two halves of the cube, whereas $u$ is affine on each of them. Therefore, we first introduce a piecewise constant approximation in each cube, {followed by} an additional interpolation step.
 
 For each $j\in J$ and $\pcube\in I_j$,
let $u_{j,\pcube}^\pm:=u^\pm(A_j(\pcube))$ denote the two traces of $u$ at the center of $Q_{j,\pcube}$ (the traces have point
values,
since $u$ is piecewise affine).
From these two values we construct
a piecewise constant function by
\begin{equation}\label{e:wjy}
w_{j,\pcube}(x):=
 \begin{cases}
 u_{j,\pcube}^+, & \text{ if } x\in
 A_j(\pcube+\R^{n-1}\times[0,\infty)),\\
 u_{j,\pcube}^-, & \text{ if } x\in
 A_j(\pcube+\R^{n-1}\times(-\infty,0)).
 \end{cases}
\end{equation}
We remark that $\|w_{j,\pcube}-u\|_{L^\infty(Q_{j,\pcube})}\le \rho \sqrt n M_u$,
and in particular that
$\|[w_{j,\pcube}]-[u]\|_{L^\infty(J_u\cap Q_{j,\pcube};\calH^{n-1})}\le 2\rho \sqrt n M_u$.

We fix a cutoff function
$\theta_{\delta}\in C^\infty_c(Q_1;[0,1])$ with $\theta_{\delta}=1$ in $Q_{1-\frac{\delta}{2}}$, {for $\delta\in(0,\sfrac12)$.}
For each $j\in J$ and $\pcube\in I_j$, we define
$\theta_{j,\pcube}\in C^\infty_c(Q_{j,\pcube})$ by 
$\theta_{j,\pcube}({A_j(\pcube+\rho x)}):=\theta_{{\delta}}(x)$
and let $q_{j,\pcube}:=A_j(\pcube+Q_{(1-\delta)\rho})\subset\subset Q_{j,\pcube}$.
We set
\begin{equation}\label{e:Wjy}
W_{j,\pcube}:=\left(w_{j,\pcube}\theta_{j,\pcube}+u(1-\theta_{j,\pcube})\right)\ast\varphi_{\eps}.
\end{equation}
Then  for $\eps$ sufficiently small we have
\begin{equation}\label{eqboudnaryWjp}
W_{j,\pcube}=U_\eps \text{ on $\partial Q_{j,\pcube}$, \hskip5mm and }\hskip5mm 
W_{j,\pcube}=w_{j,\pcube}\ast\varphi_\eps \text{ on }\partial q_{j,\pcube}.
\end{equation}

For each $j$ and $\pcube$ we use
Proposition~\ref{periodicity}, with $\nu={\nu_j}$ 
and $z=u^+_{j,\pcube}-u^-_{j,\pcube}$. By
 an elementary translation and scaling argument, we obtain functions
$(u_{\eps,j,\pcube},v_{\eps,j,\pcube})
\in W^{1,q}(q_{j,\pcube};\R^m\times[0,1])$ such that
$u_{\eps,j,\pcube}
\to w_{j,\pcube}$ as $\eps\to0$ in $L^q(q_{j,\pcube};\R^{m})$,
\begin{equation}\label{eqfunctepsqjp}
\lim_{\eps\to0}\Functeps^\infty(u_{\eps,j,\pcube},
v_{\eps,j,\pcube};q_{j,\pcube})={(1-\delta)^{n-1}\rho^{n-1}}\gpsup([u](A_j(\pcube)),\nu_j),
\end{equation}
and
\begin{equation}\label{equbboundaryqjp}
u_{\eps,j,\pcube}=w_{j,\pcube}*\varphi_{\eps}=W_{j,\pcube},\quad  
v_{\eps,j,\pcube}=V_\eps\quad \text{on }\partial {q}_{j,\pcube}. 
\end{equation}
Using \eqref{psipsiinf}, {we have
	\begin{equation*}
	\Functeps({u_{\eps,j,\pcube},v_{\eps,j,\pcube}};q_{j,\pcube})\leq
	(1+\delta)\Functeps^\infty({u_{\eps,j,\pcube},v_{\eps,j,\pcube}};q_{j,\pcube})+C(\delta)\rho^n,
	\end{equation*}
and then}
\begin{equation}\label{eqfunctepsqjpb}
\lim_{\eps\to0}\Functeps(u_{\eps,j,\pcube},
v_{\eps,j,\pcube};q_{j,\pcube})
\le (1+\delta)
\rho^{n-1}\gpsup([u](A_j(\pcube)),\nu_j)
+C_\delta \rho^n.
\end{equation}

Recalling the bounds after the definition of $w_{j,\pcube}$,
{setting $g_0(s):=s^{\frac2{p+1}}\wedge 1$ for every $s\geq 0$,
and using \ref{e:gp growth} and \ref{e:gp subadd} in Lemma~\ref{lemmapropg}, we have}
\begin{equation}\label{eqgpsupwu}
\begin{split}
\rho^{n-1} \gpsup([u](A_j(\pcube)),\nu_j)
=&\int_{J_u\cap Q_{j,\pcube}}
\gpsup([w_{j,\pcube}],\nu_j)
 \dH^{n-1}\\
 \le &
\int_{J_u\cap Q_{j,\pcube}}
\left(\gpsup([u],\nu_j)+{C} g_0(2\rho\sqrt n M_u)\right)
 \dH^{n-1}.
 \end{split}
 \end{equation}

We finally set
\begin{equation}\label{e:urho2}
 \begin{split}
u_\eps^{\delta,\rho}:=&U_\eps 
 +\sum_{j\in J} \sum_{\pcube\in I_j}\left(
 \chi_{q_{j,\pcube}} (u_{\eps,j,\pcube}-W_{j,\pcube})
 +\chi_{Q_{j,\pcube}} (W_{j,\pcube}-U_\eps)\right)
,\\
 v_\eps^{\delta,\rho}:=&V_\eps
 +\sum_{j\in J} \sum_{\pcube\in I_j}
 \chi_{q_{j,\pcube}} (v_{\eps,j,\pcube}-V_\eps).
 \end{split}\end{equation}
The boundary data in \eqref{equbboundaryqjp} show that the characteristic functions do not introduce any jump on 
each $\partial q_{j,\pcube}$; and with the boundary data in \eqref{eqboudnaryWjp} the same holds
on each $\partial Q_{j,\pcube}$. Therefore $(u_\eps^\rho, v_\eps^\rho)\in W^{1,q}(\Omega;\R^{m}\times[0,1])$.

\newcommand\Omegain{\Omega^\text{in}}
\newcommand\Omegamid{\Omega^\text{mid}}
\newcommand\Omegaout{\Omega^\text{out}}

In order to estimate the energy we decompose the domain in three parts, 
\begin{equation}
 \Omegain:=\bigcup_{j\in J}\bigcup_{\pcube\in I_j} q_{j,\pcube}, 
 \hskip5mm
 \Omegamid:=\bigcup_{j\in J}\bigcup_{\pcube\in I_j} Q_{j,\pcube}\setminus q_{j,\pcube}, 
 \hskip5mm
 \Omegaout:=\Omega\setminus \Omegain\setminus\Omegamid.
\end{equation}
As the cubes {$Q_{j,\pcube}$} are disjoint, $\sum_{j\in J}\sum_{\pcube{\in I_j}}\rho^{n-1}\le \calH^{n-1}(\Omega\cap F_*)$ implies that their number is bounded by ${C_u}\rho^{1-n}$, so that {for $\delta\in(0,\sfrac12)$ we have}
\begin{equation}\label{e: Omega volume estimates}
 \calL^n(\Omegain)\le C_u\rho,
 \hskip5mm
 \calL^n(\Omegamid)\le C_u \rho \delta,
\end{equation}
where $C_u$ denotes a constant that may depend on $u$ (and $\Omega$) but not on $\eps$, $\rho$, and $\delta$.

We start from $\Omegain$.
Since in each $q_{j,\pcube}$ we have $(u_\eps^{\delta,\rho},v_\eps^{\delta,\rho})=(u_{\eps,j,\pcube},
v_{\eps,j,\pcube})$,
by \eqref{eqfunctepsqjpb}, \eqref{eqgpsupwu}, {and the first estimate in \eqref{e: Omega volume estimates}}
\begin{equation}
\label{eqFomegain}
\begin{split}
& \limsup_{\eps\to0}\Functeps(u_{\eps}^{\delta,\rho},
v_{\eps}^{\delta,\rho};\Omegain)
\le (1+\delta)\sum_{j,\pcube}\Big(\rho^{n-1}\gpsup([u](A_j(\pcube)),\nu_j)+C_\delta\rho^n\Big)\\
&\le (1+\delta){H_1}(u;J_u\cap \Omegain)
+{C M_u}
g_0(2\rho\sqrt n M_u)
+{C C_\delta} \calL^n(\Omegain)
\\
&\le (1+\delta){H_1}(u;J_u\cap\Omega')
+C{M_u}g_0(2\rho\sqrt n M_u)
+{C_u C_\delta\rho}
,
\end{split}\end{equation} and similarly, using
$u_{\eps,j,\pcube}\to w_{j,\pcube}$ and the definition of $w_{j,\pcube}$ in \eqref{e:wjy} and
that of $W_{j,\pcube}$ in \eqref{e:Wjy} we infer
\begin{equation}\label{eqboundl1inside}
\begin{split}
 \limsup_{\eps\to0} \| u_{\eps}^{\delta,\rho}\|_{L^1(\Omegain)} = 
& \sum_{j,\pcube}\|w_{j,\pcube}\|_{L^1(q_{j,\pcube})}
\le \calL^n(\Omegain) \|u\|_{L^\infty(\Omega')}
\le C_u \rho.
\end{split}\end{equation}

We next address $\Omegamid$.
In $Q_{j,\pcube}\setminus (F_j)_\eps$,
letting $Q'_{j,\pcube}$ be a cube with the same center and twice the side length,
\begin{equation}\begin{split}
 \|\nabla W_{j,\pcube}-\nabla u\|_{L^\infty(Q_{j,\pcube}\setminus (F_j)_\eps)}
 \le &
 \|\nabla ((w_{j,\pcube}-u)\theta_{j,\pcube})\|_{L^\infty(Q'_{j,\pcube})}\\
\le & \|\nabla \theta_{j,\pcube}\|_{{L^\infty(Q'_{j,\pcube})}} \|w_{j,\pcube}-u\|_{L^\infty(Q'_{j,\pcube})}+
 \|\nabla u\|_{L^\infty(\Omega')},
\end{split}\end{equation}
where we used that {$U_\eps=u$ outside $(F_*)_{\eps}$}, $\nabla w_{j,\pcube}=0$, and $\theta_{j,\pcube}\in[0,1]$ pointwise.
Recalling that 
$|\nabla \theta_{j,\pcube}|\le C_\delta/\rho$
and $|\nabla u|\le M_u$, we conclude
\begin{equation}
 \|\nabla W_{j,\pcube}-\nabla u\|_{L^\infty(Q_{j,\pcube}\setminus (F_j)_\eps)}
 \le C_\delta M_u,
\end{equation}
which implies that $\Psi(\nabla W_{j,\pcube})\le C_{\delta}(M_u^q+1)$ pointwise in  $Q_{j,\pcube}\setminus (F_j)_\eps$.

Since ${v_{\eps}^{\delta,\rho}=V_\eps}=0$ in 
$\Omega\cap (F_*)_\eps\setminus \Omegain$,
we conclude
\begin{equation}
\label{eqpsiomegamid}
\begin{split}
\int_{\Omegamid} f^q_{\eps}(v_\eps^{\delta,\rho})\Psi(\nabla u_\eps^{\delta,\rho}) \dx\le&
\int_{\Omegamid\setminus (F_*)_\eps} \Psi(\nabla u_\eps^{\delta,\rho}) \dx \\
\le &
C_{\delta}(M_u^q+1) \calL^n(\Omegamid)
\le 
C_{\delta}(M_u^q+1) C_u\rho.
\end{split}
\end{equation}
Similarly,
\begin{equation}
\| W_{j,\pcube}-{u*\varphi_\eps}\|_{L^1(Q_{j,\pcube})}\le
\| (w_{j,\pcube}-u)\theta_{j,\pcube}\|_{L^1(Q'_{j,\pcube})}\le 
\| w_{j,\pcube}-u\|_{L^1(Q'_{j,\pcube})}\le 
C_u\rho^{{n+1}},
\end{equation}
so that {for $\eps$ sufficiently small, using that $u*\varphi_\eps\to u$ {in} $L^{1}_{\loc}(\R^n;\R^m)$ as $\eps\to0$, we get}
\begin{equation}\label{eqL1omegamid}
\| W_{j,\pcube}-u\|_{L^1(\Omegamid)}\le  C_u{\rho}^2.
\end{equation}
Finally, in $\Omegaout\setminus (F_*)_\eps$ we have $u_\eps^{\delta,\rho}=U_\eps=u$,
so that 
\begin{equation}\label{eqpsiomegaout}
\begin{split}
\int_{\Omegaout} f^q_{\eps}(v_\eps^{\delta,\rho})\Psi(\nabla u_\eps^{\delta,\rho}) \dx\le&
\int_{\Omegaout\setminus (F_*)_\eps} \Psi(\nabla u) \dx
\le \int_\Omega \Psi(\nabla u) \dx.
\end{split}
\end{equation}

The {last two terms of $\calF_\eps$ are} estimated jointly in $\Omegamid$ and $\Omegaout$.
Since $v_\eps^{\delta,\rho}=V_\eps$ in $\Omega\setminus\Omegain$,
 $V_\eps=1$ in $\Omega\setminus (F_*)_{3\eps}$,
 and $|\nabla V_\eps|\le C/\eps$ everywhere,
\begin{equation}\label{eqestATeps1}
 \calF_\eps^\infty(0,
 v_\eps^{\delta,\rho};\Omega\setminus\Omegain)
= \calF_\eps^\infty(0,V_\eps;\Omega\setminus\Omegain)
 \le  \frac C\eps \calL^n(\Omega\cap (F_*)_{3\eps}\setminus\Omegain).
\end{equation}
To compute this limit we observe that, since $\Omega\cap F_*\subseteq\cup_j F_j$, the $F_j$ are contained in planes, and $\Omegain$ does not touch the $(n-{2})$-dimensional boundary of the $F_j$,
\begin{equation}
 \Omega\cap (F_*)_{3\eps}\setminus\Omegain
 = \Omega\cap\bigcup_j (F_j)_{3\eps}\setminus \Omegain
 \subset \bigcup_{{j\in J}} (F_j\setminus \Omegain)_{3\eps}.
\end{equation}
As {each} $F_j\setminus\Omegain$ is a compact subset of a plane, its Minkowski content equals its $\calH^{n-1}$  measure, so that
\begin{equation}
\limsup_{\eps\to0} \frac
{ \calL^n(\Omega\cap
(F_*)_{3\eps}\setminus\Omegain)}
{6\eps}
 \le \sum_{{j\in J}} \Big(\calH^{n-1}(F_j\setminus F_j')
 +C\delta \calH^{n-1}(F_j')\Big).
\end{equation}
Combining this with \eqref{eqestATeps1} and \eqref{eqchoiceFjp2} leads to
\begin{equation}\label{eqATnotin}
 \calF_\eps^\infty(0,v_\eps^{\delta,\rho};\Omega\setminus\Omegain)
\le  C \delta + C_u \delta.
\end{equation}

Putting together 
\eqref{eqFomegain},
\eqref{eqpsiomegamid},
\eqref{eqpsiomegaout},
\eqref{eqATnotin}
shows that for any {$\delta\in (0,\sfrac12)$} there is a sequence with
\begin{equation}
\limsup_{\rho\to0}\limsup_{\eps\to0} \calF_\eps(u_\eps^{\delta,\rho},v_\eps^{\delta,\rho};\Omega)
 \le (1+\delta) {H_1}(u;\Omega')
+C \delta + C_u \delta.
\end{equation}
At the same time, 
from \eqref{eqboundl1inside},
\eqref{eqL1omegamid}, and $ u_\eps^{\delta,\rho}={U_\eps}$
on $\Omegaout$,
\begin{equation}
\limsup_{\rho\to0}\limsup_{\eps\to0} \| u_\eps^{\delta,\rho}-u\|_{L^1(\Omega)}=0.
\end{equation}
Taking the limit $\delta\to0$, and finally taking a diagonal subsequence, concludes the proof.

\end{proof}
{
We are now ready to prove Theorem~\ref{t:finale p}.
\begin{proof}[Proof of Theorem~\ref{t:finale p}]
Propositions~\ref{p:lb GSBV} and~\ref{p:ubp} imply in particular that $\gpinf\leq(\gpsup)_{BV}$, so that by  \eqref{e:gpinf=gpsup} we have $\gpsup\leq(\gpsup)_{BV}$, and then $\gpsup=(\gpsup)_{BV}$. Hence, $\calF_{\inf}=\calF_{\sup}$ and the common value $g=\gpinf=\gpsup$ is $BV$-elliptic.
\end{proof}
}

\section*{Acknowledgements}
SC and FI gratefully
thank the University of Florence for the warm hospitality of the DiMaI ``Ulisse
Dini'', where part of this work was carried out.
This work was partially supported
by the Deutsche Forschungsgemeinschaft (DFG) through project CRC1720 - 539309657.
MF is a member of GNAMPA - INdAM. MF has been supported by the European Union - Next Generation EU, Mission 4 Component 1 CUP B53D23009310006, codice 2022J4FYNJ, PRIN2022 project "Variational
methods for stationary and evolution problems with singularities and interfaces".

\section*{Data Availability Statement} Data sharing is not applicable to this article as no
datasets were generated or analysed during the current study.

\bibliographystyle{alpha-noname}
\bibliography{cfi}

\end{document}